\documentclass{svjour3} 
\usepackage{needspace}
\usepackage{etoolbox}

\BeforeBeginEnvironment{lemma}{\Needspace{2\baselineskip}}
\BeforeBeginEnvironment{theorem}{\Needspace{2\baselineskip}}
\BeforeBeginEnvironment{proposition}{\Needspace{2\baselineskip}}
\BeforeBeginEnvironment{corollary}{\Needspace{2\baselineskip}}

\usepackage{graphicx}
\usepackage{caption}
\usepackage{subcaption}
\usepackage{amssymb,amsmath}

\usepackage{algorithm}
\usepackage{algpseudocode}
\algrenewcommand{\algorithmicrequire}{\textbf{Input:}}
\algrenewcommand{\algorithmicensure}{\textbf{Output:}}
\newcommand{\Input}{\Require}
\newcommand{\Output}{\Ensure}
\usepackage{enumitem}

\usepackage{xcolor}
\usepackage{pgf,tikz,pgfplots}
\usepackage{tkz-euclide}
\usepackage{graphicx}
\usepackage{color}
\usepackage{hyperref}
\usepackage[all]{xy}
\usepackage{mathrsfs}
\usepackage{tikz,tkz-tab}
\usepackage{parallel}
\usepackage{array}
\usepackage{xcolor}
\usepackage{verbatim}
\usepackage{tikz-3dplot}
\usepackage[title]{appendix}
\usepackage{tikz-cd}
\usepackage{multirow}
\usepackage{makecell}
\usepackage{comment}

\renewcommand\labelenumi{(\alph{enumi})}
\renewcommand\theenumi\labelenumi

\title{Last two pieces of the puzzle for unsolvability of a system of two quadratic (in)equalities}

\author{Min-Chi Wang \and
        Ruey-Lin Sheu \and
        Huu-Quang Nguyen
}

\institute{Min-Chi Wang \at
              Department of Mathematics, National Cheng Kung University, Tainan, Taiwan\\
              \email{sdhwwc@yahoo.com.tw} 
           \and
           Ruey-Lin Sheu \at
              Department of Mathematics, National Cheng Kung University, Tainan, Taiwan\\
              \email{rsheu@mail.ncku.edu.tw}
              \and
           Huu-Quang Nguyen \at
           Department of Mathematics, Vinh
           University, Vinh, Nghe An, Vietnam
}

\date{Received: date / Accepted: date}

\begin{document}
\maketitle

\begin{abstract}
Given two quadratic functions \( f(x) = x^T Ax + 2a^T x + a_0 \) and \( g(x) = x^T Bx + 2b^T x + b_0 ,\) each associated with either the strict inequality ($<0$); non-strict inequality ($\leq 0$); or the equality ($=0$), it is a fundamental question to ask whether or not the joint system has a solution. For homogeneous quadratic systems ($a=b=0,~a_0=b_0=0$), starting from Finsler's lemma in 1936 until Yuan's alternative lemma in 1990, all combinations of the unsolvability for $\{x\in \mathbb{R} ^n\mid x^T Ax \mathbin{\star } 0\}\cap \{x\in \mathbb{R} ^n\mid x^T Bx \mathbin{\#} 0\}\subset \{0\}$, where $\star $ and $\#$ can be any of $\{<,\leq ,=\}$, have been shown to possess either a positive definite or a positive semi-definite matrix pencil of $A$ and $B.$ Extensions to nonhomogeneous quadratic systems
$\{x\in \mathbb{R} ^n\mid f(x) \mathbin{\star } 0\}\cap \{x\in \mathbb{R} ^n\mid g(x) \mathbin{\#} 0\}=\emptyset $ have been done for several cases already. Two challenging cases remain open: the
nonhomogeneous Calabi Theorem which determines when $\{f(x)=0\}\cap \{g(x)=0\}=\emptyset $;
and the nonhomogeneous (strict) Finsler lemma to determine whether $\{f(x)\leq 0\}\cap \{g(x)=0\}=\emptyset .$ The paper provides the answers to both, in theorems and algorithms.

\keywords{unsolvability of a system, Calabi Theorem, Finsler lemma, Joint numerical range, Convexity, Connectedness, Separation hyperplane, Quadratically constrained quadratic programming.}
\end{abstract}

\section{Introduction}

Let $A$ and $B$ be two $n\times n$ symmetric matrices. In 1964, Eugenio Calabi \cite{calabi1964linear}
studied the pair of quadratic forms $x^T A x$ and $x^T B x$ over $\mathbb{R} ^n$ with their joint values $(x^T Ax,x^T Bx)$ in $\mathbb{R} ^2$. Such pairs naturally occur through the second fundamental form of classical differential geometry, 
and also appear in the study of certain matrix differential equations. Calabi's theorem states that
\begin{eqnarray*}
[{\rm Calabi}] &(n\geq 3)& \{x\in \mathbb{R} ^n\mid x^T Ax=0\}\cap \{x\in \mathbb{R} ^n\mid x^T Bx=0\}=\{0\} \\
  (1964) &\Longleftrightarrow & \exists (\lambda ,\mu )\neq (0,0) \text{ s.t. } \lambda A+ \mu B\succ 0.
\end{eqnarray*}
Namely, two homogeneous quadratic equations have no non-trivial solution other than $0$ if and only if two matrices $A$ and $B$ possess a positive definite linear combination. 

However, Uhlig \cite{Uhlig1979} and Hiriart-Urruty \cite{hiriart2007potpourri} noted that Calabi's theorem was a rediscovery of Finsler's 1936 lemma \cite{finsler1936vorkommen}. The statement of (strict) Finsler lemma is quoted as follows.
\begin{eqnarray*}
[{\rm Str.\text{-}Finsler}] && \{x\in \mathbb{R} ^n\mid x^T Ax\leq 0\}\cap \{x\in \mathbb{R} ^n\mid x^T Bx=0\}=\{0\} \\
  (1936) &\Longleftrightarrow & \exists \xi \in \mathbb{R} \text{ s.t. } A+\xi B\succ 0.
\end{eqnarray*}
The term ``strict'' is used because in \cite[1936]{finsler1936vorkommen}, the condition
\[
\{x\in \mathbb{R} ^n\mid x^T Ax\leq 0\}\cap \{x\in \mathbb{R} ^n\mid x^T Bx=0\}=\{0\}
\]
was posed as $x^T Bx=0\Rightarrow x^T Ax>0$ for every \( x\neq 0 \). Thus, ``strict'' refers to the requirement that $x^T Ax$ is strictly positive.

The (strict) Finsler lemma has a non-strict version, independently studied by many researchers but is generally attributed to Finsler. It states that
\begin{eqnarray*}
[{\rm NStr.\text{-}Finsler}] &(B:~{\rm indefinite})& \{x\in \mathbb{R} ^n\mid x^T Ax<0\}\cap \{x\in \mathbb{R} ^n\mid x^T Bx=0\}=\emptyset \\
  (1936) &\Longleftrightarrow & \exists \xi \in \mathbb{R} \text{ s.t. } A+\xi B\succeq 0.
\end{eqnarray*}
Similarly, the term ``non-strict'' is used because the condition 
\[
\{x\in \mathbb{R} ^n\mid x^T Ax<0\}\cap \{x\in \mathbb{R} ^n\mid x^T Bx=0\}=\emptyset 
\]
can be written as $x^T Bx=0\Rightarrow x^T Ax\geq 0$. Thus, ``non-strict'' indicates that $x^T Ax$ is required to be non-strictly positive.

There are two other famous alternative theorems of similar flavor: $\mathcal{S}$-lemma by Yakubovich in \cite[1973]{yakubovich1973minimization}\cite[1971]{yakubovich1971s}\cite[1977]{yakubovich1977s} and also Yuan's alternative theorem \cite[1990]{yuan1990subproblem}:
\begin{eqnarray*}
[\mathcal{S}{\rm\text{-}lemma}] &(\exists \bar{x},\bar{x}^T B\bar{x}<0)& \{x\in \mathbb{R} ^n\mid x^T Ax<0\}\cap \{x\in \mathbb{R} ^n\mid x^T Bx\leq 0\}=\emptyset \\
  (1973) &\Longleftrightarrow & \exists \xi \geq 0 \text{ s.t. } A+\xi B\succeq 0.
\end{eqnarray*}
\begin{eqnarray*}
[{\rm Yuan}] && \{x\in \mathbb{R} ^n\mid x^T Ax<0\}\cap \{x\in \mathbb{R} ^n\mid x^T Bx<0\}=\emptyset \\
  (1990) &\Longleftrightarrow & \exists \xi \in [0,1] \text{ s.t. } (1-\xi )A+\xi B\succeq 0.
\end{eqnarray*}
As the five alternative theorems [Calabi], [Str.-Finsler], [NStr.-Finsler], [$\mathcal{S}$-lemma] and [Yuan] are all direct consequences of the joint numerical range 
$\{(x^T Ax, x^T Bx)\mid x \in \mathbb{R} ^n\}$ being a convex set\footnote{It is known as Dines theorem.} \cite[1941]{dines1941mapping} and the separating hyperplane theorem, they are equivalent to each other. Please refer to \cite[2010]{Yan-Guo} for the proof of equivalence. 

The following summarizes all combinations of a system of two homogeneous quadratic (in)equalities, regarding when they have only the trivial solution \( 0 \) or no solution at all.

\begin{align*}
 [{\rm Yuan}]~~ \{x\in \mathbb{R} ^n\mid x^T Ax < 0\}\cap \{x\in \mathbb{R} ^n\mid x^T Bx < 0\} &= \emptyset \\
 [\mathcal{S}{\rm\text{-}lemma}]({\rm homo.~ version})~~ \{x\in \mathbb{R} ^n\mid x^T Ax < 0\}\cap \{x\in \mathbb{R} ^n\mid x^T Bx \leq 0\} &= \emptyset \\
  [{\rm NStr.\text{-}Finsler}]~~ \{x\in \mathbb{R} ^n\mid x^T Ax < 0\}\cap \{x\in \mathbb{R} ^n\mid x^T Bx = 0\} &= \emptyset \\
  [{\rm Non\text{-}Available}]~~ \{x\in \mathbb{R} ^n\mid x^T Ax \leq 0\}\cap \{x\in \mathbb{R} ^n\mid x^T Bx \leq 0\} &= \{0\} \\
 [{\rm Str.\text{-}Finsler}]~~ \{x\in \mathbb{R} ^n\mid x^T Ax \leq 0\}\cap \{x\in \mathbb{R} ^n\mid x^T Bx = 0\} &= \{0\} \\
[{\rm Calabi}]~~ \{x\in \mathbb{R} ^n\mid x^T Ax = 0\}\cap \{x\in \mathbb{R} ^n\mid x^T Bx = 0\} &= \{0\}
\end{align*}
Notice that [{\rm Non-Available}] can be obtained from [Str.\text{-}Finsler] and $[\mathcal{S}{\rm\text{-}lemma}]$, by
\begin{eqnarray*}
 && \{x\in \mathbb{R} ^n\mid x^T Ax \leq 0\}\cap \{x\in \mathbb{R} ^n\mid x^T Bx \leq 0\}=\{0\} \\
  &\Leftrightarrow & \underbrace {\{x\in \mathbb{R} ^n\mid x^T Ax \leq 0,x^T Bx = 0\}=\{0\}}_{\text{[{\rm Str.\text{-}Finsler}]}} \text{ and } \underbrace {\{x\in \mathbb{R} ^n\mid x^T Ax \leq 0,x^T Bx < 0\}=\emptyset }_{[\mathcal{S}{\rm\text{-}lemma}]}.
\end{eqnarray*}

\noindent $\bullet $ {\bf The nonhomogeneous extensions.}

All five alternative theorems have their own nonhomogeneous extensions, as summarized below, where \( f(x) = x^T Ax + 2a^T x + a_0 \) and \( g(x) = x^T Bx + 2b^T x + b_0 \) are real \( n \)-variable quadratic functions; \( A,B\in \mathcal S^n \) are symmetric matrices; vectors \( a,b\in \mathbb{R} ^n \); and \( a_0,b_0\in \mathbb{R} \). 
\begin{align*}
 [{\rm Yuan}]({\rm nonhomo.~ version})~~ \{x\in \mathbb{R} ^n\mid f(x) < 0\}\cap \{x\in \mathbb{R} ^n\mid g(x) < 0\} &= \emptyset \\
 [\mathcal{S}{\rm\text{-}lemma}]({\rm nonhomo.~ version})~~ \{x\in \mathbb{R} ^n\mid f(x) < 0\}\cap \{x\in \mathbb{R} ^n\mid g(x) \leq 0\} &= \emptyset \\
  [{\rm NStr.\text{-}Finsler}]({\rm nonhomo.~ version})~~ \{x\in \mathbb{R} ^n\mid f(x) < 0\}\cap \{x\in \mathbb{R} ^n\mid g(x) = 0\} &= \emptyset \\
  [{\rm Non\text{-}Available}]({\rm nonhomo.~ version})~~ \{x\in \mathbb{R} ^n\mid f(x) \leq 0\}\cap \{x\in \mathbb{R} ^n\mid g(x) \leq 0\} &= \emptyset \\
 [{\rm Str.\text{-}Finsler}]({\rm nonhomo.~ version})~~ \{x\in \mathbb{R} ^n\mid f(x) \leq 0\}\cap \{x\in \mathbb{R} ^n\mid g(x) = 0\} &= \emptyset \\
[{\rm Calabi}]({\rm nonhomo.~ version})~~ \{x\in \mathbb{R} ^n\mid f(x) = 0\}\cap \{x\in \mathbb{R} ^n\mid g(x) = 0\} &= \emptyset 
\end{align*}
Due to the lack of convexity for the joint numerical range\footnote{For a complete characterization about the convexity of the joint numerical range$\{(f(x),g(x))\mid x\in \mathbb{R} ^n\}\subset \mathbb{R} ^2$, please refer to 
\cite[2022]{nguyen2022convexity}.} $\{(f(x),g(x))\mid x\in \mathbb{R} ^n\}\subset \mathbb{R} ^2$, the above six statements are neither equivalent to each other, nor do they necessarily imply a non-negative (positive) Lagrangian function:
$$ (\exists (\lambda ,\mu )\neq (0,0))\quad \lambda f(x) + \mu g(x) \geq (>)\, 0,\quad \forall x \in \mathbb{R} ^n.$$
except for the two cases $[\mathcal{S}{\rm\text{-}lemma}]({\rm nonhomo.~ version})$ and $[{\rm Yuan}]({\rm nonhomo.~ version})$.

\begin{theorem}
($\mathcal{S}$-lemma,~Yakubovich\cite[1973]{yakubovich1973minimization}) Suppose that there is a $\bar{x}\in \mathbb{R} ^n$ such that $g(\bar x)<0$. Then, the following two statements are equivalent.\\
(i) $\{x\in \mathbb{R} ^n\mid f(x) < 0\}\cap \{x\in \mathbb{R} ^n\mid g(x) \leq 0\}=\emptyset ;$\\
(ii) $(\exists \xi \geq 0)\quad f(x) + \xi g(x) \geq 0,~\forall x \in \mathbb{R} ^n$.
\end{theorem}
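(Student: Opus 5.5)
The direction (ii)$\Rightarrow$(i) is immediate. If $f(x)+\xi g(x)\ge 0$ for all $x$ with $\xi\ge 0$, then any $x$ with $g(x)\le 0$ satisfies $f(x)\ge -\xi g(x)\ge 0$, so the two sets cannot intersect. The work is in (i)$\Rightarrow$(ii). I would reduce it to the homogeneous $\mathcal{S}$-lemma stated in the introduction, which rests on Dines' convexity theorem and the separating hyperplane theorem.

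\textbf{Homogenization.} Introduce the $(n+1)\times(n+1)$ symmetric matrices
$$\hat A=\begin{pmatrix} A & a\\ a^T & a_0\end{pmatrix},\qquad \hat B=\begin{pmatrix} B & b\\ b^T & b_0\end{pmatrix},$$
so that $(x,1)^T\hat A(x,1)=f(x)$ and $(x,1)^T\hat B(x,1)=g(x)$. The Slater point $\bar x$ gives $\hat y=(\bar x,1)$ with $\hat y^T\hat B\hat y<0$, which is exactly the hypothesis of the homogeneous $\mathcal{S}$-lemma. It then suffices to prove the homogeneous statement
$$\{y\in\mathbb R^{n+1}\mid y^T\hat Ay<0\}\cap\{y\in\mathbb R^{n+1}\mid y^T\hat By\le 0\}=\emptyset .$$
The homogeneous $\mathcal{S}$-lemma then yields $\xi\ge 0$ with $\hat A+\xi\hat B\succeq 0$. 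Evaluating this at $y=(x,1)$ gives (ii).

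\textbf{Lifting (i) to the homogeneous system.} Suppose some $y=(x,t)$ has $y^T\hat Ay<0$ and $y^T\hat By\le 0$; I derive a contradiction in two cases.
\begin{itemize}
\item \emph{Case $t\ne 0$.} Scaling by $1/t^2$ gives $f(x/t)<0$ and $g(x/t)\le 0$, which contradicts (i).
\item \emph{Case $t=0$.} Here $x^TAx<0$ and $x^TBx\le 0$, and this is the main obstacle. I would perturb to $y_\epsilon=(x,\epsilon)$ for small $\epsilon\neq 0$, but a non-strict inequality may fail to survive the perturbation. To fix this, first use the Slater direction to make the $g$-inequality strict: replace $y$ by $y+\delta\hat y$ with small $\delta$ of the appropriate sign. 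Expanding,
$$(y+\delta\hat y)^T\hat B(y+\delta\hat y)=y^T\hat By+2\delta\, y^T\hat B\hat y+\delta^2\hat y^T\hat B\hat y,$$
and the sign of $\delta$ is chosen so that $2\delta\, y^T\hat B\hat y\le 0$ (either sign works if that cross term vanishes). With that choice every term is $\le 0$ and the last is strictly negative, so the whole expression is strictly negative for all small $\delta$ of that sign. Meanwhile the $\hat A$-value stays negative for $\delta$ small. Now both inequalities are strict, and they are preserved if the last coordinate is moved slightly to be nonzero. This returns us to the case $t\neq 0$, giving the contradiction.
\end{itemize}

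If the homogeneous $\mathcal{S}$-lemma is not to be taken as given, I would prove it directly from Dines' theorem. The set $K=\{(y^T\hat Ay,\,y^T\hat By)\}$ is a convex cone that is disjoint from the convex cone $C=\{(u,v)\mid u<0,\ v\le 0\}$. Separation gives $(\lambda,\mu)\ge 0$, not both zero, with $\lambda u+\mu v\ge 0$ on $K$. The Slater point rules out $\lambda=0$: then $\mu>0$ and $\mu\,\hat y^T\hat B\hat y<0$ would contradict separation. Taking $\xi=\mu/\lambda$ then finishes the proof.
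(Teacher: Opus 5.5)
Your proof is correct. The paper gives no proof of this statement to compare against: it is quoted as background from Yakubovich. The only hint of method is the introduction's remark that the homogeneous alternatives follow from Dines' convexity theorem plus the separating hyperplane theorem, and that is exactly where your argument ends.

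Your route is the classical one, and each step checks out: homogenize with $\hat A,\hat B$, remove the solutions at infinity ($t=0$) using the Slater point, then invoke the homogeneous $\mathcal{S}$-lemma via Dines. Two small remarks:
\begin{itemize}
\item In the case $t=0$, the perturbed vector $y+\delta\hat y=(x+\delta\bar x,\delta)$ already has last coordinate $\delta\neq 0$. The extra step of moving the last coordinate is therefore redundant, though harmless.
\item In the direct Dines argument, the claim that separation yields $(\lambda,\mu)\ge 0$ with right-hand side $0$ deserves one line. Since $(0,0)\in K$, the separating constant satisfies $\nu\le 0$. Since $C$ is invariant under positive scaling and $(0,0)\in\operatorname{cl}(C)$, also $\nu\ge 0$. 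Testing the points $(-1,0)$ and $(-\epsilon,-1)$ of $C$ then gives $\lambda\ge 0$ and $\mu\ge 0$.
\end{itemize}

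Your argument also shows why this case is special relative to the paper's main results. Homogenization works here because a bad solution of the lifted system with $t=0$ can be pushed to $t\neq 0$. The condition $f<0$ is open, and thanks to the Slater point the condition $g\le 0$ can be made strict, so both survive a small perturbation.

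For $\{f=0\}\cap\{g=0\}$ or $\{f\le 0\}\cap\{g=0\}$ no such perturbation is available, since an equality (or the closed condition $f\le 0$) is not stable. A solution of the homogeneous system at $t=0$ then need not come from any $x\in\mathbb{R}^n$. This is precisely why the paper cannot simply homogenize for Calabi and strict Finsler. Instead it works with $(f,g)(\mathbb{R}^n)$, its convex hull, and the closure of that hull, and this produces the extra alternatives (C2)/(C3) and (F2)/(F3).
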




\begin{theorem}
(nonhomogeneous Yuan's lemma,~Jeyakumar, Lee and Li in \cite[2009]{jeyakumar2009alternative}) The following two statements are equivalent.\\
(i) $\{x\in \mathbb{R} ^n\mid f(x) < 0\}\cap \{x\in \mathbb{R} ^n\mid g(x) < 0\}=\emptyset ;$\\
(ii) $\exists \lambda ,\mu \geq 0$ such that $(\lambda ,\mu )\neq (0,0)$ and $\lambda f(x) + \mu g(x) \geq 0$ for all $x \in \mathbb{R} ^n$.
\end{theorem}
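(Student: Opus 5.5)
The easy direction (ii)$\Rightarrow$(i) is immediate. Suppose $\lambda,\mu\ge 0$, $(\lambda,\mu)\neq(0,0)$ and $\lambda f+\mu g\ge 0$ on $\mathbb{R}^n$. If some $x$ had $f(x)<0$ and $g(x)<0$, then $\lambda f(x)+\mu g(x)<0$, since at least one coefficient is positive. This is a contradiction.

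For (i)$\Rightarrow$(ii) I will homogenize and apply the homogeneous Yuan lemma. Set
\[
\hat A=\begin{pmatrix} A & a\\ a^T & a_0\end{pmatrix},\qquad \hat B=\begin{pmatrix} B & b\\ b^T & b_0\end{pmatrix}\in\mathcal S^{n+1},
\]
so that $(x,1)^T\hat A(x,1)=f(x)$, $(x,1)^T\hat B(x,1)=g(x)$, and more generally $(x,t)^T\hat A(x,t)=t^2f(x/t)$ for $t\neq 0$. The key claim is that (i) implies the homogeneous system $\{y\in\mathbb{R}^{n+1}: y^T\hat Ay<0,\ y^T\hat By<0\}$ is empty. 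Granting this, [Yuan] gives $\xi\in[0,1]$ with $(1-\xi)\hat A+\xi\hat B\succeq 0$. Evaluating at $y=(x,1)$ yields $(1-\xi)f(x)+\xi g(x)\ge 0$ for all $x$. Then $(\lambda,\mu)=(1-\xi,\xi)$ is nonzero and nonnegative, which is (ii).

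To prove the claim, take $y=(x,t)$ with $y^T\hat Ay<0$ and $y^T\hat By<0$. If $t\neq 0$, dividing by $t^2$ gives $f(x/t)<0$ and $g(x/t)<0$, contradicting (i). The main obstacle is the case $t=0$, where $x^TAx<0$ and $x^TBx<0$. Here I use a perturbation argument: both strict inequalities are open conditions, so a small perturbation $(x,\varepsilon)$ with $\varepsilon\neq 0$ still satisfies $y^T\hat Ay<0$ and $y^T\hat By<0$. By continuity of the quadratic forms, this reduces to the case $t\neq 0$ and again contradicts (i). Alternatively, one can argue directly: $f(sx)=s^2x^TAx+2s\,a^Tx+a_0\to-\infty$ as $s\to\infty$, and likewise $g(sx)\to-\infty$, so for large $s$ the point $sx$ violates (i).

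Hence the homogeneous strict system has no solution, [Yuan] applies, and the theorem follows. No Slater-type condition is needed, precisely because both inequalities are strict. That is why this case, unlike the $\le$ or $=$ variants, transfers cleanly from the homogeneous setting.
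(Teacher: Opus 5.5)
Your proof is correct and complete. The paper gives no proof of this statement: it is quoted from Jeyakumar, Lee and Li as background, so there is no in-paper argument to match step by step. Each step of your reduction holds.
\begin{itemize}
\item For $t\neq 0$, $(x,t)^T\hat A(x,t)=t^2f(x/t)$.
\item The slice $t=0$ is harmless, because $\varepsilon^2 f(x/\varepsilon)=x^TAx+2\varepsilon a^Tx+\varepsilon^2 a_0\to x^TAx$, and likewise for $g$. Either of your two arguments therefore disposes of it.
\item Testing $(1-\xi)\hat A+\xi\hat B\succeq 0$ at $(x,1)$ gives multipliers $(1-\xi,\xi)$ that are nonnegative and not both zero.
\end{itemize}

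It is worth seeing what your route buys compared with the viewpoint the paper uses elsewhere, namely convexity of a joint numerical range plus a separating hyperplane in $\mathbb{R}^2$. The nonhomogeneous range $(f,g)(\mathbb{R}^n)$ can be nonconvex, so one cannot simply separate it from the open quadrant $(-\infty,0)^2$. Homogenization replaces it by the range of $y\mapsto(y^T\hat Ay,y^T\hat By)$. That range is convex by Dines' theorem. It consists of the rays $\{s\,(f(z),g(z)) : s>0\}$ together with the asymptotic points $(x^TAx,x^TBx)$, each of which is a limit of points on those rays. Because the target quadrant is open, these limit points can never create a spurious intersection.

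This is exactly the ``behaviour at infinity'' the paper must treat explicitly in its cases (C2) and (F2). There the targets $\{(0,0)\}$ and $(-\infty,0]\times\{0\}$ are not open, so points of $\operatorname{cl}(\operatorname{conv}((f,g)(\mathbb{R}^n)))\setminus\operatorname{conv}((f,g)(\mathbb{R}^n))$ genuinely matter. Your closing remark about why no Slater-type hypothesis is needed is therefore on point.
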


As for [{\rm NStr.\text{-}Finsler}]({\rm nonhomo version}), two results are available in the literature. The initial contribution was made by Xia, Wang, and Sheu \cite[2016]{xia2016s}. It was later refined by Nguyen and Sheu \cite[2019]{nguyen2019geometric}, who introduced the new concept the separation property (see Sect. 2).

\begin{theorem}
([{\rm NStr.\text{-}Finsler}]({\rm nonhomo version});~$\mathcal{S}$-lemma with equality, Xia, Wang, and Sheu \cite[2016]{xia2016s})
Suppose $g(x)$ takes both positive and negative values. Then, $\{x\in \mathbb{R} ^n\mid f(x) < 0\}\cap \{x\in \mathbb{R} ^n\mid g(x) = 0\}=\emptyset $ fails to imply the existence of a $\xi \in \mathbb{R} $ such that $f(x)+\xi g(x)\geq 0$ for all $x\in \mathbb{R} ^n$ if and only if $A$ has exactly one negative eigenvalue, $B=0$, $b\neq 0$, and
\[
\begin{bmatrix}
V^T AV & V^T (Ax_0+a)\\
(x_0^T A+a^T )V & f(x_0)
\end{bmatrix}
\succeq 0,
\]
where $x_0=-\frac {b_0}{2b^T b} b$ and $V\in \mathbb{R} ^{n\times (n-1)}$ is the matrix basis of $\mathcal N(b^T )$.
\end{theorem}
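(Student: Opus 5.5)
Both directions of the theorem rest on Dines' convexity of the homogeneous joint numerical range, applied after homogenizing $f$ and $g$. Write $\hat A=\begin{bmatrix}A&a\\a^T&a_0\end{bmatrix}$ and $\hat B=\begin{bmatrix}B&b\\b^T&b_0\end{bmatrix}$, so that $f(x)=(x,1)^T\hat A(x,1)$ and $g(x)=(x,1)^T\hat B(x,1)$.

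\emph{Main step: when a multiplier exists.} If $\{f<0\}\cap\{g=0\}=\emptyset$ lifts to $\{y:\ y^T\hat Ay<0,\ y^T\hat By=0\}=\emptyset$ in $\mathbb{R}^{n+1}$, then [NStr.-Finsler] gives $\xi$ with $\hat A+\xi\hat B\succeq 0$. Restricting to $y=(x,1)$ yields $f+\xi g\ge 0$. This needs $\hat B$ indefinite, which holds because $g$ takes both signs. The lift fails only at a direction $y=(d,0)$ with $d^TBd=0$ and $d^TAd<0$. We must decide when such a $d$ can occur although the nonhomogeneous system is infeasible.
\begin{itemize}
\item Take such a $d$ and a point $x$ with $g(x)=0$. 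Consider the line $x+td$, along which $g(x+td)=g(x)+2t\,d^T(Bx+b)$ is affine in $t$ and $f(x+td)\sim t^2 d^TAd\to-\infty$.
\item If $d^T(Bx+b)=0$ at some zero of $g$, the whole line lies in $\{g=0\}$, and $f<0$ on it gives a contradiction.
\item Hence $d^T(Bx+b)\neq 0$ on all of $\{g=0\}$. Perturbing to a nearby point of $\{g=0\}$ along which $g$ stays zero on a curve, and combining this with the Dines convexity of $\{(d^TAd,d^TBd)\}$ (allowing one to tilt $d$ so that $d^TBd$ has either sign), I would show $B=0$.
\item Concretely: if $B\neq 0$, pick $d'$ near $d$ with $d'^TBd'$ of the appropriate sign and $d'^TAd'<0$. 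Then along $x+td'$ the function $g$ is a genuine quadratic in $t$ taking the value $0$ at arbitrarily large $|t|$, where $f<0$, which is a contradiction.
\end{itemize}

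\emph{Case $B=0$.} Then $g$ is affine, $b\ne0$, and $\{g=0\}=x_0+\mathcal N(b^T)=\{x_0+Vz\}$. Infeasibility means $h(z):=f(x_0+Vz)\ge0$ for all $z$, which is exactly the displayed block-matrix condition. Indeed, $h(z)=z^TV^TAVz+2z^TV^T(Ax_0+a)+f(x_0)\ge0$ for all $z$ iff the block matrix is $\succeq0$, by the usual homogenization with $V^TAV\succeq 0$.

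Now look for $\xi$ with $f+\xi g\ge0$. Since $g$ is affine, this requires $A\succeq0$: the Hessian $A$ is unchanged, since $g$ contributes no quadratic term. Thus, if $A$ has a negative eigenvalue, no multiplier exists. Conversely, the block condition forces $V^TAV\succeq0$, so $A$ has at most one negative eigenvalue, by interlacing on a hyperplane. This gives the "if" direction.

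For "only if", suppose $A\succeq0$ and the block condition holds; we must produce $\xi$. Note that $f$ restricted to each line $x_0+Vz+tb/\|b\|^2$ is convex in $t$ with $g=2t+\dots$ affine. The existence of $\xi$ then follows from the nonhomogeneous $\mathcal S$-lemma applied to $g\le0$ and to $-g\le0$ (Slater holds since $g$ is affine and nonconstant). Convexity of $f$ makes $\{f<0\}$ convex, so a separating affine function proportional to $g$ exists. This gives multipliers $\xi_1\ge0$ and $\xi_2\ge0$ for the two half-spaces. A convexity and Lagrange-duality argument, namely that the minimum of a convex $f$ on the hyperplane is attained as a dual, yields a single $\xi$ (possibly after checking that the infimum is attained or handled by a limit). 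Combined with the previous paragraph, failure of the implication is equivalent to $B=0$, $b\ne0$, exactly one negative eigenvalue of $A$, and the block condition.

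\textbf{Main obstacle.} I expect the hardest step to be proving $B=0$ in the main step: showing that when $B\neq0$ the bad recession direction $d$ always produces a feasible point. Some care is needed when $B$ is semidefinite with $d\in\mathcal N(B)$. In that case I would use the indefiniteness of $g$ to move along $d$ plus a small component where $B$ is nonzero.
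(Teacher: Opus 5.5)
The paper gives no proof of this statement; it only quotes it from Xia, Wang and Sheu (2016). Your proposal therefore has to stand on its own, and its main line does. After homogenizing, $\{f<0\}\cap\{g=0\}=\emptyset$ can fail to lift to $\{y^T\hat Ay<0\}\cap\{y^T\hat By=0\}=\emptyset$ only through a direction $(d,0)$ with $d^TAd<0=d^TBd$. Since $\hat B$ is indefinite, homogeneous [NStr.-Finsler] yields $\xi$ whenever no such $d$ exists. Reducing everything to ``such a $d$ forces $B=0$'' is the right decomposition, and the $B=0$ case analysis, including the block-matrix equivalence, is correct.

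Your ``concrete'' bullet already proves $B=0$, and the step is easier than your stated obstacle suggests. Fix $x$ with $g(x)=0$; your line argument gives $\ell:=d^T(Bx+b)\neq 0$. Suppose $B\neq 0$. The form $y\mapsto y^TBy$ cannot vanish on an open set, so there are $d_k\to d$ with $\beta_k:=d_k^TBd_k\neq 0$ and $\beta_k\to 0$. Because $g(x)=0$, we have $g(x+td_k)=t\,(2d_k^T(Bx+b)+\beta_k t)$. This always has the real root $t_k=-2d_k^T(Bx+b)/\beta_k$, with $|t_k|\to\infty$. Then $f(x+t_kd_k)=f(x)+2t_kd_k^T(Ax+a)+t_k^2\,d_k^TAd_k\to-\infty$, since $d_k^TAd_k\to d^TAd<0$. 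So you need no sign choice for $\beta_k$, no appeal to Dines' theorem, no ``curve in $\{g=0\}$'', and no separate treatment of $d\in\mathcal N(B)$ or of semidefinite $B$.

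The one real flaw is your final paragraph. The $\mathcal S$-lemma cannot be applied to both $g\le 0$ and $-g\le 0$, because its hypothesis $\{f<0\}\cap\{g\le 0\}=\emptyset$ does not follow from $\{f<0\}\cap\{g=0\}=\emptyset$. For example, take $f=x_2^2-2x_2$ and $g=x_2$. Then $\{f<0\}=\{0<x_2<2\}$ meets $\{-g\le 0\}$, and no $\xi_2\ge 0$ with $f-\xi_2 g\ge 0$ exists. The ``duality argument'' meant to merge $\xi_1,\xi_2$ is also never specified.

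Fortunately the step is superfluous. In the failure case your main step has already produced $d$ with $d^TAd<0$, so $A$ has at least one negative eigenvalue. The block condition gives $V^TAV\succeq 0$, and interlacing then gives at most one negative eigenvalue. Put differently, if $A\succeq 0$ there is no bad direction, the lifted system is infeasible, and homogeneous Finsler produces $\xi$ directly.

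If you prefer a convexity argument instead: for $A\succeq 0$ the set $\{f<0\}$ is convex and misses the hyperplane $\{g=0\}$, so it lies in one open half-space. A single $\mathcal S$-lemma application on that side then gives the multiplier. With the final paragraph replaced by either observation, the proof is complete.
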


\begin{theorem}
\label{Geometric S-lemma}
([{\rm NStr.\text{-}Finsler}]({\rm nonhomo version}); Nguyen and Sheu \cite[2019]{nguyen2019geometric})
Suppose $g(x)$ takes both positive and negative values. Then, $\{x\in \mathbb{R} ^n\mid f(x) < 0\}\cap \{x\in \mathbb{R} ^n\mid g(x) = 0\}=\emptyset $ fails to imply the existence of a $\xi \in \mathbb{R} $ such that $f(x)+\xi g(x)\geq 0$ for all $x\in \mathbb{R} ^n$ if and only if the lower level set $\{f<0\}$ is separated by the $0$-level set $\{g=0\}$.
\end{theorem}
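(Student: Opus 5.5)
The plan is to reduce both directions to the nonhomogeneous $\mathcal{S}$-lemma (Yakubovich's theorem above), using the separation property from Sect.~2. I read ``$\{f<0\}$ is separated by $\{g=0\}$'' as follows: $\{f<0\}\cap\{g=0\}=\emptyset$, while both $\{f<0\}\cap\{g<0\}$ and $\{f<0\}\cap\{g>0\}$ are nonempty. Equivalently, $\{f<0\}$ splits into two nonempty pieces lying on opposite sides of the hypersurface $\{g=0\}$. The statement then says: assuming $\{f<0\}\cap\{g=0\}=\emptyset$, no multiplier $\xi\in\mathbb{R}$ with $f+\xi g\ge 0$ exists if and only if separation holds. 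I will prove both contrapositive halves.

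\emph{Separation $\Rightarrow$ no multiplier.} Suppose $\{f<0\}$ is separated, and pick $x_1$ with $f(x_1)<0,\ g(x_1)<0$ and $x_2$ with $f(x_2)<0,\ g(x_2)>0$. If $f+\xi g\ge 0$ on $\mathbb{R}^n$, then at $x_1$ we get $\xi g(x_1)\ge -f(x_1)>0$, which forces $\xi<0$. At $x_2$ we get $\xi g(x_2)>0$, which forces $\xi>0$. This is a contradiction, so the condition $\{f<0\}\cap\{g=0\}=\emptyset$ holds while no $\xi$ exists.

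\emph{No separation $\Rightarrow$ a multiplier exists.} Assume $\{f<0\}\cap\{g=0\}=\emptyset$ but $\{f<0\}$ is not separated. Then $\{f<0\}\subset\{g<0\}\cup\{g>0\}$ and does not meet both pieces, which leaves three cases.
\begin{itemize}
\item $\{f<0\}=\emptyset$. Take $\xi=0$.
\item $\{f<0\}\subset\{g>0\}$. Then $\{f<0\}\cap\{g\le 0\}=\emptyset$. Since $g$ takes a negative value, the Slater hypothesis of the $\mathcal{S}$-lemma holds, so there is $\xi\ge 0$ with $f+\xi g\ge 0$.
\item $\{f<0\}\subset\{g<0\}$. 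Apply the same theorem to the pair $(f,-g)$. Here $-g$ takes a negative value because $g$ takes a positive one, and $\{f<0\}\cap\{-g\le 0\}=\emptyset$. This gives $\eta\ge 0$ with $f-\eta g\ge 0$, and we set $\xi=-\eta$.
\end{itemize}
In every case a multiplier exists, which contradicts the failure of the implication.

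I expect no serious obstacle. The only point needing care is to match the exact definition of the separation property stated in Sect.~2 with the ``meets both sides'' formulation used here. If that definition is phrased topologically, for example as $\{f<0\}$ being disconnected by $\{g=0\}$ or as a partition into relatively open sets, then first show it is equivalent to the two-sided nonemptiness condition. This follows because, under $\{f<0\}\cap\{g=0\}=\emptyset$, the sets $\{f<0,\ g<0\}$ and $\{f<0,\ g>0\}$ are disjoint, relatively open, and cover $\{f<0\}$. With that equivalence in hand, the argument above is complete, and it uses the hypothesis that $g$ takes both signs exactly to supply the Slater points for the two applications of the $\mathcal{S}$-lemma.
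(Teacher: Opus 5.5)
Your proof is correct and complete. The paper itself does not prove this theorem: it is quoted in the introduction from Nguyen and Sheu \cite{nguyen2019geometric} as background, so there is no in-paper argument to compare against. Your worry about the definition is settled directly by the paper. In Definition~\ref{def sep} you may take $L^-=\{f<0\}\cap\{g<0\}$ and $L^+=\{f<0\}\cap\{g>0\}$, and Lemma~\ref{eq sep} states exactly your two-sided formulation.

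The paper's own machinery offers a useful point of comparison. The analogous results it does prove, such as Proposition~\ref{F2 prop} and the propositions for (F1) and (F2), are argued in the joint numerical range. A hyperplane separating a ray from $\operatorname{conv}((f,g)(\mathbb{R}^n))$ produces the multiplier, and the two-sided Slater condition forces the coefficient of $f$ to be positive. The converse direction goes through Fenchel--Eggleston (Lemma~\ref{fenchel-eggleston}) and the $\zeta$-shift of Lemma~\ref{sep with aff}. That lemma in turn rests on Lemma~\ref{Tsu prop 1}, which says $g$ must be affine. Adapted to the open ray $(-\infty,0)\times\{0\}$, this machinery would also yield the present theorem. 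It is much heavier, though: a point $(r,0)$ with $r<0$ in the convex hull only guarantees that one of its two representing points lies in $\{f<0\}$, and that is why the shift lemma is needed.

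Your reduction avoids all of this, because the nonhomogeneous $\mathcal{S}$-lemma already encodes the convexity argument. Applying it to $(f,g)$ and to $(f,-g)$ uses each half of the two-sided Slater hypothesis exactly once. The direction from separation to non-existence of $\xi$ is a two-line sign argument that needs no hypothesis at all. What the joint-numerical-range route buys in exchange is uniformity with the strict cases (C) and (F) treated in the paper. There a merely nonnegative Lagrangian is not enough, and the $\mathcal{S}$-lemma by itself cannot detect the boundary situations (C2) and (F2).
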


The statement ``$\{f<0\}$ is separated by $\{g=0\}$'' in Theorem \ref{Geometric S-lemma} is called the ``separation property.'' It describes the situation in which the set
\[
\{f<0\} = \{f<0\}^- \cup \{f<0\}^+
\]
is disconnected, consisting of exactly two connected components \( \{f<0\}^- \) and \( \{f<0\}^+ \). Moreover, one of the two components lies entirely in \( \{g>0\} \), while the other lies entirely in \( \{g<0\} \). In this case, the joint numerical range $\{(f(x),g(x))\mid x\in \mathbb{R} ^n\}\subset \mathbb{R} ^2$ is ``almost'' non-convex\footnote{with only one exception: $\{f<0\}$ is separated by $\{g=0\}$ but $\{f=0\}\cap \{g=0\}\ne \emptyset .$} so that the well-known ``separating hyperplane theorem'' cannot be applied to yield a non-negative Lagrangian function.

The main contribution of this paper is to incorporate the separation property and establish the following necessary and sufficient conditions for [{\rm Calabi}]({\rm nonhomo.\ version}) as well as [{\rm Str.\text{-}Finsler}] ({\rm nonhomo.\ version}), thereby completing the last two pieces of the puzzle concerning the unsolvability of a system of two quadratic (in)equalities.

\begin{theorem}[nonhomogeneous Calabi Theorem]
\label{NHFC}
The statement
\begin{itemize}
\item[(C0)] \( \{f=0\}\cap \{g=0\}=\emptyset \)
\end{itemize}
holds if and only if exactly one of the following three mutually exclusive statements (C1), (C2), and (C3) holds.
\begin{itemize}
\item[(C1)] There exists \( (\lambda ,\mu )\neq (0,0) \) such that \( \lambda f(x)+\mu g(x)>0 \) for all \( x\in \mathbb{R} ^n \).
\item[(C2)]
\begin{enumerate}
    \item There exists \( (\lambda ,\mu )\neq (0,0) \), unique up to multiplication by a positive constant, such that the Lagrangian function \( \lambda f(x)+\mu g(x)\geq 0,~\forall x\in \mathbb{R} ^n. \) Moreover, \( \lambda f(x)+\mu g(x) \) attains its minimal value 0.
    \item On the minimum solution set of \( \lambda f(x)+\mu g(x) \), namely
\[
\mathcal W=\{\lambda f+\mu g=0\}=\{\lambda \nabla f+\mu \nabla g=0\},
\]
there is either \( (\mu f-\lambda g)|_{\mathcal W}>0 \) or \( (\mu f-\lambda g)|_{\mathcal W}<0 \).
\end{enumerate}
\item[(C3)] \( f \) and \( g \) have the separation property. (to be elaborated in Section 2)
\end{itemize}
\end{theorem}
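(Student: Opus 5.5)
We prove sufficiency of each of (C1)–(C3) for (C0) first, then necessity, and finally mutual exclusivity. The main tool is the joint numerical range $\mathcal R=\{(f(x),g(x))\mid x\in\mathbb R^n\}\subset\mathbb R^2$. Statement (C0) says exactly that the origin does not belong to $\mathcal R$.

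\textbf{Sufficiency.}
(C1) clearly implies (C0): at a common zero the Lagrangian $\lambda f+\mu g$ would vanish.

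For (C2), suppose $f(x)=g(x)=0$. Then $\lambda f(x)+\mu g(x)=0$, so $x$ is a minimizer, i.e. $x\in\mathcal W$. Hence $(\mu f-\lambda g)(x)\neq 0$ by (C2)(b), which contradicts $f(x)=g(x)=0$.

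For (C3), the separation property gives two components $\{f<0\}^\pm$, lying in $\{g>0\}$ and $\{g<0\}$ respectively. Suppose $f(x)=g(x)=0$. We will use the local structure near a boundary point of $\{f<0\}$, via the precise description of separation to be given in Section 2 (as in Nguyen--Sheu \cite{nguyen2019geometric}). A point $x$ with $f(x)=0$ lies in the closure of $\{f<0\}$ unless $x$ is a local minimizer of $f$. In the former case $x$ is adherent to one of the components, hence $g(x)\ge0$ or $g(x)\le0$ is forced. The separation condition, read as $\overline{\{f<0\}^+}\subset\{g>0\}$ and similarly for the $-$ component, excludes $g(x)=0$. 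The latter case (a local minimizer of $f$ with value $0$) contradicts the assumption that $\{f<0\}$ is nonempty and disconnected. I expect to need the characterization of the separation property: $B=\eta A$ up to an affine part, with $g$ essentially affine along the relevant direction, in order to make the closure statement rigorous.

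\textbf{Necessity.} Assume (C0) and that (C3) fails. By Theorem~\ref{Geometric S-lemma} and the convexity results for $\mathcal R$ (Dines-type; see \cite{nguyen2022convexity}), failure of separation makes $\mathcal R$ convex, or at least makes $\mathcal R\cup\{\text{recession directions}\}$ convex. I plan to argue this via the classification in \cite{nguyen2022convexity} rather than reprove it. Since $0\notin\mathcal R$ and $\mathcal R$ is convex, the separating hyperplane theorem yields $(\lambda,\mu)\neq(0,0)$ with $\lambda u+\mu v\ge 0$ on $\mathcal R$. This is the step I expect to be the main obstacle, in particular handling non-closedness of $\mathcal R$ and the degenerate case where $\mathcal R$ is a half-plane minus part of its boundary line.

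If strict inequality holds everywhere, we are in (C1). Otherwise $\min(\lambda f+\mu g)=0$ is attained, and the set $\mathcal W$ is the affine set of critical points of the convex quadratic $\lambda f+\mu g$. On $\mathcal W$ we have $\lambda f+\mu g=0$, so $\mu f-\lambda g\neq0$ by (C0). Since $\mathcal W$ is affine, hence connected, $\mu f-\lambda g$ has a constant sign there. For uniqueness of $(\lambda,\mu)$, suppose a second independent nonnegative combination existed. Summing the two would give a nonnegative Lagrangian; combined with the minimum being attained on $\mathcal W$ and the two combinations spanning $\mathbb R^2$, either a strictly positive combination results (landing in (C1)), or $f=g=0$ simultaneously on $\mathcal W$, contradicting (C0). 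Hence when (C1) fails the multiplier is unique, and we are in (C2).

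\textbf{Exclusivity.} Under (C3), $\mathcal R$ meets both open quadrants $\{u<0,v>0\}$ and $\{u<0,v<0\}$, and it contains points with $u>0$. I will check that this precludes any nonnegative Lagrangian, using the Theorem~\ref{Geometric S-lemma} mechanism. Hence (C3) excludes both (C1) and (C2). Finally, (C1) and (C2) are exclusive: in (C2) the minimum $0$ is attained by the unique multiplier, while a strictly positive combination would combine with it to contradict uniqueness or attainment.
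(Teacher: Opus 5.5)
\textbf{Misreading of (C3).} In the paper, the separation property (Definition~\ref{def sep prop}) means that $\{f=0\}$ separates $\{g=0\}$ or $\{g=0\}$ separates $\{f=0\}$. It concerns the two $0$-level sets, not the sublevel set $\{f<0\}$ of Theorem~\ref{Geometric S-lemma}. Under your reading, the implication (C3)$\Rightarrow$(C0) is false. Take $f=x_2^2-x_1^2$ and $g=x_1$ on $\mathbb{R}^2$. The line $\{g=0\}$ separates $\{f<0\}=\{x_1>|x_2|\}\cup\{x_1<-|x_2|\}$, yet $f(0)=g(0)=0$. Your inclusion $\operatorname{cl}(\{f<0\}^+)\subset\{g>0\}$ fails at the origin.

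With the correct definition this part is trivial, since the first condition in Lemma~\ref{eq sep} is literally $\{f=0\}\cap\{g=0\}=\emptyset$. Exclusivity is equally short. Suppose $f(u)=f(v)=0$ and $g(u)<0<g(v)$ (the other case is symmetric). Then any $(\lambda,\mu)$ with $\lambda f+\mu g\geq 0$ must have $\mu=0$, so $\lambda f+\mu g=\lambda f$ vanishes at $u$ and $v$, and (C1) fails. In (C2) one then gets $u,v\in\mathcal W$ with $\mu f-\lambda g=-\lambda g$ changing sign, so (C2)(b) fails.

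\textbf{The key gap is in necessity.} You claim that (C0) together with the failure of (C3) makes the joint range convex. This is false. For $n=1$, $f(x)=x^2+1$, $g(x)=x$: (C0) holds, and (C3) fails because $\{f=0\}=\emptyset$ and $\{g=0\}$ is a single point. Yet $(f,g)(\mathbb{R})=\{(s^2+1,s)\mid s\in\mathbb{R}\}$ is a nonconvex parabola. Convexity of the joint range depends on separation at all levels $(\alpha,\beta)$, not only at $(0,0)$.

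What you actually need is only $(0,0)\notin\operatorname{conv}((f,g)(\mathbb{R}^n))$. The missing idea is the paper's Proposition~\ref{C2 prop}. Suppose $(0,0)\in\operatorname{conv}((f,g)(\mathbb{R}^n))\setminus(f,g)(\mathbb{R}^n)$. Fenchel--Eggleston (Lemma~\ref{fenchel-eggleston}) writes $(0,0)$ as a convex combination of $(f,g)(u)=(\sigma,\tau)$ and $(f,g)(v)=(-t\sigma,-t\tau)$ with $t>0$. Then $u,v\in\{\tau f-\sigma g=0\}$, the function $\sigma f+\tau g$ has opposite signs at $u$ and $v$, and (C0) makes the two level sets disjoint. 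Hence $\{\sigma f+\tau g=0\}$ separates $\{\tau f-\sigma g=0\}$, and Lemma~\ref{linear comb sep} turns this into (C3).

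Once $(0,0)\notin\operatorname{conv}((f,g)(\mathbb{R}^n))$, weakly separating the point from this convex set yields the nonnegative Lagrangian, so the non-closedness you worried about plays no role. The paper then distinguishes whether or not $(0,0)\in\operatorname{cl}(\operatorname{conv}((f,g)(\mathbb{R}^n)))$ to obtain (C1) versus (C2), which makes the three cases disjoint by construction.

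From that point your derivation of (C2)(b) and of uniqueness is essentially the paper's. You must also exclude the antiparallel multiplier $-(\lambda,\mu)$. It forces $\lambda f+\mu g\equiv 0$, so by (C0) and connectedness $\mu f-\lambda g$ has a strict sign on all of $\mathbb{R}^n$, i.e.\ (C1) holds.
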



\begin{theorem}[nonhomogeneous Strict Finsler Lemma]
\label{NHSF}
Suppose that \( g(x) \) takes both positive and negative values. Then,
\begin{itemize}
    \item[(F0)] \( \{f\leq 0\}\cap \{g=0\}=\emptyset \)
\end{itemize}
holds if and only if exactly one of the following three mutually exclusive statements (F1), (F2), and (F3) holds.
\begin{itemize}
\item[(F1)] There exists \( \xi \in \mathbb{R} \) such that \( f(x)+\xi g(x)>0 \) for all \( x\in \mathbb{R} ^n \).
\item[(F2)]
\begin{enumerate}
    \item There exists a unique \( \xi \in \mathbb{R} \) such that the Lagrangian function \( f(x)+\xi g(x)\geq 0 \) for all \( x\in \mathbb{R} ^n \). Also, the Lagrangian function \( f(x)+\xi g(x) \) attains its minimal value 0.
    \item On the minimum solution set of \( f(x)+\xi g(x) \), namely
\[
\mathcal V=\{f+\xi g=0\}=\{\nabla f+\xi \nabla g=0\},
\]
there is either \( g|_{\mathcal V}>0 \) or \( g|_{\mathcal V}<0 \).
\end{enumerate}
\item[(F3)] \( \{g=0\} \) separates \( \{f\leq 0\} \).
\end{itemize}
\end{theorem}

It is worth mentioning that there was an algorithmic approach to solving [{\rm Calabi}]({\rm nonhomo.\ version}) based on examining the optimal values and their attainability of four ``single-constraint quadratic optimization problems'', labeled (\( \mathcal P 1\))-(\( \mathcal P 4\)) below.

\begin{theorem}\label{thm:P0iff}
(Algorithmic approach for [{\rm Calabi}]({\rm nonhomo.~version}), Nguyen, Lin, Sheu, and Xia \cite[2023]{nguyen2023Po4})
Assume that $f(x)$ and $g(x)$ both satisfy two-sided Slater's condition and their quadratic coefficient matrices $\{A,B\}$ is linearly independent. Then,
\begin{equation*}
    \{x\in \mathbb{R} ^n\mid f(x) = 0\}\cap \{x\in \mathbb{R} ^n\mid g(x) = 0\}=\emptyset 
\end{equation*}
if and only if, for the following four (QP1QC)'s:
\begin{equation*}
\begin{array}{ll}
(\mathcal P 1) & \inf\{f(x)~|~g(x)\leq 0\}; \\
(\mathcal P 2) & \inf\{f(x)~|~-g(x)\leq 0\}; \\
(\mathcal P 3) & \inf\{-f(x)~|~g(x)\leq 0\}; \\
(\mathcal P 4) & \inf\{-f(x)~|~-g(x)\leq 0\},
\end{array}
\end{equation*}
exactly one of the following two statements (\romannumeral 1), (\romannumeral 2) holds.
\begin{itemize}
\item[(\romannumeral 1)] Exactly one of $(\mathcal P1),~(\mathcal P2),~(\mathcal P3)$ and $(\mathcal P4)$ has a strictly positive optimal value, while the optimal values of the other three are strictly negative.
\item[(\romannumeral 2)] Exactly one of $(\mathcal P1),~(\mathcal P2),~(\mathcal P3)$ and $(\mathcal P4)$ has an unattainable optimal value $0$, while the remaining three are unbounded from below.
\end{itemize}
\end{theorem}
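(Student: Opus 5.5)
Write $\Omega=\{(f(x),g(x))\mid x\in\mathbb{R}^n\}\subset\mathbb{R}^2$ for the joint numerical range. Then (C0) says exactly that $(0,0)\notin\Omega$. The proof will work in the $(u,v)=(f,g)$ plane and reduce each of the four sign conditions to a statement about optimal values of $(\mathcal P1)$–$(\mathcal P4)$.

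\emph{Sufficiency.} If (i) holds, say $(\mathcal P1)$ has a positive value, then $f>0$ on $\{g\le0\}$. If (ii) holds, say $(\mathcal P1)$ has the unattained infimum $0$, then $f>0$ on $\{g\le 0\}$ as well. In either case no point has $f=0=g$. The same reasoning applies verbatim to $(\mathcal P2)$–$(\mathcal P4)$ after replacing $f,g$ by $\pm f,\pm g$. So sufficiency is immediate.

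\emph{Necessity.} Assume (C0). Two-sided Slater for $g$ guarantees that both $\{g\le0\}$ and $\{g\ge 0\}$ have interior points. I would use that $\{g=0\}$ is connected whenever $g$ is indefinite, apart from the degenerate separated case of Theorem~\ref{NHFC}(C3). Since $f$ never vanishes on $\{g=0\}$, $f$ then has a constant sign there, say $f>0$ on $\{g=0\}$.

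Next I would show that $f$ keeps one sign on a whole half $\{g\le0\}$ or $\{g\ge0\}$. Each of these sets is a union of components, each of which meets $\{g=0\}$ unless it is a bounded component. A zero of $f$ in, say, $\{g<0\}$ would then have to be joined to the boundary through points where $f$ changes sign. I expect to derive a contradiction from the convexity of the image of a ray and line segments, or from Dines-type convexity of $\Omega$ for nonhomogeneous pairs when $\{A,B\}$ is independent (cf.\ \cite{nguyen2022convexity}).

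This pins down which problem is the "good" one, say $(\mathcal P1)$, whose value is $\ge 0$. The other three have negative values:
\begin{itemize}
\item $(\mathcal P3)$ is negative because $-f<0$ on $\{g=0\}\subset\{g\le0\}$.
\item $(\mathcal P2)$ is negative because $f$ must take a negative value somewhere (Slater for $f$), and that point lies in $\{g>0\}$.
\item $(\mathcal P4)$ is negative by the same point.
\end{itemize}

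\emph{Dichotomy.} It remains to show that $(\mathcal P1)$ has either a positive value, giving (i), or an unattained value $0$, giving (ii). It also remains to show that in case (ii) the other three are unbounded. The attained-$0$ case is excluded since it would give $f=0$ with $g\le0$, hence $g<0$ with $f=0$. That would contradict the sign of $f$ on the relevant side, so the infimum cannot be attained at $0$. Its value is nonnegative, so it is either positive or $0$ unattained.

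For the unboundedness claim in case (ii), I would use the known structure of unattained QP1QC: $\inf 0$ unattained forces an asymptotic direction $d$ with $d^TAd=0=d^TBd$-type recession behaviour. Along a perturbation of $d$, one can drive $f\to-\infty$ on the other regions, using linear independence of $\{A,B\}$ to make the quadratic terms have the needed signs.

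\emph{Mutual exclusivity.} This is trivial: in (i) all values are finite, while in (ii) three are $-\infty$.

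\emph{Main obstacle.} The hard step is the topological one: showing that a nonzero $f$ on $\{g=0\}$ forces $f$ to have one sign on an entire side. Here I would rely on Theorem~\ref{NHFC}'s trichotomy. In (C1) and (C2) a Lagrangian $\lambda f+\mu g\ge0$ directly yields the sign pattern. In (C3) the separation property gives it geometrically.
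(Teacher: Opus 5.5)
The paper only quotes this result from its reference and gives no proof, so I judged your argument on its own. Sufficiency is fine. Necessity has a genuine gap at exactly the step you call the main obstacle, and the route you propose for it cannot work.

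The claim that $f$ keeps one sign on all of $\{g\le 0\}$ or all of $\{g\ge 0\}$ is false without linear independence of $\{A,B\}$. Example~\ref{eg:CF3} shows this: there $f=-x_1^2+x_2^2+1$ and $g=2x_1-x_2$, so $B=0$. Both functions satisfy two-sided Slater, $\{f=0\}\cap\{g=0\}=\emptyset$, and $f>0$ on $\{g=0\}$. Yet $f$ is negative on both sides of $\{g=0\}$, and all four of $(\mathcal P1)$--$(\mathcal P4)$ equal $-\infty$. So no argument about components of $\{g\le0\}$ and sign changes of $f$ can deliver this step unless it uses the quadratic parts. Your remark that in (C3) ``the separation property gives it geometrically'' is backwards: (C3) is exactly where the conclusion fails.

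Two smaller errors:
\begin{itemize}
\item $\{g=0\}$ need not be connected for indefinite $g$; take $g=x_1^2-1$.
\item $(\mathcal P4)<0$ needs a point with $g\ge0$ and $f>0$. That point comes from $\{g=0\}$, not from the point where $f<0$.
\end{itemize}

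\textbf{The missing idea is where linear independence enters.} By Lemma~\ref{= = sep} (equivalently (C3-1) in Theorem~\ref{= sep+}), the separation property forces $\sigma A+\tau B=0$ for some $(\sigma,\tau)\neq(0,0)$. So under your hypothesis (C3) cannot occur, and (C0) puts you in (C1) or (C2) of Theorem~\ref{NHFC}.

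Either case gives $(\lambda,\mu)\neq(0,0)$ with $\lambda f+\mu g\ge0$. Two-sided Slater for $g$ and for $f$ forces $\lambda\neq0\neq\mu$, and the signs of $(\lambda,\mu)$ pick the good problem. For instance, if $\lambda,\mu>0$, then $\lambda f\ge-\mu g\ge0$ on $\{g\le0\}$. A zero of $f$ there would force $g=0$, which (C0) forbids, so $f>0$ on $\{g\le0\}$. Your passing appeal to convexity of the joint range under linear independence would also produce this Lagrangian, but you did not pursue it.

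From here:
\begin{itemize}
\item (C1) yields (i) directly.
\item In (C2), $(\mathcal P1)$ has value $0$, unattained. A positive value would give (C1) via the S-lemma.
\end{itemize}

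Your unboundedness sketch for (ii) needs to be made concrete.
\begin{itemize}
\item Take $x_k$ with $g(x_k)\le0$ and $f(x_k)\to0$; then $g(x_k)\to0$. Project the $x_k$ orthogonally to the directions along which both $f$ and $g$ are invariant.
\item Then $\|x_k\|\to\infty$, and a normalized limit $d$ satisfies $d^TAd=d^TBd=0$, $(\lambda A+\mu B)d=0$ and $(\lambda a+\mu b)^Td=0$. Also $(Ax+a)^Td\neq0$ for generic $x$.
\item Along $x+td$ the pair $(f,g)$ sweeps the whole line $\lambda r+\mu s=\lambda f(x)+\mu g(x)$. This makes $(\mathcal P2)$ and $(\mathcal P3)$ equal to $-\infty$.
\item For $(\mathcal P4)$ you need these levels to be unbounded above. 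That holds because $\lambda A+\mu B\succeq0$ is nonzero, which is the second place linear independence is used.
\end{itemize}
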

Through numerical experiments in Section \ref{num exp}, we will see that the algorithm implementing Theorem \ref{NHFC} is computationally more efficient than the (QP1QC) approach proposed in Theorem \ref{thm:P0iff}.

To present our proposals and schemes for resolving the last two pieces of the puzzle: [{\rm Calabi}]({\rm nonhomo version}) and [{\rm Str.-Finsler}]({\rm nonhomo version}), we organize the paper as follows. Section 2 presents preliminary results on the separation property of two quadratic (sub)level sets. Section 3 establishes Theorem \ref{NHFC}, the nonhomogeneous version of [{\rm Calabi}], and Section 4 proves Theorem \ref{NHSF}, the nonhomogeneous counterpart of [{\rm Str.-Finsler}]. Section 5 provides illustrative examples. Section 6 develops matrix pencil conditions and their algorithms for both theorems with the computation algorithms of matrix pencils detailed in Section 7. Numerical results are discussed in Section 8.

Before ending the section, let us remark that both [Calabi] and [Str.-Finsler] are related to the study of ``strict'' inequalities, as opposed to the ``non-strict'' inequality, provided we write: $\{f=0\}\cap \{g=0\}=\emptyset $ as $g=0~\Rightarrow f\ne 0$; $\{f\leq 0\}\cap \{g=0\}=\emptyset $ as $g=0~\Rightarrow f>0$. Important applications include: to answer whether or not the optimal value $\alpha =\inf \{f(x) \mid g=0\}$ is unattainable, namely, \( \{f-\alpha =0\}\cap \{g=0\}=\emptyset \); and to solve quadratic fractional programming $\beta =\inf \frac{f(x)}{g(x)}$ with a natural constraint $g>0$ (or $g\neq 0$).

\noindent $\bullet $ {\bf Notations.}

Throughout this paper, we use the following notation. Let \( \mathcal S^n \) denote the set of all real symmetric \( n\times n \) matrices. For a function \( h(x) \), we write the level set 
\[
\{x\in \mathbb{R} ^n \mid h(x)  \mathbin{\star } 0\}
\]
in the short-hand form \( \{h \mathbin{\star } 0\} \), where \( \star \in \{\leq ,<, =\} \). For a matrix \( Q \), we denote by \( \mathcal{R} (Q) \), \( \mathcal{N}(Q) \), and \( Q^\dagger \) its range space, null space, and the Moore--Penrose inverse of \( Q \), respectively. 

For a symmetric matrix \( P \) we write \( P\preceq 0 \), \( P\prec 0 \), \( P\succeq 0 \), and \( P\succ 0 \) to indicate that \( P \) is negative semi-definite, negative definite, positive semi-definite, and positive definite, respectively.

For a set \( \mathcal T\subset \mathbb{R} ^n \), we denote its closure by \( \operatorname{cl}(\mathcal T) \), its convex hull by \( \operatorname{conv}(\mathcal T) \), and its linear span set by \( \operatorname{span}(\mathcal T) \). For a mathematical programming (P), we denote its optimal value by \( v(\text{P}) \).

Finally, the joint numerical range of \( f(x) \) and \( g(x) \), denoted by
\[
(f,g)(\mathbb{R} ^n)=\{(f(x),g(x)) \mid x\in \mathbb{R} ^n\}\subset \mathbb{R} ^2,
\]
is defined to be the image of the joint function \( (f,g):\mathbb{R} ^n\rightarrow \mathbb{R} ^2 \).

\section{Preliminaries: the separation property of two quadratic (sub)level-sets}

The core technical tool of the paper is the separation of quadratic (sub)level-sets. We adopt the following formal definition.
\begin{definition}[Separation of quadratic (sub)level-sets](Nguyen and Sheu \cite[2019]{nguyen2019geometric})
\label{def sep}
The surface \( \{g = 0\} \) is said to separate the set \( \{f \mathbin{\star }0\} \), where \( \star \in \{<,\leq ,=\} \), if there are non-empty subsets \( L^- \) and \( L^+ \) of \( \{f \mathbin{\star }0\} \) such that
\begin{align*}
& L^- \cup L^+ =\{f \mathbin{\star } 0\};\\
& g(u)<0<g(v), \;\forall \, u \in L^- ; \;\forall \, v \in L^+.
\end{align*}
\end{definition}
Alternatively, Lemma \ref{eq sep} provides a more useful, yet equivalent, formulation of the original Definition \ref{def sep}, which will facilitate our analysis later on.

\begin{lemma}
\label{eq sep}
(Lemma 7 in \cite{nguyen2024separating})\\
The 0-level set \( \{g=0\} \) separates \( \{f \mathbin{\star } 0\} \) if and only if
\begin{align}
& \{f\mathbin{\star } 0\} \cap \{g=0\}=\emptyset \text{ and} \label{f=g=0}\\
& g(u)<0<g(v) \text{ for some } u,v\in \{f \mathbin{\star } 0\},
\end{align}
where \( \star \in \{<,\leq ,=\} \).
\end{lemma}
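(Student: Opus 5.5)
Both directions follow directly from Definition \ref{def sep}; no convexity or connectedness is needed.

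\emph{Forward direction.} Assume $\{g=0\}$ separates $\{f\star 0\}$, and let $L^-$, $L^+$ be the nonempty sets provided by the definition.
\begin{itemize}
\item Pick any $u\in L^-$ and $v\in L^+$. Then $g(u)<0<g(v)$ with $u,v\in\{f\star 0\}$, which is the second condition.
\item Take any $x\in\{f\star 0\}$. Since $L^-\cup L^+=\{f\star 0\}$, either $x\in L^-$, so $g(x)<0$, or $x\in L^+$, so $g(x)>0$.
\item In both cases $g(x)\neq 0$, hence $\{f\star 0\}\cap\{g=0\}=\emptyset$, which is \eqref{f=g=0}.
\end{itemize}

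\emph{Reverse direction.} Assume \eqref{f=g=0} and the existence of $u,v\in\{f\star 0\}$ with $g(u)<0<g(v)$. Define
\[
L^-=\{f\star 0\}\cap\{g<0\},\qquad L^+=\{f\star 0\}\cap\{g>0\}.
\]
\begin{itemize}
\item $L^-$ is nonempty because it contains $u$, and $L^+$ is nonempty because it contains $v$.
\item Both sets are subsets of $\{f\star 0\}$.
\item By \eqref{f=g=0}, no point of $\{f\star 0\}$ has $g=0$. Hence $\{f\star 0\}$ splits into the points where $g<0$ and those where $g>0$, giving $L^-\cup L^+=\{f\star 0\}$.
\item By construction, $g<0$ on $L^-$ and $g>0$ on $L^+$.
\end{itemize}
Therefore the sets $L^\pm$ satisfy every requirement of Definition \ref{def sep}, and $\{g=0\}$ separates $\{f\star 0\}$.

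The argument is uniform in $\star\in\{<,\le,=\}$ and uses no structure of $f$ or $g$ beyond the definition. There is no real obstacle; the only point to be careful about is that the covering $L^-\cup L^+=\{f\star 0\}$ in the reverse direction relies on \eqref{f=g=0}.
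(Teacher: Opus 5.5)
Your proof is correct and complete. Both directions follow immediately from Definition~\ref{def sep}, and you rightly note that the covering $L^-\cup L^+=\{f\star 0\}$ in the converse is exactly where \eqref{f=g=0} is used. The paper gives no proof of its own; it quotes the lemma from \cite{nguyen2024separating}. Your direct check against the definition is the natural argument and does not need the connectedness facts mentioned after the lemma.
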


Note that, by the continuity of $g$ and the Intermediate Value Theorem, if \( \{f\mathbin{\star } 0\} \) is separated by \( \{g=0\} \), then \( \{f\mathbin{\star }0\} \) must be disconnected. In this case, it was shown in \cite[2019]{nguyen2019geometric} that the set \( \{f\mathbin{\star } 0\} \) consists of exactly two connected components, denoted by $L^-$ and $L^+$ in the definition.

Several important results on the separation of two quadratic (sub)level sets have been developed from Definition \ref{def sep} and Lemma \ref{eq sep}. For example, please refer to Nguyen, Chu and Sheu \cite[2022]{nguyen2022convexity} and \cite[2024]{nguyen2024separating}. In particular, let us mention the following result.
\begin{itemize}
\item[$\bullet $][Theorem 3.1 in Nguyen, Chu and Sheu \cite[2022]{nguyen2022convexity}] The joint numerical range $(f,g)(\mathbb{R} ^n)$ is non-convex if and only if there exist $\alpha ,~\beta \in \mathbb{R} $ such that either $\{g=\beta \}$ separates $\{f=\alpha \}$ or $\{f=\alpha \}$ separates $\{g=\beta \}$.
\end{itemize}

In this paper, since we characterize [Calabi] (nonhomo.\ version) and [Str.-Finsler] (nonhomo. version) through the lens of the separation of two quadratic (sub)level sets, related properties on the separation of such sets must first be developed.

\begin{definition}[The separation property]
\label{def sep prop}
Two quadratic functions \( f \) and \( g \) are said to have the separation property if either \( \{f=0\} \) separates \( \{g=0\} \), or \( \{g=0\} \) separates \( \{f=0\} \), or possibly both.
\end{definition}

\begin{lemma}
\label{h!ap}
If \( \{g=0\} \) separates \( \{f\mathbin{\star } 0\} \), where \( \star \in \{<,\leq ,=\} \), then
\begin{enumerate}
    \item \( (s,0)\notin (f,g)(\mathbb{R} ^n) \) for all \( s\mathbin{\star }0 \);\label{for all s}
    \item \( (r,0)\in \operatorname{conv}((f,g)(\mathbb{R} ^n)) \) for some \( r\mathbin{\star }0 \).\label{for some r}
\end{enumerate}
In particular, we have
\begin{itemize}
  \item[(i)] (for the case that ``$\star $'' is ``$=$'') if \( \{g=0\} \) separates \( \{f = 0\} \), then \( (0,0)\in \operatorname{conv}((f,g)(\mathbb{R} ^n))\setminus (f,g)(\mathbb{R} ^n) \). Due to the symmetry, if, reversely, \( \{f=0\} \) separates \( \{g = 0\} \), there also is \( (0,0)\in \operatorname{conv}((f,g)(\mathbb{R} ^n))\setminus (f,g)(\mathbb{R} ^n) \).
  \item[(ii)] (for the case that ``$\star $'' is ``$\leq $'') if \( \{g=0\} \) separates \( \{f \leq 0\} \), then \( ((-\infty ,0]\times \{0\}) \cap (f,g)(\mathbb{R} ^n) = \emptyset \) but \( ((-\infty ,0]\times \{0\}) \cap \operatorname{conv}((f,g)(\mathbb{R} ^n)) \neq \emptyset \).
  \item[(iii)] (for the case that ``$\star $'' is ``$<$'') If \( \{g=0\} \) separates \( \{f < 0\} \), then \( ((-\infty ,0)\times \{0\}) \cap (f,g)(\mathbb{R} ^n) = \emptyset \) but \( ((-\infty ,0)\times \{0\}) \cap \operatorname{conv}((f,g)(\mathbb{R} ^n)) \neq \emptyset \).
\end{itemize}
\end{lemma}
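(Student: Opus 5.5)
The plan is to read both parts of the lemma directly off Lemma \ref{eq sep} and then specialize them to the three relations.

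Assume \( \{g=0\} \) separates \( \{f\mathbin{\star }0\} \). By Lemma \ref{eq sep}, condition (\ref{f=g=0}) gives \( \{f\mathbin{\star }0\}\cap \{g=0\}=\emptyset \).

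\emph{Part (a).} Suppose, for contradiction, that \( (s,0)=(f(x),g(x)) \) for some \( x \) and some \( s\mathbin{\star }0 \). Then \( f(x)=s\mathbin{\star }0 \) and \( g(x)=0 \), so \( x\in \{f\mathbin{\star }0\}\cap \{g=0\} \), which contradicts (\ref{f=g=0}). This is immediate.

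\emph{Part (b).} Lemma \ref{eq sep} also supplies points \( u,v\in \{f\mathbin{\star }0\} \) with \( g(u)<0<g(v) \). Their images \( (f(u),g(u)) \) and \( (f(v),g(v)) \) lie in \( (f,g)(\mathbb{R}^n) \), so the segment between them lies in the convex hull. Take
\[
t=\frac{g(v)}{g(v)-g(u)}\in (0,1).
\]
This choice makes \( t\,g(u)+(1-t)\,g(v)=0 \). Set \( r=t f(u)+(1-t)f(v) \), so that \( (r,0)\in \operatorname{conv}((f,g)(\mathbb{R}^n)) \).

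It remains to check \( r\mathbin{\star }0 \), which depends on the relation:
\begin{itemize}
\item for \( \star \in \{<,\leq \} \), the set \( \{s\mid s\mathbin{\star }0\} \) is a convex half-line, so a convex combination of \( f(u) \) and \( f(v) \) inherits the relation;
\item for \( \star \) being \( = \), we have \( f(u)=f(v)=0 \), hence \( r=0 \).
\end{itemize}
This closure of the sets \( \{s\le 0\} \), \( \{s<0\} \) and \( \{0\} \) under convex combinations is the only point needing care, and it is routine.

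\emph{The particular cases.} Items (i)--(iii) follow by substituting each relation into (a) and (b).
\begin{itemize}
\item (i), \( \star \) being \( = \): part (a) gives \( (0,0)\notin (f,g)(\mathbb{R}^n) \), and part (b) gives \( (0,0)\in \operatorname{conv}((f,g)(\mathbb{R}^n)) \).
\item The symmetric statement in (i): apply the same argument with \( f \) and \( g \) interchanged. The reflection \( (s,t)\mapsto (t,s) \) maps the joint range and its convex hull onto those of \( (g,f) \) and fixes \( (0,0) \).
\item (ii) and (iii): these restate (a) and (b) with \( \star \) being \( \le \) and \( < \) respectively, since the sets of \( s \) with \( s\le 0 \) or \( s<0 \) are exactly \( (-\infty ,0] \) and \( (-\infty ,0) \).
\end{itemize}

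No step is a real obstacle. The substance is carried by Lemma \ref{eq sep}, which guarantees both the empty intersection and the existence of the points \( u \) and \( v \) on opposite sides of \( \{g=0\} \).
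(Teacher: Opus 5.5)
Your proof is correct and follows essentially the same route as the paper. Part (a) is read off from the empty intersection in Lemma \ref{eq sep}, and part (b) uses the same convex combination of $(f(u),g(u))$ and $(f(v),g(v))$ with weights $g(v)/(g(v)-g(u))$ and $-g(u)/(g(v)-g(u))$; your explicit treatment of items (i)--(iii), including swapping $f$ and $g$ for the symmetric claim in (i), just spells out what the paper leaves implicit.
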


\begin{proof}
For \ref{for all s}, it is just a rephrase of \eqref{f=g=0}. Since \( \{f\mathbin{\star } 0\} \cap \{g=0\}=\emptyset , \) \( g(x)=0 \) implies that \( f(x)\mathbin{\star } 0 \) fails. Therefore, $(\forall s\mathbin{\star }0)$ \( (s,0)\notin (f,g)(\mathbb{R} ^n) \).

For \ref{for some r}, let \( u,v\in \mathbb{R} ^n \) be such that \( f(u),f(v)\mathbin{\star } 0 \) with \( g(u)<0<g(v) \). Let us write the value 0 as a convex combination of \( g(u) \) and \( g(v) \), and denote $(r,0)$ as
\[
(r,0)\triangleq \frac{g(v)}{g(v)-g(u)}(f(u),g(u))+\frac{-g(u)}{g(v)-g(u)}(f(v),g(v))\in \operatorname{conv}((f,g)(\mathbb{R} ^n)).
\]
Since \( f(u),f(v)\mathbin{\star } 0\), it follows that \( r\mathbin{\star } 0 \) and Statement \ref{for some r} is proved.
\end{proof}

In the following, we quote some preliminary results to be used later.

\begin{lemma}[Proposition 2 in \cite{nguyen2022convexity}]
\label{cor 2.9 ng}
Suppose that the level set \( \{\sigma f + \tau g = 0\} \) separates the level set \( \{\eta f + \theta g = 0\} \) for some real numbers \( \eta , \theta , \sigma , \tau \) with \( \eta \tau - \theta \sigma \neq 0 \). Then, exactly one of the following two statements holds.
\begin{enumerate}
    \item if both \( f \) and \( g \) are quadratic functions, \( \{f = 0\} \) and \( \{g = 0\} \) mutually separate each other;
    \item if one of \( f \) and \( g \) is affine, then the other must be a quadratic function and the 0-level set of the affine function separates the 0-level set of the quadratic function.
\end{enumerate}
\end{lemma}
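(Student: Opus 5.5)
The plan is to move everything to the joint numerical range in $\mathbb{R}^2$. The key observation is that $(f,g)\mapsto(F,G):=(\eta f+\theta g,\ \sigma f+\tau g)$ is an invertible linear map $M$ of $\mathbb{R}^2$, because $\eta\tau-\theta\sigma\neq 0$. Moreover $(F,G)(\mathbb{R}^n)=M\bigl((f,g)(\mathbb{R}^n)\bigr)$.

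\emph{Step 1 (transfer).} By Lemma~\ref{eq sep}, the hypothesis says that $\{F=0\}\cap\{G=0\}=\emptyset$ and that there are $u,v\in\{F=0\}$ with $G(u)<0<G(v)$. By Lemma~\ref{h!ap}(i), $(0,0)\in\operatorname{conv}((F,G)(\mathbb{R}^n))\setminus(F,G)(\mathbb{R}^n)$. Since $M$ is linear and invertible, $(0,0)\in\operatorname{conv}((f,g)(\mathbb{R}^n))\setminus(f,g)(\mathbb{R}^n)$. In particular $\{f=0\}\cap\{g=0\}=\emptyset$, which is condition \eqref{f=g=0} for both directions of separation. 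By Lemma~\ref{eq sep}, it then remains to show two things: $g$ takes both signs on $\{f=0\}$, and (when $g$ is quadratic) $f$ takes both signs on $\{g=0\}$.

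\emph{Step 2 (every ray is hit).} I will show that the range $R=(F,G)(\mathbb{R}^n)$, which misses the origin, meets every open ray from the origin except possibly those on the line $F=0$. Since $M$ preserves rays from the origin, the same then holds for $(f,g)(\mathbb{R}^n)$.
\begin{itemize}
\item The line $\{f=0\}$ in the $(f,g)$-plane corresponds to a line through the origin in the $(F,G)$-plane. If that line is not $\{F=0\}$, both of its open rays are hit, so $g$ takes both signs on $\{f=0\}$.
\item If it is $\{F=0\}$, then $\theta=0$. In that case $\{f=0\}=\{F=0\}$ and $G=\sigma f+\tau g$ restricts to $\tau g$ on it, with $\tau\neq0$. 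The hypothesis then directly gives that $g$ takes both signs on $\{f=0\}$.
\item The symmetric argument applies to $\{g=0\}$.
\end{itemize}

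To hit the rays, I use the angular argument. Take a path $\gamma$ from $u$ to $v$ along which $F\ge 0$. Its image is a continuous curve in the closed half-plane $\{F\ge0\}$ that avoids the origin and runs from the negative $G$-axis to the positive $G$-axis. Hence its argument varies continuously from $-\pi/2$ to $\pi/2$, so it meets every ray in $\{F>0\}$. A path with $F\le0$ handles $\{F<0\}$.
\begin{itemize}
\item $F$ must take both signs. If, say, $F\ge0$ everywhere, then $F$ is a convex quadratic and $\{F=0\}$ is its minimizer set, which is convex and hence connected. This is impossible for a set separated by $\{G=0\}$, by the intermediate value theorem.
\item \textbf{Main obstacle.} The existence of such paths is the step I expect to be hardest, since sublevel sets like $\{F\ge 0\}$ can be disconnected (hyperbolic type). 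My first attempt is to avoid needing them. I would restrict to the affine plane through $u$, $v$ and a point $w$ with $F(w)$ of the required sign. There $F,G$ become quadratics in two variables, and I would use the line-image parabolas $t\mapsto(F,G)(x+td)$. Their asymptotic direction is $(d^TA_Fd,\,d^TA_Gd)$, and the Dines convexity of the homogeneous part should let me connect $u$ to $v$ through each half-plane in the image, even without connected preimages.
\item Failing that, I would invoke the structure result from \cite{nguyen2019geometric}: the separated set consists of exactly two components, and these can be joined through either side.
\end{itemize}

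\emph{Step 3 (the dichotomy).} If both $f$ and $g$ are quadratic, Step 2 gives mutual separation, which is statement (a). If one of them, say $g$, is affine, then $F$ and $G$ cannot both be affine, since then $M^{-1}$ would make $f$ and $g$ both affine. Step 2 still gives that $\{g=0\}$ separates $\{f=0\}$. The converse separation is impossible, because $\{g=0\}$ is a hyperplane and hence connected. This also rules out $f$ being affine as well, since then $\{f=0\}$ would be a connected hyperplane and could not be separated. So exactly statement (b) holds, and the two alternatives are mutually exclusive by this connectedness remark.
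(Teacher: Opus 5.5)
Steps 1 and 3 are fine. The gap is in Step 2: its central claim is false, and the path construction meant to prove it cannot work.

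\emph{A counterexample to the ray claim.} Take Example~\ref{eg:CF3}: $F=-x_1^2+x_2^2+1$, $G=2x_1-x_2$, $u=(-1,0)$, $v=(1,0)$. Here $\{G=0\}$ separates $\{F=0\}$. Yet $F=3x_1^2+1>0$ on $\{G=0\}$, so the open ray $\{(-t,0)\mid t>0\}$ is never hit, although it does not lie on $\{F=0\}$. Correspondingly, $\{F\le 0\}=\{x_1^2-x_2^2\ge 1\}$ has two components, one containing $u$ and the other $v$. So no path in $\{F\le 0\}$ joins them, and your fallback that the components ``can be joined through either side'' is false.

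\emph{Why this happens in every case.} By Lemmas~\ref{= = sep} and~\ref{= = aff}, $H:=G-\zeta F$ is affine for some $\zeta$, and $\{H=0\}$ separates $\{F\le 0\}$ or $\{-F\le 0\}$. Since $H(u)=G(u)<0<G(v)=H(v)$, one of your two paths never exists, in every instance of the hypothesis.

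\emph{An internal inconsistency.} Nothing in your ray argument uses that $g$ is quadratic. Applied to the image of $\{g=0\}$ as in your third bullet, it would show that $f$ takes both signs on $\{g=0\}$ even when $g$ is affine. Steps 1--2 would then always give mutual separation and exclude alternative~(b). But (b) does occur: take $f=F$, $g=G$ above, where $\{g=0\}$ is a line. Your Step~3 uses affineness only through connectedness, never to locate an exceptional ray, and that is exactly where the argument breaks.

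\emph{What is missing.} You need the correct description of the exceptional ray and a reason the two axes avoid it.
\begin{enumerate}
\item Show that the quadratic parts of $F$ and $G$ are proportional, so $G=\zeta F+H$ with $H$ affine.
\item Work in the coordinates $(F,H)$. These are a shear of $(F,G)$, so they map rays to rays. In them the range misses exactly one open ray: the half of the axis $\{H=0\}$ on the side where $\{H=0\}$ fails to meet $\{F\le 0\}$ (respectively $\{F\ge 0\}$).
\item Rays in that half-plane close to the exceptional one are attained only far out in $\mathbb{R}^n$. They need an asymptotic argument, for instance studying $F$ on the parallel hyperplanes $\{H=s\}$ as $|s|\to\infty$. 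They cannot come from a path joining $u$ to $v$.
\item Write $f=p_1F+q_1H$ and $g=p_2F+q_2H$. The image of $\{f=0\}$ is the line $\{H=0\}$ of the $(F,H)$-plane iff $p_1=0$, i.e.\ iff $f$ is affine. Likewise for $g$ with $p_2=0$.
\end{enumerate}
This is what produces the dichotomy (a) versus (b), and with it your Steps~1 and~3 go through.
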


It has the following immediate consequence.

\begin{lemma}
\label{linear comb sep}
Two quadratic functions \( f \) and \( g \) have the separation property if and only if \( \{\sigma f + \tau g = 0\} \) separates \( \{\eta f + \theta g = 0\} \) for some real numbers \( \eta , \theta , \sigma , \tau \) with \( \eta \tau - \theta \sigma \neq 0 \).
\end{lemma}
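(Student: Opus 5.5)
The equivalence has two directions, and only one of them needs Lemma \ref{cor 2.9 ng}.

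The ``if'' direction is essentially definitional. Suppose \( \{\sigma f+\tau g=0\} \) separates \( \{\eta f+\theta g=0\} \) with \( \eta\tau-\theta\sigma\neq 0 \). Set \( \tilde f=\eta f+\theta g \) and \( \tilde g=\sigma f+\tau g \). The linear map \( (f,g)\mapsto(\tilde f,\tilde g) \) is invertible, so \( f \) and \( g \) are recovered as linear combinations of \( \tilde f \) and \( \tilde g \).

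I then apply Lemma \ref{cor 2.9 ng} to the pair \( (\tilde f,\tilde g) \) and the combination that returns \( f \) and \( g \). Concretely, \( f=(\tau\tilde f-\theta\tilde g)/(\eta\tau-\theta\sigma) \) and \( g=(-\sigma\tilde f+\eta\tilde g)/(\eta\tau-\theta\sigma) \). A nonzero scalar does not change a zero level set, so it suffices to handle the equivalent form
\[
\{g=0\}=\{-\sigma\tilde f+\eta\tilde g=0\},\qquad \{f=0\}=\{\tau\tilde f-\theta\tilde g=0\}.
\]
The cleaner route, however, is to apply Lemma \ref{cor 2.9 ng} directly to the original \( f \) and \( g \), since its hypothesis is exactly our assumption. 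This gives two cases.

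\textbf{Case (a).} If both \( f \) and \( g \) are genuinely quadratic, Lemma \ref{cor 2.9 ng}(a) says that \( \{f=0\} \) and \( \{g=0\} \) mutually separate each other. In particular \( \{g=0\} \) separates \( \{f=0\} \), which is the separation property of Definition \ref{def sep prop}.

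\textbf{Case (b).} If one of \( f,g \) is affine, Lemma \ref{cor 2.9 ng}(b) says that the zero level set of the affine one separates the zero level set of the other. This is again one of the two alternatives in Definition \ref{def sep prop}.

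The ``only if'' direction is a direct choice of parameters. If \( \{g=0\} \) separates \( \{f=0\} \), take \( \eta=1,\theta=0,\sigma=0,\tau=1 \), so that \( \eta\tau-\theta\sigma=1\neq0 \). If instead \( \{f=0\} \) separates \( \{g=0\} \), take \( \eta=0,\theta=1,\sigma=1,\tau=0 \), so that the determinant is \( -1\neq0 \).

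The one point needing care is that Lemma \ref{cor 2.9 ng} presupposes \( f \) and \( g \) are not both affine. This follows because the lemma's dichotomy is asserted under the separation hypothesis, so that hypothesis already excludes the doubly affine case. An independent check: if \( f \) and \( g \) were both affine, every combination \( \sigma f+\tau g \) and \( \eta f+\theta g \) would be affine. The zero set of a nonconstant affine function is connected, and Lemma \ref{eq sep} shows that a separated set must be disconnected. If instead \( \eta f+\theta g \) were a nonzero constant, its zero set would be empty and hence could not be separated by Definition \ref{def sep}, which requires nonempty \( L^\pm \).

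So the argument is a direct invocation of Lemma \ref{cor 2.9 ng}, and the only obstacle is this bookkeeping around the degenerate affine cases.
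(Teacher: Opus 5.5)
Your proof is correct and matches the paper's. Sufficiency is a direct invocation of Lemma~\ref{cor 2.9 ng}, and necessity uses the same parameter choices \( (\eta,\theta,\sigma,\tau)=(1,0,0,1) \) and \( (0,1,1,0) \). The abandoned change of variables to \( (\tilde f,\tilde g) \) and the doubly-affine bookkeeping are unnecessary, since the paper simply cites the lemma; the bookkeeping is harmless, but your connectedness argument should also cover \( \eta f+\theta g\equiv 0 \), whose zero set \( \mathbb{R}^n \) is likewise connected.
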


\begin{proof}
The sufficient part holds by Lemma \ref{cor 2.9 ng}. For the necessary part, if \( \{f=0\} \) separates \( \{g=0\} \), let us take \( \eta =\tau =0 \) and \( \theta =\sigma =1 \). If \( \{g=0\} \) separates \( \{f=0\} \), we take \( \eta =\tau =1 \) and \( \theta =\sigma =0 \).
\end{proof}

\begin{lemma}
\label{Tsu prop 1}
(Theorem 1 in \cite{nguyen2019geometric}, Theorem 2 and Lemma 6 in \cite{nguyen2024separating})\\
If \( \{g=0\} \) separates \( \{f\mathbin{\star } 0\} \), where \( \star \in \{<,\leq \} \), then \( g \) must be a non-constant affine function, while \( f \) must be quadratic.
\end{lemma}

We will also repeatedly use the following convex-geometric result.

\begin{lemma}[Fenchel--Eggleston theorem (Theorem 3.5) in \cite{danzer1963helly}]
\label{fenchel-eggleston}
Let \( \mathcal T\subseteq \mathbb R^m \) be the union of at most \( m \) connected sets. Then every point of \( \operatorname{conv}(\mathcal T) \) can be expressed as a convex combination of at most \( m \) points of \( \mathcal T \). In particular, since \( (f,g):\mathbb R^n\rightarrow \mathbb R^2 \) is continuous and \( \mathbb R^n \) is connected, every point of \( \operatorname{conv}((f,g)(\mathbb R^n)) \) is a convex combination of at most two points of \( (f,g)(\mathbb R^n) \).
\end{lemma}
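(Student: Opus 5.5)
We argue by contradiction, starting from Carath\'eodory's theorem and using the connectedness of the pieces of \( \mathcal T \) to remove one point. After that, the ``in particular'' part is immediate.

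\emph{Setup.} Write \( \mathcal T=C_1\cup\cdots\cup C_k \) with \( k\le m \) and each \( C_l \) connected. Let \( p\in\operatorname{conv}(\mathcal T) \); after translation assume \( p=0 \). By Carath\'eodory's theorem, \( 0=\sum_{l=0}^{m}\lambda_l t_l \) with \( t_l\in\mathcal T \) and \( \lambda_l\ge 0 \). Suppose, for contradiction, that \( 0 \) is not a convex combination of any \( m \) points of \( \mathcal T \). Then every \( \lambda_l>0 \), and the \( t_l \) are affinely independent: otherwise Carath\'eodory applied inside their affine hull, of dimension \( <m \), would already give at most \( m \) points. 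Hence \( 0 \) lies in the interior of the nondegenerate \( m \)-simplex \( \operatorname{conv}\{t_0,\dots,t_m\} \).

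\emph{Pigeonhole and path.} There are \( m+1 \) vertices but at most \( m \) pieces \( C_l \), so two vertices, say \( t_0 \) and \( t_1 \), lie in the same connected piece \( C \). I will first treat the case where \( C \) is path-connected, and take a path \( \gamma:[0,1]\to C \) with \( \gamma(0)=t_0 \) and \( \gamma(1)=t_1 \). For general connected \( C \), I would run the same argument with a ``clopen subset'' version: show that the set \( \{x\in C:\ 0\in\operatorname{conv}(x,t_1,\dots,t_m)\} \) is both open and closed in \( C \). Let
\[ S=\{s\in[0,1]:\ 0\in\operatorname{conv}(\gamma(s),t_1,\dots,t_m)\}. \]
This set contains \( 0 \) and is closed by continuity and compactness of the simplex of coefficients. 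Put \( s^*=\max S \).

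\emph{Endpoint case.} If \( s^*=1 \), then \( 0\in\operatorname{conv}(t_1,t_1,t_2,\dots,t_m) \). This is a convex combination of only \( m \) distinct points, a contradiction.

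\emph{Interior case.} If \( s^*<1 \), look at the simplex \( \Delta^*=\operatorname{conv}(\gamma(s^*),t_1,\dots,t_m) \).
\begin{itemize}
\item If \( \Delta^* \) is nondegenerate and \( 0 \) lies in its interior, then small perturbations of the vertex \( \gamma(s^*) \) keep \( 0 \) inside. So \( s^*+\varepsilon\in S \), contradicting maximality.
\item If \( \Delta^* \) is degenerate, its vertices lie in an affine hull of dimension \( <m \), and Carath\'eodory there writes \( 0 \) with at most \( m \) of them.
\item If \( 0 \) lies on the boundary of \( \Delta^* \), some barycentric coordinate vanishes, again leaving at most \( m \) points.
\end{itemize}
Every case contradicts the assumption, which proves the general statement.

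\emph{The particular case.} The map \( (f,g) \) is continuous and \( \mathbb R^n \) is connected, so \( (f,g)(\mathbb R^n) \) is connected. It is therefore a union of one, hence at most \( m=2 \), connected sets in \( \mathbb R^2 \), and the general statement gives at most two points.

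The main obstacle is the interior case: checking carefully that ``\( 0 \) in the interior of a nondegenerate simplex'' is an open condition in the vertex \( \gamma(s) \). This follows from continuity of barycentric coordinates, since they solve a nonsingular linear system. A secondary point is that connectedness, not just path-connectedness, suffices, which is what the clopen-set version of the argument handles.
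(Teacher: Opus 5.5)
Your proof is correct. The paper does not prove this lemma; it quotes it from Danzer--Gr\"unbaum--Klee. The only reasoning the paper adds is that $(f,g)(\mathbb R^n)$ is connected as a continuous image of $\mathbb R^n$, which is exactly your last step.

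Your argument is a localized form of the usual proof of the Fenchel--Eggleston theorem. There, with $0$ interior to the nondegenerate simplex $\operatorname{conv}\{t_0,\dots,t_m\}$, one uses the $m+1$ simplicial cones $Q_i=\operatorname{cone}\{-t_j : j\neq i\}$. These cones have the following properties:
\begin{enumerate}
\item they cover $\mathbb R^m$ and have pairwise disjoint interiors;
\item $t_i\in\operatorname{int}Q_i$ for each $i$;
\item under the contradiction hypothesis, $\mathcal T$ meets no $\partial Q_i$, since a point of $\mathcal T\cap\partial Q_i$ would put $0$ in the convex hull of at most $m$ points of $\mathcal T$.
\end{enumerate}
Hence $\mathcal T$ splits into $m+1$ disjoint, nonempty, relatively open pieces and has at least $m+1$ components.

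Your set $\{x\in C : 0\in\operatorname{conv}(x,t_1,\dots,t_m)\}$ is exactly $C\cap Q_0$. Your degenerate/boundary/interior trichotomy is the statement that $C$ avoids $\partial Q_0$. Pigeonholing first lets you work with one cone instead of the whole fan, at the price of a path or clopen argument.

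The clopen version, which you only sketch, follows directly from your own case analysis:
\begin{enumerate}
\item closedness comes from compactness of the coefficient simplex;
\item openness holds because, for $x$ in the set, the trichotomy either contradicts the hypothesis or puts $0$ in the interior of a nondegenerate simplex, which is an open condition in $x$;
\item $t_0$ lies in the set, so connectedness forces $t_1$ into it, giving $0\in\operatorname{conv}(t_1,\dots,t_m)$, a contradiction.
\end{enumerate}
This makes the path case superfluous, and you should write it out. For the paper's use, the path version already suffices, since $(f,g)(\mathbb R^n)$ is a continuous image of the path-connected space $\mathbb R^n$.
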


\section{Proof for the nonhomogeneous Calabi theorem, Theorem \ref{NHFC}}
\label{pf:NHFC}

Recall that the notation \( \operatorname{conv}((f,g)(\mathbb{R} ^n)) \) stands for the convex hull of $(f,g)(\mathbb{R} ^n),$ whereas $\operatorname{cl}(\operatorname{conv}((f,g)(\mathbb{R} ^n)))$ is the closure of the convex hull of the joint numerical range of $f$ and $g.$ Obviously, there is 
\[
\underbrace {\operatorname{cl}(\operatorname{conv}((f,g)(\mathbb{R} ^n)))}_{\text{convex \& closed}}\supset \underbrace {\operatorname{conv}((f,g)(\mathbb{R} ^n))}_{\text{convex}}\supset (f,g)(\mathbb{R} ^n).
\]

To characterize the nonhomogeneous Calabi Theorem (Theorem \ref{NHFC}) that \( \{f=0\}\cap \{g=0\}=\emptyset \), namely, $(0,0)\notin (f,g)(\mathbb{R} ^n)$, our program is to divide
$(0,0)\notin (f,g)(\mathbb{R} ^n)$ into the following three exclusive cases.
\begin{itemize}
  \item[$\dagger $] (C1) in Theorem \ref{NHFC}: \( (0,0)\notin \operatorname{cl}(\operatorname{conv}((f,g)(\mathbb{R} ^n))) \). See Proposition \ref{0-t_i-gu_a} below.
  \item[$\dagger $] (C2) in Theorem \ref{NHFC}: $(0,0)\in \operatorname{cl}(\operatorname{conv}((f,g)(\mathbb{R} ^n)))\setminus \operatorname{conv}((f,g)(\mathbb{R} ^n))$. See Proposition \ref{iff boundary} below.
  \item[$\dagger $] (C3) in Theorem \ref{NHFC}: $(0,0)\in \operatorname{conv}((f,g)(\mathbb{R} ^n))\setminus (f,g)(\mathbb{R} ^n)$. See Proposition \ref{C2 prop} below.
\end{itemize}

In the examples in Section 5, we will show that (C1) and (C2) typically arise when the joint numerical range \( (f,g)(\mathbb{R} ^n) \) is convex, whereas (C3) can occur only when \( (f,g)(\mathbb{R} ^n) \) is nonconvex. Based on our experience, the nonconvex case is actually easier to characterize, although not necessarily easier to prove. For this reason we treat (C3) first and then prove (C1) and (C2).

\subsection{Proof of Theorem \ref{NHFC} (C3): $(0,0)\in \operatorname{conv}((f,g)(\mathbb{R} ^n))\setminus (f,g)(\mathbb{R} ^n)$}

According to Lemma \ref{h!ap} (i), if either \( \{g=0\} \) separates \( \{f=0\} \) or \( \{f=0\} \) separates \( \{g=0\} \), then \( (0,0)\in \operatorname{conv}((f,g)(\mathbb{R} ^n))\setminus (f,g)(\mathbb{R} ^n) \). Proposition \ref{C2 prop} below asserts that the converse is also true.

\begin{proposition}[Theorem \ref{NHFC} (C3)]
\label{C2 prop}
Two quadratic functions \( f \) and \( g \) have the separation property if and only if \( (0,0) \in \operatorname{conv}((f,g)(\mathbb{R} ^n)) \setminus (f,g)(\mathbb{R} ^n) \).
\end{proposition}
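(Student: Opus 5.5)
The forward direction is already available: by Lemma \ref{h!ap}(i), if $\{g=0\}$ separates $\{f=0\}$ or $\{f=0\}$ separates $\{g=0\}$, then $(0,0)\in\operatorname{conv}((f,g)(\mathbb{R}^n))\setminus(f,g)(\mathbb{R}^n)$. So the work is in the converse. Assume $(0,0)\in\operatorname{conv}((f,g)(\mathbb{R}^n))$ but $(0,0)\notin(f,g)(\mathbb{R}^n)$. The plan is to find a line through the origin that witnesses a separation of two suitable linear combinations of $f$ and $g$, and then to convert it into the separation property using Lemma \ref{linear comb sep}.

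First, by the Fenchel--Eggleston theorem (Lemma \ref{fenchel-eggleston}), there are $u,v\in\mathbb{R}^n$ and $t\in(0,1)$ with
\[
(0,0)=t\,(f(u),g(u))+(1-t)\,(f(v),g(v)).
\]
Since the origin is not in the range, $P=(f(u),g(u))$ and $Q=(f(v),g(v))$ are nonzero and lie on opposite sides of the origin along a single line $\ell$ through $0$. Write $\ell=\{(s_1,s_2): \eta s_1+\theta s_2=0\}$ with $(\eta,\theta)\neq(0,0)$. Choose $(\sigma,\tau)$ with $\eta\tau-\theta\sigma\neq0$ so that $(\sigma,\tau)\cdot P<0<(\sigma,\tau)\cdot Q$; for instance, take $(\sigma,\tau)$ parallel to $P$ up to sign, which is independent of $(\eta,\theta)$ because $(\eta,\theta)\perp P$. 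Set $h_1=\eta f+\theta g$ and $h_2=\sigma f+\tau g$. Then $u,v\in\{h_1=0\}$ with $h_2(u)<0<h_2(v)$.

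Next, note that the map $(s_1,s_2)\mapsto(\eta s_1+\theta s_2,\ \sigma s_1+\tau s_2)$ is invertible. Hence $\{h_1=0\}\cap\{h_2=0\}=\{f=0\}\cap\{g=0\}=\emptyset$, because $(0,0)\notin(f,g)(\mathbb{R}^n)$. By Lemma \ref{eq sep} with $\star$ being $=$, the set $\{h_2=0\}$ separates $\{h_1=0\}$. Lemma \ref{linear comb sep} then gives that $f$ and $g$ have the separation property.

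The main obstacle is a technical gap in that last step. Lemmas \ref{cor 2.9 ng} and \ref{linear comb sep} are stated for quadratic $f,g$, but the combinations $h_1,h_2$ might be affine or even constant. I would first check directly that $h_2$ cannot be constant: it takes both signs on $\{h_1=0\}$. Similarly, $h_1$ cannot be a nonzero constant, since it vanishes at $u$; and if $h_1\equiv0$, then $\{h_1=0\}=\mathbb{R}^n$, so $\mathbb{R}^n$ would be disconnected by $\{h_2=0\}$, which is impossible. Lemma \ref{cor 2.9 ng} is phrased for arbitrary $f,g$ via their combinations, and in the affine/quadratic case its alternative (b) still yields a separation between the zero sets of $f$ and $g$. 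So the plan is to invoke it directly rather than re-derive it, verifying only that its hypotheses, namely a nondegenerate coefficient matrix and a genuine separation, hold here.
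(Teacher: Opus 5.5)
Your proposal is correct and follows the paper's proof essentially step for step. Fenchel--Eggleston gives $u,v$ with $(f(v),g(v))$ a negative multiple of $(f(u),g(u))$. The combination vanishing at both points (the paper's $\tau f-\sigma g$, your $h_1$) is then separated by the orthogonal combination (the paper's $\sigma f+\tau g$, your $h_2$) via Lemma~\ref{eq sep}, and Lemma~\ref{linear comb sep} finishes.

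Your worry about $h_1,h_2$ being affine or constant is unnecessary. Lemma~\ref{linear comb sep} only asks that the zero set of one combination separate the zero set of another, with a nonsingular coefficient matrix. You have verified exactly that, so the lemma applies verbatim, just as in the paper.
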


\begin{proof}
We only have to prove the sufficient part. Assume that \( (0,0)\in \operatorname{conv}((f,g)(\mathbb R^n)) \) and \( (0,0)\notin (f,g)(\mathbb R^n) \). By Fenchel--Eggleston theorem (Lemma \ref{fenchel-eggleston}), \( (0,0) \) is a convex combination of \( (\sigma ,\tau ),(-t\sigma ,-t\tau )\in (f,g)(\mathbb{R} ^n) \) for some \( (\sigma ,\tau )\neq (0,0) \) and \( t>0 \). Moreover, there exist \( u,v \in \mathbb{R} ^n \) such that 
\begin{equation}\label{tau-}
(f(u),g(u))=(\sigma ,\tau )~ {\rm and}~(f(v),g(v))=(-t\sigma ,-t\tau ).
\end{equation}
 
Since \( (\sigma ,\tau )\neq (0,0) \), 
\[
\det
\begin{bmatrix}
\sigma & \tau \\
\tau & -\sigma 
\end{bmatrix}
=-(\sigma ^2+\tau ^2)\neq 0.
\]
Then, the following system of simultaneous linear equations in variables $(r,s)$
\begin{equation*}
    \begin{cases}
      \sigma r + \tau s=0\\
      \tau r - \sigma s = 0
    \end{cases}
\end{equation*}
has a unique solution $(r,s)=(0,0).$ Due to \( (0,0) \notin (f,g)(\mathbb{R} ^n) \), there exists no \( x \in \mathbb{R} ^n \) such that \( f(x)=g(x)=0 \). Therefore,
\begin{equation}
\label{phian sep}
\{\sigma f + \tau g = 0\} \cap \{\tau f - \sigma g = 0\}=\emptyset .
\end{equation}

By \eqref{tau-}, it is easy to check that
\begin{equation}
\label{mix sep}
u,v\in \{\tau f-\sigma g=0\} \text {and} (\sigma f + \tau g)(u)\cdot (\sigma f + \tau g)(v) = -t(\sigma ^2+\tau ^2)^2<0.
\end{equation}
According to Lemma \ref{eq sep}, \eqref{phian sep} and \eqref{mix sep} together imply that \( \{\sigma f+\tau g=0\} \) separates \( \{\tau f-\sigma g=0\} \). Finally, by Lemma \ref{linear comb sep}, we conclude that the two quadratic functions \( f \) and \( g \) have the separation property. The proof is complete.
\end{proof}

\subsection{Proof of Theorem \ref{NHFC} (C1): \( (0,0)\notin \operatorname{cl}(\operatorname{conv}((f,g)(\mathbb{R} ^n))) \)}

We are going to show that, in this case, there exists a positive Lagrangian function \( \lambda f(x)+\mu g(x)>0 \) for all \( x\in \mathbb{R} ^n \) with \( (\lambda ,\mu )\neq (0,0) \). Let us first recall a lemma.

\begin{lemma}
\label{QP0}
(\cite{boyd2004convex} appendix A.5.4 and \cite{gallier2010schur})
Let \( h(x) =x^T Cx + 2c^T x +c_0 \) be an \( n \)-variable real quadratic function. Then, \( h(x) \) is bounded below if and only if \( C\succeq 0 \) and \( c\in \mathcal{R} (C) \). Moreover, when \( h(x) \) is bounded below, it always attains its minimum at some $\bar{x}\in \mathbb{R} ^n$ such that
\[
h(\bar{x})=\min_{x\in \mathbb{R} ^n} h(x)=c_0-c^T C^\dagger c.
\]
\end{lemma}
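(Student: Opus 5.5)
The goal is Lemma~\ref{QP0}: a quadratic $h(x)=x^TCx+2c^Tx+c_0$ is bounded below if and only if $C\succeq 0$ and $c\in\mathcal R(C)$, and in that case the minimum is attained with value $c_0-c^TC^\dagger c$. I would treat the two directions separately and get the attainment and the formula from a completing-the-square identity.

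\emph{Necessity.} Suppose $C\not\succeq 0$. Pick $d$ with $d^TCd<0$. Then
\[
h(td)=t^2\,d^TCd+2t\,c^Td+c_0\to-\infty \quad\text{as } t\to\infty,
\]
so $h$ is unbounded below. Now suppose $C\succeq 0$ but $c\notin\mathcal R(C)$. Since $C$ is symmetric, $\mathcal R(C)^\perp=\mathcal N(C)$. Decompose $c=c_1+c_2$ with $c_1\in\mathcal R(C)$ and $0\neq c_2\in\mathcal N(C)$. Along the ray $x=-tc_2$ we get
\[
h(-tc_2)=c_0-2t\,c^Tc_2=c_0-2t\|c_2\|^2\to-\infty .
\]
So bounded below forces $C\succeq0$ and $c\in\mathcal R(C)$.

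\emph{Sufficiency and attainment.} Assume $C\succeq 0$ and $c=Cw$ for some $w$. Set $\bar x=-C^\dagger c$. Since $CC^\dagger$ is the orthogonal projector onto $\mathcal R(C)$, we have $CC^\dagger c=c$, so $C\bar x=-c$. Expanding then gives the identity
\[
h(x)=(x-\bar x)^TC(x-\bar x)+c_0-c^TC^\dagger c .
\]
To check the constant term, use $\bar x^TC\bar x=c^TC^\dagger CC^\dagger c=c^TC^\dagger c$, which relies on the Moore--Penrose identity $C^\dagger CC^\dagger=C^\dagger$. Because $C\succeq0$, the quadratic term is nonnegative. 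Hence $h(x)\ge c_0-c^TC^\dagger c$ for all $x$, with equality at $x=\bar x$. This gives boundedness, attainment, and the formula for the minimum value all at once.

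The only step that needs care is verifying the completed-square identity through the pseudo-inverse relations: $CC^\dagger c=c$ when $c\in\mathcal R(C)$, and $C^\dagger CC^\dagger=C^\dagger$. Everything else is elementary. Since the lemma is the standard result cited from \cite{boyd2004convex} and \cite{gallier2010schur}, the proof can be kept to these few lines.
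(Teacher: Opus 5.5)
Your proof is correct. Necessity follows from the two rays along which $h\to-\infty$, and sufficiency, attainment and the value $c_0-c^TC^\dagger c$ all come from the completed-square identity $h(x)=(x-\bar x)^TC(x-\bar x)+c_0-c^TC^\dagger c$ with $\bar x=-C^\dagger c$. The paper gives no proof of this lemma and only cites Boyd--Vandenberghe (Appendix A.5.4) and Gallier, and your argument is essentially the standard one underlying those references.
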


\begin{proposition}[Theorem \ref{NHFC} (C1)]
\label{0-t_i-gu_a}
Let \( f,g \) be two real quadratic functions. The following statements are equivalent.
\begin{enumerate}
    \item[(a)] There exists \( (\lambda ,\mu ) \neq (0,0) \) such that \( \lambda f(x)+\mu g(x)>0 \) for all \( x\in \mathbb{R} ^n \).
    \item[(b)] There exists \( (\lambda ,\mu ) \neq (0,0) \) and \( \nu >0 \) such that \( \lambda f(x)+\mu g(x) \geq \nu \) for all \( x\in \mathbb{R} ^n \).
    \item[(c)] \( (0,0)\notin \operatorname{cl}(\operatorname{conv}((f,g)(\mathbb{R} ^n))) \). That is, \( \{(0,0)\}\cap \operatorname{cl}(\operatorname{conv}((f,g)(\mathbb{R} ^n)))=\emptyset \).
\end{enumerate}
\end{proposition}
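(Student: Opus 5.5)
I plan to prove the cycle (a)$\Rightarrow$(b)$\Rightarrow$(c)$\Rightarrow$(a). The implication (c)$\Rightarrow$(a) should come from the separating hyperplane theorem. The main content is (a)$\Rightarrow$(b), where Lemma \ref{QP0} upgrades strict positivity to a uniform positive lower bound.

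\emph{(a)$\Rightarrow$(b).} Set $h=\lambda f+\mu g$, which is a quadratic function $x^T Cx+2c^T x+c_0$ with $C=\lambda A+\mu B$, $c=\lambda a+\mu b$, $c_0=\lambda a_0+\mu b_0$. By (a), $h>0$ on $\mathbb{R}^n$, so $h$ is bounded below. Lemma \ref{QP0} then says that $h$ attains its minimum at some $\bar x$. Since $h(\bar x)>0$, the choice $\nu=h(\bar x)=c_0-c^TC^\dagger c$ works. The degenerate case where $h$ is constant or affine is also covered by Lemma \ref{QP0}, since it allows $C=0$: a positive affine function must have $c=0$, so $h$ is a positive constant. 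I expect this attainment issue to be the only genuine subtlety, because an arbitrary positive function need not be bounded away from $0$. It is precisely the quadratic structure, via Lemma \ref{QP0}, that rules out an infimum $0$ which is not attained.

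\emph{(b)$\Rightarrow$(c).} Consider the closed half-plane $H=\{(s,t)\in\mathbb{R}^2:\lambda s+\mu t\ge \nu\}$. By (b), $(f,g)(\mathbb{R}^n)\subset H$. Since $H$ is convex and closed, it also contains $\operatorname{cl}(\operatorname{conv}((f,g)(\mathbb{R}^n)))$. Because $\nu>0$, the origin does not lie in $H$, which gives (c).

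\emph{(c)$\Rightarrow$(a).} The set $K=\operatorname{cl}(\operatorname{conv}((f,g)(\mathbb{R}^n)))$ is nonempty, closed and convex, and by (c) it does not contain $(0,0)$. Strict separation of a point from a closed convex set gives $(\lambda,\mu)\neq(0,0)$ and $\nu>0$ such that $\lambda s+\mu t\ge\nu>0$ for all $(s,t)\in K$. In particular, $\lambda f(x)+\mu g(x)\ge\nu>0$ for every $x\in\mathbb{R}^n$, which is (a) and in fact (b) as well. This closes the cycle.
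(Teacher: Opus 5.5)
Your proposal is correct and follows essentially the same route as the paper's proof. Both use Lemma \ref{QP0} to get an attained positive minimum for (a)$\Rightarrow$(b), containment of $\operatorname{cl}(\operatorname{conv}((f,g)(\mathbb{R}^n)))$ in the closed half-plane $\{\lambda r+\mu s\ge\nu\}$ for (b)$\Rightarrow$(c), and strict separation of the origin from that closed convex set (which forces $\nu>0$) for (c)$\Rightarrow$(a).
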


\begin{proof}
\noindent\textbf{(a) \(\Rightarrow \) (b).} Suppose $(a)$ holds with \( \lambda f(x)+\mu g(x)>0 \) for all \( x\in \mathbb{R} ^n \). It implies that the Lagrangian function \( \lambda f(x)+\mu g(x)\) is quadratic and bounded below by $0.$
By Lemma \ref{QP0}, there exists some \( \bar x\in \mathbb{R} ^n \) such that
\[
\nu :=\lambda f(\bar x)+\mu g(\bar x)=\inf_{x\in \mathbb{R} ^n} \lambda f(x)+\mu g(x)>0,
\]
which is exactly $(b)$.

\noindent\textbf{(b) \(\Rightarrow \) (c).} Suppose $(b)$ holds, that is,
\[
(f,g)(\mathbb{R} ^n) \subset \{(r,s)\in \mathbb{R} ^2 \mid \lambda r+\mu s \geq \nu >0 \},
\]
Since \( \{(r,s)\in \mathbb{R} ^2 \mid \lambda r+\mu s\geq \nu \} \) is closed and convex, and the set does not contain $(0,0)$ since $\nu >0,$ it follows that
\[
\operatorname{cl}(\operatorname{conv}((f,g)(\mathbb{R} ^n))) \subset \{(r,s)\in \mathbb{R} ^2 \mid \lambda r+\mu s \geq \nu \}.
\]
and \( \{(0,0)\}\cap \operatorname{cl}(\operatorname{conv}((f,g)(\mathbb{R} ^n)))=\emptyset \).

\noindent\textbf{(c) \(\Rightarrow \) (a).} Suppose $(c)$ holds. Then, the two closed convex sets \( \{(0,0)\} \) and \( \operatorname{cl}(\operatorname{conv}((f,g)(\mathbb{R} ^n))) \) are disjoint. As \( \{(0,0)\} \) is compact, the two can be strictly separated (\cite{boyd2004convex} Subsection 2.5.1) by a hyperplane. There exists \( (\lambda ,\mu )\neq (0,0) \) and \( \nu \in \mathbb{R} \) such that
\begin{align}
\operatorname{cl}(\operatorname{conv}((f,g)(\mathbb{R} ^n))) & \subset \{(r,s)\in \mathbb{R} ^2 \mid \lambda r+\mu s > \nu \};\label{0 ext pt up}\\
\{(0,0)\} & \subset \{(r,s)\in \mathbb{R} ^2 \mid \lambda r+\mu s < \nu \}.\label{0 ext pt}
\end{align}
From \eqref{0 ext pt}, \( \nu >0 \). As a result, \eqref{0 ext pt up} implies that
\[
(f,g)(\mathbb{R} ^n) \subset \{(r,s)\in \mathbb{R} ^2 \mid \lambda r+\mu s > \nu >0\},
\]
which is $(a)$ of this proposition.
\end{proof}

\subsection{Proof of Theorem \ref{NHFC} (C2): \( (0,0)\in \operatorname{cl}(\operatorname{conv}((f,g)(\mathbb{R} ^n)))\setminus \operatorname{conv}((f,g)(\mathbb{R} ^n)) \)}

This is the most interesting but subtle case in Theorem \ref{NHFC}. When it happens the Lagrangian function \( \lambda f(x)+\mu g(x)\geq 0 \) attains the lower bound value $0$. On the minimum solution set, \( \mathcal W=\{x\in \mathbb{R} ^n\mid \lambda f(x)+\mu g(x)=0\}\), there exists a ``conjugate'' Lagrangian function which is either strictly positive or strictly negative.

We first need Lemma \ref{siang=>giam} and Lemma \ref{nonclosed} to establish the Lagrange multiplier \( (\lambda ,\mu )\ne (0,0) \) such that
\(
\lambda f(x)+\mu g(x) \geq 0, \;\forall \, x\in \mathbb{R} ^n.
\)

\begin{lemma}\label{siang=>giam}
Suppose that \( (0,0)\notin (f,g)(\mathbb{R} ^n) \) and the two row vectors \( (\lambda ,\mu ), (\lambda ^\prime,\mu ^\prime)\in \mathbb{R} ^2 \) are linearly independent. If there is
\begin{equation}\label{siang giap}
\lambda f(x)+\mu g(x)\geq 0~{\rm and}~\lambda ^\prime f(x)+\mu ^\prime g(x)\geq 0, \; \;\forall \, x\in \mathbb{R} ^n,
\end{equation}
then the sum of the two inequalities in \eqref{siang giap} becomes strictly positive everywhere:
\[
(\lambda +\lambda ^\prime)f(x)+(\mu +\mu ^\prime)g(x)>0, \;\forall \, x\in \mathbb{R} ^n.
\]
\end{lemma}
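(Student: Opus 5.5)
The argument is short and purely pointwise. Adding the two inequalities in \eqref{siang giap} gives
\[
(\lambda+\lambda')f(x)+(\mu+\mu')g(x)\ge 0 \quad \text{for all } x.
\]
It therefore only remains to rule out equality at some point, and I would do this by contradiction.

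Suppose there is an $\bar x\in\mathbb{R}^n$ with $(\lambda+\lambda')f(\bar x)+(\mu+\mu')g(\bar x)=0$. Set
\[
p=\lambda f(\bar x)+\mu g(\bar x)\ge 0, \qquad q=\lambda' f(\bar x)+\mu' g(\bar x)\ge 0 .
\]
Since $p+q=0$ and both terms are nonnegative, we get $p=q=0$. Hence the vector $(f(\bar x),g(\bar x))$ solves the homogeneous $2\times 2$ linear system whose coefficient matrix has rows $(\lambda,\mu)$ and $(\lambda',\mu')$.

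By the assumed linear independence of these two rows, the determinant $\lambda\mu'-\mu\lambda'$ is nonzero. The only solution of the system is therefore $(f(\bar x),g(\bar x))=(0,0)$. This means $(0,0)\in(f,g)(\mathbb{R}^n)$, contradicting the hypothesis. Consequently the sum is strictly positive at every $x$, which is the claim.

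I do not expect any real obstacle here. The only point needing care is the step ``sum of two nonnegatives is zero $\Rightarrow$ both are zero,'' followed by invoking the nonsingularity of the coefficient matrix. In particular, no attainment or boundedness argument such as Lemma~\ref{QP0} is needed: the conclusion is strict pointwise positivity, not a uniform positive lower bound. (If a uniform bound $\nu>0$ were wanted, it would then follow from Proposition~\ref{0-t_i-gu_a}.)
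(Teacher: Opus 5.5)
Your proof is correct and is essentially the paper's argument. The paper argues directly that, since the coefficient matrix with rows $(\lambda,\mu)$ and $(\lambda',\mu')$ is invertible and $(f(x),g(x))\neq(0,0)$, at least one of the two nonnegative Lagrangians is nonzero at every $x$; your contradiction argument is just the contrapositive of that same step.
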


\begin{proof}
By \( (0,0)\notin (f,g)(\mathbb{R} ^n) \), we have \( (f(x),g(x)) \neq (0,0) \) for all \( x \in \mathbb{R} ^n \). Since the two row vectors \( (\lambda ,\mu ) \) and \( (\lambda ^\prime,\mu ^\prime) \) are linearly independent,
\[
\begin{bmatrix}
\lambda f(x)+\mu g(x) \\ \lambda ^\prime f(x) + \mu ^\prime g(x)
\end{bmatrix}
=
\begin{bmatrix}
\lambda & \mu \\ \lambda ^\prime & \mu ^\prime
\end{bmatrix}
\begin{bmatrix}
f(x) \\ g(x)
\end{bmatrix}
\neq 
\begin{bmatrix}
0 \\ 0
\end{bmatrix},
\; \;\forall \, x\in \mathbb{R} ^n.
\]
Thus, for all \(x\in \mathbb{R} ^n\), at least one of \( \lambda f(x)+\mu g(x) \) and \( \lambda ^\prime f(x)+\mu ^\prime g(x) \) is nonzero. Together with \eqref{siang giap}, we get
\[
(\lambda f(x)+\mu g(x)) +(\lambda ^\prime f(x)+\mu ^\prime g(x))>0, \; \;\forall \, x\in \mathbb{R} ^n.
\]
\end{proof}

\begin{lemma}\label{nonclosed}
If \( (0,0)\in \operatorname{cl}(\operatorname{conv}((f,g)(\mathbb{R} ^n)))\setminus \operatorname{conv}((f,g)(\mathbb{R} ^n)) \), then there exists \( (\lambda ,\mu )\neq (0,0) \) such that
\begin{equation}
\lambda f(x)+\mu g(x)\geq 0,\; \;\forall \, x\in \mathbb{R} ^n.\label{exist lambda}
\end{equation}
Moreover, the existence of \( (\lambda ,\mu ) \) is unique up to multiplication by a positive constant. That is, if \( (\lambda ^\prime,\mu ^\prime)\neq (0,0) \) also satisfies \eqref{exist lambda}, then \( (\lambda ^\prime,\mu ^\prime)=(t\lambda ,t\mu ) \) for some \( t>0 \).
\end{lemma}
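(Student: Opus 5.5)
Existence comes from a separating hyperplane, and uniqueness from Lemma \ref{siang=>giam} and Proposition \ref{0-t_i-gu_a}. Write $K=\operatorname{conv}((f,g)(\mathbb{R}^n))$.

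\textbf{Existence.} By hypothesis $(0,0)\in\operatorname{cl}(K)\setminus K$. A point of the closure of a convex set that is not in the set cannot lie in its interior, because $\operatorname{int}(\operatorname{cl}K)=\operatorname{int}K\subset K$ for convex $K\subset\mathbb{R}^2$. Hence $(0,0)$ lies on the boundary of the closed convex set $\operatorname{cl}(K)$. The supporting hyperplane theorem then gives $(\lambda,\mu)\neq(0,0)$ with $\lambda r+\mu s\geq 0$ for all $(r,s)\in\operatorname{cl}(K)$. In particular $\lambda f(x)+\mu g(x)\geq 0$ for all $x$, which is \eqref{exist lambda}. If $\operatorname{cl}(K)$ had empty interior, it would lie in a line, and any normal to that line through the origin works; the same conclusion holds.

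\textbf{Uniqueness.} Suppose $(\lambda',\mu')\neq(0,0)$ also satisfies \eqref{exist lambda}. First assume $(\lambda,\mu)$ and $(\lambda',\mu')$ are linearly independent. Since $(0,0)\notin K\supset (f,g)(\mathbb{R}^n)$, Lemma \ref{siang=>giam} gives
\[
(\lambda+\lambda')f(x)+(\mu+\mu')g(x)>0 \quad\text{for all } x .
\]
Linear independence forces $(\lambda+\lambda',\mu+\mu')\neq(0,0)$, so statement (a) of Proposition \ref{0-t_i-gu_a} holds. By that proposition, $(0,0)\notin\operatorname{cl}(K)$, which contradicts the hypothesis. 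Hence $(\lambda',\mu')=t(\lambda,\mu)$ for some $t\neq 0$.

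It remains to exclude $t<0$. If $t<0$, then both $\lambda f+\mu g\geq 0$ and $-(\lambda f+\mu g)\geq 0$ hold, so $\lambda f+\mu g\equiv 0$ on $\mathbb{R}^n$. Then $(f,g)(\mathbb{R}^n)$ lies in the line $\ell=\{\lambda r+\mu s=0\}$ through the origin, and $K$ is a subset of $\ell$. Since $\mathbb{R}^n$ is connected and $(f,g)$ is continuous, $(f,g)(\mathbb{R}^n)$ is a connected subset of $\ell$, that is, an interval, and hence already convex, so $K=(f,g)(\mathbb{R}^n)$.

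This case is the main obstacle: we must rule out $(0,0)$ being a missing endpoint of this interval, which is a genuinely quadratic fact rather than a convexity fact. Choose $(\lambda',\mu')$ with $\lambda\mu'-\mu\lambda'\neq 0$, and set $h=\lambda' f+\mu' g$. Then $(f,g)(x)$ is determined linearly by $h(x)$, since the other coordinate $\lambda f+\mu g$ vanishes, and the hypothesis becomes $0\in\operatorname{cl}(h(\mathbb{R}^n))\setminus h(\mathbb{R}^n)$. Since $0\notin h(\mathbb{R}^n)$ and the image is an interval, $h$ has constant sign, say $h>0$ after replacing $h$ by $-h$ if needed. Then $h$ is a quadratic function bounded below, so by Lemma \ref{QP0} it attains its infimum, which equals $0$. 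Thus $0\in h(\mathbb{R}^n)$, a contradiction. The same argument covers the case $h<0$ via $-h$. Therefore $t>0$, completing the proof.
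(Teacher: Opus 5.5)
Your proof is correct and follows essentially the same route as the paper's. A hyperplane gives existence; Lemma \ref{siang=>giam} plus Proposition \ref{0-t_i-gu_a} force linear dependence; in the opposite-sign case $\lambda f+\mu g\equiv 0$, a complementary combination never vanishes, so it has constant sign, and by quadratic attainment this contradicts $(0,0)\in\operatorname{cl}(\operatorname{conv}((f,g)(\mathbb{R}^n)))$. The differences are only cosmetic: the paper separates $\{(0,0)\}$ directly from $\operatorname{conv}((f,g)(\mathbb{R}^n))$ (no closure/interior argument needed), it uses the specific combination $\mu f-\lambda g$, and it invokes Proposition \ref{0-t_i-gu_a} where you call Lemma \ref{QP0} directly (also, reusing the name $(\lambda',\mu')$ for your auxiliary pair clashes with the earlier one).
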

\begin{proof}
Since \( \{(0,0)\} \) and \( \operatorname{conv}((f,g)(\mathbb{R} ^n)) \) are both convex and disjoint, by the separating hyperplane theorem, there exists \( (\lambda ,\mu )\neq (0,0) \) and \( \nu \in \mathbb{R} \) such that
\begin{align}
\operatorname{conv}((f,g)(\mathbb{R} ^n)) &\subset \{(r,s)\in \mathbb{R} ^2 \mid \lambda r+\mu s \geq \nu \};\label{exist lambda mu}\\
\{(0,0)\} &\subset \{(r,s)\in \mathbb{R} ^2 \mid \lambda r+\mu s \leq \nu \}.\label{0<mu}
\end{align}
By \eqref{0<mu}, \( \nu \geq 0 \), which, together with \eqref{exist lambda mu}, gives
\[
(f,g)(\mathbb{R} ^n)\subset \operatorname{conv}((f,g)(\mathbb{R} ^n))\subset \{(r,s)\in \mathbb{R} ^2 \mid \lambda r+\mu s \geq \nu \geq 0\}.
\]
It is exactly \eqref{exist lambda}.

For uniqueness, suppose that \( (\lambda ^\prime,\mu ^\prime)\neq (0,0) \) also satisfies \eqref{exist lambda} but there is no \( t\in \mathbb{R} \) such that \( (\lambda ^\prime,\mu ^\prime)=(t\lambda ,t\mu ) \). By Lemma \ref{siang=>giam},
\[
(f,g)(\mathbb{R} ^n) \subset \{(r,s)\in \mathbb{R} ^2 \mid (\lambda +\lambda ^\prime)r+(\mu +\mu ^\prime) s> 0\},
\]
which also implies that $(\lambda +\lambda ^\prime)$ and $(\mu +\mu ^\prime)$ cannot be both zero. By the equivalence of Proposition \ref{0-t_i-gu_a} $(a)$ and $(c)$, we know that \( (0,0)\notin \operatorname{cl}(\operatorname{conv}((f,g)(\mathbb{R} ^n))) \), a contradiction to the assumption of this lemma. So, $(\lambda ,\mu )$ and \( (\lambda ^\prime,\mu ^\prime)\) must be linearly dependent.

Secondly, suppose that, for some $t>0,$ \( (\lambda ^\prime,\mu ^\prime)=(-t\lambda ,-t\mu )\neq (0,0) \) also satisfies 
$$\lambda ^\prime f(x)+\mu ^\prime g(x)=-t\lambda f(x)-t\mu g(x)\geq 0,\; \;\forall \, x\in \mathbb{R} ^n.$$
It implies that $\lambda f(x)+\mu g(x)\leq 0,~\forall x\in \mathbb{R} ^n,$ and forces that \( \lambda f(x)+\mu g(x)=0 \) for all \( x\in \mathbb{R} ^n \). We claim that \( \mu f(x)-\lambda g(x)\neq 0 \) for all \( x\in \mathbb{R} ^n \).

Otherwise, let \( \bar x\in \mathbb{R} ^n \) be such that \( \lambda f(\bar x)+\mu g(\bar x)=\mu f(\bar x)-\lambda g(\bar x)=0 \). Then,
\[
\begin{bmatrix}
\lambda & \mu \\
\mu & -\lambda 
\end{bmatrix}
\begin{bmatrix}
f(\bar x)\\
g(\bar x)
\end{bmatrix}
=
\begin{bmatrix}
0 \\ 0
\end{bmatrix}.
\]
By \( \det\begin{bmatrix}\lambda & \mu \\\mu & -\lambda \end{bmatrix}=-\lambda ^2-\mu ^2<0 \), it follows that \( f(\bar x)=g(\bar x)=0 \) and \( (0,0)\in (f,g)(\mathbb{R} ^n) \), which contradicts the assumption that \( (0,0)\notin \operatorname{conv}((f,g)(\mathbb{R} ^n)) \).

Now, we know that \( \mu f(x)-\lambda g(x)\neq 0,~\forall x\in \mathbb{R} ^n \), provided that \( (\lambda ',\mu ')=(-t\lambda ,-t\mu )\neq (0,0) \) for some \( t>0 \) satisfying \eqref{exist lambda}. Since \( \mu f(x)-\lambda g(x) \) is a continuous function of \( x \), by Intermediate Value Theorem, \( \mu f(x)-\lambda g(x) \) must be either strictly positive or strictly negative. In either case, Proposition \ref{0-t_i-gu_a} $(a)$ leads to the fact that \( (0,0)\notin \operatorname{cl}(\operatorname{conv}((f,g)(\mathbb{R} ^n))) \), which again contradicts the assumption of this lemma that \( (0,0)\in \operatorname{cl}(\operatorname{conv}((f,g)(\mathbb{R} ^n)))\setminus \operatorname{conv}((f,g)(\mathbb{R} ^n)) \). The lemma is thus proved.
\end{proof}

Finally, Theorem \ref{NHFC} (C2) is stated and proved as follows.

\begin{proposition}[Theorem \ref{NHFC} (C2)]\label{iff boundary}
\( (0,0)\in \operatorname{cl}(\operatorname{conv}((f,g)(\mathbb{R} ^n)))\setminus \operatorname{conv}((f,g)(\mathbb{R} ^n)) \) if and only if the following two statements hold.
\begin{enumerate}
    \item There exists \( (\lambda ,\mu )\neq (0,0) \), unique up to multiplication by a positive constant, such that the Lagrangian function \( \lambda f(x)+\mu g(x)\geq 0,~\forall x\in \mathbb{R} ^n. \) Moreover, \( \lambda f(x)+\mu g(x) \) attains its minimal value 0.\label{gu^an 2}
    \item On the minimum solution set of \( \lambda f(x)+\mu g(x) \),
\[
\mathcal W=\{\lambda f+\mu g=0\}=\{\lambda \nabla f+\mu \nabla g=0\},
\]
there is either \( (\mu f-\lambda g)|_{\mathcal W}>0 \) or \( (\mu f-\lambda g)|_{\mathcal W}<0 \).\label{gu^an 3}
\end{enumerate}
\end{proposition}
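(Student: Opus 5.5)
Both directions can be handled in the plane, using only the convex geometry of $(f,g)(\mathbb{R}^n)$ and the earlier propositions.

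\textbf{Necessity.} Assume $(0,0)\in\operatorname{cl}(\operatorname{conv}((f,g)(\mathbb{R}^n)))\setminus\operatorname{conv}((f,g)(\mathbb{R}^n))$.

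First, Lemma \ref{nonclosed} gives $(\lambda,\mu)\neq(0,0)$, unique up to a positive scalar, with $\lambda f+\mu g\ge 0$ on $\mathbb{R}^n$.

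Second, this Lagrangian attains the value $0$. By Lemma \ref{QP0} the quadratic $\lambda f+\mu g$ is bounded below and attains its minimum $m\ge 0$. If $m>0$, then (a) of Proposition \ref{0-t_i-gu_a} holds, so $(0,0)\notin\operatorname{cl}(\operatorname{conv}(\cdot))$, a contradiction. Hence $m=0$. Since $0$ is the minimum, $\mathcal W=\{\lambda f+\mu g=0\}$ is the set of minimizers, which equals the critical set $\{\lambda\nabla f+\mu\nabla g=0\}$ by convexity of a bounded-below quadratic.

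Third, on $\mathcal W$ the point $(f,g)$ lies on the line $\lambda r+\mu s=0$ and differs from $(0,0)$. The vector $(\mu,-\lambda)$ spans this line, so $\mu f-\lambda g\neq 0$ on $\mathcal W$. Now $\mathcal W$ is the minimizer set of a convex quadratic, i.e.\ an affine subspace, hence connected. By the Intermediate Value Theorem, $\mu f-\lambda g$ has constant sign on $\mathcal W$.

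\textbf{Sufficiency.} Assume (a) and (b), and without loss of generality $(\mu f-\lambda g)|_{\mathcal W}>0$.

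First, $(0,0)\notin\operatorname{conv}((f,g)(\mathbb{R}^n))$. The image lies in the closed half-plane $H=\{\lambda r+\mu s\ge 0\}$. A convex combination landing on the boundary line of $H$ must use only image points on that line, i.e.\ points $(f(x),g(x))$ with $x\in\mathcal W$. All of these lie on the open ray $\{t(\mu,-\lambda)\mid t>0\}$. Any convex combination of points on this ray stays on the ray, so it is never $(0,0)$.

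Second, $(0,0)\in\operatorname{cl}(\operatorname{conv}(\cdot))$. Suppose not. Then (c) of Proposition \ref{0-t_i-gu_a} holds, and hence (b) of that proposition: there are $(\lambda',\mu')\neq(0,0)$ and $\nu>0$ with $\lambda' f+\mu' g\ge\nu$. If $(\lambda',\mu')$ is independent of $(\lambda,\mu)$, then for every $\epsilon>0$ the combination $(\lambda,\mu)+\epsilon(\lambda',\mu')$ is also a nonnegative Lagrangian. This contradicts the uniqueness in (a). If it is dependent with a positive factor, then $\lambda f+\mu g\ge c\nu>0$, which contradicts attainment of $0$. If it is dependent with a negative factor, then $\lambda f+\mu g\le -c\nu<0$, which contradicts $\lambda f+\mu g\ge 0$.

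The main obstacle is the closure claim in this last step. It hinges on reading ``unique up to positive scaling'' in (a) as excluding every non-proportional multiplier, which is exactly what the perturbation $(\lambda,\mu)+\epsilon(\lambda',\mu')$ exploits. One should also verify that this step never uses $\mathcal W$ being a line segment; it does not, since only the ray geometry of the image of $\mathcal W$ enters.
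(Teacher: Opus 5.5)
Your proposal is correct and follows essentially the paper's route: Lemma \ref{nonclosed} and Proposition \ref{0-t_i-gu_a} for necessity, then a contradiction in sufficiency split into the cases $(0,0)\in\operatorname{conv}$ and $(0,0)\notin\operatorname{cl}(\operatorname{conv})$. The deviations are minor. You exclude $(0,0)\in\operatorname{conv}$ by a supporting-line argument on arbitrary finite convex combinations rather than the paper's Fenchel--Eggleston two-point reduction, and your three-case split in the closure step is more than needed, since uniqueness applied directly to $(\lambda',\mu')$ already forces it to be a positive multiple of $(\lambda,\mu)$.
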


\begin{proof}
\textbf{(Necessity)} Suppose that \( (0,0)\in \operatorname{cl}(\operatorname{conv}((f,g)(\mathbb{R} ^n)))\setminus \operatorname{conv}((f,g)(\mathbb{R} ^n)) \). By Lemma \ref{nonclosed}, the Lagrangian function \( \lambda f(x)+\mu g(x) \) must be non-negative. However, from the equivalence of statements~(c) and~(a) in
Proposition~\ref{0-t_i-gu_a}, the assumption that \( (0,0)\in \operatorname{cl}(\operatorname{conv}((f,g)(\mathbb{R} ^n))) \) implies that the Lagrangian function \( \lambda f(x)+\mu g(x) \) cannot be strictly positive. There exists at least one \( \bar x\in \mathbb{R} ^n \) such that \( \lambda f(\bar x)+\mu g(\bar x)=0 \). The point \( \bar x \) is a minimum solution of \( \lambda f+\mu g \) on \( \mathbb{R} ^n \).

For the statement $\ref{gu^an 3}$, Suppose, to the contrary, that there would exist another minimizer \( \hat x\in \mathcal W \) of the Lagrangian function $\lambda f(x)+\mu g(x)$ such that \( -\mu f(\hat x)+\lambda g(\hat x)=0 \). Then,
\[
\left\{
\begin{aligned}
 \lambda f(\hat x)+\mu g(\hat x) &=0;\\
-\mu f(\hat x)+\lambda g(\hat x) &=0.
\end{aligned}
\right .
\]
Since \( \lambda ^2+\mu ^2\neq 0 \), it implies that
\( f(\hat x)=g(\hat x)=0 \), which contradicts the assumption that \( (0,0)\notin \operatorname{conv}((f,g)(\mathbb{R} ^n)) \). The image of the restriction of \( -\mu f+\lambda g \) to \( \mathcal W=\{x \mid \lambda f(x)+\mu g(x)=0\} \) does not contain $0.$

However, the minimum solution set $\mathcal W$ of the Lagrangian function \( \lambda f(x)+\mu g(x) \) is affine, and thus is connected. So is the image set \( (-\mu f+\lambda g)(\mathcal W) \). As \( 0\notin (-\mu f+\lambda g)(\mathcal W) \), the values of \(-\mu f+\lambda g\) on \( \mathcal W \) must be either strictly positive or strictly negative.

\noindent \textbf{(Sufficiency)} Let us prove by contradiction. Suppose, to the contrary, that \( (0,0) \notin \operatorname{cl}(\operatorname{conv}((f,g)(\mathbb{R} ^n)))\setminus \operatorname{conv}((f,g)(\mathbb{R} ^n)) \). Namely, either \( (0,0)\in \operatorname{conv}((f,g)(\mathbb{R} ^n)) \) or \( (0,0)\notin \operatorname{cl}(\operatorname{conv}((f,g)(\mathbb{R} ^n))) \).

If \( (0,0)\in \operatorname{conv}((f,g)(\mathbb{R} ^n)) \), there exists \( (r,s),(-tr,-ts) \in (f,g)(\mathbb{R} ^n) \), \( t>0 \), and \( u,v\in \mathbb{R} ^n \) such that
\begin{equation}\label{(f,g)(u,v)}
(f(u),g(u))=(r,s), \, (f(v),g(v))=(-tr,-ts),~t>0
\end{equation}
and $(0,0)$ is a convex combination of $(r,s)$ and $(-tr,-ts).$
By $\ref{gu^an 2}$, the Lagrangian function \( \lambda f(x)+\mu g(x) \) is non-negative, so that
\[
\lambda r+\mu s\geq 0, \lambda (-tr)+\mu (-ts)\geq 0,~t>0.
\]
As a result, \( \lambda r+\mu s=0 \) and also $\lambda (-tr)+\mu (-ts)= 0$ so that
\[
\lambda f(u)+\mu g(u)=\lambda f(v)+\mu g(v)=0,
\]
which implies that \( u,v\in \mathcal W \). Moreover, by \eqref{(f,g)(u,v)},
\begin{align*}
(-\mu f+\lambda g)(u)\cdot (-\mu f+\lambda g)(v) &= (-\mu r+\lambda s)\cdot (-\mu (-tr)+\lambda (-ts))\\
&= -t(-\mu r+\lambda s)^2 \leq 0,
\end{align*}
which contradicts the statement $(b)$ of this proposition.

On the other hand, suppose that \( (0,0)\notin \operatorname{cl}(\operatorname{conv}((f,g)(\mathbb{R} ^n))) \). By Proposition \ref{0-t_i-gu_a}, there exists \( (\lambda ',\mu ')\neq (0,0) \) such that \( \lambda ' f(x)+\mu ' g(x)>0 \) for all \( x\in \mathbb{R} ^n \). Since the Lagrange multipliers are assumed to be unique up to multiplication by a positive constant, all possible Lagrangian functions \( \lambda f(x)+\mu g(x) \) must be strictly positive on \( x\in \mathbb{R} ^n \). It, however, contradicts the assumption that there exists a Lagrangian function \( \lambda f(x)+\mu g(x) \) which attains its minimum value 0. The proof for the sufficiency of this proposition is thus complete.
\end{proof}



\section{Proof for the nonhomogeneous Strict Finsler Lemma, Theorem \ref{NHSF}}
\label{pf:NHSF}

Our second proposal in this paper is to resolve the nonhomogeneous version of the strict Finsler lemma [Str.-Finsler], which asks to characterize
\[
\{f\leq 0\}\cap \{g=0\}=\emptyset ,~\text{or equivalently}~((-\infty ,0]\times \{0\}) \cap (f,g)(\mathbb{R} ^n) = \emptyset .
\]
Following the same scheme to analyze the nonhomogeneous version of [Calabi], we divide $((-\infty ,0]\times \{0\}) \cap (f,g)(\mathbb{R} ^n) = \emptyset $ into three exclusive cases.
\begin{itemize}
  \item[$\dagger $] (F1) in Theorem \ref{NHSF}: \(((-\infty ,0]\times \{0\}) \cap \operatorname{cl}(\operatorname{conv}((f,g)(\mathbb{R} ^n))) = \emptyset \). See Proposition \ref{su`ann gu_a} below.
  \item[$\dagger $] (F2) in Theorem \ref{NHSF}: 
\[
((-\infty ,0]\times \{0\}) \cap \operatorname{cl}(\operatorname{conv}((f,g)(\mathbb{R} ^n))) \neq \emptyset ; ((-\infty ,0]\times \{0\}) \cap \operatorname{conv}((f,g)(\mathbb{R} ^n)) = \emptyset .
\]
See Proposition \ref{tsu^an kh`o} below.
  \item[$\dagger $] (F3) in Theorem \ref{NHSF}: 
\[
((-\infty ,0]\times \{0\}) \cap \operatorname{conv}((f,g)(\mathbb{R} ^n)) \neq \emptyset ; ((-\infty ,0]\times \{0\}) \cap (f,g)(\mathbb{R} ^n) = \emptyset .
\]
See Proposition \ref{F2 prop} below.
\end{itemize}

In the same manner, we begin by proving (F3), before turning to (F1) and (F2).

\subsection{Proof of Theorem \ref{NHSF} (F3): $((-\infty ,0]\times \{0\}) \cap \operatorname{conv}((f,g)(\mathbb{R} ^n)) \ne \emptyset $ but $((-\infty ,0]\times \{0\}) \cap (f,g)(\mathbb{R} ^n) =\emptyset $}

The main focus of this subsection is to show that this case occurs if and only if the $0$-level set \( \{g=0\} \) separates the lower level set \( \{f\leq 0\} \). To facilitate the proof, we need the following lemma.

\begin{lemma}
\label{sep with aff}
If \( \{g=0\} \) separates \( \{f\mathbin{\star }0\} \), where \( \star \in \{<,\leq \} \), then \( \{g=0\} \) separates \( \{f-\zeta g\mathbin{\star }0\} \) for all \( \zeta \in \mathbb{R} \).
\end{lemma}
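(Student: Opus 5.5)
Let $h_\zeta := f-\zeta g$. The plan is to verify the two conditions of Lemma \ref{eq sep} for the pair $(h_\zeta,g)$: (1) $\{h_\zeta\star 0\}\cap\{g=0\}=\emptyset$, and (2) there exist $u,v\in\{h_\zeta\star 0\}$ with $g(u)<0<g(v)$.

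Condition (1) is immediate. On $\{g=0\}$ we have $h_\zeta=f$, so
\[
\{h_\zeta\star 0\}\cap\{g=0\}=\{f\star 0\}\cap\{g=0\}=\emptyset .
\]
The last equality is \eqref{f=g=0}, which holds by Lemma \ref{eq sep} because $\{g=0\}$ separates $\{f\star0\}$.

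Condition (2) is the real content. By Lemma \ref{Tsu prop 1}, $g$ is a non-constant affine function, say $g(x)=2b^Tx+b_0$ with $b\neq 0$. Also, by Lemma \ref{eq sep}, there are $u_0,v_0\in\{f\star0\}$ with $g(u_0)<0<g(v_0)$. The sign of $\zeta$ decides which side is easy.
\begin{itemize}
\item If $\zeta\ge0$: $h_\zeta(v_0)=f(v_0)-\zeta g(v_0)\le f(v_0)$, so $v_0$ already lies in $\{h_\zeta\star0\}$. It remains to find a point of $\{h_\zeta\star0\}$ in $\{g<0\}$.
\item If $\zeta<0$: symmetrically $u_0$ works, and a point in $\{g>0\}$ is needed.
\end{itemize}

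For the missing side (say $\zeta\ge0$, needing a point in $\{g<0\}$), I would move off $u_0$ along the ray $u_0-tb$, $t\ge 0$:
\begin{itemize}
\item $g$ strictly decreases along the ray, so the whole ray stays in $\{g<0\}$;
\item $-\zeta g(u_0-tb)$ is positive and grows linearly in $t$, which could push $h_\zeta$ above $0$.
\end{itemize}

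The main obstacle is showing that some point of the ray, or more generally some point of $\{g<0\}$, still satisfies $h_\zeta\star 0$. I would argue by contradiction. Suppose $\{h_\zeta\star0\}\subset\{g\ge 0\}$. By (1) this sharpens to $\{h_\zeta\star0\}\subset\{g>0\}$, i.e. $g>0$ wherever $f\star\zeta g$. Since $\zeta g<0$ on $\{g<0\}$, this forces $f>\zeta g$ on $\{g<0\}$, which is compatible with $f(u_0)\star 0$ and gives no contradiction yet.

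To close the gap I would use the structure of the separated set instead. Because $\{g=0\}$ separates $\{f\star0\}$, the component $L^-$ lies in the open half-space $\{g<0\}$. The known structure results behind Lemma \ref{Tsu prop 1} (Theorem 1 of \cite{nguyen2019geometric}) say that $f$ restricted to lines parallel to $b$ is strictly convex with $A$ having exactly one negative direction. From this, each component $L^\pm$ is unbounded in the $\mp b$ direction, with $f\to-\infty$ quadratically along a suitable direction $d$ satisfying $d^TAd<0$ and $b^Td\ne0$.

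Taking $x=u_0+td$ with the sign of $d$ chosen so that $b^Td<0$:
\begin{itemize}
\item $f(x)$ contains the term $t^2 d^TAd\to-\infty$;
\item $\zeta g(x)$ grows only linearly in $t$.
\end{itemize}
Hence $h_\zeta(x)<0$ for large $t$, while $g(x)<0$. This provides the point in $\{g<0\}\cap\{h_\zeta\star 0\}$ (it works for both $\star=<$ and $\star=\le$).

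The delicate step is justifying the existence of such a $d$, namely a negative-curvature direction of $A$ not orthogonal to $b$. I would derive it as follows. If every $d$ with $d^TAd<0$ satisfied $b^Td=0$, then $A$ would be positive semidefinite on the complement of $\mathcal N(b^T)$ in an appropriate sense, and $\{f\star0\}$ would be connected across $\{g=0\}$. That contradicts the separation, via the same reasoning as in Lemma \ref{Tsu prop 1}.
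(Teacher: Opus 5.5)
Your proposal is correct in outline, but it takes a different route from the paper.

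\textbf{The paper's route.} The paper stays in the joint-numerical-range framework. From Lemma \ref{h!ap} it obtains $(r,0)\in\operatorname{conv}((f,g)(\mathbb{R}^n))\setminus(f,g)(\mathbb{R}^n)$ with $r\star 0$, and translates this point to the origin. Proposition \ref{C2 prop} then shows that $\{g=0\}$ separates $\{f-r=0\}$, using that $g$ is affine. Finally, the quoted invariance result Lemma \ref{cor 2.9 ng}(b) transfers this to $\{f-\zeta g-r=0\}\subset\{f-\zeta g\star 0\}$.

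\textbf{Your route.} You check the two conditions of Lemma \ref{eq sep} directly. Condition (1) follows from $f-\zeta g=f$ on $\{g=0\}$. Condition (2) follows from a growth argument along a direction $d$ with $d^TAd<0$ and $b^Td\neq 0$, which needs only that $g$ is affine (Lemma \ref{Tsu prop 1}).

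\textbf{Comparison.} Your argument is more elementary: it bypasses Fenchel--Eggleston, Proposition \ref{C2 prop} and Lemma \ref{cor 2.9 ng} entirely. It also shows your case split via $u_0,v_0$ is unnecessary. Starting from any point and moving along $\pm d$ drives $f-\zeta g\to-\infty$ quadratically, while $g$ is linear in $t$ and so tends to $\pm\infty$. Hence condition (2) holds automatically for every $\zeta$, and the real content of the lemma is just condition (1). The paper's route, in exchange, gives the finer statement that $\{g=0\}$ separates the level set $\{f-\zeta g=r\}$, and it fits the convex-hull framework used throughout.

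\textbf{Tighten the step you call delicate.} Your justification there (``positive semidefinite on the complement of $\mathcal N(b^T)$ in an appropriate sense'') is too vague to count as a proof, but the fact is easy:
\begin{itemize}
\item If $A\succeq 0$, then $f$ is convex, so $\{f\star 0\}$ is convex and contains the segment $[u_0,v_0]$.
\item By the intermediate value theorem this segment meets $\{g=0\}$, contradicting (1). So $A\not\succeq 0$.
\item Hence $\{d\mid d^TAd<0\}$ is a nonempty open cone. It therefore cannot lie inside the hyperplane $\mathcal N(b^T)$, and replacing $d$ by $-d$ fixes the sign of $b^Td$.
\end{itemize}

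\textbf{Drop the side remarks attributed to \cite{nguyen2019geometric}.} First, $f$ need not be convex along lines parallel to $b$. In Example \ref{eg:CF3} one has $b=(1,-\tfrac12)^T$, $A=\operatorname{diag}(-1,1)$ and $b^TAb=-\tfrac34<0$. Second, $L^-\subset\{g<0\}$ cannot be unbounded in the $+b$ direction, so the signs in ``$L^\pm$ unbounded in the $\mp b$ direction'' are reversed. Neither remark is used in your argument.
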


\begin{proof}
Since \( \{g=0\} \) separates \( \{f\mathbin{\star }0\} \) with \( \star \in \{<,\leq \} \) (note that $\star $ cannot be $=$ in this lemma), there are the following properties:
\begin{eqnarray}
&& \text{by \eqref{f=g=0} in Lemma \ref{eq sep}}, \{f\mathbin{\star } 0\}\cap \{g=0\}=\emptyset ;\\
&& \text{by Lemma \ref{h!ap}~(a) and (b)}, (\exists r\mathbin{\star }0)~(r,0)\in \operatorname{conv}((f,g)(\mathbb{R} ^n))\setminus (f,g)
(\mathbb{R} ^n).\label{sep-3}\\
&& \text{by Lemma \ref{Tsu prop 1}}, g~\text{is a non-constant affine function;}\label{sep-1}
\end{eqnarray}
From \eqref{sep-3}, a translation in $\mathbb{R} ^2$ by $(-r,0)$ gives
\[
(0,0)\in \operatorname{conv}((f-r,g)(\mathbb{R} ^n))\setminus (f-r,g)(\mathbb{R} ^n),
\]
which implies, by Proposition \ref{C2 prop}, that \( f-r \) and \( g \) have the separation property. Since $g$ is affine by \eqref{sep-1}, there must be \( \{g=0\} \) to separate \( \{f-r=0\} \) but not the other way around.

Choose an arbitrary \( \zeta \in \mathbb{R} \). We can rewrite \[ \{g=0\} =\{0\cdot (f-\zeta g-r)+g=0\} \] and \[ \{f-r=0\}= \{(f-\zeta g-r)+\zeta g=0\}. \] Thus the statement that \( \{g=0\} \) separates \( \{f-r=0\} \) becomes that
\[ \{0\cdot (f-\zeta g-r)+g=0\} ~{\rm separates} ~\{(f-\zeta g-r)+\zeta g=0\}. \] By Lemma
\ref{cor 2.9 ng} (b) and the fact that $g$ is affine, the 0-level set $\{g=0\}$ separates $\{f-\zeta g-r=0\}.$
By Lemma \ref{eq sep}, 
\[
(\exists u,v\in \mathbb{R} ^n)~ u,v\in \{f-\zeta g-r=0\}\subset \{f-\zeta g\mathbin{\star } 0\} \text{ such that } g(u)<0<g(v).
\]
Moreover, \( \{f-\zeta g\mathbin{\star } 0\}\cap \{g=0\}=\{f\mathbin{\star } 0\}\cap \{g=0\}=\emptyset \). It shows that \( \{g=0\} \) separates \( \{f-\zeta g\mathbin{\star }0\} \) for all \( \zeta \in \mathbb{R} \).
\end{proof}

\begin{proposition}
\label{F2 prop}
The $0$-level set \( \{g=0\} \) separates \( \{f\leq 0\} \) if and only if
\[
 ((-\infty ,0]\times \{0\})\cap \operatorname{conv}((f,g)(\mathbb{R} ^n))\neq \emptyset \text{ but }((-\infty ,0]\times \{0\})\cap (f,g)(\mathbb{R} ^n)=\emptyset .
\]
\end{proposition}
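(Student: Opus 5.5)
The necessity direction is a direct application of Lemma \ref{h!ap}(ii). If \( \{g=0\} \) separates \( \{f\leq 0\} \), then (ii) with ``$\star$'' being ``$\leq$'' gives \( ((-\infty,0]\times\{0\})\cap (f,g)(\mathbb{R}^n)=\emptyset \). It also gives \( ((-\infty,0]\times\{0\})\cap \operatorname{conv}((f,g)(\mathbb{R}^n))\neq\emptyset \). So nothing beyond quoting the lemma is needed there.

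For sufficiency, I will use the equivalent formulation of separation in Lemma \ref{eq sep}, with ``$\star$'' being ``$\leq$''.

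\textbf{Condition \eqref{f=g=0}.} The hypothesis \( ((-\infty,0]\times\{0\})\cap (f,g)(\mathbb{R}^n)=\emptyset \) is literally \( \{f\leq 0\}\cap\{g=0\}=\emptyset \), so this condition holds immediately.

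\textbf{The two-sided condition.} It remains to produce \( u,v\in\{f\leq 0\} \) with \( g(u)<0<g(v) \). Take a point \( (r,0)\in \operatorname{conv}((f,g)(\mathbb{R}^n)) \) with \( r\leq 0 \). By the Fenchel--Eggleston theorem (Lemma \ref{fenchel-eggleston}), \( (r,0) \) is a convex combination
\[
(r,0)=\theta (f(u),g(u))+(1-\theta)(f(v),g(v)), \qquad \theta\in[0,1].
\]
If only one point is needed, or if \( \theta\in\{0,1\} \), then \( (r,0)\in(f,g)(\mathbb{R}^n) \), which contradicts the hypothesis. So \( \theta\in(0,1) \). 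Likewise neither \( g(u) \) nor \( g(v) \) can vanish, since a vanishing one forces the other to vanish and puts \( (r,0) \) on the segment of points \( (\cdot,0) \) in the image, again a contradiction. Hence, after relabeling, \( g(u)<0<g(v) \).

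\textbf{Main obstacle: getting \( f(u),f(v)\leq 0 \).} We only know \( \theta f(u)+(1-\theta)f(v)=r\leq 0 \), so one of \( f(u),f(v) \) might be positive. The fix is a connectedness argument. Suppose, say, \( f(v)>0 \). Then \( f(u)<0 \), because \( r\le 0 \) forces at least one value to be \( \le 0 \) and the strict inequality follows from the weighted sum. Consider the path-connected set \( \{g>0\} \), an open half-space or the complement of a quadric. On it look at \( f \) along a path from \( v \) toward \( u \). Since \( g(u)<0 \), the path must cross \( \{g=0\} \) at some point \( w \), and there \( f(w)>0 \) because \( \{f\le 0\}\cap\{g=0\}=\emptyset \).

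This alone does not close the argument, so I would instead parametrize over the line through \( u \) and \( v \). Restricted to the segment from \( u \) to \( v \), the map \( t\mapsto (f,g)(u+t(v-u)) \) is a planar curve from \( (f(u),g(u)) \) to \( (f(v),g(v)) \). It crosses the axis \( \{s=0\} \) only at positive first coordinates. The segment joining the endpoints crosses at \( (r,0) \) with \( r\le 0 \). Combining these with the two endpoints, I would pick the crossing structure to find a different pair of image points on opposite sides of the axis, both with nonpositive first coordinate.

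A cleaner route I would try first avoids this geometry. Translate by \( (-r,0) \), so that \( (0,0)\in\operatorname{conv}((f-r,g)(\mathbb{R}^n))\setminus(f-r,g)(\mathbb{R}^n) \), using \( \{f-r=0\}\cap\{g=0\}\subset\{f\le0\}\cap\{g=0\}=\emptyset \). By Proposition \ref{C2 prop}, \( f-r \) and \( g \) have the separation property, and by Lemma \ref{cor 2.9 ng} and Lemma \ref{linear comb sep}:
\begin{itemize}
\item if \( g \) is affine, then \( \{g=0\} \) separates \( \{f=r\} \), yielding \( u,v\in\{f=r\}\subset\{f\le0\} \) with \( g(u)<0<g(v) \);
\item if both are genuinely quadratic, then \( \{g=0\} \) separates \( \{f-r=0\} \) mutually, giving the same conclusion.
\end{itemize}
Either way, Lemma \ref{eq sep} finishes the proof. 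This route handles the obstacle cleanly, and I expect the only delicate point is verifying that the case split from Lemma \ref{cor 2.9 ng} always yields \( \{g=0\} \) separating \( \{f=r\} \), rather than only the reverse.
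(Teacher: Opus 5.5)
Your necessity direction is fine, and the translation route is the right idea, but as written it has a hole at exactly the point you flagged. The case split you take from Lemma~\ref{cor 2.9 ng} is not exhaustive. It omits the case in which $f$ (hence $f-r$) is affine while $g$ is genuinely quadratic. In that case, part (b) of that lemma gives only the reverse direction: $\{f-r=0\}$ separates $\{g=0\}$. That yields no points of $\{f=r\}$ on both sides of $\{g=0\}$; indeed the hyperplane $\{f=r\}$ is connected, so $\{g=0\}$ cannot separate it. This case therefore has to be ruled out, and nothing in your proposal rules it out. The exploratory paragraphs before the translation route do not fill the gap and should be dropped. They are not arguments, and the claim that $\{g>0\}$ is path-connected is false for quadratic $g$ (take $g=x_1^2-1$).

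The repair is one line and makes the affine/quadratic case split unnecessary. Suppose $\{f-r=0\}$ separated $\{g=0\}$. Then Definition~\ref{def sep} would supply $p\in\{g=0\}$ with $f(p)-r<0$, i.e.\ $f(p)<r\le 0$, contradicting $\{f\le 0\}\cap\{g=0\}=\emptyset$. Hence the separation property of $(f-r,g)$ obtained from Proposition~\ref{C2 prop} can only mean that $\{g=0\}$ separates $\{f=r\}$. This gives $u,v\in\{f=r\}\subset\{f\le 0\}$ with $g(u)<0<g(v)$, and Lemma~\ref{eq sep} concludes.

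With this fix your route is correct and a bit more direct than the paper's. The paper keeps the Fenchel--Eggleston pair $u,v$ (with $f(u)\le 0$, hence $g(u)g(v)<0$). It tilts by $\zeta=\frac{f(u)-f(v)}{g(u)-g(v)}$ so that $f-\zeta g$ equals $r$ at both points, and gets from Lemma~\ref{eq sep} that $\{g=0\}$ separates $\{f-\zeta g\le 0\}$. It then removes the tilt with Lemma~\ref{sep with aff}. The proof of that lemma is precisely your translation by $(-r,0)$ plus Proposition~\ref{C2 prop}. There, however, the direction of separation is fixed by Lemma~\ref{Tsu prop 1} ($g$ affine), which is usable only because a separation is already in hand. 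Your direct contradiction with $\{f\le 0\}\cap\{g=0\}=\emptyset$ replaces both the tilt and that appeal.
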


\begin{proof}
The necessary part holds by Lemma \ref{h!ap} (ii). For the sufficient part, let us assume that \( ((-\infty ,0]\times \{0\})\cap (f,g)(\mathbb{R} ^n)=\emptyset \) and \( (r,0)\in \operatorname{conv}((f,g)(\mathbb{R} ^n))\neq \emptyset \) for some \( r\leq 0 \). By the connectedness of \( (f,g)(\mathbb{R} ^n) \) and Fenchel--Eggleston theorem (Lemma \ref{fenchel-eggleston}), let \( u,v\in \mathbb{R} ^n \) such that
\begin{equation}
\label{(r,0)}
(r,0)=t(f(u),g(u))+(1-t)(f(v),g(v)) \text{ for some } 0<t<1.
\end{equation}
Since \( tf(u)+(1-t)f(v)=r\leq 0 \) and $t\in (0,1)$, the two values $f(u)$ and $f(v)$ cannot be both positive. Without loss of generality, let \( f(u)\leq 0 \). Thus, \( g(u)\neq 0 \) because \(((-\infty ,0]\times \{0\})\cap (f,g)(\mathbb{R} ^n)=\emptyset \). 

On the other hand, \eqref{(r,0)} also implies that \( 0=tg(u)+(1-t)g(v) \). Then, 
\begin{equation}\label{g(u)g(v)<0}
g(v)=-\frac t{1-t} g(u) \text{ and } g(u)g(v)=-\frac t{1-t} g(u)^2<0. 
\end{equation}

Define \( \zeta =\frac{f(u)-f(v)}{g(u)-g(v)} \). Then,
\begin{align*}
f(u)-\zeta g(u)
&=f(v)-\zeta g(v)\\
&=t(f(u)-\zeta g(u))+(1-t)(f(v)-\zeta g(v))\\
&=tf(u)+(1-t)f(v)-\zeta (tg(u)+(1-t)g(v))\\
&=r\leq 0.
\end{align*}
Therefore, \( u,v\in \{f-\zeta g\leq 0\} \) and \( g(u)g(v)<0 \) by \eqref{g(u)g(v)<0}. Also,
\[
\{f-\zeta g\leq 0\}\cap \{g=0\}=\{f\leq 0\}\cap \{g=0\}=\emptyset .
\]
By Lemma \ref{eq sep}, \( \{g=0\} \) separates \( \{f-\zeta g\leq 0\} \). By Lemma \ref{sep with aff}, \( \{g=0\} \) separates \( \{(f-\zeta g)+\zeta g\leq 0\}=\{f\leq 0\} \). The proof is complete.
\end{proof}

\subsection{Proof of Theorem \ref{NHSF} (F1): \(((-\infty ,0]\times \{0\}) \cap \operatorname{cl}(\operatorname{conv}((f,g)(\mathbb{R} ^n))) = \emptyset \)}
\begin{proposition}
\label{su`ann gu_a}
Let \( f,g \) be two real quadratic functions. Suppose that $g$ takes both positive and negative values. Then, the following statements are equivalent.
\begin{enumerate}
    \item There exists \( \xi \in \mathbb{R} \) such that \( f(x)+\xi g(x)>0 \) for all \( x\in \mathbb{R} ^n \).\label{su`ann it}
    \item There exists \( \xi \in \mathbb{R} \) and \( \nu >0 \) such that \( f(x)+\xi g(x) \geq \nu \) for all \( x\in \mathbb{R} ^n \). \label{su`ann j_i}
    \item The two sets, \( (-\infty ,0]\times \{0\} \) and \( \operatorname{cl}(\operatorname{conv}((f,g)(\mathbb{R} ^n))) \), are disjoint. \label{su`ann sann}
\end{enumerate}
\end{proposition}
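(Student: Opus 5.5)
I would follow the cycle (a) $\Rightarrow$ (b) $\Rightarrow$ (c) $\Rightarrow$ (a), mirroring the proof of Proposition \ref{0-t_i-gu_a}. The new ingredient is that the compact point $\{(0,0)\}$ is replaced by the closed half-line $(-\infty ,0]\times \{0\}$. The separating functional must then have a nonzero, correctly signed coefficient on the $f$-coordinate so that it can be normalized to $f+\xi g$. The hypothesis that $g$ takes both signs will be used to rule out the degenerate direction $(0,\mu )$.

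\textbf{(a) $\Rightarrow$ (b).} Since $f+\xi g>0$ on $\mathbb{R} ^n$, the quadratic $f+\xi g$ is bounded below. By Lemma \ref{QP0} it attains its infimum at some $\bar x$, so $\nu :=f(\bar x)+\xi g(\bar x)>0$ works.

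\textbf{(b) $\Rightarrow$ (c).} By (b), $(f,g)(\mathbb{R} ^n)\subset H:=\{(r,s)\mid r+\xi s\geq \nu \}$. Since $H$ is closed and convex, also $\operatorname{cl}(\operatorname{conv}((f,g)(\mathbb{R} ^n)))\subset H$. Any point $(r,0)$ with $r\le 0$ gives $r+\xi\cdot 0=r\le 0<\nu $, so it lies outside $H$, and the two sets are disjoint.

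\textbf{(c) $\Rightarrow$ (a).} This is the main step. Let $K=\operatorname{cl}(\operatorname{conv}((f,g)(\mathbb{R} ^n)))$ and $R=(-\infty ,0]\times \{0\}$. Both are closed and convex, but neither is compact, so strict separation is not automatic.

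My first attempt is to show $\operatorname{dist}(R,K)>0$ and then apply the strict separation theorem. If the distance fails to be positive, $K$ approaches $R$ asymptotically along the direction $(-1,0)$. I would handle this through recession cones. If $0_+K\cap(-0_+R)$ is trivial, i.e.\ $(1,0)\notin 0_+K$, then $R-K$ is closed (Rockafellar, Cor.\ 9.1.2), and $(0,0)\notin R-K$ gives strict separation.

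If instead $(1,0)\in 0_+K$, we can still separate properly. This gives $(\lambda ,\mu )\neq (0,0)$ and $\nu $ with $\lambda r+\mu s\ge \nu $ on $K$ and $\lambda r\le \nu $ for all $r\le 0$.

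Alternatively, I would use the weak separation from the start and upgrade it. Weak separation yields $(\lambda ,\mu )\ne(0,0)$ with $\lambda r+\mu s\ge \nu $ on $K$ and $\lambda r\le \nu $ for all $r\le 0$. The second condition forces $\lambda \ge 0$ and $\nu \ge 0$.

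\emph{Excluding $\lambda =0$.} If $\lambda =0$, then $\mu g(x)\ge \nu \ge 0$ on $\mathbb{R} ^n$ with $\mu \ne 0$. This contradicts the assumption that $g$ takes both positive and negative values. Hence $\lambda >0$, and dividing by it gives $\xi =\mu /\lambda $ with $f+\xi g\ge \nu /\lambda \ge 0$.

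\emph{Upgrading to strict positivity.} The remaining task is to pass from $f+\xi g\ge 0$ to $f+\xi g>0$. I would argue by contradiction. Suppose every admissible $\xi $ gives $\min(f+\xi g)=0$ (the minimum is attained by Lemma \ref{QP0}). For a fixed such $\xi $, let $\mathcal V$ be its minimizer set, which is affine. On $\mathcal V$ we have $f=-\xi g$, so every point $(-\xi g(x),g(x))$ with $x\in\mathcal V$ lies in $K$. This set must avoid $R$. Therefore $g$ has no zero on $\mathcal V$, and by connectedness $g|_{\mathcal V}$ has a constant sign.

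Perturb $\xi $ to $\xi '=\xi +\epsilon $ with the sign of $\epsilon $ matching $g|_{\mathcal V}$. Then $f+\xi 'g=(f+\xi g)+\epsilon g$ is strictly positive on $\mathcal V$, and I would show it stays positive nearby. This needs care, since $g$ is unbounded on the complement of $\mathcal V$. I would handle it via the Schur-complement description in Lemma \ref{QP0}, checking that $A+\xi 'B\succeq 0$, that $a+\xi 'b\in\mathcal R(A+\xi 'B)$, and that the minimal value $c_0-c^TC^\dagger c$ becomes positive for small $\epsilon $.

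If this perturbation fails, the only remaining possibility is that $K$ touches $R$ in the limit, i.e.\ $\operatorname{dist}(R,K)=0$. That contradicts the closedness argument of the first route.

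I expect this last step, excluding the boundary case $\min(f+\xi g)=0$, to be the genuine obstacle. The clean way through is the recession-cone closedness of $R-K$. Reducing to the Calabi-type analysis of Lemma \ref{nonclosed} is a fallback.
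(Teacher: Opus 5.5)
Your steps (a)$\Rightarrow$(b) and (b)$\Rightarrow$(c) are fine. The gap is in (c)$\Rightarrow$(a), which the proposal never actually closes. Write $K=\operatorname{cl}(\operatorname{conv}((f,g)(\mathbb{R}^n)))$ and $R=(-\infty,0]\times\{0\}$.

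\textbf{First route.} The recession condition has the wrong sign. Closedness of $K-R$ can only fail if $K$ and $R$ share a direction of recession, that is, if $(-1,0)\in 0^+K$. The condition $(1,0)\in 0^+K$ is harmless: in Example \ref{eg:CF1}, $K=\{r+s\ge 2,\ s\ge 0\}$ contains the direction $(1,0)$, yet $r+s=1$ separates strictly. Moreover, in the branch you could not rule out, you fall back on weak separation, which only gives $f+\xi g\ge 0$.

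\textbf{Second route.} It stops at the same place. The perturbation $\xi'=\xi+\epsilon$ is only asserted. The facts it would rest on are $\min(f+\xi g)=0$ and $g|_{\mathcal V}$ having constant sign, and these cannot suffice. In Example \ref{eg:F2} we have $f+g=x_1^2$ and $g|_{\mathcal V}\equiv -1$ on $\mathcal V=\{x_1=0\}$. Yet $A+\xi'B$ has determinant $-(\xi'-1)^2/4<0$ for every $\xi'\neq 1$, so no perturbation in either direction works. Your closing sentence appeals to the closedness from the first route, which was never established, so the argument is circular.

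\textbf{Missing idea.} You do not need to separate $K$ from the whole non-compact ray.
\begin{enumerate}
\item Condition (c) gives in particular $(0,0)\notin K$.
\item Proposition \ref{0-t_i-gu_a}, (c)$\Rightarrow$(a), separates the compact set $\{(0,0)\}$ strictly from $K$. It yields $(\lambda,\mu)\neq(0,0)$ with $\lambda f+\mu g>0$ on $\mathbb{R}^n$.
\item It remains to fix the sign of $\lambda$. Since $g$ takes both signs, the intermediate value theorem gives $x_0$ with $g(x_0)=0$.
\item Then $(f(x_0),0)\in K$, so (c) forces $f(x_0)>0$. Hence $\lambda f(x_0)=\lambda f(x_0)+\mu g(x_0)>0$ gives $\lambda>0$.
\item Take $\xi=\mu/\lambda$.
\end{enumerate}
This is the paper's argument.

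The same point also repairs your first route. If $(-1,0)\in 0^+K$, then $(f(x_0)-t,0)\in K$ for all $t\ge 0$, and this meets $R$, contradicting (c). So $K-R$ is closed and strict separation applies. Your argument excluding $\lambda=0$ then finishes the proof.
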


\begin{proof}
The proofs of \( \ref{su`ann it}\Longrightarrow \ref{su`ann j_i} \) and \( \ref{su`ann j_i}\Longrightarrow \ref{su`ann sann} \) of Proposition \ref{su`ann gu_a} are similar to those of Proposition \ref{0-t_i-gu_a}, and hence are omitted. We only focus on the part \( \ref{su`ann sann}\Longrightarrow \ref{su`ann it} \).

Suppose that $\ref{su`ann sann}$ holds. In particular, \( (0,0)\notin \operatorname{cl}(\operatorname{conv}((f,g)(\mathbb{R} ^n))) \). By Proposition \ref{0-t_i-gu_a}, there exists \( (\lambda ,\mu )\neq (0,0) \) such that
\begin{equation}\label{gh-}
\lambda f(x)+\mu g(x)>0,~\forall x\in \mathbb{R} ^n.
\end{equation}
Since $g$ takes both positive and negative values, there exists some \( x_0\in \mathbb{R} ^n \) such that \( g(x_0)=0 \) and, by \ref{su`ann sann}, \( (f(x_0),g(x_0))\notin (-\infty ,0]\times \{0\} \). It implies that \( f(x_0)>0 \). Hence,
\[
\lambda f(x_0)+\mu g(x_0)=\lambda f(x_0)>0\Longrightarrow \lambda >0.
\]
Define \( \xi =\frac{\mu }{\lambda } \). Then, \eqref{gh-} renders the statement $\ref{su`ann it}$.
\end{proof}

\subsection{Proof of Theorem \ref{NHSF} (F2): \(((-\infty ,0]\times \{0\}) \cap \operatorname{cl}(\operatorname{conv}((f,g)(\mathbb{R} ^n))) \ne \emptyset \) but
$((-\infty ,0]\times \{0\}) \cap \operatorname{conv}((f,g)(\mathbb{R} ^n)) = \emptyset $}

\begin{proposition}
\label{tsu^an kh`o}
Suppose that \( g(x) \) takes both positive and negative values. Then, \( ((-\infty ,0]\times \{0\})\cap \operatorname{conv}((f,g)(\mathbb{R} ^n))=\emptyset \) but \( ((-\infty ,0]\times \{0\})\cap \operatorname{cl}(\operatorname{conv}((f,g)(\mathbb{R} ^n)))\neq \emptyset \) if and only if the following two statements hold.
\begin{enumerate}
    \item There exists a unique \( \xi \in \mathbb{R} \) such that the Lagrangian function \( f(x)+\xi g(x)\geq 0 \) for all \( x\in \mathbb{R} ^n \). Also, the Lagrangian function \( f(x)+\xi g(x) \) attains its minimal value 0.\label{tsu^an 2}
    \item On the minimum solution set of \( f(x)+\xi g(x) \),
\[
\mathcal V=\{f+\xi g=0\}=\{\nabla f+\xi \nabla g=0\},
\]
there is either \( g|_{\mathcal V}>0 \) or \( g|_{\mathcal V}<0 \). \label{tsu^an 3}
\end{enumerate}
\end{proposition}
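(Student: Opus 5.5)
We mirror the proof of Proposition \ref{iff boundary}, with the ray $R:=(-\infty,0]\times\{0\}$ playing the role of the origin. Throughout, set $K:=\operatorname{conv}((f,g)(\mathbb{R}^n))$ and $\bar K:=\operatorname{cl}(K)$.

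\textbf{Necessity.} Suppose $R\cap K=\emptyset$ and $R\cap\bar K\neq\emptyset$.

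First we produce a multiplier. Both $R$ and $K$ are convex and disjoint, so the separating hyperplane theorem gives $(\lambda,\mu)\neq(0,0)$ and $\nu$ with $\lambda r+\mu s\ge\nu$ on $K$ and $\lambda r\le\nu$ for all $r\le 0$. The second condition forces $\lambda\ge 0$ and $\nu\ge 0$. To rule out $\lambda=0$, pick $x_0$ with $g(x_0)=0$, which exists because $g$ changes sign. Since $R\cap K=\emptyset$, we get $f(x_0)>0$. If $\lambda=0$, then $\mu g\ge 0$ everywhere, contradicting that $g$ changes sign. So $\lambda>0$, and with $\xi=\mu/\lambda$ we have $f+\xi g\ge 0$.

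Next, the minimum value is $0$. If $f+\xi g>0$ on $\mathbb{R}^n$, then by Lemma \ref{QP0} it is bounded below by some $\nu'>0$, which keeps $\bar K$ inside the closed half-plane $\{r+\xi s\ge\nu'\}$. That half-plane misses $R$, contradicting $R\cap\bar K\ne\emptyset$. Hence the infimum is $0$, and Lemma \ref{QP0} says it is attained. On the affine minimizer set $\mathcal V$ we have $\nabla f+\xi\nabla g=0$.

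Uniqueness of $\xi$. Suppose $\xi'\ne\xi$ also works. The vectors $(1,\xi)$ and $(1,\xi')$ are independent, and $(0,0)\notin(f,g)(\mathbb{R}^n)$. Lemma \ref{siang=>giam} then gives $2f+(\xi+\xi')g>0$. By Lemma \ref{QP0} this is bounded below by some $\nu''>0$, so $\bar K\subset\{2r+(\xi+\xi')s\ge\nu''\}$. This half-plane is disjoint from $R$, again a contradiction.

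Sign of $g$ on $\mathcal V$. If $g(\hat x)=0$ for some $\hat x\in\mathcal V$, then $f(\hat x)=-\xi g(\hat x)=0$, so $(0,0)\in(f,g)(\mathbb{R}^n)\cap R$, which is impossible. Since $\mathcal V$ is affine and hence connected, $g$ has constant strict sign on $\mathcal V$.

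\textbf{Sufficiency.} Assume (a) and (b).

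Suppose $(r,0)\in K$ with $r\le 0$. By Lemma \ref{fenchel-eggleston}, $(r,0)=t(f(u),g(u))+(1-t)(f(v),g(v))$ with $t\in(0,1)$ (the degenerate single-point case is included). Applying $f+\xi g\ge 0$ gives $0\le t(f+\xi g)(u)+(1-t)(f+\xi g)(v)=r\le 0$. So $r=0$ and both $u,v\in\mathcal V$. But $tg(u)+(1-t)g(v)=0$ while $g$ has one strict sign on $\mathcal V$, a contradiction. Hence $R\cap K=\emptyset$.

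Suppose instead $R\cap\bar K=\emptyset$. Then Proposition \ref{su`ann gu_a} gives some $\xi'$ with $f+\xi' g>0$. By uniqueness in (a), $\xi'=\xi$, but $f+\xi g$ attains $0$, a contradiction. Hence $R\cap\bar K\neq\emptyset$.

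The main obstacle is the very first step of necessity: producing a multiplier whose $f$-coefficient is strictly positive. This is where the hypothesis that $g$ takes both signs enters, through the point $x_0$ with $g(x_0)=0$ and $f(x_0)>0$. Everything after that is routine adaptation of Lemma \ref{nonclosed} and Proposition \ref{iff boundary}, with one detail to check: for $f+\xi g$ quadratic, the infimum is attained by Lemma \ref{QP0}.
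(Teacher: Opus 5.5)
Your proof is correct and follows essentially the same route as the paper. You separate the ray from $\operatorname{conv}((f,g)(\mathbb{R}^n))$ to get $\lambda>0$, use Lemma \ref{QP0} to force the minimum $0$, Lemma \ref{siang=>giam} for uniqueness, connectedness of $\mathcal V$ for the sign of $g$, and Fenchel--Eggleston plus Proposition \ref{su`ann gu_a} for sufficiency; the only cosmetic differences are that allowing $u=v$ with $t\in(0,1)$ absorbs the paper's separate endpoint cases $\bar t\in\{0,1\}$, and that your auxiliary point $x_0$ with $g(x_0)=0$ is not needed, since $\lambda=0$ is already excluded by $\mu g\ge 0$ contradicting the sign change of $g$.
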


\begin{proof}
\textbf{(Necessity)}
Since \( (-\infty ,0]\times \{0\} \) and \( \operatorname{conv}((f,g)(\mathbb{R} ^n)) \) are disjoint and both sets are convex, by the separating hyperplane theorem, there exists a separation hyperplane \( \{(r,s)\in \mathbb{R} ^2 \mid \lambda r+\mu s=\nu \} \) with \( (\lambda ,\mu )\neq (0,0) \) and \( \nu \in \mathbb{R} \) such that
\begin{align}
(-\infty ,0]\times \{0\} &\subset \{(r,s)\in \mathbb{R} ^2 \mid \lambda r+\mu s\leq \nu \};\label{khui-tshiat}\\
\operatorname{conv}((f,g)(\mathbb{R} ^n)) &\subset \{(r,s)\in \mathbb{R} ^2 \mid \lambda r+\mu s\geq \nu \}.\label{JNR-tshiat}
\end{align}
From \eqref{khui-tshiat}, $(0,0)\in \{(r,s)\in \mathbb{R} ^2 \mid \lambda r+\mu s\leq \nu \}$ so that \( \nu \geq 0 \). Moreover, for all \( r<0 \) and $s=0$, there is \( \lambda r+\mu s=\lambda r\leq \nu \).
\begin{equation}\label{1fsf--}
(\forall r<0)~ \lambda \geq \frac{\nu }{r}, \text{ and hence } \lambda \geq 0.
\end{equation}
It now follows from \eqref{JNR-tshiat} that
\begin{equation}\label{1fsf}
(f,g)(\mathbb{R} ^n) \subset \operatorname{conv}((f,g)(\mathbb{R} ^n)) \subset \{(r,s)\in \mathbb{R} ^2 \mid \lambda r+\mu s\geq \nu \geq 0\},
\end{equation}
It implies that \( \lambda f(x)+\mu g(x)\geq 0 \) for all \( x\in \mathbb{R} ^n \).

We claim that, in \eqref{1fsf--}, \( \lambda >0 \) indeed. Suppose on the contrary that \( \lambda =0 \). Due to \( (\lambda ,\mu )\neq (0,0) \) for the separation hyperplane, we know \( \mu \neq 0 \) so that, \eqref{1fsf} gives $\mu g(x)\geq 0$, which violates the two-sided Slater condition that \( g(x) \) takes both positive and negative values.
As \( \lambda >0 \), we can let \( \xi =\frac{\mu }{\lambda } \) to obtain 
\[
f(x)+\xi g(x)\geq 0,~ \forall x\in \mathbb{R} ^n .
\]

However, \( f(x)+\xi g(x) \) cannot be strictly positive on $\mathbb{R} ^n$. Otherwise, by Lemma \ref{QP0}, there would exist a minimum point $\bar{x}\in \mathbb{R} ^n$ such that 
\[
f(x)+\xi g(x)\geq f(\bar{x})+\xi g(\bar{x}) >0, \; \;\forall \, x\in \mathbb{R} ^n.
\]
That is, \( (f,g)(\mathbb{R} ^n)\subset \{(r,s)\in \mathbb{R} ^2 \mid r+\xi s\geq f(\bar{x})+\xi g(\bar{x}) \} \). Taking the convex hull and the closure on both sides, we have
\begin{equation}
\label{SF cl conv}
\underbrace {\operatorname{cl}(\operatorname{conv}((f,g)(\mathbb{R} ^n)))}_{W}\subset \underbrace {\{(r,s)\in \mathbb{R} ^2 \mid r+\xi s\geq f(\bar{x})+\xi g(\bar{x}) >0\}}_{S}.
\end{equation}
From \eqref{SF cl conv}, we notice that 
\begin{equation}\label{U cap S}
\underbrace {(-\infty ,0]\times \{0\}}_{U} \cap \underbrace {\{(r,s)\in \mathbb{R} ^2 \mid r+\xi s\geq f(\bar{x})+\xi g(\bar{x}) >0\}}_{S}=\emptyset ,
\end{equation}
and thus
\begin{equation}\label{U cap W}
\underbrace {(-\infty ,0]\times \{0\}}_{U} \cap \underbrace {\operatorname{cl}(\operatorname{conv}((f,g)(\mathbb{R} ^n)))}_{W} = \emptyset ,
\end{equation}
which is a contradiction to the assumption. It implies that \( f(x)+\xi g(x)\geq 0 \) for all \( x\in \mathbb{R} ^n \) and there exists some \( x^* \) such that \( f(x^*)+\xi g(x^*)=0 \) and \(x^* \) is a minimizer of \( f(x)+\xi g(x) \) on \( \mathbb{R} ^n \).

We now argue that the Lagrange multiplier \( \xi \) is indeed unique.
Suppose, to the contrary, that there exists another \( \xi '\neq \xi \) that also satisfies \( f(x)+\xi 'g(x)\geq 0 \) for all \( x\in \mathbb{R} ^n \). By Lemma \ref{siang=>giam}, the sum of the two Lagrangian functions must be strictly positive; that is
\[
2f(x)+(\xi +\xi ')g(x)>0,\; \;\forall \, x\in \mathbb{R} ^n,
\]
which again leads to \eqref{SF cl conv}, \eqref{U cap S}, \eqref{U cap W}, and a contradiction. Therefore, the Lagrange multiplier \( \xi \) is unique.

Finally, by the assumption \( ((-\infty ,0]\times \{0\})\cap \operatorname{conv}((f,g)(\mathbb{R} ^n))=\emptyset \), we see that
\[
(0,0) \notin (f,g)(\mathbb{R} ^n). \text{ That is, } \{f=0\}\cap \{g=0\}=\emptyset .
\]
If we consider the $0$-level set of the function $g$ restricted to the minimum solution set of \( f(x)+\xi g(x) \), i.e., $\mathcal V=\{f+\xi g=0\}$, we find that 
\[
\{g|_{\mathcal V}=0\}=\{f+\xi g=0\}\cap \{g=0\}=\{f=0\}\cap \{g=0\}=\emptyset .
\]
Since the affine subspace \( \mathcal V \) is connected, the image set of \( g|_\mathcal V \) must be a connected interval that does not include the value $0.$ As a consequence, \( g|_\mathcal V \) is either strictly positive or strictly negative. The necessity of this proposition is thus proved.

\noindent \textbf{(Sufficiency)} Let us prove by contradiction. If \(( (-\infty ,0]\times \{0\})\cap \operatorname{cl}(\operatorname{conv}((f,g)(\mathbb{R} ^n)))\) were assumed to be empty, by the equivalence of Proposition \ref{su`ann gu_a} $(a)$ and $(c)$, there would exist \( \xi '\in \mathbb{R} \) such that \( f(x)+\xi 'g(x)>0 \) for all \( x\in \mathbb{R} ^n \), which contradicts \ref{tsu^an 2} of this proposition. Therefore,
\(( (-\infty ,0]\times \{0\})\cap \operatorname{cl}(\operatorname{conv}((f,g)(\mathbb{R} ^n)))\ne \emptyset .\)

Secondly, suppose that \( ((-\infty ,0]\times \{0\})\cap \operatorname{conv}((f,g)(\mathbb{R} ^n))\neq \emptyset \). By FenchelEggleston theorem (Lemma \ref{fenchel-eggleston}), there would exist \( u,v \in \mathbb{R} ^n \) and some \( 0\leq \bar t\leq 1 \) such that
\begin{equation}\label{tree-4}
\bar t(f(u),g(u))+(1-\bar t)(f(v),g(v))\in (-\infty ,0]\times \{0\}.
\end{equation}
That is, we have
\begin{equation}
\label{conv SF}
\bar tf(u)+(1-\bar t)f(v)\leq 0,\, \bar tg(u)+(1-\bar t)g(v)=0.
\end{equation}
By the non-negativity of \( f(x)+\xi g(x) \), there also are
 \[
 f(u)+\xi g(u)\geq 0 \text{ and } f(v)+\xi g(v)\geq 0.
 \] 
Hence,
\begin{eqnarray*}
&& \bar t\underbrace {(f(u)+\xi g(u))}_{\geq 0}+(1-\bar t)\underbrace {(f(v)+\xi g(v))}_{\geq 0}\\
&=& \underbrace {(\bar tf(u)+(1-\bar t)f(v))}_{\leq 0~\rm{by}~\eqref{conv SF}}+\xi \underbrace {(\bar tg(u)+(1-\bar t)g(v))}_{=0~\rm{by}~\eqref{conv SF}} \leq 0.
\end{eqnarray*}
It implies that 
\begin{equation}\label{tree-3}
 \bar t(f(u)+\xi g(u))=0 \text{ and } (1-\bar t)(f(v)+\xi g(v))=0 .
\end{equation}

If \( 0<\bar t<1 \), \eqref{tree-3} forces \( f(u)+\xi g(u)=0 \) and \( f(v)+\xi g(v)=0 \). Then, both \( u \) and \( v \) attain the minimum value 0 of the Lagrangian function. That is, \( u,v\in \mathcal V \). 

However, from \eqref{conv SF},
\[
g(u)=-\frac{1-\bar t}{\bar t} g(v) \text{ so that } g(u)g(v)=-\frac{1-\bar t}{\bar t} g(v)^2 \leq 0,
\]
which contradicts \ref{tsu^an 3} that \( g|_\mathcal V \) is either a positive or a negative function.

If \( \bar t=0 \) in \eqref{conv SF}, there is \( f(v)\leq 0 \) and \( g(v)=0 \) so that \( f(v)+\xi g(v)\leq 0 \). Since the Lagrangian function is non-negative, \( f(v)+\xi g(v)=0 \) and \( v\in \mathcal V \), which is impossible since \( g(v)=0 \), contradicting either \( g|_\mathcal V>0 \) or \( g|_\mathcal V <0 \).

If \( \bar t=1 \) in \eqref{conv SF}, \( f(u)\leq 0 \) and \( g(u)=0 \) will lead to the same contradiction as above.

Therefore, neither \eqref{conv SF} nor \eqref{tree-4} can hold. That is, \( ((-\infty ,0]\times \{0\})\cap \operatorname{conv}((f,g)(\mathbb{R} ^n))\neq \emptyset \) is a false assumption.
The sufficiency of this proposition is thus proved.
\end{proof}

\section{Illustrative examples for [Calabi](nonhomo version) and [Str.-Finsler]({nonhomo version})}

In this section, we use several examples in \( \mathbb{R} ^2 \) to illustrate conditions (C1)-(C3) in Theorem \ref{NHFC} for [Calabi](nonhomo.\ version), as well as conditions (F1)-(F3) in Theorem \ref{NHSF} for [Str.-Finsler]({nonhomo version}). In each example, the mapping $(f,g)$ maps $(x_1,x_2)\in \mathbb{R} ^2$ to their joint numerical range $(r,s)\in (f,g)(\mathbb{R} ^2)$ with $r=f(x_1,x_2),~s=g(x_1,x_2).$ Representing [Calabi](nonhomo.\ version) and [Str.-Finsler](nonhomo.\ version) in the joint numerical range setting, we have
$$\{f=0\}\cap \{g=0\}=\emptyset \Longleftrightarrow (0,0)\notin (f,g)(\mathbb{R} ^2)$$
and 
$$\{f\leq 0\}\cap \{g=0\}=\emptyset \Longleftrightarrow \left((-\infty ,0]\times \{0\}\right)\cap (f,g)(\mathbb{R} ^2)=\emptyset .$$
Also recall that 
$$\operatorname{cl}(\operatorname{conv}((f,g)(\mathbb{R} ^2)))\supset \operatorname{conv}((f,g)(\mathbb{R} ^2))\supset (f,g)(\mathbb{R} ^2).$$

Example \ref{eg:CF1} is the case when $(0,0)\notin \operatorname{cl}(\operatorname{conv}((f,g)(\mathbb{R} ^2))),$ indicating that $(0,0)$ lies away from $(f,g)(\mathbb{R} ^2)$ with a positive margin to the closure of its convex hull.

Example \ref{eg:F2} illustrates 
$(0,0)\notin \operatorname{conv}((f,g)(\mathbb{R} ^2))$ but $(0,0)\in \operatorname{cl}(\operatorname{conv}((f,g)(\mathbb{R} ^2))).$ Although \( (0,0) \) does not belong to the convex hull of \( (f,g)(\mathbb{R} ^2) \), it belongs to the boundary of the convex hull as a limit point.

Finally, Example \ref{eg:CF3} shows a case where $(f,g)(\mathbb{R} ^2)$ is not convex; $(0,0)\notin (f,g)(\mathbb{R} ^2)$; yet the convex hull contains $(0,0)$.

Here we would like to draw the reader’s special attention to the fact that in Example \ref{eg:CF1} and Example \ref{eg:F2}, the joint numerical range $(f,g)(\mathbb{R} ^2)$ is a convex set, whereas in Example \ref{eg:CF3}, $(f,g)(\mathbb{R} ^2)$ is nonconvex.


\begin{example}
\label{eg:CF1}
(for (C1) in Theorem \ref{NHFC} and (F1) in Theorem \ref{NHSF})

Let \( f(x_1,x_2)=-x_1^2+x_2^2+2 \), \( g(x_1,x_2)=x_1^2 \). When \( g(x_1,x_2)=0 \), we have \( x_1=0,~x_2\in \mathbb{R} \) and hence \( f(0,x_2)=x_2^2+2\geq 2 \). Therefore, 
\[
\{f=0\}\cap \{g=0\}=\emptyset =\{f\leq 0\}\cap \{g=0\}.
\]
The joint numerical range in the example is
\begin{align*}
(f,g)(\mathbb{R} ^2) & =\{(-x_1^2+x_2^2+2,x_1^2) \mid x _1,x_2\in \mathbb{R} \}\\
& =\{(-y_1+y_2+2,y_1) \mid y_1\geq 0,y_2\geq 0\}~(y_1=x_1^2,~y_2=x_2^2)\\
& = \{(r,s) \mid r+s\geq 2,~s\geq 0\}~(r=-y_1+y_2+2,~s=y_1),
\end{align*}
which is convex and closed. Hence $\operatorname{cl}(\operatorname{conv}((f,g)(\mathbb{R} ^2)))=(f,g)(\mathbb{R} ^2).$

Note that $(0,0)\notin (f,g)(\mathbb{R} ^2)$ and $\left((-\infty ,0]\times \{0\}\right)\cap (f,g)(\mathbb{R} ^2)=\emptyset .$ In both cases, the hyperplane \( \mathcal{H} =\{(r,s) \mid r + s=1\} \) separates strictly the joint numerical range $(f,g)(\mathbb{R} ^2)$ from $\{(0,0)\}$ (Fig.\ \ref{figC1})
and also from $\left((-\infty ,0]\times \{0\}\right)$ (Fig.\ \ref{figF1}). Since $(f,g)(\mathbb{R} ^2)$ lies in the upper half-space $\{(r,s) \mid r + s>1\},$ there is
$$f(x)+g(x)>1,~\forall x\in \mathbb{R} ^2,$$
which validates (C1) in Theorem \ref{NHFC} and (F1) in Theorem \ref{NHSF}.

\begin{figure}[bth]
\centering
\begin{minipage}[b]{0.45\textwidth}
\centering
\begin{tikzpicture}[scale=0.65]
    \draw[->] (-4,0) -- (4,0) node[right] {$r$};
    \draw[->] (0,-4) -- (0,4) node[above] {$s$};
    \node[green!70!black, below left] at (3.5,-2.5) {\(\mathcal{H} : r+s=1\)};
    \node[blue] at (2,3) {\((f,g)(\mathbb{R} ^2)\)};
    \node[below] at (2,0) {\( (2,0) \)};
    \node[left] at (0,2) {\( (0,2) \)};

     \fill[blue!20,opacity=0.5] (2,0) -- (4,0) -- (4,4) -- (-2,4) -- cycle;
    \draw[thick,blue] (-2,4) -- (2,0);
    \draw[thick,blue] (2,0) -- (4,0);
    \draw[draw=red, fill=red] (0,0) circle (2pt) node[red, below left] {\(\{(0,0)\}\)};
    \draw[thick,green!70!black] (-3,4) -- (4,-3);
\end{tikzpicture}
\caption{Example \ref{eg:CF1} shows (C1) with $\lambda f(x)+\mu g(x)>0$ and $\lambda =\mu =1.$}
\label{figC1}
\end{minipage}
\hspace{0.06\textwidth}
\begin{minipage}[b]{0.45\textwidth}
\centering
\begin{tikzpicture}[scale=0.65]
    \draw[->] (-4,0) -- (4,0) node[right] {$r$};
    \draw[->] (0,-4) -- (0,4) node[above] {$s$};
    \node[red, below] at (-2,0) {\((-\infty ,0]\times \{0\}\)};
    \node[green!70!black, below left] at (3.5,-2.5) {\(\mathcal{H} : r+s=1\)};
    \node[blue] at (2,3) {\((f,g)(\mathbb{R} ^2)\)};
    \node[below] at (2,0) {\( (2,0) \)};
    \node[left] at (0,2) {\( (0,2) \)};

     \fill[blue!20,opacity=0.5] (2,0) -- (4,0) -- (4,4) -- (-2,4) -- cycle;
    \draw[thick,blue] (-2,4) -- (2,0);
    \draw[thick,blue] (2,0) -- (4,0);
    \draw[thick,red] (-4,0) -- (0,0);
    \draw[draw=red, fill=red] (0,0) circle (2pt);
    \draw[thick,green!70!black] (-3,4) -- (4,-3);
\end{tikzpicture}
\caption{Example \ref{eg:CF1} shows (F1) with $f(x)+\xi g(x)>0$ and $\xi =1.$}
\label{figF1}
\end{minipage}
\end{figure}
\end{example}

\begin{example}
\label{eg:F2}
(for (C2) in Theorem \ref{NHFC} and (F2) in Theorem \ref{NHSF})

Consider \( f(x_1,x_2)=x_1^2-x_1x_2+1 \) and \( g(x_1,x_2)=x_1x_2-1 \). Observe that 
$$g(x_1,x_2)=0~\Rightarrow ~x_1x_2=1,~x_1\ne 0~\Rightarrow ~f(x_1,x_2)=x_1^2-x_1x_2+1=x_1^2>0.$$
Therefore, 
\[
\{f=0\}\cap \{g=0\}=\emptyset =\{f\leq 0\}\cap \{g=0\}.
\]
As can be seen in Fig.\ \ref{figC2'}, the two level sets \( \{f=0\} \) and \( \{g=0\} \) approach each other asymptotically but never intersect. It can be justified by computing the vertical difference of the two level sets:
 \[
\left|\left(x_1+\frac{1}{x_1}\right)-\frac{1}{x_1}\right|
=|x_1|\longrightarrow0
\qquad\text{as }x_1\to0.
\]

\begin{figure}[bth]
\centering
\begin{minipage}[b]{0.45\textwidth}
\centering
\begin{tikzpicture}[scale=0.65]
    \draw[->] (-4,0) -- (4,0) node[right] {$x_1$};
    \draw[->] (0,-4) -- (0,4) node[above] {$x_2$};

  \draw[thick,cyan] plot[domain=0.25:1 ,samples=25] (\x,{1/\x}) --
  plot[domain=1:4 ,samples=25] (\x,{1/\x});
  \draw[thick,cyan] plot[domain=-0.25:-1 ,samples=25] (\x,{1/\x}) --
  plot[domain=-1:-4 ,samples=25] (\x,{1/\x});
\draw[thick,magenta] plot[domain={2-sqrt(3)}:1 ,samples=25] (\x,{\x + 1/\x}) --
  plot[domain=1:{2+sqrt(3)} ,samples=25] (\x,{\x + 1/\x});
  \draw[thick,magenta] plot[domain={-2+sqrt(3)}:-1 ,samples=25] (\x,{\x + 1/\x}) --
  plot[domain=-1:{-2-sqrt(3)} ,samples=25] (\x,{\x + 1/\x});

  \node[cyan,above right] at (2,0.5) {\(\{g=0\}\)};
  \node[cyan,below left] at (-1,-1) {\(x_1x_2-1=0\)};
  \node[magenta,below right] at (2,2.5) {\(\{f=0\}\)};
  \node[magenta, right] at (-0.5,-2.5) {\(x_1^2-x_1x_2+1=0\)};
\end{tikzpicture}
\caption{In Example \ref{eg:F2}, $\{f=0\}=\{x_1^2-x_1x_2+1=0\}$ and $\{g=0\}=\{x_1x_2-1=0\}$ have no intersection.}
\label{figC2'}
\end{minipage}
\hspace{0.06\textwidth}
\begin{minipage}[b]{0.45\textwidth}
\centering
\begin{tikzpicture}[scale=0.65]
    \draw[->] (-4,0) -- (4,0) node[right] {$x_1$};
    \draw[->] (0,-4) -- (0,4) node[above] {$x_2$};

  \draw[thick,cyan] plot[domain=0.25:1 ,samples=25] (\x,{1/\x}) --
  plot[domain=1:4 ,samples=25] (\x,{1/\x});
  \draw[thick,cyan] plot[domain=-0.25:-1 ,samples=25] (\x,{1/\x}) --
  plot[domain=-1:-4 ,samples=25] (\x,{1/\x});
\draw[thick,magenta] plot[domain={2-sqrt(3)}:1 ,samples=25] (\x,{\x + 1/\x}) --
  plot[domain=1:{2+sqrt(3)} ,samples=25] (\x,{\x + 1/\x});
  \draw[thick,magenta] plot[domain={-2+sqrt(3)}:-1 ,samples=25] (\x,{\x + 1/\x}) --
  plot[domain=-1:{-2-sqrt(3)} ,samples=25] (\x,{\x + 1/\x});
  \fill[magenta!20,opacity=0.5] plot[domain={-2+sqrt(3)}:-1 ,samples=25] (\x,{\x + 1/\x}) -- 
  plot[domain=-1:{-2-sqrt(3)} ,samples=25] (\x,{\x + 1/\x}) -- cycle;
\fill[magenta!20,opacity=0.5] plot[domain={2-sqrt(3)}:1 ,samples=25] (\x,{\x + 1/\x}) -- 
  plot[domain=1:{2+sqrt(3)} ,samples=25] (\x,{\x + 1/\x}) -- cycle;

  \node[cyan,above right] at (2,0.5) {\(\{g=0\}\)};
  \node[cyan,below left] at (-1,-1) {\(x_1x_2-1=0\)};
  \node[magenta,below right] at (0.5,3.5) {\(\{f\leq 0\}\)};
  \node[magenta,right] at (-0.5,-2.5) {\(x_1^2-x_1x_2+1=0\)};
\end{tikzpicture}
\caption{In Example \ref{eg:F2}, $\{f\leq 0\}=\{x_1^2-x_1x_2+1\leq 0\}$ and $\{g=0\}=\{x_1x_2-1=0\}$ have no intersection.}
\label{figF2'}
\end{minipage}
\end{figure}

As for the joint numerical range $(f,g)(\mathbb{R} ^2),$ we first see that 
\begin{align*}
(f+g,g)(\mathbb{R} ^2) &=\{(x_1^2,x_1x_2-1) \mid x_1,x_2\in \mathbb{R} \} \\
&=\bigcup _{t\neq 0}\{(t^2,tx_2-1) \mid x_2\in \mathbb{R} \}\cup \{(0,-1)\} \\
&=\{(\gamma ,\sigma ) \mid \gamma >0,~\sigma \in \mathbb{R} \}\cup \{(0,-1)\}.\\
\end{align*}
By the fact that 
$$(\gamma ,\sigma )\in (f+g,g)(\mathbb{R} ^2)~\Longleftrightarrow ~(\gamma -\sigma ,\sigma )\in (f,g)(\mathbb{R} ^2),$$
the joint numerical range $(f,g)(\mathbb{R} ^2)$ in the example can be expressed as 
\begin{eqnarray}
(f,g)(\mathbb{R} ^2)&=&\{(r,s)\mid ~r=\gamma -\sigma ,~s=\sigma ,~(\gamma ,\sigma )\in \{(\gamma ,\sigma ) \mid \gamma >0\}\cup \{(0,-1)\}\}\nonumber\\
&=&\{(r,s) \mid r+s>0\}\cup \{(1,-1)\},\label{eq:JNR2}
\end{eqnarray}
which is a convex set as shown in Fig.\ \ref{fig2_0}. Note that 
$$(0,0)\notin (f,g)(\mathbb{R} ^2) \text{ and } \left((-\infty ,0]\times \{0\}\right)\cap (f,g)(\mathbb{R} ^2)=\emptyset .$$
However, after taking the closure of the set in \eqref{eq:JNR2} (see Fig.\ \ref{fig2_0}), we obtain
$$(0,0)\in \operatorname{cl}((f,g)(\mathbb{R} ^2))\text{ and } \left((-\infty ,0]\times \{0\}\right)\cap \operatorname{cl}((f,g)(\mathbb{R} ^2))\ne \emptyset ,$$
which is the case (C2) in Theorem \ref{NHFC} and the case (F2) in Theorem \ref{NHSF}, respectively.

\begin{figure}[bth]
\begin{minipage}[b]{0.45\textwidth}
\centering
\begin{tikzpicture}[scale=0.65]
    \draw[->] (-4,0) -- (4,0) node[right] {$r$};
    \draw[->] (0,-4) -- (0,4) node[above] {$s$};
    \node[blue] at (2,3) {\((f,g)(\mathbb{R} ^2)\)};
    \node[below left] at (-2,2) {\( r+s=0 \)};
    \node[red, above] at (-2,0) {\((-\infty ,0]\times \{0\}\)};

     \draw[draw=blue, fill=blue] (1,-1) circle (2pt) node[below left] {\( (1,-1) \)};
     \fill[blue!20,opacity=0.5] (4,-4) -- (-4,4) -- (4,4) -- cycle;
    \draw[blue,dashed,thick] (-4,4) -- (4,-4);
    \draw[thick,red] (-4,0) -- (0,0);
    \draw[draw=red, fill=red] (0,0) circle (2pt);

\end{tikzpicture}
\caption{The convex joint numerical range in Example \ref{eg:F2} has no intersection with either $\{(0,0)\}$ or \((-\infty ,0]\times \{0\} \).}
\label{fig2_0}
\end{minipage}
\hspace{0.06\textwidth}
\begin{minipage}[b]{0.45\textwidth}
\centering
\begin{tikzpicture}[scale=0.65]
    \draw[->] (-4,0) -- (4,0) node[right] {$r$};
    \draw[->] (0,-4) -- (0,4) node[above] {$s$};
    \node[red, above] at (-2,0) {\((-\infty ,0]\times \{0\}\)};
    \node[green!70!black, below left] at (3,-3) {\(\mathcal{H} : r+s=0\)};
    \node[blue] at (2,3) {\((f,g)(\mathbb{R} ^2)\)};

     \draw[draw=blue, fill=blue] (1,-1) circle (2pt) node[below left] {\((1,-1)\)};
     \fill[blue!20,opacity=0.5] (4,-4) -- (-4,4) -- (4,4) -- cycle;
     \draw[very thick,green!70!black] (-4,4) -- (4,-4);
    \draw[blue,dashed] (-4,4) -- (4,-4);
    \draw[thick,red] (-4,0) -- (0,0);
    \draw[draw=red, fill=red] (0,0) circle (2pt);
\end{tikzpicture}
\caption{In Example \ref{eg:F2},~\( \mathcal{H} =\{(r,s) \mid r+s=0\} \) is the only separating hyperplane but the separation is not strict.}
\label{figF2.}
\end{minipage}
\end{figure}

\begin{figure}[bth]
\begin{minipage}[b]{0.45\textwidth}
\centering
\begin{tikzpicture}[scale=0.65]
    \draw[->] (-4,0) -- (4,0) node[right] {$r$};
    \draw[->] (0,-4) -- (0,4) node[above] {$s$};
    \node[green!70!black, below left] at (3,-3) {\(\mathcal{H} : r+s=0\)};
    \node[blue] at (2,3) {\((f,g)(\mathbb{R} ^2)\)};
    \node[gray,above left] at (-1,-3) {\( r-s=2 \)};

      \fill[blue!20,opacity=0.5] (4,-4) -- (-4,4) -- (4,4) -- cycle;
     \draw[draw=blue, fill=blue] (1,-1) circle (2pt) node[below left] {\((1,-1)\)};
     \draw[thick,gray,dashed] (-2,-4) -- (4,2);
     \draw[very thick,green!70!black] (-4,4) -- (4,-4);
    \draw[blue,dashed] (-4,4) -- (4,-4);
    \draw[draw=red, fill=red] (0,0) circle (2pt) node[red, below left] {\(\{(0,0)\}\)};

\end{tikzpicture}
\caption{Non-strict separation hyperplane}
\label{figF2}
\end{minipage}
\hspace{0.06\textwidth}
\begin{minipage}[b]{0.45\textwidth}
\centering
\begin{tikzpicture}[scale=0.65]
    \draw[->] (-4,0) -- (4,0) node[right] {$r$};
    \draw[->] (0,-4) -- (0,4) node[above] {$s$};
    \node[red, above] at (-2,0) {\((-\infty ,0]\times \{0\}\)};
    \node[green!70!black, below left] at (3,-3) {\(\mathcal{H} : r+s=0\)};
    \node[blue] at (2,3) {\((f,g)(\mathbb{R} ^2)\)};
    \node[gray,below] at (-2.5,-1) {\( s=-1 \)};

     \draw[draw=blue, fill=blue] (1,-1) circle (2pt) node[below left] {\((1,-1)\)};
     \fill[blue!20,opacity=0.5] (4,-4) -- (-4,4) -- (4,4) -- cycle;
     \draw[thick,gray,dashed] (-4,-1) -- (4,-1);
     \draw[very thick,green!70!black] (-4,4) -- (4,-4);
    \draw[blue,dashed] (-4,4) -- (4,-4);
    \draw[thick,red] (-4,0) -- (0,0);
    \draw[draw=red, fill=red] (0,0) circle (2pt);
\end{tikzpicture}
\caption{Non-strict separation hyperplane for Example \ref{eg:F2}}
\label{figF2-}
\end{minipage}
\end{figure}

From Fig.\ \ref{figF2.}, observe that \( \mathcal{H} =\{(r,s) \mid r+s=0\} \) is the only hyperplane that can separate the convex joint numerical range in \eqref{eq:JNR2} from $\{(0,0)\}$ (or from the larger convex set \((-\infty ,0]\times \{0\} \)). The separation by \( \mathcal{H} \) is not strict because it touches $(0,0)\in (-\infty ,0]\times \{0\}$ and $(1,-1)\in (f,g)(\mathbb{R} ^2)$.
As such, \( \mathcal{H} =\{(r,s) \mid r+s=0\} \) alone is not sufficient to determine whether \( (f,g)(\mathbb{R} ^2) \) in \eqref{eq:JNR2} intersects $\{(0,0)\}$ or \((-\infty ,0]\times \{0\} \).

Then, (C2) in Theorem \ref{NHFC} and (F2) in Theorem \ref{NHSF} require the touching point $(1,-1)\in (f,g)(\mathbb{R} ^2)$ by $\mathcal{H} $ to be further separated from $(0,0)$ (in Theorem \ref{NHFC}) or from $(-\infty ,0]\times \{0\}$ (in Theorem \ref{NHSF}), should we want to conclude that
$\{f=0\}\cap \{g=0\}=\emptyset $ (or $\{f\leq 0\}\cap \{g=0\}=\emptyset $). 
The central issue is to provide a general representation for the touching point $(1,-1)\in (f,g)(\mathbb{R} ^2)$.

The touching point $(1,-1)\in (f,g)(\mathbb{R} ^2)$ by \( \mathcal{H} =\{(r,s) \mid r+s=0\} \) can be written as the minimum solution $(r^*,s^*)=(1,-1)$ that minimizes $k=r+s$ over the joint numerical range $(r,s)\in (f,g)(\mathbb{R} ^2).$ Specifically, 
\begin{eqnarray*}
0=r^*+s^*&=&\min\limits_{(r,s)}\{r+s=k\,\vert~r=f(x_1,x_2),s=g(x_1,x_2)\}\\
&=&\min\limits_{(x_1,x_2)}\{f(x_1,x_2)+g(x_1,x_2)=x_1^2~\vert~ (x_1,x_2)\in \mathbb{R} ^2\},
\end{eqnarray*}
where the minimal value \( 0 \) is attained (in the $x_1$-$x_2$ space) on 
\[
{\mathcal W}={\mathcal V}=\{(0,x_2) \mid x_2\in \mathbb{R} \}~~(\mathcal W\text{ for (C2)};~\mathcal V\text{ for (F2)}).
\]
The set ${\mathcal W}$ (or ${\mathcal V}$) is the pre-image of the touching point {\small $(1,-1)$} under the joint mapping $(f,g).$

If the touching point $(1,-1)$ is to be separated from $(0,0)$ by a line $\mathcal{L} =\{(r,s) \mid \sigma r+\tau s=k\}$, then $\mathcal{L} $ must be non-parallel to $\mathcal{H} $ with $k\neq 0$ (since $k=0$ would make $\mathcal{L} $ pass through $(0,0)$). Given the pre-image ${\mathcal W}$, it amounts to checking whether $\sigma f(x_1,x_2)+\tau g(x_1,x_2)$ takes strictly positive or strictly negative values on ${\mathcal W}$. In Fig.\ \ref{figF2}, we choose $\mathcal{L} $ to be $r-s=2$ which passes through $(1,-1)$ with $k=2$ and it is perpendicular to \( \mathcal{H} =\{(r,s) \mid r+s=0\} \). 

On the other hand, if the touching point $(1,-1)$ is to be separated from the ray \((-\infty ,0]\times \{0\} \), we can choose a horizontal line $\mathcal{L} =\{(r,s) \mid s=k\}$ with $k\ne 0$. In Fig.\ \ref{figF2-}, the line $s=-1$ serves the purpose. In the pre-image, we see that \( g(x_1,x_2)=x_1x_2-1 \) indeed takes the constant value $-1<0$ on ${\mathcal V}=\{(0,x_2) \mid x_2\in \mathbb{R} \}.$
\end{example}

\begin{example}
\label{eg:CF3}
(for (C3) in Theorem \ref{NHFC} and (F3) in Theorem \ref{NHSF})
Consider \( f(x_1,x_2)=-x_1^2+x_2^2+1 \), \( g(x_1,x_2)=2x_1-x_2 \). Observe that
\[
g(x_1,x_2)=0 \Rightarrow x_2= 2x_1 \Rightarrow f(x_1,x_2)=3x_1^2+1>0.
\]
Therefore,
\[
\{f=0\}\cap \{g=0\}=\emptyset =\{f\leq 0\}\cap \{g=0\}.
\]
Notice that the $0$-level set \( \{f=0\} \) is a hyperbola of two branches:
\begin{align*}
\{f=0\}^+&=\{(x_1,x_2) \mid x_1^2-x_2^2=1, x_1>0\},\\
\{f=0\}^-&=\{(x_1,x_2) \mid x_1^2-x_2^2=1, x_1<0\}.
\end{align*}
Likewise, the sub-level set \( \{f\leq 0\} \) contains two connected components:
\begin{align*}
\{f\leq 0\}^+ &=\{(x_1,x_2) \mid x_1^2-x_2^2\geq 1, x_1>0\},\\
\{f\leq 0\}^- &=\{(x_1,x_2) \mid x_1^2-x_2^2\geq 1, x_1<0\}.
\end{align*}
See Fig.\ \ref{fig:C3'} and Fig.\ \ref{fig:F3'} respectively. 
As \( (1,0)\in \{f=0\}^+ \) and \( g(1,0)=2>0 \), while \( (-1,0)\in \{f=0\}^- \) and \( g(-1,0)=-2<0 \), Lemma \ref{eq sep} confirms that \( \{g=0\} \) separates \( \{f=0\} \) and also \( \{g=0\} \) separates \( \{f\leq 0\}. \) This example validates (C3) in Theorem \ref{NHFC} and (F3) in Theorem \ref{NHSF}.

\begin{figure}[bth]
\centering
\begin{minipage}[b]{0.45\textwidth}
\begin{tikzpicture}[scale=0.65]

  \draw[thick,magenta] plot[domain={-sqrt(15)}:{sqrt(15)},samples=50]
      ({sqrt(1+(\x)^2)},\x);
  \draw[thick,magenta] plot[domain={-sqrt(15)}:{sqrt(15)},samples=50]
      ({-sqrt(1+(\x)^2)},\x);
  \draw[thick,cyan] (2,4) -- (-2,-4);

    \draw[->] (-4,0) -- (4,0) node[right] {$x_1$};
    \draw[->] (0,-4) -- (0,4) node[above] {$x_2$};

  \node[cyan,above left] at (1,2) {\(\{g=0\}\)};
  \node[magenta,below right] at ({sqrt(2)},1) {\(\{f=0\}^+\)};
  \node[magenta,below left] at ({-sqrt(2)},1) {\(\{f=0\}^-\)};
  \node[below] at (1,0) {\( (1,0) \)};
  \node[below] at (-1,0) {\( (-1,0) \)};
  \node[cyan,below right] at (-1,-2) {\(2x_1-x_2=0\)};
  \node[magenta, above right] at ({-sqrt(10)},3) {\(-x_1^2+x_2^2+1=0\)};
  \node[magenta, below left] at ({sqrt(10)},-3) {\(-x_1^2+x_2^2+1=0\)};
\end{tikzpicture}
\caption{In Example \ref{eg:CF3}, \( \{g=0\}=\{2x_1-x_2=0\} \) separates \( \{f=0\}=\{-x_1^2+x_2^2+1=0\} \).}
\label{fig:C3'}
\end{minipage}
\hspace{0.06\textwidth}
\centering
\begin{minipage}[b]{0.45\textwidth}
\begin{tikzpicture}[scale=0.65]

    \fill[magenta!20] plot[domain={-sqrt(15)}:{sqrt(15)},samples=50]
      ({sqrt(1+(\x)^2)},\x) -- cycle;
    \fill[magenta!20] plot[domain={-sqrt(15)}:{sqrt(15)},samples=50]
      ({-sqrt(1+(\x)^2)},\x) -- cycle;
  \draw[thick,magenta] plot[domain={-sqrt(15)}:{sqrt(15)},samples=50]
      ({sqrt(1+(\x)^2)},\x);
  \draw[thick,magenta] plot[domain={-sqrt(15)}:{sqrt(15)},samples=50]
      ({-sqrt(1+(\x)^2)},\x);
  \draw[thick,cyan] (2,4) -- (-2,-4);

    \draw[->] (-4,0) -- (4,0) node[right] {$x_1$};
    \draw[->] (0,-4) -- (0,4) node[above] {$x_2$};

  \node[cyan,above left] at (1,2) {\(\{g=0\}\)};
  \node[magenta,below right] at ({sqrt(2)},1) {\(\{f\leq 0\}^+\)};
  \node[magenta,below left] at ({-sqrt(2)},1) {\(\{f\leq 0\}^-\)};
  \node[below] at (1,0) {\( (1,0) \)};
  \node[below] at (-1,0) {\( (-1,0) \)};
  \node[cyan,below right] at (-1,-2) {\(2x_1-x_2=0\)};
  \node[magenta, above right] at ({-sqrt(10)},3) {\(-x_1^2+x_2^2+1=0\)};
  \node[magenta, below left] at ({sqrt(10)},-3) {\(-x_1^2+x_2^2+1=0\)};
\end{tikzpicture}
\caption{In Example \ref{eg:CF3}, \( \{g=0\}=\{2x_1-x_2=0\} \) separates \( \{f\leq 0\}=\{-x_1^2+x_2^2+1\leq 0\} \).}
\label{fig:F3'}
\end{minipage}
\hspace{0.06\textwidth}
\begin{minipage}[b]{0.45\textwidth}
\centering
\begin{tikzpicture}[scale=0.65]
    \draw[->] (-4,0) -- (4,0) node[right] {$r$};
    \draw[->] (0,-4) -- (0,4) node[above] {$s$};
    \node[blue] at (2,2) {\((f,g)(\mathbb{R} ^2)\)};
    \node[below left, blue] at ({-1/3},2) {\( r=-\frac{s^2}{3} +1 \)};
    \node[below right] at (1,0) {\( 1 \)};

     \fill[blue!20,opacity=0.5] (-4,4) -- (4,4) -- (4,-4) -- (-4,-4) --
     plot[domain={-sqrt(15)}:{sqrt(15)} ,samples=25] ({-(\x)^2/3 +1},\x) -- cycle;
    \draw[blue,thick] plot[domain={-sqrt(15)}:{sqrt(15)} ,samples=25] ({-(\x)^2/3 +1},\x);
    \draw[draw=red, fill=red] (0,0) circle (3pt) node[red, below left] {\(\{(0,0)\}\)};
\end{tikzpicture}
\caption{The closed joint numerical range set of Example \ref{eg:CF3} has no intersection with \( \{(0,0)\} \).}
\label{fig:C3}
\end{minipage}
\hspace{0.06\textwidth}
\begin{minipage}[b]{0.45\textwidth}
\centering
\begin{tikzpicture}[scale=0.65]
    \draw[->] (-4,0) -- (4,0) node[right] {$r$};
    \draw[->] (0,-4) -- (0,4) node[above] {$s$};
    \node[red, below] at (-2,0) {\((-\infty ,0]\times \{0\}\)};
    \node[blue] at (2,2) {\((f,g)(\mathbb{R} ^2)\)};
    \node[below left, blue] at ({-1/3},2) {\( r=-\frac{s^2}{3} +1 \)};
    \node[below right] at (1,0) {\( 1 \)};

     \fill[blue!20,opacity=0.5] (-4,4) -- (4,4) -- (4,-4) -- (-4,-4) --
     plot[domain={-sqrt(15)}:{sqrt(15)} ,samples=25] ({-(\x)^2/3 +1},\x) -- cycle;
    \draw[blue,thick] plot[domain={-sqrt(15)}:{sqrt(15)} ,samples=25] ({-(\x)^2/3 +1},\x);
    \draw[thick,red] (-4,0) -- (0,0);
    \draw[draw=red, fill=red] (0,0) circle (2pt);
\end{tikzpicture}
\caption{The closed joint numerical range set of Example \ref{eg:CF3} has no intersection with \( (-\infty ,0]\times \{0\} \).}
\label{fig:F3}
\end{minipage}
\end{figure}
According to Proposition \ref{C2 prop}, two quadratic functions \( f \) and \( g \) have the separation property if and only if \( (0,0) \in \operatorname{conv}((f,g)(\mathbb{R} ^2)) \setminus (f,g)(\mathbb{R} ^2) \). In this example, the joint numerical range of \( f \) and \( g \) is
\begin{align*}
(f,g)(\mathbb{R} ^2)&=\{(-x_1^2+x_2^2+1,2x_1-x_2)|x_1,x_2\in \mathbb{R} \}\\
&=\{(-x_1^2+(2x_1-s)^2 +1,s) \mid x_1,s=2x_1-x_2\in \mathbb{R} \}\\
&=\{(3x_1^2-4sx_1+s^2+1,s) \mid x_1,s\in \mathbb{R} \}\\
&=\left \{(r,s) \, \middle | \, r\geq -\frac{s^2}{3} +1 \right \},
\end{align*}
as illustrated by the shadow areas in Fig.\ \ref{fig:C3} and Fig.\ \ref{fig:F3}, where \( (f,g)(\mathbb{R} ^2) \) is closed and
\begin{equation*}
(0,0)\notin \operatorname{cl} ((f,g)(\mathbb{R} ^2)) \text{ and }\operatorname{cl} ((f,g)(\mathbb{R} ^2)) \cap ((-\infty ,0]\times \{0\})=\emptyset .
\end{equation*}
Since \( \operatorname{conv}((f,g)(\mathbb{R} ^2))=\mathbb{R} ^2 \), we have
\begin{equation*}
(0,0)\in \operatorname{conv}(\operatorname{cl}((f,g)(\mathbb{R} ^2))) \text{ and} \operatorname{conv}(\operatorname{cl}((f,g)(\mathbb{R} ^2))) \cap ((-\infty ,0]\times \{0\})\neq \emptyset .
\end{equation*}
Proposition \ref{C2 prop} as well as Proposition \ref{F2 prop} are both justified in this example.
\end{example}

\section{Checkable conditions for [Calabi](nonhomo version) and [Str.-Finsler]({nonhomo version})}
\label{checkability}
In this section, we develop explicit formulas of Theorem \ref{NHFC} [Calabi](nonhomo version) and of Theorem \ref{NHSF} [Str.-Finsler]({nonhomo version}) that are amenable to computation. Specifically, conditions (C1), (C2), (C3) for checking \(
(\text{C0}):~ \{f=0\}\cap \{g=0\}=\emptyset 
\) are reduced to computationally implementable versions (C1-1), (C2-1), (C3-1) whereas conditions (F1), (F2), (F3) for checking \(
(\text{F0}):~ \{f\leq 0\}\cap \{g=0\}=\emptyset 
\) become (F1-1), (F2-1) and (F3-1).

In the following, we first address (C3‑1) and (F3‑1), both of which involve the separation property. Since the proof of (C3‑1) relies on properties of (F3‑1), we present (F3‑1) in Section 6.1, followed by (C3‑1) in Section 6.2. As for the pairs ``(C2‑1) and (F2‑1)'' and ``(C1‑1) and (F1‑1),'' we characterize them using matrix pencils, and these will be discussed in Sections 6.3 and 6.4, respectively.

\subsection{Checkable conditions (F3-1) for (F3): $\{g=0\}$ to separate $\{f \leq 0\}$}

Most of the results in this subsection rely on earlier developments by Nguyen, Chu and Sheu \cite{nguyen2024separating}, in which analytic conditions for the $0$-level set $\{g=0\}$ to separate $\{x\in \mathbb{R} ^n \mid f(x) \mathbin{\star } 0\}$ with $\star \in \{<, \leq , =, \geq , >\}$ were derived. In particular, we quote the result for $\{g=0\}$ to separate $\{x\in \mathbb{R} ^n \mid f(x) \leq 0\}$ in Theorem \ref{le sep}, and then provide an enhanced version of it in Theorem \ref{le sep+}.

\begin{theorem}
(Theorem 2 in \cite{nguyen2024separating})
\label{le sep}
The following are equivalent.
\begin{itemize}
\item[(F3)] The level set \( \{g = 0\} \) separates \( \{f \leq 0\} \).
\item[(F3-0)] \( B = 0 \), \( b \neq 0 \), \( A \) has exactly one negative eigenvalue, and
\[
 V^T AV\succeq 0, w\in \mathcal{R} (V^T AV),f(x_0)-w^T (V^T AV)^\dagger w >0,
\]
where \( w=V^T (Ax_0 + a) \), \( x_0=\frac{-b_0}{2b^T b}b \), \( V\in \mathbb{R} ^{n\times (n-1)} \) is the matrix basis of \( \mathcal{N} (b^T ) \).
\end{itemize}
\end{theorem}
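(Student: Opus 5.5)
The plan is to reduce to the hyperplane $\{g=0\}$ by an affine change of variables and then apply Lemma \ref{eq sep}. First, for (F3) $\Rightarrow$ (F3-0), Lemma \ref{Tsu prop 1} (with $\star$ being $\leq$) already gives that $g$ is a non-constant affine function, i.e.\ $B=0$, $b\neq 0$, and that $f$ is quadratic. With $g$ affine, the surface $\{g=0\}$ is the hyperplane $x_0+\mathcal N(b^T)=\{x_0+Vy\mid y\in\mathbb R^{n-1}\}$, where $x_0=-\frac{b_0}{2b^Tb}b$ satisfies $g(x_0)=0$. Restricting $f$ to this hyperplane gives
\[
\phi(y):=f(x_0+Vy)=y^TV^TAVy+2w^Ty+f(x_0),\qquad w=V^T(Ax_0+a).
\]
By Lemma \ref{QP0}, the condition $\{f\le 0\}\cap\{g=0\}=\emptyset$, i.e.\ $\phi>0$ on $\mathbb R^{n-1}$, is equivalent to
\[
V^TAV\succeq 0,\qquad w\in\mathcal R(V^TAV),\qquad f(x_0)-w^T(V^TAV)^\dagger w>0 .
\]
These are exactly the three displayed conditions in (F3-0), so this part of Lemma \ref{eq sep}, namely (\ref{f=g=0}), is settled in both directions.

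It remains to handle the second condition of Lemma \ref{eq sep}, that $\{f\le0\}$ meets both $\{g<0\}$ and $\{g>0\}$, and to match it with ``$A$ has exactly one negative eigenvalue'' under the preceding conditions.

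\textbf{Necessity of the eigenvalue condition.} Since $V^TAV\succeq 0$ on the $(n-1)$-dimensional subspace $\mathcal N(b^T)$, the Courant--Fischer theorem shows that $A$ has at most one negative eigenvalue. If $A\succeq0$, then $f$ is convex, so $\{f\le 0\}$ is convex and hence connected. A connected set cannot meet both $\{g<0\}$ and $\{g>0\}$ while avoiding $\{g=0\}$, which contradicts the separation. So $A$ has exactly one negative eigenvalue.

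\textbf{Sufficiency.} Assume (F3-0). Then $\phi>0$, so $\{f\le0\}\cap\{g=0\}=\emptyset$, and we must produce $u,v\in\{f\le 0\}$ with $g(u)<0<g(v)$. Since $b\neq0$, write $x=x_0+Vy+tb$, so that $g(x)=2t\,b^Tb$. Then
\[
f(x_0+Vy+tb)=\phi(y)+2t\,b^TA(x_0+Vy)+2t\,a^Tb+t^2\,b^TAb .
\]
Because $A$ has a negative eigenvalue while $V^TAV\succeq0$, there is a direction $d=Vy_1+b$ (necessarily with a nonzero $b$-component) such that $d^TAd<0$. Taking $x=x_0+s d$ with $s\to+\infty$ and $s\to-\infty$ gives $f(x)\to-\infty$ along both rays, while $g(x)=2s\,b^Tb$ has opposite signs on the two rays. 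This yields the required $u$ and $v$, and Lemma \ref{eq sep} finishes the argument.

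The main obstacle I expect is the existence of $d$ with a nonzero $b$-component and $d^TAd<0$. This follows because $A$ is not positive semidefinite while it is positive semidefinite on $\mathcal N(b^T)$: any negative direction of $A$ cannot lie in $\mathcal N(b^T)$, so it has a nonzero $b$-component and can be rescaled to the form $Vy_1+b$.
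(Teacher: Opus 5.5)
The paper gives no proof of this statement. It is quoted from \cite{nguyen2024separating}, and the only in-paper work on it is Lemma~\ref{no VAV}, which replaces $V^TAV\succeq 0$ by $b\in\mathcal R(A)$, $b^TA^\dagger b\le 0$. There is therefore no in-paper proof to compare with. Judged on its own, your argument is correct.

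\begin{itemize}
\item Restricting $f$ to $x_0+\mathcal N(b^T)$ and applying Lemma~\ref{QP0} turns condition \eqref{f=g=0} of Lemma~\ref{eq sep} exactly into the three displayed conditions.
\item The dimension count ($A\succeq 0$ on an $(n-1)$-dimensional subspace) gives at most one negative eigenvalue.
\item Convexity of $\{f\le 0\}$ then rules out $A\succeq 0$.
\item The obstacle you anticipate in the sufficiency part is immediate, and the rescaling is not needed. Any $d$ with $d^TAd<0$ has $b^Td\neq 0$. Along $x_0+sd$ one has $g=2s\,b^Td$, which changes sign with $s$, while $f\to-\infty$ as $|s|\to\infty$.
\end{itemize}

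There is one caveat. The genuinely hard part of the theorem, that $g$ must be affine, is delegated to Lemma~\ref{Tsu prop 1}. That lemma's attribution includes Theorem 2 of \cite{nguyen2024separating}, which is the very statement you are proving. Inside this paper the lemma is a stated preliminary, so using it is admissible. As a from-scratch proof of the non-strict case, however, the argument would be circular.

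You can remove this dependence by reducing to the strict case. That case follows from results quoted in the introduction that predate \cite{nguyen2024separating}. Since $g$ takes both signs, Theorem~\ref{Geometric S-lemma} shows that separation of $\{f<0\}$ by $\{g=0\}$ means the $\mathcal S$-lemma with equality fails. The theorem of \cite{xia2016s} then forces $B=0$ and $b\neq 0$.

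It remains to show that if $\{g=0\}$ separates $\{f\le 0\}$, it also separates $\{f<0\}$.
\begin{itemize}
\item The condition $\{f<0\}\cap\{g=0\}=\emptyset$ is inherited directly.
\item Suppose $f(u)=0$ with $g(u)<0$. Then $u$ cannot be a local minimizer of $f$. Otherwise $Au+a=0$ and $A\succeq 0$, so $\{f\le 0\}=u+\mathcal N(A)$, which is connected and cannot be separated.
\item Hence there are points with $f<0$ arbitrarily close to $u$, and by continuity they also satisfy $g<0$. The same argument applies to $v$.
\end{itemize}
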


In what follows, we replace the condition \( V^T AV \succeq 0 \) in (F3-0) of Theorem \ref{le sep} by 
\begin{equation}\label{no-VAV-1}
b\in \mathcal{R} (A) \text{ and } b^T A^\dagger b \leq 0.
\end{equation}
Notice that \eqref{no-VAV-1} is easier to verify than to check $V^T AV\succeq 0$ directly.

\begin{lemma}
\label{no VAV}
Suppose that \( A \) has exactly one negative eigenvalue and \( V\in \mathbb{R} ^{n\times (n-1)} \) is the matrix basis of \( \mathcal{N} (b^T ) \) with \( b\neq 0 \). Then, \( V^T A V \succeq 0 \) if and only if \( b\in \mathcal{R} (A) \) and \( b^T A^\dagger b \leq 0 \).
\end{lemma}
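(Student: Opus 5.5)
The plan is to work with the symmetric bilinear form $\langle u,v\rangle := u^T A v$. By Sylvester's law of inertia, this form has negative index exactly one. Consequently no two-dimensional subspace can be negative definite for it. Note also that $V^TAV\succeq 0$ means exactly that $y^TAy\ge 0$ for every $y$ with $b^Ty=0$, since the columns of $V$ span $\mathcal N(b^T)$.

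\emph{Necessity.} Assume $V^TAV\succeq 0$. Let $q_1$ be a unit eigenvector of $A$ for its negative eigenvalue $-\nu<0$.

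First, suppose $b\notin\mathcal R(A)=\mathcal N(A)^\perp$. Then there is a $z\in\mathcal N(A)$ with $b^Tz\neq 0$. Choose $t$ so that $y=q_1+tz$ satisfies $b^Ty=0$. Since $Az=0$, we get $y^TAy=q_1^TAq_1=-\nu<0$, a contradiction. Hence $b\in\mathcal R(A)$.

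Next, put $x^*=A^\dagger b$. Then $Ax^*=AA^\dagger b=b$ and $\beta:=b^TA^\dagger b=x^{*T}Ax^*=b^Tx^*$. Suppose, to the contrary, that $\beta>0$. Take $y=q_1-c\,x^*$ with $c=b^Tq_1/\beta$, so that $b^Ty=0$. Using $q_1^TAx^*=b^Tq_1$, a direct expansion gives
\[
y^TAy=-\nu-2c\,b^Tq_1+c^2\beta=-\nu-\frac{(b^Tq_1)^2}{\beta}<0,
\]
again a contradiction. Therefore $\beta\le 0$.

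\emph{Sufficiency.} Assume $b=Ax^*\in\mathcal R(A)$ with $\beta=\langle x^*,x^*\rangle\le 0$. Note that $b^Ty=\langle x^*,y\rangle$. Suppose, to the contrary, that some $y$ has $\langle x^*,y\rangle=0$ and $\langle y,y\rangle<0$.

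If $\beta<0$, then $x^*$ and $y$ are linearly independent. Indeed, if $y=cx^*$, then $0=\langle x^*,y\rangle=c\beta$ forces $c=0$, contradicting $\langle y,y\rangle<0$. The form on $\operatorname{span}\{x^*,y\}$ has Gram matrix $\operatorname{diag}(\beta,\langle y,y\rangle)\prec 0$. This gives a two-dimensional negative definite subspace, contradicting negative index one.

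If $\beta=0$, use the $A$-orthogonal complement $y^{\perp_A}=\{w:\langle w,y\rangle=0\}$. The same two-dimensional argument, with $y$ now in the role of $x^*$, shows that the form is positive semidefinite on $y^{\perp_A}$. Now $x^*\in y^{\perp_A}$ and $\langle x^*,x^*\rangle=0$. For a positive semidefinite form, the Cauchy--Schwarz inequality then gives $\langle x^*,w\rangle=0$ for all $w\in y^{\perp_A}$. We also have $\langle x^*,y\rangle=0$. Since $\langle y,y\rangle\neq0$, $\mathbb R^n=\operatorname{span}\{y\}\oplus y^{\perp_A}$. Hence $\langle x^*,\cdot\rangle\equiv 0$, that is, $b=Ax^*=0$, contradicting $b\neq 0$.

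Thus $y^TAy\ge0$ whenever $b^Ty=0$, which is $V^TAV\succeq0$.

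The step I expect to be the main obstacle is the degenerate case $\beta=0$ in the sufficiency part. There the naive two-dimensional argument only produces a negative \emph{semi}definite plane, which does not contradict the inertia count. The detour through $y^{\perp_A}$ and the Cauchy--Schwarz inequality for semidefinite forms is what resolves it.
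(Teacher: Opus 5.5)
Your proof is correct, and the sufficiency half takes a genuinely different route from the paper's.

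\textbf{Necessity.} This half is essentially the paper's argument written without coordinates. Up to scaling, your test vectors $q_1+tz$ and $q_1-c\,x^*$ coincide with the paper's $\beta_n u_1-\beta_1 u_n$ and $A^\dagger b-\frac{b^TA^\dagger b}{b^Tu_1}u_1$. Because you normalize by $\beta>0$ rather than by $b^Tq_1$, you do not need the paper's preliminary step showing $b^Tu_1\neq 0$.

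\textbf{Sufficiency, the paper's route.} The paper expands $b=\sum_{i\le k}\beta_i u_i$ and $y=\sum_i\gamma_i u_i$ in an eigenbasis. It reads off $b^TA^\dagger b=\sum_{i\le k}\beta_i^2/\alpha_i\le 0$, which forces $\beta_1\neq 0$. It then applies a weighted Cauchy--Schwarz inequality to $-\beta_1\gamma_1=\sum_{i\ge 2}\beta_i\gamma_i$ and obtains $-\alpha_1\gamma_1^2\le\sum_{i\ge 2}\alpha_i\gamma_i^2$, i.e.\ $y^TAy\ge 0$, directly. Every inequality in that chain is non-strict, so the cases $b^TA^\dagger b<0$ and $b^TA^\dagger b=0$ are handled together. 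No contradiction argument is needed.

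\textbf{Sufficiency, your route.} You recast $\mathcal N(b^T)$ as the $A$-orthogonal complement of $x^*=A^\dagger b$. You then argue from inertia: there is no two-dimensional subspace on which the form is negative definite. This is basis-free and explains the lemma conceptually. For a form of negative index one, the $A$-orthogonal complement of any $x$ with $x^TAx\le 0$ and $Ax\neq 0$ is positive semidefinite; this is a Lorentzian-type fact.

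The price is a case split, since the isotropic case $\beta=0$ yields only a negative semidefinite plane. You handle that case correctly with three steps:
\begin{itemize}
\item positivity of the form on $y^{\perp_A}$;
\item Cauchy--Schwarz for semidefinite forms, which puts $x^*$ in the radical of that restriction;
\item the decomposition $\mathbb R^n=\operatorname{span}\{y\}\oplus y^{\perp_A}$, valid since $y^TAy\neq 0$, which gives $Ax^*=0$.
\end{itemize}
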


\begin{proof}
Since \( V \) is a matrix basis of \( \mathcal{N}(b^T )=\{y\in \mathbb{R} ^n \mid b^T y =0 \} \), we have
\begin{equation}
\label{VAV iff}
V^T AV\succeq 0 \Longleftrightarrow y^T Ay \geq 0, ~\forall y\in \mathcal{N}(b^T ).
\end{equation}

Let \( u_1,u_2,\ldots ,u_n \) be orthonormal eigenvectors of matrix $A$ with their respective eigenvalues \( \alpha _1,\alpha _2,\ldots ,\alpha _n \). Since $A$ has exactly one negative eigenvalue, we assume that \( \alpha _1 < 0 \), \( \alpha _i > 0 \) for \( 1<i\leq k \), and \( \alpha _i = 0 \) for \( k+1\leq i\leq n \). Then, the symmetric matrix $A$ has the following spectral decomposition:
\[
A=\sum _{i=1}^n \alpha _i u_i u_i^T =\sum _{i=1}^k \alpha _i u_i u_i^T .
\]
In the case where $k=n,$ $A$ is of full rank and it has no $0$-eigenvalue.

Using the orthonormal eigenvectors \( u_1,u_2,\ldots ,u_n \) as a basis of $\mathbb{R} ^n,$ the vector $b$ can be expressed as 
\[
b=\sum _{i=1}^n \beta _i u_i\ne 0,~\beta _i\in \mathbb{R} ,~i=1,2,\ldots ,n.
\]
With the setting, we first prove the necessary part of Lemma \ref{no VAV} by contradiction with the following two cases (i) and (ii).
\begin{itemize}
\item[(i)] Suppose that \( V^T AV\succeq 0 \) and \( b\notin \mathcal{R} (A)=\operatorname{span} \{u_1,u_2,\ldots ,u_k\} \). The case could occur only when $A$ is rank deficient, so that $k<n$, and at least one of the coefficients $\beta _{k+1}, \beta _{k+2},\ldots , \beta _{n}$ of vector $b$ corresponding to the zero eigenvalue of $A$ is nonzero. Let us assume that \( \beta _n\neq 0 \) and consider \( y = \beta _n u_1 - \beta _1 u_n \). Then,
\[
b^T y = \left (\sum _{i=1}^n \beta _i u_i \right )^T (\beta _n u_1 -\beta _1 u_n )= \beta _1 \beta _n - \beta _n \beta _1 = 0.
\]
Namely, $y\in \mathcal{N}(b^T )$. In addition,
\begin{align*}
y^T Ay &= (\beta _n u_1 - \beta _1 u_n)^T \left(\sum _{i=1}^n \alpha _i u_i u_i^T \right) (\beta _n u_1 - \beta _1 u_n)\\
&=(\beta _n u_1 - \beta _1 u_n)^T \left( \alpha _1\beta _nu_1-\alpha _n\beta _1 u_n \right)\\
&= \alpha _1\beta _n^2 + \alpha _n\beta _1^2< 0,\text{ since } \alpha _1<0, \beta _n\neq 0, \text{ and } \alpha _n=0.
\end{align*}
By \eqref{VAV iff}, \( V^T AV\nsucceq 0 \), which is a contradiction.

\item[(ii)] Suppose that \( V^T AV\succeq 0 \), \( b\in \mathcal{R} (A) \) but \( b^T A^\dagger b >0 \). Since \( u_1^T A u_1 = \alpha _1 <0 \), by \eqref{VAV iff}, we have \( b^T u_1\neq 0 \). Consider
\begin{equation}
\label{y with u1}
w = A^\dagger b - \frac{b^T A^\dagger b}{b^T u_1} u_1 \text{ so that } b^T w= b^T A^\dagger b - \frac{b^T A^\dagger b}{b^T u_1} b^T u_1 = 0.
\end{equation}
Then, $w\in \mathcal{N}(b^T )$. 
Notice that \( b\in \mathcal{R} (A) \) implies that \( AA^\dagger b=b \). By direct calculation, we have
\begin{eqnarray}
w^T Aw &=& \left (A^\dagger b - \frac{b^T A^\dagger b}{b^T u_1} u_1 \right )^T A \left (A^\dagger b - \frac{b^T A^\dagger b}{b^T u_1} u_1 \right )\nonumber\\
&=& b^T A^\dagger A A^ \dagger b - 2 \frac{b^T A^\dagger b}{b^T u_1} u_1^T AA^\dagger b + \left (\frac{b^T A^\dagger b}{b^T u_1}\right )^2 u_1^T A u_1\nonumber\\
&=& - b^T A^\dagger b + \left (\frac{b^T A^\dagger b}{b^T u_1}\right )^2 \alpha _1 \nonumber\\
&<& 0 \quad (\text{since } b^T A^\dagger b >0 \text{ by assumption, and } b^T u_1\neq 0, \alpha _1 < 0).\label{direct calc yAy}
\end{eqnarray}
Combining \eqref{VAV iff}, \eqref{y with u1} and \eqref{direct calc yAy} again leads to the contradiction \( V^T AV\nsucceq 0 \). The necessary part is thus completed.
\end{itemize}

Conversely, we prove the sufficiency of the theorem under the following assumptions:
\begin{equation}\label{tree-5}
0\ne b\in \mathcal{R} (A), ~b^T A^\dagger b \leq 0,
\end{equation}
and aim to show that
\[
\forall y\in \mathcal{N}(b^T )~\Rightarrow ~y^T Ay\geq 0.
\]

Denote any $y\in \mathcal{N}(b^T )$ as \( y=\sum _{i=1}^n \gamma _i u_i \).
Since $b\in \mathcal{R} (A)$ and $y\in \mathcal{N}(b^T ),$ we have
\begin{equation}\label{bTy=0}
b=\sum _{i=1}^k \beta _i u_i\text{ and } b^T y = \sum _{i=1}^k \beta _i \gamma _i = 0,~k\leq n.
\end{equation}
Notice that
\begin{eqnarray}
0 &\geq & b^T A^\dagger b\nonumber\\
&=& \left (\sum _{i=1}^k \beta _i u_i \right)^T \left (\sum _{i=1}^k \alpha _i^{-1} u_i u_i^T \right) \left (\sum _{i=1}^k \beta _i u_i \right) \quad (\text{since } \alpha _i=0, ~\forall \, k+1\leq i\leq n  )\nonumber\\
&=&\sum _{i=1}^k \frac{\beta ^2_i}{\alpha _i}.
\label{w ge ana}
\end{eqnarray}
Since $\alpha _1<0$ and \( \alpha _i>0 \) for \( 2\leq i\leq k \), \eqref{w ge ana} implies that \( \beta _1\neq 0 \). Otherwise, if $\beta _1= 0$, \eqref{w ge ana} would force \( \beta _i=0 \) for all \( 1\leq i\leq k \), a contradiction to the assumption in \eqref{tree-5} that \( b\neq 0 \).

Rearrange the term for $b^T y$ in \eqref{bTy=0} and take the square. We obtain
\begin{eqnarray}
(-\beta _1 \gamma _1)^2 & =& \left (\sum _{i=2}^k \beta _i \gamma _i \right )^2 \quad \nonumber\\
& \leq & \left(\sum _{i=2}^k \frac{\beta _i^2}{\alpha _i}\right)\left(\sum _{i=2}^k \alpha _i \gamma _i^2\right) \quad (\text{by Cauchy-Schwarz inequality})\nonumber\\
& \leq & \left(-\frac{\beta _1^2}{\alpha _1}\right) \left (\sum _{i=2}^k \alpha _i \gamma _i^2 \right ). \quad (\text{by }\eqref{w ge ana})\label{Cauchy-Sch}
\end{eqnarray}

Recall that \( \alpha _1<0 \) and $\beta _1\ne 0.$ We then cancel $\beta _1^2$ from both sides of \eqref{Cauchy-Sch} and swap $-\alpha _1$ to the left hand side to obtain
\begin{equation}
\label{both devide}
-\alpha _1 \gamma _1^2 \leq \sum _{i=2}^k \alpha _i \gamma _i^2~ \Rightarrow \sum _{i=1}^k \alpha _i \gamma _i^2~ \geq 0.
\end{equation}
Hence,
\begin{eqnarray*}
y^T Ay &=& \left ( \sum _{i=1}^n \gamma _i u_i^T \right ) \left ( \sum _{i=1}^n \alpha _i u_i u_i^T \right ) \left ( \sum _{i=1}^n \gamma _i u_i \right )\nonumber\\
&=&\left ( \sum _{i=1}^n \gamma _i u_i^T \right ) \left ( \sum _{i=1}^n \alpha _i\gamma _i u_i \right )\nonumber\\
&=& \sum _{i=1}^n \alpha _i \gamma _i^2=\sum _{i=1}^k \alpha _i \gamma _i^2\geq 0. \quad (\text{by } \eqref{both devide}~{\rm and}~\alpha _i=0,~i=k+1,\ldots ,n)
\end{eqnarray*}
The proof for the sufficiency as well as the entire Lemma \ref{no VAV} are thus complete.
\end{proof}

Incorporating Lemma \ref{no VAV} with Theorem \ref{le sep}, we have the following result.

\begin{theorem}
\label{le sep+}
The following are equivalent.
\begin{itemize}
\item[(F3)] The level set \( \{g = 0\} \) separates \( \{f \leq 0\} \).
\item[(F3-1)] \( B = 0 \), \( b \neq 0 \), \( A \) has exactly one negative eigenvalue,
\[
b\in \mathcal{R} (A), b^T A^\dagger b \leq 0, w\in \mathcal{R} (V^T AV),f(x_0)-w^T (V^T AV)^\dagger w >0,
\]
where \( w=V^T (Ax_0 + a) \), \( x_0=\frac{-b_0}{2b^T b}b \), \( V\in \mathbb{R} ^{n\times (n-1)} \) is the matrix basis of \( \mathcal{N} (b^T ) \).
\end{itemize}
\end{theorem}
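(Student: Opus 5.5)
The statement follows by combining Theorem \ref{le sep} with Lemma \ref{no VAV}. Theorem \ref{le sep} already gives the equivalence of (F3) and (F3-0). So it suffices to show that (F3-0) and (F3-1) are equivalent as conditions on the data $(A,a,a_0,b,b_0)$. The two conditions share every clause except one: (F3-0) contains $V^TAV\succeq 0$, while (F3-1) contains the pair $b\in\mathcal R(A)$, $b^TA^\dagger b\le 0$. The common clauses are $B=0$, $b\neq 0$, $A$ having exactly one negative eigenvalue, $w\in\mathcal R(V^TAV)$, and $f(x_0)-w^T(V^TAV)^\dagger w>0$, with $w$, $x_0$, $V$ defined identically in both.

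Assume either (F3-0) or (F3-1). In both cases $b\neq0$ and $A$ has exactly one negative eigenvalue, and $V$ is a matrix basis of $\mathcal N(b^T)$. These are exactly the hypotheses of Lemma \ref{no VAV}. That lemma then says the differing clause of (F3-0) holds if and only if the differing clause of (F3-1) holds. Hence (F3-0) holds exactly when (F3-1) holds, and chaining with Theorem \ref{le sep} yields (F3) $\Leftrightarrow$ (F3-1).

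The only point needing care is logical bookkeeping. Lemma \ref{no VAV} may be applied only after the shared hypotheses ($b\neq 0$, exactly one negative eigenvalue of $A$) have been extracted from whichever condition is assumed, so the substitution must be done inside the conjunction rather than as an unconditional equivalence. In addition, the remaining clauses involving $V^TAV$, namely $w\in\mathcal R(V^TAV)$ and the pseudo-inverse expression, are kept verbatim and so need no further argument. I expect no real obstacle, since the substantive work is already contained in Lemma \ref{no VAV}.
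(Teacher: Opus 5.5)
Your proposal is correct and matches the paper's argument: the paper obtains Theorem \ref{le sep+} by substituting Lemma \ref{no VAV} for the clause $V^TAV\succeq 0$ in (F3-0) of Theorem \ref{le sep}. The shared clauses ($b\neq 0$ and $A$ having exactly one negative eigenvalue) supply the lemma's hypotheses, and your point that the substitution must be made inside the conjunction is the correct bookkeeping, left implicit in the paper's one-line justification.
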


Algorithm \ref{alg:F3} in Appendix \ref{'en-s'ng-hu8at} enumerates all the computational steps involved in verifying (F3-1).

\subsection{Checkable conditions (C3-1) for (C3): the separation property}

Recall that the separation property (C3) is defined as: either \( \{f = 0\} \) separates \( \{g = 0\} \), or \( \{g = 0\} \) separates \( \{f = 0\} \). According to Lemma \ref{= = sep} and Lemma \ref{= = aff} below, the separation property reduces to the existence of a hyperplane $\{h=0\}$ that separates either $\{f\leq 0\}$ or $\{f\geq 0\}$, or the existence of a hyperplane \( \{l=0\} \) that separates either \( \{g\leq 0\} \) or \( \{g\geq 0\} \).

\begin{lemma}
\label{= = sep}
(\cite{nguyen2024separating} Theorem 4)\\
Let \( f(x) = x^T Ax + 2 a^T x + a_0 \) and \( g(x) = x^T Bx + 2 b^T x + b_0 \). Then, \( \{g=0\} \) separates \( \{f=0\} \) if and only if there exists \( \zeta \in \mathbb{R} \) such that \( B = \zeta A \) and the hyperplane \( \{- \zeta f + g = 0\} \) separates \( \{f = 0\} \).
\end{lemma}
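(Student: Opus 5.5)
We have to prove that $\{g=0\}$ separates $\{f=0\}$ if and only if $B=\zeta A$ for some $\zeta\in\mathbb{R}$ and $\{-\zeta f+g=0\}$ separates $\{f=0\}$.

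\emph{Sufficiency.} Suppose $B=\zeta A$ and $\{-\zeta f+g=0\}$ separates $\{f=0\}$. On $\{f=0\}$ we have $-\zeta f+g=g$ identically. Hence the two conditions of Lemma \ref{eq sep}, namely
\[
\{f=0\}\cap\{-\zeta f+g=0\}=\emptyset \quad\text{and}\quad (-\zeta f+g)(u)<0<(-\zeta f+g)(v) \text{ for some } u,v\in\{f=0\},
\]
translate verbatim into the same two conditions with $-\zeta f+g$ replaced by $g$. Lemma \ref{eq sep} then says $\{g=0\}$ separates $\{f=0\}$. Note that we never use the affine nature of $-\zeta f+g$ here.

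\emph{Necessity.} Suppose $\{g=0\}$ separates $\{f=0\}$. By Lemma \ref{h!ap}(i), $(0,0)\in\operatorname{conv}((f,g)(\mathbb{R}^n))\setminus(f,g)(\mathbb{R}^n)$. The main obstacle is showing that $B$ and $A$ are proportional, so that some combination $-\zeta f+g$ is affine. I would argue through the structure of separation:
\begin{itemize}
\item If $g$ is already affine ($B=0$), take $\zeta=0$ and we are done.
\item If $g$ is genuinely quadratic, then $\{f=0\}$ must be disconnected into two pieces $L^\pm$ lying in $\{g<0\}$ and $\{g>0\}$.
\end{itemize}

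For the quadratic case, the plan is to reduce to an affine separator via Lemma \ref{cor 2.9 ng} and Lemma \ref{Tsu prop 1}.

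First pick $u\in L^-$ and $v\in L^+$, so that $f(u)=f(v)=0$ and $g(u)<0<g(v)$. Then, since $\{f=0\}\cap\{g=0\}=\emptyset$, $g$ has constant sign on each connected component of $\{f=0\}$. I would show that there is $\epsilon>0$ such that $\{g=0\}$ separates $\{f-\epsilon\le 0\}\cap\{f\ge -\epsilon\}$, or more simply separates $\{f\le 0\}$ or $\{f\ge 0\}$ after a sign choice. The idea is to move along the segment joining $u$ and $v$ and use connectedness arguments as in Proposition \ref{F2 prop}.

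Once some $\{g=0\}$ (or a combination) separates a sublevel set $\{\pm f\le 0\}$, Lemma \ref{Tsu prop 1} would force the separator to be affine. That is the contradiction or reduction needed.

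A cleaner alternative is to use the convex-hull route. Using Proposition \ref{C2 prop}'s construction, I would seek $\zeta$ such that $\{-\zeta f+g=0\}$ separates $\{f=0\}$, where $\zeta$ is chosen as the slope through the two points $(f(u),g(u))$ and $(f(v),g(v))$ shifted appropriately. Then I would invoke Lemma \ref{sep with aff}-type reasoning with Lemma \ref{Tsu prop 1} to conclude that $-\zeta f+g$ is affine, i.e.\ $B=\zeta A$. Here $\zeta$ would come from examining the recession (homogeneous) parts: separation of an unbounded quadric $\{f=0\}$ by $\{g=0\}$ forces $x^TBx$ and $x^TAx$ to share zero cones, which gives proportionality.

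I expect that last step, deducing $B=\zeta A$ from separation, to be the hard part. I would try first to cite the analysis of \cite{nguyen2024separating} (Theorem 4 there) directly, since this lemma is quoted from it.
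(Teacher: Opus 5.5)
Your sufficiency argument is correct: on $\{f=0\}$ the functions $-\zeta f+g$ and $g$ coincide, so both conditions of Lemma~\ref{eq sep} carry over unchanged. The necessity direction, however, is not proved. Its whole content is the proportionality $B=\zeta A$, and none of your routes produces it. Falling back on Theorem~4 of \cite{nguyen2024separating} is exactly what the paper does, since it gives no proof of this lemma, only the citation. But that quotes the statement rather than arguing for it.

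Concretely, the routes you sketch fail as follows.
\begin{enumerate}
\item[(i)] Showing that $\{g=0\}$ separates $\{\pm f\le 0\}$ and then invoking Lemma~\ref{Tsu prop 1} cannot work for non-affine $g$. That lemma itself rules out such a separation, yet non-affine $g$ \emph{can} separate $\{f=0\}$. For example, take $f=x_1^2-x_2^2-1$ and $g=f+x_1$: then $g=x_1$ on $\{f=0\}$, whose branches lie in $x_1\ge 1$ and $x_1\le -1$. So there is no contradiction to derive, and your ``combination'' variant presupposes the very $\zeta$ you are looking for. The band $\{|f|\le\epsilon\}$ is not a sublevel set, so Lemma~\ref{Tsu prop 1} says nothing about it either.
\item[(ii)] The convex-hull and segment tricks carry no information here. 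Since $u,v\in\{f=0\}$, the image points $(0,g(u))$ and $(0,g(v))$ lie on the $s$-axis. The construction of Proposition~\ref{C2 prop} therefore returns $(\sigma,\tau)\propto(0,1)$, which is just the hypothesis. The slope used in Proposition~\ref{F2 prop} is $(f(u)-f(v))/(g(u)-g(v))=0$. Finally, Lemma~\ref{sep with aff} presupposes that the separator is already affine.
\end{enumerate}

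The idea that does work is the recession-direction one you only gesture at. However, ``sharing zero cones gives proportionality'' is not a valid step (non-proportional definite forms share the zero cone $\{0\}$), and it must be replaced by an actual argument.

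First you need the structure of a disconnected quadric. For $n\ge 2$, if $\{f=0\}$ is disconnected then $a\in\mathcal R(A)$; otherwise $\{f=0\}$ is a graph over a hyperplane and hence connected. Change variables by $x=Py+p$ with $P$ invertible; this replaces $A,B$ by $P^TAP,P^TBP$ and does not affect proportionality. After such a change and a rescaling of $f$, one has $f=y_1^2-Q(y')-c$ with $Q(y')=\sum_{i=2}^k\alpha_i y_i^2$, $\alpha_i>0$ and $c>0$. The two components are the sheets $y_1=\pm\sqrt{c+Q(y')}$, with $y''$ free, and they are swapped by $y\mapsto -y$. The case $n=1$ is trivial, since then $A\neq 0$ and every $B$ is a multiple of $A$.

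Case $k\ge 2$. Follow $y(t)=(\pm\sqrt{c+Q(td')},td',td'')$ as $t\to\infty$ and divide $g(y(t))$ by $t^2$. Since $g>0$ on one sheet and $g<0$ on the other, and $q(y)=y^TBy$ is even, $q$ must vanish on the cone $\{y_1^2=Q(y')\}$. Write $q=\beta y_1^2+2y_1\ell(y',y'')+r(y',y'')$ and substitute $y_1=\pm\sqrt{Q(y')}$. This gives $\ell\equiv 0$ and $r=-\beta Q$, so $B$ is proportional to $A$.

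Case $k=1$ (two parallel hyperplanes). Vanishing on the zero cone $\{y_1=0\}$ only gives $q=\beta y_1^2+2y_1\ell(y'')$, which is not proportional to $y_1^2$. Instead, use that $g$ restricted to $y_1=\sqrt{c}$ is an everywhere positive quadratic in $y''$, and restricted to $y_1=-\sqrt{c}$ an everywhere negative one. This forces its $y''$-Hessian block to vanish. It then forces its $y''$-linear coefficients to vanish on both hyperplanes; these differ by $4\sqrt{c}\,\nabla\ell$, so $\ell\equiv 0$.

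Only once $B=\zeta A$ is established does your sufficiency computation show that $\{-\zeta f+g=0\}$ separates $\{f=0\}$. Here $-\zeta f+g$ is affine, and non-constant because it changes sign on $\{f=0\}$.
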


\begin{lemma}
\label{= = aff}
(\cite{nguyen2024separating} Theorem 3)\\
Let \( f(x) = x^T Ax + 2 a^T x + a_0 \) and \( h(x) = c^T x + c_0 \). The hyperplane \( \{h = 0\} \)
separates \( \{f = 0\} \) if and only if either (i) \( \{h = 0\} \) separates \( \{f \leq 0\} \); or (ii) \( \{h = 0\} \) separates \( \{-f \leq 0\} \).
\end{lemma}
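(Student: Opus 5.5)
The plan is to work throughout with the equivalent formulation of separation in Lemma~\ref{eq sep}. By that lemma, $\{h=0\}$ separates $\{f\mathbin{\star}0\}$ if and only if two things hold: $\{f\mathbin{\star}0\}\cap\{h=0\}=\emptyset$, and $h$ takes both a negative and a positive value on $\{f\mathbin{\star}0\}$. Since $\{-f\le 0\}=\{f\ge 0\}$ and $\{-f=0\}=\{f=0\}$, case (ii) is case (i) applied to $-f$. So it suffices to handle (i) and to note that the necessity argument splits naturally into (i) and (ii).

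\textbf{Necessity.} Assume $\{h=0\}$ separates $\{f=0\}$.
\begin{enumerate}
\item First, $c\neq 0$. If $c=0$, then $h$ is constant and cannot take both signs on $\{f=0\}$. Hence $\{h=0\}$ is a genuine affine hyperplane, which is nonempty and connected.
\item By Lemma~\ref{eq sep}, $f\neq 0$ on $\{h=0\}$. Since $f$ is continuous on the connected set $\{h=0\}$, the Intermediate Value Theorem gives either $f>0$ on $\{h=0\}$ or $f<0$ on $\{h=0\}$.
\item If $f>0$ on $\{h=0\}$, then $\{f\le 0\}\cap\{h=0\}=\emptyset$. The points $u,v\in\{f=0\}\subset\{f\le 0\}$ with $h(u)<0<h(v)$, supplied by the separation of $\{f=0\}$, now witness via Lemma~\ref{eq sep} that $\{h=0\}$ separates $\{f\le 0\}$. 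This is (i).
\item If $f<0$ on $\{h=0\}$, the same reasoning with $-f$ in place of $f$ gives (ii).
\end{enumerate}

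\textbf{Sufficiency.} Assume (i); case (ii) is identical with $-f$. By Lemma~\ref{eq sep}, $f>0$ on $\{h=0\}$, and there exist $u',v'$ with $f(u'),f(v')\le 0$ and $h(u')<0<h(v')$. As before, $c\neq 0$.
\begin{enumerate}
\item Since $\{f=0\}\subset\{f\le 0\}$, we get $\{f=0\}\cap\{h=0\}=\emptyset$ immediately.
\item It remains to produce $u,v\in\{f=0\}$ with $h(u)<0<h(v)$. If $f(u')=0$, take $u=u'$.
\item Otherwise $f(u')<0$. Pick any $p\in\{h=0\}$, so $f(p)>0$. Consider the segment $x(t)=(1-t)u'+tp$ for $t\in[0,1]$. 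Since $h$ is affine, $h(x(t))=(1-t)h(u')<0$ for all $t<1$.
\item By continuity, $f(x(t))>0$ for $t$ close to $1$. The Intermediate Value Theorem applied on $[0,t]$ for such a $t<1$ then gives some $t^*<1$ with $f(x(t^*))=0$ and $h(x(t^*))<0$. Set $u=x(t^*)$.
\item The same construction starting from $v'$ yields $v\in\{f=0\}$ with $h(v)>0$.
\end{enumerate}
Lemma~\ref{eq sep} then concludes that $\{h=0\}$ separates $\{f=0\}$.

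The only delicate point is the sufficiency step of turning a point of $\{f\le 0\}$ into a point of $\{f=0\}$ on the same side of the hyperplane. The argument handles this by keeping the connecting segment inside the open half-space, using the affineness of $h$ together with the strict positivity of $f$ on $\{h=0\}$. Beyond that, the proof needs only the non-degeneracy remark $c\neq0$, which is forced by $h$ taking both signs.
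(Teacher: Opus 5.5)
Your proof is correct. The paper gives no argument of its own for this lemma: it imports it as Theorem 3 of \cite{nguyen2024separating}. So there is no in-text proof to compare against, and what you have written is a complete, self-contained replacement for the citation.

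Your route reduces everything to Lemma~\ref{eq sep}, which is just an unwinding of Definition~\ref{def sep}. Beyond that you use only two elementary facts.
\begin{enumerate}
\item The hyperplane $\{h=0\}$ is connected (once $c\neq 0$ is forced by $h$ taking both signs). In the necessity direction, this makes $f$ keep a constant sign on it.
\item The open half-spaces $\{h<0\}$ and $\{h>0\}$ are convex. The segment from a witness $u'\in\{f\le 0\}$ to a point $p\in\{h=0\}$ therefore stays strictly on one side until its endpoint. This is what lets you move a witness from $\{f\le 0\}$ onto $\{f=0\}$ without crossing the hyperplane.
\end{enumerate}

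A by-product of your approach is that you never use that $f$ is quadratic. The argument proves the equivalence for any continuous $f$ and any nonconstant affine $h$. This shows the lemma is really a topological and convex-geometric fact, not something that depends on the special structure of quadratic level sets.

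One small simplification is available in the sufficiency step. You do not need the detour through ``$f(x(t))>0$ for $t$ close to $1$.'' Since $f(x(0))=f(u')<0<f(p)=f(x(1))$, the Intermediate Value Theorem on $[0,1]$ directly gives some $t^*\in(0,1)$ with $f(x(t^*))=0$. Then $h(x(t^*))=(1-t^*)h(u')<0$ follows immediately.
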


Applying Lemma~\ref{= = sep} and Lemma~\ref{= = aff} to verify the separation property (C3) amounts to invoking Theorem~\ref{le sep+} four times, namely to check whether 
\begin{itemize}
\item[(i)] either \( \{h = 0\} \) separates \( \{f \leq 0\} \), or
\( \{h = 0\} \) separates \( \{-f \leq 0\} \);
\end{itemize}
or whether there exists a constant \( \tau \in \mathbb{R} \) such that \( A=\tau B \); \( l(x)=-\tau g + f \) and 
\begin{itemize}
\item[(ii)] either \( \{l = 0\} \) separates \( \{g \leq 0\} \), or
\( \{l = 0\} \) separates \( \{-g \leq 0\} \).
\end{itemize}
If any of the four possibilities in (i) and (ii) is satisfied, the quadratic pair 
$(f,g)$ possesses the separation property. The following lemma offers an alternative that checks all of them in a single step.

\begin{lemma}
\label{sep prop aff}
The two quadratic functions \( f \) and \( g \) have the separation property if and only if there exists \( (\sigma ,\tau )\neq (0,0) \) such that \( \sigma f + \tau g \) is an affine function and the hyperplane \( \{\sigma f+ \tau g=0\} \) separates \( \{\tau f - \sigma g \leq 0\} \).
\end{lemma}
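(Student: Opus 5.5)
We prove the two directions separately. In both, the key tools are Lemma \ref{linear comb sep}, which lets us work with arbitrary invertible linear recombinations of $(f,g)$, Lemma \ref{= = aff}, and Lemma \ref{sep with aff}, which lets us change the sign of the separated function by absorbing a multiple of the affine separator.

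\textbf{Necessity.} Assume $f,g$ have the separation property. By symmetry, first suppose $\{g=0\}$ separates $\{f=0\}$. Lemma \ref{= = sep} gives $\zeta$ with $B=\zeta A$ such that the affine function $h:=-\zeta f+g$ has $\{h=0\}$ separating $\{f=0\}$. By Lemma \ref{Tsu prop 1}, $h$ is non-constant affine. Lemma \ref{= = aff} then gives that $\{h=0\}$ separates either $\{f\le 0\}$ or $\{-f\le 0\}$.

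Take $(\sigma,\tau)=(-\zeta,1)$. Then $\sigma f+\tau g=h$ and $\tau f-\sigma g=f+\zeta g$. By Lemma \ref{sep with aff}, with $h$ playing the role of $g$, the separation of $\{\pm f\le 0\}$ by $\{h=0\}$ transfers to $\{\pm f-\xi h\le 0\}$ for every real $\xi$.

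In the case where $\{h=0\}$ separates $\{f\le0\}$, choose $\xi$ with $f-\xi h=f+\zeta g$. Since $h=-\zeta f+g$, this requires $f-\xi(-\zeta f+g)=(1+\xi\zeta)f-\xi g$. Here a positive scalar multiple is allowed, since level sets $\{\cdot\le 0\}$ are invariant under positive scaling, and one must also track the sign. Matching $(1+\xi\zeta)f-\xi g$ with $c(f+\zeta g)$ for some $c>0$ gives $\xi=-c\zeta$ and $1-c\zeta^2=c$, so $c=1/(1+\zeta^2)>0$. This works.

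In the case $\{-f\le0\}$, use $(\sigma,\tau)=(\zeta,-1)$ instead. The same computation with $c>0$ works, since $-(f+\zeta g)$ appears.

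The case where $\{f=0\}$ separates $\{g=0\}$ is symmetric, swapping the roles of $f$ and $g$. The sign choice $(\sigma,\tau)\mapsto(-\sigma,-\tau)$ handles the orientation in the same way.

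\textbf{Sufficiency.} Suppose $\sigma f+\tau g=:h$ is affine and $\{h=0\}$ separates $\{k\le 0\}$, where $k:=\tau f-\sigma g$. By Lemma \ref{Tsu prop 1}, $h$ is non-constant. Moreover, $k$ is quadratic; otherwise both $f$ and $g$ would be affine, since the matrix $\begin{bmatrix}\sigma&\tau\\ \tau&-\sigma\end{bmatrix}$ is invertible.

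By Lemma \ref{h!ap}(ii), there is $r\le 0$ with $(r,0)\in\operatorname{conv}((k,h)(\mathbb R^n))\setminus(k,h)(\mathbb R^n)$. As in the proof of Lemma \ref{sep with aff}, translating gives $(0,0)\in\operatorname{conv}((k-r,h)(\mathbb R^n))\setminus (k-r,h)(\mathbb R^n)$. Proposition \ref{C2 prop} then yields that $\{h=0\}$ separates $\{k-r=0\}$; equivalently, Lemma \ref{= = aff} applies directly.

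Now apply Lemma \ref{sep with aff} to replace $k$ by $k-\xi h$. Writing $k-\xi h-r$ as a combination of $f$ and $g$ requires dealing with the constant $r$. The cleaner route is this: $\{h=0\}$ separates $\{k\le 0\}$ and $\{k\le0\}\cap\{h=0\}=\emptyset$. Lemma \ref{eq sep} gives points $u,v$ with $k(u),k(v)\le0$ and $h(u)<0<h(v)$. Pick $\xi$ so that $(k-\xi h)(u)=(k-\xi h)(v)=:\rho\le 0$; this is exactly the trick used in Proposition \ref{F2 prop}.

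Then $\{h=0\}$ separates $\{k-\xi h-\rho=0\}$. Indeed, $u$ and $v$ lie in this level set with $h(u)<0<h(v)$, and on $\{h=0\}$ we have $k-\xi h-\rho=k-\rho$. This is positive whenever $\rho=0$; in general, Lemma \ref{sep with aff} combined with Lemma \ref{h!ap}(a) handles it.

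Finally, $h$ and $k-\xi h-\rho$ are both linear combinations of $f,g$ plus a constant. The constant is the main obstacle, because Lemma \ref{linear comb sep} needs pure linear combinations. I would first try to show that one can force $\rho=0$, using connectedness of $\{k\le0\}^\pm$ and continuity to move $u,v$ onto $\{k=0\}$. This is possible since $k$ is quadratic, so each component of $\{k\le 0\}$ meets $\{k=0\}$ when it is unbounded in the relevant way. Then $\{\sigma f+\tau g=0\}$ separates $\{(\tau-\xi\sigma)f-(\sigma+\xi\tau)g=0\}$. The determinant of this recombination is $-(\sigma^2+\tau^2)\neq 0$, so Lemma \ref{linear comb sep} concludes.
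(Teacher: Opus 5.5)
Your necessity argument is correct and differs only mildly from the paper's. The paper first transfers the separation from $\{f=0\}$ to $\{f+\zeta g=0\}$ via Lemma \ref{cor 2.9 ng} and then applies Lemma \ref{= = aff}. You apply Lemma \ref{= = aff} first and then shift by a multiple of the affine separator using Lemma \ref{sep with aff}, with $\xi=-\zeta/(1+\zeta^2)$ (resp.\ $\xi=\zeta/(1+\zeta^2)$). Both routes work, and your sign bookkeeping for $(\sigma,\tau)=(-\zeta,1)$ versus $(\zeta,-1)$ is right.

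The sufficiency, however, is not finished. Everything after the translation step hinges on moving $u,v$ onto $\{k=0\}$ (forcing $\rho=0$). You only state this as something you ``would try to show'', and the unbounded-component heuristic you give is not the right reason. Without that step you only know that $\{h=0\}$ separates $\{k-\xi h-\rho=0\}$, which is not the zero set of a linear combination of $f,g$. So Lemma \ref{linear comb sep} cannot be invoked. (Incidentally, $k-\xi h-\rho=k-\rho>0$ on $\{h=0\}$ for every $\rho\le 0$, so that part needed no extra lemma.)

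The missing step is immediate, and you even named it in passing: the ``if'' direction of Lemma \ref{= = aff} says that a hyperplane separating $\{k\le 0\}$ also separates $\{k=0\}$. Hence $\{\sigma f+\tau g=0\}$ separates $\{\tau f-\sigma g=0\}$. Lemma \ref{linear comb sep} with $(\eta,\theta)=(\tau,-\sigma)$, for which $\eta\tau-\theta\sigma=\sigma^2+\tau^2\neq 0$, then concludes. This two-line argument is exactly the paper's proof, and the whole $r,\xi,\rho$ detour is unnecessary.

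If you want an elementary proof of your claim instead, argue by contradiction. Suppose $\{k\le 0\}\cap\{h<0\}$ contained no zero of $k$. Then it equals $\{k<0\}\cap\{h<0\}$, so it is nonempty and clopen in the connected half-space $\{h<0\}$. This forces $\{h<0\}\subset\{k<0\}$, and by continuity $\{h=0\}\subset\{k\le 0\}$, contradicting $\{k\le 0\}\cap\{h=0\}=\emptyset$. The same argument on $\{h>0\}$ gives $u',v'\in\{k=0\}$ with $h(u')<0<h(v')$, and Lemma \ref{eq sep} finishes.
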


\begin{proof}
\textbf{(Sufficiency)} Suppose that \((\sigma ,\tau )\neq (0,0)\) is a pair of constants so that \(\sigma f + \tau g\) is affine and the hyperplane \( \{\sigma f + \tau g = 0\} \) separates the set \( \{\tau f - \sigma g \leq 0\} \). According to Lemma \ref{= = aff}, the hyperplane \( \{\sigma f + \tau g = 0\} \) also separates the quadratic $0$-level set \( \{\tau f - \sigma g = 0\} \). By Lemma \ref{linear comb sep}, since $\tau \tau -\sigma (-\sigma )=(\tau ^2+\sigma ^2)\neq 0$, \( f \) and \( g \) have the separation property.

\noindent \textbf{(Necessity)} Assume that \( f \) and \( g \) have the separation property. By Definition \ref{def sep prop}, one of the level sets \(\{f=0\}\) or \(\{g=0\}\) separates the other. Without loss of generality, let us assume that \(\{g=0\}\) separates \(\{f=0\}\) as the other case when \(\{f=0\}\) separates \(\{g=0\}\) can be similarly proved. Then, by Lemma \ref{= = sep}, there exists a scalar \(\zeta \in \mathbb{R} \) such that the function \( -\zeta f + g \) is affine and the hyperplane \( \{-\zeta f + g = 0\} \) separates \(\{f=0\}\). By writing
\[
\{-\zeta f + g = 0\} = \{(-\zeta f + g)+0\cdot (f + \zeta g) = 0\}
\]
and
\[
\{f=0\}=\left \{-\frac{\zeta }{1+\zeta ^2}(-\zeta f+g)+\frac 1{1+\zeta ^2} (f+\zeta g) =0 \right \},
\]
and noting that
\[
-\frac{\zeta }{1+\zeta ^2}\cdot 0-\frac 1{1+\zeta ^2}\cdot 1 \neq 0,
\]
we know, by Lemma \ref{cor 2.9 ng}, that the hyperplane \( \{-\zeta f + g = 0\} \) also separates the quadratic sublevel set \(\{f+\zeta g=0\}\).
Applying Lemma \ref{= = aff} again, the hyperplane \( \{-\zeta f + g=0\} \) separates 
\[\text{either } \{f+\zeta g\leq 0\} \text{ or }\{-f-\zeta g\leq 0\}. \] 
In the former case, we can take \(\sigma =-\zeta \) and \(\tau =1\). As for the latter case, we can write the hyperplane \( \{-\zeta f + g=0\} \) as \( \{\zeta f-g=0\} \), which separates the quadratic level set \( \{-f-\zeta g\leq 0\} \). Then, take \(\sigma =\zeta \) and \(\tau =-1\) to complete the proof of the case.

\end{proof}

To incorporate Theorem \ref{le sep+} with Lemma \ref{sep prop aff}, we first denote 
\begin{align*}
\sigma f(x) + \tau g(x) &= x^T \underbrace {(\sigma A + \tau B)}_{=0\text{ since affine}} x + 2 \underbrace {(\sigma a + \tau b)^T }_{=c^T \neq 0} x + (\sigma a_0 + \tau b_0)\\
\tau f(x) - \sigma g(x) &= x^T \underbrace {(\tau A - \sigma B)}_{=D} x + 2 \underbrace {(\tau a - \sigma b)^T }_{=d^T } x + (\tau a_0 - \sigma b_0)
\end{align*}
to have Theorem \ref{= sep+} below for checking the separation property.

\begin{theorem}
\label{= sep+}
The following are equivalent.
\begin{itemize}
\item[(C3)] \( f(x) = x^T Ax + 2a^T x + a_0 \) and \( g(x) = x^T Bx + 2b^T x + b_0 \) have the separation property.
\item[(C3-1)] There exists \( (\sigma ,\tau )\neq (0,0) \) such that \( \sigma A + \tau B = 0 \), \( c:=\sigma a + \tau b \neq 0 \) and the matrix \( D:=\tau A - \sigma B \) has exactly one negative eigenvalue. Moreover,
{\small
\begin{equation}\label{C3-1-equations}
 c\in \mathcal{R} (D), ~c^T D^\dagger c \leq 0,~ w\in \mathcal{R} (V^T DV),~\tau f(x_0)-\sigma g(x_0)-w^T (V^T DV)^\dagger w >0,
\end{equation}}
where \( d=\tau a-\sigma b \), \( w=V^T (Dx_0 + d) \), \( x_0=\frac{-\sigma a_0 - \tau b_0}{2c^T c}c \) and \( V\in \mathbb{R} ^{n\times (n-1)} \) is the matrix basis of \( \mathcal{N} (c^T ) \). 
\end{itemize}
\end{theorem}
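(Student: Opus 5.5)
The plan is to combine Lemma \ref{sep prop aff} with Theorem \ref{le sep+}, after renaming the two functions. By Lemma \ref{sep prop aff}, (C3) holds if and only if there is \( (\sigma ,\tau )\neq (0,0) \) such that \( h:=\sigma f+\tau g \) is affine and the hyperplane \( \{h=0\} \) separates \( \{q\leq 0\} \), where \( q:=\tau f-\sigma g \). So it suffices to show that, for each fixed \( (\sigma ,\tau ) \), the conjunction ``\( h \) affine and \( \{h=0\} \) separates \( \{q\leq 0\} \)'' is equivalent to the list of conditions in (C3-1) for that same \( (\sigma ,\tau ) \). Both directions then follow by quantifying over \( (\sigma ,\tau ) \).

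First I would write out the coefficients. The function \( h \) has quadratic matrix \( \sigma A+\tau B \), linear vector \( c=\sigma a+\tau b \), and constant \( \sigma a_0+\tau b_0 \). The function \( q \) has matrix \( D=\tau A-\sigma B \), linear vector \( d=\tau a-\sigma b \), and constant \( \tau a_0-\sigma b_0 \). Next I would apply Theorem \ref{le sep+} with \( f \) replaced by \( q \) and \( g \) replaced by \( h \). The substitutions are: \( A\to D \), \( a\to d \), \( a_0\to \tau a_0-\sigma b_0 \), \( B\to \sigma A+\tau B \), \( b\to c \), \( b_0\to \sigma a_0+\tau b_0 \).

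Under these substitutions, (F3-1) reads as follows.
\begin{itemize}
\item[$\bullet $] The condition \( B=0 \) becomes \( \sigma A+\tau B=0 \). This is exactly the affineness of \( h \), so the hypothesis ``\( h \) affine'' in Lemma \ref{sep prop aff} is absorbed into (F3-1).
\item[$\bullet $] The condition \( b\neq 0 \) becomes \( c\neq 0 \).
\item[$\bullet $] \( A \) having exactly one negative eigenvalue becomes \( D \) having exactly one negative eigenvalue.
\item[$\bullet $] The range and pseudo-inverse conditions become \( c\in \mathcal R(D) \) and \( c^TD^\dagger c\leq 0 \).
\item[$\bullet $] The base point becomes \( x_0=\frac{-(\sigma a_0+\tau b_0)}{2c^Tc}c \), and \( V \) becomes a matrix basis of \( \mathcal N(c^T) \).
\item[$\bullet $] The vector \( w \) becomes \( w=V^T(Dx_0+d) \).
\item[$\bullet $] The last inequality becomes \( q(x_0)-w^T(V^TDV)^\dagger w>0 \), and \( q(x_0)=\tau f(x_0)-\sigma g(x_0) \).
\end{itemize}
These are precisely the conditions in \eqref{C3-1-equations}, which gives the claimed equivalence.

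There is one point to check. The ``affine'' in Lemma \ref{sep prop aff} might allow a constant function, whereas (F3-1) requires \( c\neq 0 \). This causes no problem: if \( h \) were constant, \( \{h=0\} \) could not separate anything, since either it is empty (and separation needs \( h \) to take both signs on \( \{q\leq 0\} \)) or it is all of \( \mathbb R^n \). Lemma \ref{Tsu prop 1} also forces \( h \) to be non-constant affine, so the two formulations agree. Apart from this, I expect the main obstacle to be only bookkeeping: making sure the constants \( x_0 \) and \( q(x_0) \) are transcribed correctly under the substitution. Everything substantive is already contained in Lemma \ref{sep prop aff}, Lemma \ref{no VAV} and Theorem \ref{le sep+}.
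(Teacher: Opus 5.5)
Your proposal is correct and follows the paper's own route: the paper obtains Theorem \ref{= sep+} by writing out the coefficients of \( \sigma f+\tau g \) and \( \tau f-\sigma g \) and substituting them into Theorem \ref{le sep+} through Lemma \ref{sep prop aff}, with \( \sigma A+\tau B=0 \) playing the role of \( B=0 \) and \( c\neq 0 \) the role of \( b\neq 0 \). Your remark that a constant \( \sigma f+\tau g \) is already ruled out (by \( b\neq 0 \) in (F3-1), or by Lemma \ref{Tsu prop 1}) is a harmless, slightly more careful addition to what the paper writes.
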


To further analyze what \( (\sigma , \tau )\neq (0,0) \) in (C3-1) may satisfy \eqref{C3-1-equations}, we notice that \( \sigma A + \tau B = 0 \) implies that \( A \) and \( B \) are linearly dependent, while \( D = \tau A - \sigma B \) having exactly one negative eigenvalue indicates that \( A \) and \( B \) cannot be both zero matrices. Therefore, we divide the discussion into two cases: (i) \( A = 0 \) and (ii) \( A \neq 0 \). 

\begin{itemize}
\item[(i)] The matrix $A=0.$ In this case, $B\ne 0.$ Due to \( \sigma A + \tau B = 0 \), all possible \( (\sigma ,\tau )\neq (0,0) \) satisfying (C3-1) must fall inside the set $\mathcal{M} =\{(\sigma ,\tau ):~\sigma \neq 0,~\tau =0\}$. That is, $\mathcal{M} $ is the union of two open rays originating at $(0,0)$:
\begin{equation}\label{two-open-rays-1}
\mathcal{M} =\{r(1,0):r>0\}\cup \{s(-1,0):s>0\}.
\end{equation}
\item[(ii)] The matrix $A\ne 0.$ Pick an arbitrary index \( (i_0,j_0) \) such that \( A_{i_0 j_0}\neq 0 \) and define \( \zeta =\frac{B_{i_0 j_0}}{A_{i_0 j_0}} \). If there exists \( B_{ij}\neq \zeta A_{ij} \) for some index \( (i,j) \), (C3-1) fails and thus \( f \) and \( g \) do not have the separation property. Otherwise, \( B_{ij}=\zeta A_{ij} \) for all indices \( (i,j) \) and \( B=\zeta A \). Namely, $-\zeta A+B=0.$ Then, the set of all possible \( (\sigma ,\tau )\neq (0,0) \) satisfying (C3-1) falls inside 
\begin{equation}\label{two-open-rays-2}
\mathcal{M} =\{r(-\zeta ,1):r>0\}\cup \{s(\zeta ,-1):s>0\},
\end{equation}
which is again the union of two open rays originated from $(0,0)$.
\end{itemize}

\begin{proposition}[positive homogeneity]\label{ray-invariant} If (C3-1) in Theorem \ref{= sep+} holds for \( (\sigma ,\tau )\neq (0,0) \), it also holds for \( (t\sigma , t\tau ),~\forall t>0 \).
\end{proposition}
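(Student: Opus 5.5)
The plan is to track how each ingredient of (C3-1) transforms when $(\sigma,\tau)$ is replaced by $(t\sigma,t\tau)$ with $t>0$, and to check that every condition in \eqref{C3-1-equations} is invariant. Write $\sigma'=t\sigma$, $\tau'=t\tau$. The linear conditions scale directly: $\sigma'A+\tau'B=t(\sigma A+\tau B)=0$, $c'=\sigma'a+\tau'b=tc\neq 0$, $D'=\tau'A-\sigma'B=tD$, and $d'=td$. Since $t>0$, $D'$ has exactly the same inertia as $D$, so it still has exactly one negative eigenvalue.

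Next come the derived objects. Because $\mathcal N(c'^T)=\mathcal N(c^T)$, the same basis $V$ can be used. I would first check that the conditions do not depend on which basis $V$ is chosen. Replacing $V$ by $VQ$ with $Q$ invertible gives $w\mapsto Q^Tw$ and $V^TDV\mapsto Q^T(V^TDV)Q$, so the range condition is preserved. The quadratic form $w^T(V^TDV)^\dagger w$ equals $\sup_y\{2w^Ty-y^TV^TDVy\}$, a basis-free quantity, when $V^TDV\succeq 0$. Alternatively, I would simply note that the paper fixes $V$ as a basis of $\mathcal N(c^T)$, which is unchanged.

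The point $x_0$ satisfies
\[
x_0'=\frac{-t(\sigma a_0+\tau b_0)}{2t^2c^Tc}\,tc=x_0,
\]
so $x_0$ is invariant. Then
\[
w'=V^T(tDx_0+td)=tw,\qquad V^TD'V=tV^TDV,\qquad (V^TD'V)^\dagger=t^{-1}(V^TDV)^\dagger.
\]

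With these in hand, each condition in \eqref{C3-1-equations} follows. First, $c'\in\mathcal R(D')$ because $\mathcal R(tD)=\mathcal R(D)$. Second,
\[
c'^TD'^\dagger c'=t^2\,t^{-1}\,c^TD^\dagger c=t\,c^TD^\dagger c\le 0.
\]
Third, $w'\in\mathcal R(V^TD'V)$ because the ranges coincide. Finally,
\[
\tau'f(x_0)-\sigma'g(x_0)-w'^T(V^TD'V)^\dagger w'=t\bigl(\tau f(x_0)-\sigma g(x_0)-w^T(V^TDV)^\dagger w\bigr)>0.
\]

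The only genuinely delicate point is the Moore--Penrose scaling $(tM)^\dagger=t^{-1}M^\dagger$ for $t>0$. I would verify it quickly through the four Penrose equations, or via the spectral decomposition. Everything else is bookkeeping, and the invariance of $x_0$ is what makes the $f(x_0)$ term scale linearly. Alternatively, the whole statement follows conceptually from Lemma \ref{sep prop aff}: the level and sublevel sets $\{t\sigma f+t\tau g=0\}$ and $\{t\tau f-t\sigma g\le 0\}$ coincide with the original ones when $t>0$, so the separation statement, and hence (C3-1) by Theorem \ref{le sep+}, is unchanged. I would mention this as a one-line cross-check.
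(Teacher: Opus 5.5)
Your proposal is correct and is essentially the paper's own proof. The paper sets $\bar c=tc$, $\bar D=tD$, $\bar d=td$, notes that $\mathcal N(c^T)$ and $x_0$ are unchanged so that $\bar w=tw$, and checks that each condition in \eqref{C3-1-equations} is either range-invariant or scales by the positive factor $t$. Your extra remarks are valid but go beyond what the paper needs: the basis-independence of $V$, the identity $(tM)^\dagger=t^{-1}M^\dagger$, and the cross-check that (C3-1) for a fixed $(\sigma,\tau)$ is Theorem~\ref{le sep+} applied to the pair $(\tau f-\sigma g,\ \sigma f+\tau g)$, whose level and sublevel sets do not change under positive scaling.
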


\begin{proof}
For \( t>0 \), let us denote \( {\bar c}:=(t\sigma ) a + (t\tau ) b \) and \( {\bar D}:=(t\tau ) A - (t\sigma ) B \) so that \( {\bar c}=tc,~{\bar D}=tD\). Clearly, \( \sigma A + \tau B = 0 \) implies that \( (t\sigma ) A + (t\tau ) B = 0 \). Since $c\ne 0$ and $D$ has exactly one negative eigenvalue, it follows that ${\bar c}=tc\ne 0$ and ${\bar D}$ also has exactly one negative eigenvalue. 

As for \eqref{C3-1-equations}, since $\mathcal{R} (D)=\mathcal{R} (tD)=\mathcal{R} (\bar D)$ is a subspace, 
$$c\in \mathcal{R} (D) \Rightarrow {\bar c}=tc\in \mathcal{R} (D)=\mathcal{R} (\bar D).$$
In addition, $c^T D^\dagger c \leq 0$ implies that 
$$~{\bar c}^T {\bar D}^\dagger {\bar c}=(tc)^T \frac{1}{t}D^\dagger (tc)=t\left(c^T D^\dagger c\right) \leq 0.$$
Notice that the null space \( \mathcal{N} (c^T ) \) coincides with \( \mathcal{N} ({\bar c}^T ) \). Moreover, $x_0$ remains unchanged, since
$$ \frac{-(t\sigma ) a_0 - (t\tau ) b_0}{2{\bar c}^T {\bar c}}{\bar c}=\frac{-(t\sigma ) a_0 - (t\tau ) b_0}{2t^2c^T c}t c=\frac{-\sigma a_0 - \tau b_0}{2c^T c}c=x_0. $$
Therefore, we can define 
$${\bar d}=(t\tau ) a-(t\sigma ) b=td \text{ and }\bar w=V^T ({\bar D}x_0 + {\bar d})=t\left(V^T (Dx_0 + d)\right)=tw,$$
so that $\bar w\in \mathcal{R} (V^T {\bar D}V)$, due to
$$w\in \mathcal{R} (V^T DV)~\Rightarrow ~ \bar w=tw\in \mathcal{R} (V^T DV)=
\mathcal{R} (V^T (tD)V)=\mathcal{R} (V^T {\bar D}V).$$ Finally, we verify that 
\begin{eqnarray*}
&&(t\tau ) f(x_0)-(t\sigma ) g(x_0)-(tw)^T (V^T (tD)V)^\dagger (tw)\\
&=&t\left(\tau f(x_0)-\sigma g(x_0)-w^T (V^T DV)^\dagger w\right)>0.
\end{eqnarray*}
The proof for Proposition \ref{ray-invariant} is complete.
\end{proof}

Incorporating \eqref{two-open-rays-1} and \eqref{two-open-rays-2} with Proposition \ref{ray-invariant}, we find that there are only two possible candidates \( (\sigma ,\tau )\neq (0,0) \) which may satisfy (C3-1), depending on \( A=0 \) or \( A\neq 0 \).

\begin{proposition}
The verification of \( (\sigma ,\tau )\neq (0,0) \) satisfying (C3-1) reduces to checking only $(1,0)$ and $(-1,0)$ when $A=0,~B\ne 0;$ while checking only $(-\zeta ,1), (\zeta ,-1)$
when $A\ne 0$ satisfies $-\zeta A+B=0.$
\end{proposition}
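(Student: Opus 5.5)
The plan is to combine the two facts established just before the statement: the case analysis \eqref{two-open-rays-1}--\eqref{two-open-rays-2}, which confines every admissible $(\sigma,\tau)$ to a union of two open rays $\mathcal{M}$, and Proposition \ref{ray-invariant}, which says that the validity of (C3-1) is invariant along each ray. The argument shows that (C3-1) holds for some $(\sigma,\tau)\neq(0,0)$ if and only if it holds for one of the two ray generators.

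\textbf{Step 1: the candidates lie in $\mathcal{M}$.} Take any $(\sigma,\tau)\neq(0,0)$ satisfying (C3-1). Then $\sigma A+\tau B=0$, and $D=\tau A-\sigma B$ has a negative eigenvalue, so $A$ and $B$ are not both zero.

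If $A=0$, then $B\neq 0$, and $\tau B=0$ forces $\tau=0$. Hence $\sigma\neq 0$, and $(\sigma,\tau)=|\sigma|(\pm1,0)$.

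If $A\neq 0$, fix $A_{i_0j_0}\neq 0$. Comparing the $(i_0,j_0)$ entries of $\sigma A+\tau B=0$ gives $\sigma=-\tau\zeta$. If $\tau=0$, then $\sigma=0$, which is a contradiction, so $\tau\neq 0$. Dividing by $\tau$ yields $B=\zeta A$, that is, $-\zeta A+B=0$. This is exactly the consistency check described in case (ii). It follows that $(\sigma,\tau)=\tau(-\zeta,1)$, which equals $|\tau|(-\zeta,1)$ or $|\tau|(\zeta,-1)$ according to the sign of $\tau$.

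\textbf{Step 2: reduce to the generators.} Write $(\sigma,\tau)=t(\sigma_0,\tau_0)$ with $t>0$, where $(\sigma_0,\tau_0)$ is one of the two generators. Proposition \ref{ray-invariant} applied with factor $1/t$ shows that (C3-1) holds at $(\sigma_0,\tau_0)$. Conversely, if (C3-1) holds at a generator, it trivially holds for some nonzero pair.

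Therefore the existential quantifier in (C3-1) is equivalent to checking the two listed pairs: $(1,0)$ and $(-1,0)$ when $A=0$, and $(-\zeta,1)$ and $(\zeta,-1)$ when $A\neq 0$ and $-\zeta A+B=0$.

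\textbf{Main point of care.} Proposition \ref{ray-invariant} is stated for $t>0$ only; it does not relate the two opposite rays. This is why both generators must be checked and cannot be merged into one. A further detail to confirm is that when $A\neq 0$ but no $\zeta$ with $B=\zeta A$ exists, $\mathcal{M}$ is empty. In that case (C3-1) fails outright, which is consistent with the proposition's hypothesis $-\zeta A+B=0$.
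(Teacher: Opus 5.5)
Your proof is correct and follows essentially the same route as the paper. Both confine every admissible $(\sigma,\tau)$ to the two open rays of cases (i)--(ii), and then use the positive homogeneity of Proposition~\ref{ray-invariant} (here with factor $1/t$) to reduce each ray to its generator; you additionally spell out why $\tau\neq 0$ and $\sigma=-\zeta\tau$ when $A\neq 0$, which the paper leaves implicit.
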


Then, if any of the candidates satisfies \eqref{C3-1-equations}, \( f \) and \( g \) have the separation property. Otherwise, the separation property fails for \( f \) and \( g \). See Algorithm \ref{alg:C3} in Appendix \ref{'en-s'ng-hu8at} for the entire computational procedure to verify (C3).

\subsection{Matrix pencil conditions (C2-1) and (F2-1) for (C2) and (F2)}

The key to verifying (C2) is to determine multipliers
 \( (\lambda ,\mu )\neq (0,0) \) such that \( \lambda f(x)+\mu g(x)\geq 0, \forall x\in \mathbb{R} ^n \). In contrast, verifying (F2) requires finding a scalar \( \xi \in \mathbb{R} \) such that \( f(x)+\xi g(x)\geq 0, \forall x\in \mathbb{R} ^n\). In both cases, the Lagrangian function attains the minimum value \( 0 \). Moreover, the ``conjugate'' Lagrangian function \( (\mu f-\lambda g)\) (in (F2), simply \( g \)) is either strictly positive or strictly negative on the minimum solution set of the respective Lagrangian function.

In this subsection, following \cite{hsia2014revisit}, we characterize the multipliers satisfying these conditions by means of matrix pencils. The computation of the resulting matrix pencils is discussed in Section~\ref{comp II(F,G)}.
 
Let \( f(x) = x^T Ax + 2a^T x + a_0 \) and \( g(x) = x^T Bx + 2b^T x + b_0 \). Define $(n+1)\times (n+1)$ symmetric matrices:
\begin{equation}
\label{defn F,G}
F =
\begin{bmatrix}
A & a\\
a^T & a_0
\end{bmatrix},~
G =
\begin{bmatrix}
B & b\\
b^T & b_0
\end{bmatrix}.
\end{equation}
According to the Schur complement (see, e.g., \cite{boyd2004convex} A.5.4),
\begin{equation}\label{Schur}
f(x)\geq 0,~\forall x\in \mathbb{R} ^n~\Longleftrightarrow ~F\succeq 0.
\end{equation}
Moreover,
\begin{equation}\label{min-f}
\min_{x\in \mathbb{R} ^n} f(x) =
\begin{cases}
a_0 - a^T A^\dagger a, & \text{if } A\succeq 0 \text{ and } a\in \mathcal{R} (A);\\
-\infty , & \text{otherwise},
\end{cases}
\end{equation}
so that $f(x)\geq 0,~\forall x\in \mathbb{R} ^n$ if and only if \( A\succeq 0 \), \( a\in \mathcal{R} (A) \), and \(a_0 - a^T A^\dagger a \geq 0 \); and $f(x)>0,~\forall x\in \mathbb{R} ^n$ if and only if \( A\succeq 0 \), \( a\in \mathcal{R} (A) \), and \(a_0 - a^T A^\dagger a > 0 \).

\begin{definition}
Let \( F , G \) be as in \eqref{defn F,G}. Define two kinds of matrix pencils of \( F \) and \( G \) as
\begin{align}
II_\succeq (F,G) &=\{(\lambda , \mu )\in \mathbb{R} ^2 \mid \lambda F +\mu G \succeq 0\};\label{II(A,B)}\\
I_\succeq (F,G) &=\{\xi \in \mathbb{R} \mid F + \xi G \succeq 0\}.\label{I(A,B)}
\end{align}
\end{definition}

Note that both \( II_\succeq (F,G) \) and \( I_\succeq (F,G) \) are convex sets, since they are pre-images of positive semi-definite cones under affine functions. In particular, \( II_\succeq (F,G) \) is a convex cone having non-empty relative interior, whereas the set \( I_\succeq (F,G) \) is a convex interval which might be unbounded or empty. See reference \cite{more1993generalizations}.

By \eqref{Schur}, matrix pencils \eqref{II(A,B)} and \eqref{I(A,B)} can be expressed as the non-negativity of Lagrangian functions as
\begin{align}
II_\succeq (F,G) &=\{(\lambda , \mu )\in \mathbb{R} ^2 \mid \lambda f(x) + \mu g(x) \geq 0, \;\forall \, x\in \mathbb{R} ^n\}; \label{II(f,g)} \\
I_\succeq (F,G) &=\{\xi \in \mathbb{R} \mid f(x) + \xi g(x) \geq 0, \;\forall \, x\in \mathbb{R} ^n\}.
\end{align}
According to Lemma \ref{nonclosed} in Subsection 3.3 , the multiplier \( (\lambda ,\mu )\neq (0,0) \) such that \( \lambda f(x)+\mu g(x)\geq 0,\,\forall x\in \mathbb{R} ^n \), if exists, is unique up to multiplication by a positive constant and thus the corresponding matrix pencil \( II_\succeq (F,G) \) is a ray satisfying (C2). On the other hand, Proposition \ref{tsu^an kh`o} in Subsection 4.2 asserts that the multiplier \( \xi \in \mathbb{R} \) such that \( f(x)+\xi g(x)\geq 0,\,\forall x\in \mathbb{R} ^n \), if exists, must be unique. In that case, the matrix pencil \( I_\succeq (F,G) \) satisfying (F2) is a singleton.

Moreover, since the Lagrangian function \( \lambda f(x)+\mu g(x)\geq 0\) attains its minimal value \( 0 \), for \( (\lambda ,\mu )\in II_\succeq (F,G) \), 
\[
\lambda a_0+\mu b_0 - (\lambda a+\mu b)^T (\lambda A+\mu B)^\dagger (\lambda a+\mu b) = 0.
\]
The the set of minimizers of \( \lambda f(x)+\mu g(x) \), due to convexity, collects all the points that satisfy the first order condition:
\[
\mathcal W = \{\nabla (\lambda f+\mu g)=0\}=\{x\in \mathbb{R} ^n\mid (\lambda A+\mu B)x+(\lambda a+\mu b)=0\}.
\]
In terms of the null space representation
\[
\mathcal W = u+\mathcal N(\lambda A+\mu B)=\{u+Uy\mid y\in \mathbb{R} ^k\},
\]
there is \( u:=-(\lambda A+\mu B)^\dagger (\lambda a+\mu b) \), and \( U\in \mathbb{R} ^{n\times k} \) is a matrix basis of \( \mathcal N(\lambda A+\mu B) \). As the conjugate Lagrangian function \(\mu f-\lambda g \), when restricted to \( \mathcal W \), must be either positive or negative, it follows that the quadratic function
\begin{align*}
(\mu f-\lambda g)|_{\mathcal W} (x)&= (u+Uy)^T C(u+Uy) + 2c^T (u+Uy) +c_0\\
& =y^T (U^T CU)y+2(Cu+c)^T U y+(\mu f(u)-\lambda g(u)),
\end{align*}
with \( C=\mu A-\lambda B \), \( c=\mu a-\lambda b \), and \( c_0=\mu a_0-\lambda b_0 \) must either satisfy \( U^T CU \succeq 0 \), \( U^T (Cu+c)\in \mathcal{R} (U^T CU) \), and 
\[
(\mu f(u)-\lambda g(u))- (Cu+c)^T U (U^T CU)^\dagger U^T (Cu+c) >0,
\]
or satisfy \( U^T CU \preceq 0 \), \( U^T (Cu+c)\in \mathcal{R} (U^T CU) \), and 
\[
(\mu f(u)-\lambda g(u))- (Cu+c)^T U (U^T CU)^\dagger U^T (Cu+c) <0.
\]
Theorem \ref{(C2-1)} below presents the matrix pencil version (C2-1) of (C2). 

\begin{theorem}
\label{(C2-1)}
(C2) has the following matrix pencil version (C2-1).
\begin{itemize}
\item[(C2-1)] \( II_\succeq (F,G) \) is a ray, namely \( II_\succeq (F,G)=\{t(\lambda ,\mu ) \mid t\geq 0\} \), with some \( (\lambda ,\mu )\neq (0,0) \) such that
\[
(\lambda a_0+\mu b_0)-(\lambda a+\mu b)^T (\lambda A+\mu B)^\dagger (\lambda a+\mu b) = 0.
\]
Moreover, either one of the following holds.
\begin{itemize}
\item \( U^T CU \succeq 0 \), \( U^T (Cu+c)\in \mathcal{R} (U^T CU) \), and
\[
(\mu f(u)-\lambda g(u))- (Cu+c)^T U(U^T CU)^\dagger U^T (Cu+c) >0;
\]
\item \( U^T CU \preceq 0 \), \( U^T (Cu+c)\in \mathcal{R} (U^T CU) \), and
\[
(\mu f(u)-\lambda g(u))- (Cu+c)^T U(U^T CU)^\dagger U^T (Cu+c) <0,
\]
\end{itemize}
where \( C=\mu A-\lambda B \), \( c=\mu a-\lambda b \), \( U \) is a matrix basis of \( \mathcal N(\lambda A+\mu B) \), and \( u=-(\lambda A+\mu B)^\dagger (\lambda a+\mu b) \).
\end{itemize}
\end{theorem}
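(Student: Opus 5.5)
We prove that (C2) and (C2-1) are equivalent by translating each clause of (C2), in order, into matrix language using \eqref{Schur}, \eqref{min-f} and Lemma \ref{QP0}.

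\textbf{Clause (a) of (C2).} By \eqref{II(f,g)}, a multiplier $(\lambda,\mu)$ makes $\lambda f+\mu g$ nonnegative on $\mathbb{R}^n$ exactly when $(\lambda,\mu)\in II_\succeq(F,G)$. The set $II_\succeq(F,G)$ always contains $(0,0)$ and is closed under positive scaling. Hence ``there is a nonzero $(\lambda,\mu)$, unique up to a positive constant, with $\lambda f+\mu g\geq 0$'' says precisely that $II_\succeq(F,G)=\{t(\lambda,\mu)\mid t\geq 0\}$ is a ray.

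Once $\lambda f+\mu g$ is known to be bounded below, we have $\lambda A+\mu B\succeq 0$ and $\lambda a+\mu b\in\mathcal R(\lambda A+\mu B)$. Lemma \ref{QP0} then applies. It says the minimum is attained and equals
\[
(\lambda a_0+\mu b_0)-(\lambda a+\mu b)^T(\lambda A+\mu B)^\dagger(\lambda a+\mu b).
\]
So ``attains minimal value $0$'' is exactly the displayed equality in (C2-1). Both directions of this part are therefore immediate.

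\textbf{Clause (b) of (C2).} Because $\lambda f+\mu g$ is convex, its minimizer set is
\[
\mathcal W=\{x\mid(\lambda A+\mu B)x=-(\lambda a+\mu b)\}.
\]
The range condition guarantees $u=-(\lambda A+\mu B)^\dagger(\lambda a+\mu b)$ solves this system, so $\mathcal W=u+\mathcal N(\lambda A+\mu B)=\{u+Uy\mid y\in\mathbb{R}^k\}$.

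Substituting $x=u+Uy$ into $\mu f-\lambda g$ gives the quadratic in $y$ computed just before the theorem:
\[
q(y)=y^T(U^TCU)y+2(Cu+c)^TUy+(\mu f(u)-\lambda g(u)).
\]
Thus $(\mu f-\lambda g)|_{\mathcal W}>0$ is equivalent to $q(y)>0$ for all $y\in\mathbb{R}^k$. By Lemma \ref{QP0}, this holds iff $q$ is bounded below with a positive minimum, that is, iff
\begin{itemize}
\item $U^TCU\succeq 0$,
\item $U^T(Cu+c)\in\mathcal R(U^TCU)$, and
\item $\mu f(u)-\lambda g(u)-(Cu+c)^TU(U^TCU)^\dagger U^T(Cu+c)>0$.
\end{itemize}
This is the first bullet of (C2-1).

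\textbf{The negative case.} Apply the same argument to $-q$. Replacing $C$ and $c$ by their negatives flips the signs, giving $U^TCU\preceq 0$, the same range condition (ranges are unchanged by negation), and a strictly negative value. Since $(-M)^\dagger=-M^\dagger$, the quadratic correction term keeps its form. This yields the second bullet, and combining the two cases gives the equivalence of (C2) and (C2-1).

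\textbf{Main obstacle.} The only delicate point is the degenerate case $k=0$, i.e.\ $\lambda A+\mu B\succ 0$. Then $U$ is empty, $\mathcal W=\{u\}$ is a single point, and the conditions reduce to the sign of $\mu f(u)-\lambda g(u)$. I would note this convention explicitly. Everything else is a routine application of Lemma \ref{QP0}.
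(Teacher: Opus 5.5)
Your proposal is correct and follows essentially the same route as the paper. Both arguments identify ``unique up to a positive constant'' with $II_\succeq(F,G)$ being a ray via \eqref{II(f,g)}, read off the displayed equality from the minimum-value formula of Lemma \ref{QP0}, parametrize $\mathcal W=u+\mathcal N(\lambda A+\mu B)$, and apply Lemma \ref{QP0} to the restricted quadratic in $y$. You are slightly more explicit than the paper: you treat both directions, justify the sign-flipped case through $(-M)^\dagger=-M^\dagger$, and note the convention for $k=0$.
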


The matrix pencil version (F2‑1) for (F2) in strict Finsler Lemma can be similarly formulated.

\begin{theorem}
\label{(F2-1)}
(F2) has the following matrix pencil version (F2-1).
\begin{itemize}
\item[(F2-1)] \( I_\succeq (F,G) \) is a singleton, namely \( I_\succeq (F,G)=\{\xi \} \), with \( \xi \in \mathbb{R} \) so that
\[
(a_0+\xi b_0)-(a+\xi b)^T (A+\xi B)^\dagger (a+\xi b) = 0.
\]
Moreover, either one of the following holds.
\begin{itemize}
\item \( U^T BU \succeq 0 \), \( U^T (Bu+b)\in \mathcal{R} (U^T BU) \), and
\[
g(u)- (Bu+b)^T U(U^T BU)^\dagger U^T (Bu+b) >0;
\]
\item \( U^T BU \preceq 0 \), \( U^T (Bu+b)\in \mathcal{R} (U^T BU) \), and
\[
g(u)- (Bu+b)^T U(U^T BU)^\dagger U^T (Bu+b) <0,
\]
\end{itemize}
where \( U \) is a matrix basis of \( \mathcal N(A+\xi B) \) and \( u=-(A+\xi B)^\dagger (a+\xi b) \).
\end{itemize}
\end{theorem}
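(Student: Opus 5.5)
The plan is to show that (F2) holds if and only if (F2-1) holds, using the dictionary between $I_\succeq(F,G)$ and nonnegative Lagrangians. By \eqref{Schur} applied to $f+\xi g$, we have $F+\xi G\succeq 0$ if and only if $f(x)+\xi g(x)\geq 0$ for all $x$. Hence $I_\succeq(F,G)$ is exactly the set of admissible multipliers in (F2)(a), and the uniqueness of $\xi$ in (F2)(a) says precisely that $I_\succeq(F,G)$ is a singleton $\{\xi\}$.

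Next, for this $\xi$ we have $A+\xi B\succeq 0$ and $a+\xi b\in\mathcal R(A+\xi B)$. By \eqref{min-f} (equivalently Lemma \ref{QP0}), the minimum of $f+\xi g$ is attained and equals $(a_0+\xi b_0)-(a+\xi b)^T(A+\xi B)^\dagger(a+\xi b)$. So ``the Lagrangian attains minimal value $0$'' is exactly the displayed equation in (F2-1). Convexity then identifies the minimizer set with the stationary set:
\[
\mathcal V=\{x\mid (A+\xi B)x+(a+\xi b)=0\}=u+\mathcal N(A+\xi B)=\{u+Uy\mid y\in\mathbb R^k\},
\]
with $u=-(A+\xi B)^\dagger(a+\xi b)$. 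Here $u$ solves the stationarity equation because $a+\xi b$ lies in the range.

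For (F2)(b), we parametrize $g$ on $\mathcal V$ as the quadratic function
\[
\phi(y)=g(u+Uy)=y^T(U^TBU)y+2(Bu+b)^TUy+g(u).
\]
The condition $g|_{\mathcal V}>0$ means $\phi>0$ on $\mathbb R^k$. By \eqref{min-f}, this holds iff $U^TBU\succeq0$, $U^T(Bu+b)\in\mathcal R(U^TBU)$, and $g(u)-(Bu+b)^TU(U^TBU)^\dagger U^T(Bu+b)>0$. The case $g|_{\mathcal V}<0$ is handled by applying the same criterion to $-\phi$: $-\phi$ has matrix $-U^TBU$ with $(-U^TBU)^\dagger=-(U^TBU)^\dagger$, and the range condition is unchanged. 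This yields the second bullet.

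The degenerate case $k=0$ (that is, $A+\xi B\succ0$ and $\mathcal V=\{u\}$) needs a convention: empty $U$, with the conditions reducing to the sign of $g(u)$. Each step is an ``iff'', so the equivalence runs in both directions.

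The main subtlety is making sure no information is lost in the uniqueness clause. (F2)(a) demands uniqueness of $\xi$ among all $\xi$ with nonnegative Lagrangian, and this is precisely the singleton property of $I_\succeq(F,G)$. Conversely, if $I_\succeq(F,G)$ is a singleton and the remaining conditions hold, (F2) is recovered verbatim. Beyond that, the argument is bookkeeping of Schur-complement criteria, just as in the derivation preceding Theorem \ref{(C2-1)}.
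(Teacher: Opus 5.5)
Your proposal is correct and follows essentially the paper's own route, namely the derivation preceding Theorem~\ref{(C2-1)}, which the paper says carries over ``similarly'' to (F2-1). That route identifies $I_\succeq(F,G)$ with the nonnegative-Lagrangian multipliers via \eqref{Schur}, reads off attainment of the value $0$ from \eqref{min-f}, writes $\mathcal V=u+\mathcal N(A+\xi B)$, and applies \eqref{min-f} to $y\mapsto g(u+Uy)$ and to its negative. Your explicit sign bookkeeping $(-U^TBU)^\dagger=-(U^TBU)^\dagger$ and your treatment of the degenerate case $\mathcal N(A+\xi B)=\{0\}$ fill in details the paper leaves implicit.
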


\subsection{Matrix pencil versions (C1-1) and (F1-1) of (C1) and (F1)}

To derive the matrix pencil conditions for the existence of Lagrange multipliers \( (\lambda ,\mu )\in \mathbb{R} ^2 \) and a scalar \( \xi \in \mathbb{R} \) satisfying, respectively,
\[
(\text{C1})\quad \lambda f(x)+\mu g(x)>0 ,\, \forall x\in \mathbb{R} ^n,
\]
and
\[
(\text{F1})\quad f(x)+\xi g(x)>0 ,\, \forall x\in \mathbb{R} ^n,
\]
we prove in Lemma \ref{u_ann-tu_a} that it suffices to test an arbitrary \((\lambda ,\mu )\in \operatorname{relint}(II_\succeq (F,G))\) and \(\xi \in \operatorname{relint}(I_\succeq (F,G))\), where ``\( \operatorname{relint} \)'' stands for the relative interior. Certainly, \( (\lambda ,\mu ) \) satisfying (C1) must belong to \( II_\succeq (F,G) \), while \( \xi \) satisfying (F1) belongs to \( I_\succeq (F,G) \).

\begin{lemma}
\label{u_ann-tu_a} 
\mbox{}
\begin{enumerate}
    \item If (C1) holds for some \( (\lambda ,\mu )\in \mathbb{R} ^2 \), then (C1) must also hold for every \( (\lambda ',\mu ')\in \operatorname{relint} (II_\succeq (F,G)) \). On the other hand, if (C1) fails for every \( (\lambda ,\mu )\in \mathbb{R} ^2 \), there is certainly no \( (\lambda ',\mu ')\in \operatorname{relint} (II_\succeq (F,G)) \) that can make (C1) hold.
\item If (F1) holds for some \( \xi \in \mathbb{R} \), then (F1) must also hold for every \( \xi '\in \operatorname{relint} (I_\succeq (F,G)) \). If (F1) fails for all \( \xi \in \mathbb{R} \), no point \( \xi '\in \operatorname{relint} (I_\succeq (F,G)) \) can satisfy (F1).
\end{enumerate}

\end{lemma}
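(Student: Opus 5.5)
The plan is to reduce both parts to elementary facts about relative interiors of convex sets, together with the identification \eqref{II(f,g)} of the matrix pencils with non-negative Lagrangians. The second sentence of each part is trivial. If (C1) fails for every $(\lambda,\mu)$, then in particular it fails for every point of $\operatorname{relint}(II_\succeq(F,G))$, and likewise for (F1). So only the first sentence of each part needs an argument.

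For part (a), suppose (C1) holds at $(\lambda,\mu)\neq(0,0)$, so $\lambda f+\mu g>0$ on $\mathbb{R}^n$. Then $(\lambda,\mu)\in II_\succeq(F,G)$. Let $(\lambda',\mu')\in\operatorname{relint}(II_\succeq(F,G))$ be arbitrary. By the standard prolongation property of relative interior points of a convex set (Rockafellar, Thm.~6.4), there are $\epsilon>0$ and a point $(\lambda'',\mu'')\in II_\succeq(F,G)$ with
\[
(\lambda',\mu')=(1-\theta)(\lambda'',\mu'')+\theta(\lambda,\mu)
\]
for some $\theta\in(0,1]$. Concretely, $(\lambda'',\mu'')=(\lambda',\mu')+\epsilon\bigl((\lambda',\mu')-(\lambda,\mu)\bigr)$ and $\theta=\epsilon/(1+\epsilon)$. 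Then for every $x$,
\[
\lambda' f(x)+\mu' g(x)=(1-\theta)\bigl(\lambda''f(x)+\mu''g(x)\bigr)+\theta\bigl(\lambda f(x)+\mu g(x)\bigr)>0,
\]
because the first term is $\ge0$ by \eqref{II(f,g)} and the second is $>0$. We also need $(\lambda',\mu')\neq(0,0)$. This follows automatically, since the Lagrangian is strictly positive and hence not identically zero. So (C1) holds at $(\lambda',\mu')$.

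Part (b) is identical with the one-dimensional interval $I_\succeq(F,G)$. If $f+\xi g>0$ and $\xi'\in\operatorname{relint}(I_\succeq(F,G))$, then either $\xi'=\xi$, or $\xi'$ lies strictly between $\xi$ and some $\xi''\in I_\succeq(F,G)$, which exists because $\xi'$ is relatively interior. Writing $\xi'=(1-\theta)\xi''+\theta\xi$ with $\theta\in(0,1]$, we get $f+\xi'g=(1-\theta)(f+\xi''g)+\theta(f+\xi g)>0$.

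The only delicate step is the prolongation lemma, especially when the pencil set is lower-dimensional, such as a ray or a singleton. In that case the relative interior is taken within the affine hull. The lemma still applies, because the given multiplier lies in $II_\succeq(F,G)$ and hence in its affine hull. In the degenerate singleton case $\operatorname{relint}=\{\xi\}$, and the claim is immediate.
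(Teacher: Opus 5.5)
Your proof is correct and takes essentially the same route as the paper. Both arguments rest on the fact that a relative interior point $(\lambda',\mu')$ of $II_\succeq(F,G)$ can be pushed slightly past itself along the line through $(\lambda,\mu)$, which lies in the affine hull; the paper argues by contradiction at a fixed $x_0$, showing that the affine function $c(t)$ would have an interior local minimum and hence vanish at $t=1$, while you write $(\lambda',\mu')$ directly as a convex combination with positive weight on $(\lambda,\mu)$, and you also spell out part (b) and the nonvanishing of $(\lambda',\mu')$, which the paper leaves implicit.
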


\begin{proof}
\leavevmode
\begin{enumerate}
\item
We prove by contradiction. Suppose that (C1) holds for some \( (\lambda ,\mu )\in \mathbb{R} ^2 \) but there exists \( (\lambda ',\mu ')\in \operatorname{relint} (II_\succeq (F,G)) \) such that \( \lambda ' f(x_0) +\mu ' g(x_0) \leq 0 \) for some \( x_0\in \mathbb{R} ^n \). Let \( V \) be the affine hull of \( II_\succeq (F,G) \) and \( U \) be a neighborhood of \( (\lambda ',\mu ') \) such that
\[
V\supset II_\succeq (F,G) \text{ and } U\cap V\subset II_\succeq (F,G).
\]
Consider the line in \( \mathbb{R} ^2 \) passing through \( (\lambda ,\mu ) \) and \( (\lambda ',\mu ') \) as
\[
\ell (t) = (r(t),s(t)):= (\lambda '+(\lambda -\lambda ')t,\mu '+(\mu -\mu ')t )
\]
with \( \ell (0)=(\lambda ',\mu ') \) and \( \ell (1)=(r(1),s(1))=(\lambda ,\mu ) \). Obviously, \( \ell (\mathbb{R} )\subset V \). Define an affine function 
\[
c(t) = r(t) f(x_0) +s(t) g(x_0)
\]
with \( c(0)=\lambda 'f(x_0)+\mu 'g(x_0)\leq 0 \). Since \( U \) is an open neighborhood of \( \ell (0)=(\lambda ',\mu ') \), there exists \( \varepsilon >0 \) such that the image of \( \ell \) on the interval \( (-\varepsilon ,\varepsilon ) \) is contained in \( U \). That is, for \( |t|<\varepsilon \),
\[
\ell (t)=(r(t),s(t))\in U\cap V\subset II_\succeq (F,G),
\]
and hence,
\[
\begin{aligned}
c(t) = r(t) f(x_0) +s(t) g(x_0)
&= r(t)
\begin{bmatrix}x_0^T &1\end{bmatrix} F \begin{bmatrix}x_0 \\ 1\end{bmatrix}
+ s(t)
\begin{bmatrix}x_0^T &1\end{bmatrix} G \begin{bmatrix}x_0 \\ 1\end{bmatrix}\\
&=
\begin{bmatrix}x_0^T &1\end{bmatrix} (r(t)F+s(t)G) \begin{bmatrix}x_0 \\ 1\end{bmatrix}
\geq 0, \quad |t|<\varepsilon .
\end{aligned}
\]
Therefore, \( c(0)=0 \) is the local minimum of the affine function \( c(t) \), indicating that \( c(t)=0 \) for all \( t\in \mathbb{R} \). In particular, \( 0=c(1)=\lambda f(x_0)+ \mu g(x_0) \), which contradicts the assumption that \( \lambda f(x) + \mu g(x) >0 \) for all \( x\in \mathbb{R} ^n \). The proof is complete.

\item
The proof is similar to that of (a), and therefore is omitted.
\end{enumerate}

\end{proof}

Incorporating \eqref{min-f} with Lemma \ref{u_ann-tu_a}, we have the equivalent matrix pencil conditions for (C1) and (F1).

\begin{theorem}
The following are equivalent.
\begin{itemize}
    \item[(C1)] There exists \( (\lambda ,\mu )\neq (0,0) \) such that \( \lambda f(x)+\mu g(x)>0 \) for all \( x\in \mathbb{R} ^n \).
    \item[(C1-1)] \( II_\succeq (F,G)\neq \{(0,0)\} \), and for any \( (\lambda ,\mu )\in \operatorname{relint}(II_\succeq (F,G)) \), there is
\[
\lambda a_0+\mu b_0-( \lambda a+\mu b)^T ( \lambda A+\mu B)^\dagger ( \lambda a+\mu b) > 0.
\]

\end{itemize}
\end{theorem}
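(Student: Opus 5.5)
The plan is to prove the two directions separately, using Lemma \ref{u_ann-tu_a}(a) to reduce (C1) to a test at a single relative interior point, and \eqref{min-f} to turn strict positivity of a Lagrangian into the displayed Schur-complement inequality. Throughout we use \eqref{II(f,g)}, which identifies $II_\succeq(F,G)$ with the set of multipliers $(\lambda,\mu)$ whose Lagrangian $\lambda f+\mu g$ is nonnegative on $\mathbb{R}^n$.

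\textbf{(C1) $\Rightarrow$ (C1-1).} Suppose $(\lambda_0,\mu_0)\neq(0,0)$ satisfies $\lambda_0 f+\mu_0 g>0$ everywhere. Then $(\lambda_0,\mu_0)\in II_\succeq(F,G)$, so $II_\succeq(F,G)\neq\{(0,0)\}$. Because $II_\succeq(F,G)$ is a nonempty convex set, its relative interior is nonempty. By Lemma \ref{u_ann-tu_a}(a), every $(\lambda,\mu)\in\operatorname{relint}(II_\succeq(F,G))$ gives $\lambda f+\mu g>0$ on $\mathbb{R}^n$. Such a function is a quadratic bounded below by $0$, so by Lemma \ref{QP0} and \eqref{min-f} we have $\lambda A+\mu B\succeq 0$, $\lambda a+\mu b\in\mathcal R(\lambda A+\mu B)$, and its attained minimum $\lambda a_0+\mu b_0-(\lambda a+\mu b)^T(\lambda A+\mu B)^\dagger(\lambda a+\mu b)$ is strictly positive. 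This is exactly the inequality required in (C1-1).

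\textbf{(C1-1) $\Rightarrow$ (C1).} Since $II_\succeq(F,G)\neq\{(0,0)\}$ and it is a convex cone containing the origin, its relative interior does not contain $(0,0)$. Indeed, if $(0,0)$ were relatively interior, then $II_\succeq(F,G)$ would equal its affine hull, a nonzero subspace $L$. It would then contain some $\pm(\lambda,\mu)\neq 0$, forcing $\lambda f+\mu g\equiv 0$. In that situation the displayed quantity at any relative interior point $(\lambda',\mu')\in L$ is the minimum of a function with $\lambda' F+\mu' G\succeq 0$ and $-(\lambda' F+\mu' G)\succeq 0$, hence it equals $0$, contradicting the strict inequality in (C1-1). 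So we may pick $(\lambda,\mu)\in\operatorname{relint}(II_\succeq(F,G))$ with $(\lambda,\mu)\neq(0,0)$. Membership in $II_\succeq(F,G)$ gives $\lambda f+\mu g\geq 0$, so $\lambda f+\mu g$ is bounded below. By \eqref{min-f} its minimum is the displayed positive quantity, hence $\lambda f+\mu g>0$ on $\mathbb{R}^n$, which is (C1).

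\textbf{Main obstacle.} The only delicate point is the second direction: we must rule out that the relative interior point used is $(0,0)$, which would make (C1) vacuous. The argument above does this by the dichotomy "subspace or pointed cone", where the subspace case collapses because the Lagrangian is identically zero and so has minimum $0$, not a positive value. Everything else is a direct application of Lemma \ref{u_ann-tu_a} and \eqref{min-f}.
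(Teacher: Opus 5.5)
Your proof is correct and takes the same route as the paper, which proves the theorem in one line by combining Lemma~\ref{u_ann-tu_a}(a) with the minimum formula \eqref{min-f}. Your extra step excluding $(0,0)$ from $\operatorname{relint}(II_\succeq(F,G))$ is sound, but it relies on the strict inequality in (C1-1), not on the cone structure alone (a line through the origin is a cone whose relative interior contains $0$), and it can be shortened: at $(\lambda,\mu)=(0,0)$ the displayed quantity equals $0$, so the strict inequality in (C1-1) already rules that point out.
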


\begin{theorem}
\label{(F1-1)}
The following are equivalent.
\begin{itemize}
    \item[(F1)] There exists \( \xi \in \mathbb{R} \) such that \( f(x)+\xi g(x)>0 \) for all \( x\in \mathbb{R} ^n \).
    \item[(F1-1)] \( I_\succeq (F,G)\neq \emptyset \), and for any \( \xi \in \operatorname{relint}(I_\succeq (F,G)) \), there is
\[
a_0+\xi b_0-(a+\xi b)^T (A+\xi B)^\dagger (a+\xi b) > 0.
\]

\end{itemize}
\end{theorem}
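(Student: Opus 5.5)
We prove the equivalence of (F1) and (F1-1) along the same lines as the preceding equivalence (C1)$\Leftrightarrow$(C1-1). We rely on the identification $I_\succeq(F,G)=\{\xi\mid f+\xi g\ge 0 \text{ on } \mathbb{R}^n\}$ from \eqref{Schur}, together with the closed-form minimum \eqref{min-f} applied to the quadratic function $f+\xi g$. The latter has data $A+\xi B$, $a+\xi b$, $a_0+\xi b_0$.

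\textbf{(F1)$\Rightarrow$(F1-1).} Suppose $f+\xi g>0$ for all $x$. Then in particular $f+\xi g\ge 0$, so $\xi\in I_\succeq(F,G)$ and this set is nonempty. Now take any $\xi'\in\operatorname{relint}(I_\succeq(F,G))$. By Lemma~\ref{u_ann-tu_a}(b), $f+\xi' g>0$ on $\mathbb{R}^n$. Since $\xi'\in I_\succeq(F,G)$, the function $f+\xi' g$ is bounded below, so by \eqref{min-f} we have $A+\xi'B\succeq0$ and $a+\xi'b\in\mathcal R(A+\xi'B)$. Lemma~\ref{QP0} then says the minimum is attained and equals $a_0+\xi'b_0-(a+\xi'b)^T(A+\xi'B)^\dagger(a+\xi'b)$. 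This attained minimum is a value of the strictly positive function $f+\xi'g$, so it is strictly positive.

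\textbf{(F1-1)$\Rightarrow$(F1).} Since $I_\succeq(F,G)$ is a nonempty convex subset of $\mathbb{R}$, its relative interior is nonempty; if the set is a single point, the relative interior is that point. Pick any $\xi$ in the relative interior. Membership in $I_\succeq(F,G)$ gives $A+\xi B\succeq0$ and $a+\xi b\in\mathcal R(A+\xi B)$. By \eqref{min-f}, $\min_x (f+\xi g)$ equals the Schur-type expression, which is $>0$ by hypothesis. Hence $f+\xi g>0$ everywhere, which is (F1).

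The only step needing care is the Lemma~\ref{u_ann-tu_a}(b) transfer. Its proof is omitted in the paper, but it copies part (a): for $\xi'$ in the relative interior and any fixed $x_0$, the map $t\mapsto f(x_0)+\ell(t)g(x_0)$ is affine along the line through $\xi$ and $\xi'$. It is nonnegative near $t=0$. If it vanished at $t=0$, that would be a local minimum of an affine function, so the function would vanish identically, contradicting its strict positivity at $t=1$ (the point $\xi$). When $I_\succeq$ is a singleton the line is degenerate, but then $\xi'=\xi$ and there is nothing to prove. The rest is bookkeeping.
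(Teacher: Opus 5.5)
Your proof is correct and follows the paper's route. The paper obtains this theorem by combining the closed-form minimum \eqref{min-f} with the relative-interior transfer of Lemma~\ref{u_ann-tu_a}(b), exactly as you do, and your reconstruction of the omitted part~(b), including the degenerate singleton case, correctly mirrors the affine-function argument of part~(a).
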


\begin{remark}
It is possible for a \( (\lambda ,\mu )\in \mathbb{R} ^2 \) that satisfies (C1) to fall on the relative boundary of \( II_\succeq (F,G) \).

Let \( f(x_1)=x_1^2 \) and \( g(x_1)=1 \). Then,
\[
II_\succeq (F,G)= \left\{(\lambda ,\mu ) \middle | \lambda 
\begin{bmatrix}
1 & 0 \\
0 & 0 
\end{bmatrix}
+\mu 
\begin{bmatrix}
0 & 0 \\
0 & 1 
\end{bmatrix}
\succeq 0 \right \} =[0,\infty )\times [0,\infty ).
\]
\begin{itemize}
    \item For \( \lambda =0 \), \( \mu >0 \), on the relative boundary \( \partial (II_\succeq (F,G)) \), there is \( \lambda f(x_1) + \mu g(x_1) = \mu >0 \) for all \( x_1\in \mathbb{R} \), in which case (C1) holds.
    \item For \( \lambda >0 \), \( \mu =0 \), on the relative boundary \( \partial (II_\succeq (F,G)) \), \( \lambda f(x_1) + \mu g(x_1) = \lambda x_1^2 =0 \) if \( x_1=0 \). Then, (C1) fails.
\end{itemize}
\end{remark}

For completeness, the procedures developed in the preceding two subsections are summarized as pseudocode in Appendix \ref{'en-s'ng-hu8at}. The procedures for checking conditions (C1) and (C2) are combined in Algorithm~\ref{alg:C12}, whereas those for checking conditions (F1) and (F2) are combined in Algorithm~\ref{alg:F12}.

\section{Computing \( II_\succeq (F,G) \) and \( I_\succeq (F,G) \)}
\label{comp II(F,G)}

In this subsection, we demonstrate how to calculate \( II_\succeq (F,G) \). Notice that \( II_\succeq (F,G) \) is a closed convex cone, since it is the dual cone of the joint numerical range set of \( F \) and \( G \) 
\[
\mathcal{E} = \{(z^T F z, z^T G z) \mid z\in \mathbb{R} ^{n+1}\},
\]
where \( F,G\in \mathcal S^{n+1} \) are defined in \eqref{defn F,G}. In fact,
\begin{align*}
II_\succeq (F,G) & =\{(\lambda ,\mu ) \mid \lambda F+\mu G\succeq 0\}\\
& = \{(\lambda ,\mu )\mid \lambda z^T F z + \mu z^T G z \geq 0, \;\forall \, z\in \mathbb{R} ^{n+1}\}\\
& = \{(\lambda ,\mu )\mid \lambda r + \mu s \geq 0, \;\forall \, (r,s) \in \mathcal{E} \}. 
\end{align*}
It can be easily seen that a closed convex cone in \( \mathbb{R} ^2 \) is a polyhedron with at most two extreme rays, whose union constitutes the boundary of \( II_\succeq (F,G) \). Moreover, \( II_\succeq (F,G) \) is pointed if and only if \( \{F,G\} \) is linearly independent due to the following argument
\begin{equation}
\label{pted<=>l.i.}
\begin{aligned}
&\nexists (\lambda ,\mu )\neq (0,0)\text{ s.t. } (\lambda ,\mu )\in II_\succeq (F,G),~ (-\lambda ,-\mu )\in II_\succeq (F,G)\\
\Longleftrightarrow {}&\nexists (\lambda ,\mu )\neq (0,0)\text{ s.t. } \lambda F+\mu G\succeq 0,~ -\lambda F-\mu G\succeq 0\\
\Longleftrightarrow {}&\nexists (\lambda ,\mu )\neq (0,0)\text{ s.t. } \lambda F+\mu G=0.
\end{aligned}
\end{equation}

\subsection{Computing \( II_\succeq (F,G) \) when \( \{F,G\} \) is linearly dependent}
\label{II_(F,G),F//G}

If \( F=G=0 \), the matrix pencil \( II_\succeq (F,G) \) is the whole space \( \mathbb{R} ^2 \). Suppose that \( F \) and \( G \) are not both zero. By their linear dependence, let \( (\tilde{\lambda },\tilde{\mu })\neq (0,0) \) satisfy \( \tilde{\lambda }F+\tilde{\mu }G=0 \). Then, the line spanned by \( (\tilde{\lambda }, \tilde{\mu }) \)
\[
L=\{(t\tilde{\lambda },t\tilde{\mu })\mid t\in \mathbb{R} \}=\{(r,s)\in \mathbb{R} ^2\mid \tilde{\mu }r-\tilde{\lambda }s=0\}
\]
is contained in \( II_\succeq (F,G) \) and \( L \) separates \( \mathbb{R} ^2 \) into two half-planes:
\[
L^+=\{(r,s)\in \mathbb{R} ^2\mid \tilde{\mu }r-\tilde{\lambda }s\geq 0\} \text{ and } L^-=\{(r,s)\in \mathbb{R} ^2\mid \tilde{\mu }r-\tilde{\lambda }s\leq 0\}
\]
By writing any \( rF+sG \) as a linear combination of \( \tilde{\lambda }F+\tilde{\mu }G \) and  \( \tilde{\mu }F-\tilde{\lambda }G \), we have
\begin{equation}
\label{rF+sG l.c.}
rF+sG=\frac{\tilde{\lambda }r+\tilde{\mu }s}{\tilde{\lambda }^2+\tilde{\mu }^2}\underbrace {(\tilde{\lambda }F+\tilde{\mu }G)}_{=0}+\frac{\tilde{\mu }r-\tilde{\lambda }s}{\tilde{\lambda }^2+\tilde{\mu }^2}(\tilde{\mu }F-\tilde{\lambda }G)=\frac{\tilde{\mu }r-\tilde{\lambda }s}{\tilde{\lambda }^2+\tilde{\mu }^2}(\tilde{\mu }F-\tilde{\lambda }G).
\end{equation}
Then, \( \tilde{\mu }F-\tilde{\lambda }G\neq 0 \). Otherwise, by \eqref{rF+sG l.c.}, there would be \( rF+sG=0 \), \( \forall (r,s)\in \mathbb{R} ^2 \), implying a contradiction \( F=G=0 \). As a result, when \( \{F,G\} \) is linearly dependent and \( (\tilde{\lambda },\tilde{\mu }) \) are such that \( \tilde{\lambda }F+\tilde{\mu }G=0 \) and \(  \tilde{\lambda }^2+ \tilde{\mu }^2>0 \), we have
\begin{align*}
(r,s)\in II_\succeq (F,G)&\Longleftrightarrow \frac{\tilde{\mu }r-\tilde{\lambda }s}{\tilde{\lambda }^2+\tilde{\mu }^2}(\tilde{\mu }F-\tilde{\lambda }G)\succeq 0\\
&\Longleftrightarrow 
\begin{cases}
\tilde{\mu }r-\tilde{\lambda }s\geq 0, & \text{if } \tilde{\mu }F-\tilde{\lambda }G\succeq 0;\\
\tilde{\mu }r-\tilde{\lambda }s\leq 0, & \text{if } \tilde{\mu }F-\tilde{\lambda }G\preceq 0;\\
\tilde{\mu }r-\tilde{\lambda }s=0, & \text{if } \tilde{\mu }F-\tilde{\lambda }G \text{ is indefinite}.
\end{cases}
\end{align*}
In summary,
\begin{itemize}
    \item If \( \tilde{\mu }F-\tilde{\lambda }G\succeq 0 \), then \( II_\succeq (F,G)=\{(r,s)\in \mathbb{R} ^2\mid \tilde{\mu }r-\tilde{\lambda }s\geq 0\} \) is a closed half-plane.
    \item If \( \tilde{\mu }F-\tilde{\lambda }G\preceq 0 \), then \( II_\succeq (F,G)=\{(r,s)\in \mathbb{R} ^2\mid \tilde{\mu }r-\tilde{\lambda }s\leq 0\} \).
    \item If \( \tilde{\mu }F-\tilde{\lambda }G \) is indefinite, then \( II_\succeq (F,G)=L=\{(r,s)\in \mathbb{R} ^2\mid \tilde{\mu }r-\tilde{\lambda }s=0\} \) is a line in \( \mathbb{R} ^2 \). 
\end{itemize}

\subsection{Computing \( II_\succeq (F,G) \) when \( \{F,G\} \) is linearly independent}
\label{II_(F,G),F/|G}

In this case, we determine the closed convex cone \( II_\succeq (F,G) \) by characterizing its boundary \( \partial (II_\succeq (F,G)) \). Let us first remove the common null space of \( F \) and \( G \) based on the idea in \cite{hsia2014revisit}. Let \( R\in \mathbb{R} ^{(n+1)\times (n+1-k)} \) and \( S\in \mathbb{R} ^{(n+1)\times k} \) be matrix bases of \( \mathcal N(F)\cap \mathcal N(G) \) and \( (\mathcal N(F)\cap \mathcal N(G))^\bot \) respectively, where \( \bot \) denotes the orthogonal complement, and \( k=\dim ((\mathcal N(F)\cap \mathcal N(G))^\bot ) \). Then, \( \begin{bmatrix} R & S \end{bmatrix} \) is an invertible \( (n+1)\times (n+1) \) matrix and
\begin{align*}
\begin{bmatrix}
R^T \\ S^T 
\end{bmatrix}
(\lambda F+\mu G)
\begin{bmatrix}
R & S
\end{bmatrix}
& =
\lambda 
\begin{bmatrix}
R^T F R & R^T F S\\
S^T F R & S^T F S
\end{bmatrix}
+ \mu 
\begin{bmatrix}
R^T G R & R^T G S\\
S^T G R & S^T G S
\end{bmatrix}
\\
& =
\begin{bmatrix}
0 & 0\\
0 & \lambda S^T F S + \mu S^T G S
\end{bmatrix},
\end{align*}
which indicates that
\begin{equation}
\label{(F,G)=(SFS,SGS)}
II_\succeq (F,G) = II_\succeq (S^T F S, S^T G S),
\end{equation}
and it can be shown that
\begin{equation}
\label{N(SFG)N(SGS)}
\mathcal N(S^T FS)\cap \mathcal N(S^T GS)=\{0\}.
\end{equation}
Lemma \ref{int(II)} shows that the interior of \( II_\succeq (F,G) \) is the ``positive definite matrix pencil'' of \( F \) and \( G \).

\begin{lemma}
\label{int(II)}
Let \( S\in \mathbb{R} ^{(n+1)\times k} \) be the matrix base of \( (\mathcal N(F)\cap \mathcal N(G))^\bot \) with \( k=\dim ((\mathcal N(F)\cap \mathcal N(G))^\bot ) \). Then, 
\[
\operatorname{int} (II_\succeq (F,G)) = \operatorname{int} (II_\succeq (S^T F S, S^T G S)) = \{(\lambda ,\mu ) \mid \lambda S^T F S + \mu S^T G S \succ 0\}.
\]
\end{lemma}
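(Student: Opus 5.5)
The first equality is immediate from \eqref{(F,G)=(SFS,SGS)}: the two cones coincide as subsets of $\mathbb R^2$, so their interiors coincide. The work is to show that, for $F' := S^T F S$ and $G' := S^T G S$, the interior of $II_\succeq(F',G')$ equals $P := \{(\lambda,\mu)\mid \lambda F'+\mu G'\succ 0\}$. I will prove the two inclusions separately.

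\textbf{Step 1: $P\subseteq \operatorname{int}(II_\succeq(F',G'))$.} This follows from continuity of the smallest eigenvalue. The map $(\lambda,\mu)\mapsto \lambda_{\min}(\lambda F'+\mu G')$ is continuous, so $P$ is open. Since $P\subseteq II_\succeq(F',G')$, it lies in the interior.

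\textbf{Step 2: $\operatorname{int}(II_\succeq(F',G'))\subseteq P$.} Fix $(\lambda,\mu)$ in the interior and suppose, for contradiction, that $\lambda F'+\mu G'$ is positive semidefinite but singular. Pick $z\neq 0$ with $(\lambda F'+\mu G')z=0$. Then $z^T(\lambda F'+\mu G')z=0$.

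For small $\varepsilon>0$, all four points $(\lambda\pm\varepsilon,\mu)$ and $(\lambda,\mu\pm\varepsilon)$ lie in $II_\succeq(F',G')$. Evaluating the quadratic form at $z$ for each of them gives
\[
\pm\varepsilon\, z^TF'z\ge 0 \quad\text{and}\quad \pm\varepsilon\, z^TG'z\ge 0,
\]
so $z^TF'z=z^TG'z=0$.

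The main obstacle is to upgrade these vanishing quadratic forms to $F'z=G'z=0$. I handle it with the standard fact that if $M\succeq 0$ and $z^TMz=0$, then $Mz=0$. Apply this to the positive semidefinite matrices $(\lambda+\varepsilon)F'+\mu G'$ and $\lambda F'+\mu G'$: both annihilate $z$. Subtracting gives $\varepsilon F'z=0$, hence $F'z=0$. The same argument in the $\mu$-direction gives $G'z=0$.

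Thus $z\in\mathcal N(F')\cap\mathcal N(G')=\{0\}$ by \eqref{N(SFG)N(SGS)}, contradicting $z\neq 0$. Hence $\lambda F'+\mu G'\succ 0$.

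\textbf{Step 3: verify \eqref{N(SFG)N(SGS)}.} This was only asserted ("it can be shown"), so I check it. Suppose $S^TFSy=0=S^TGSy$. Then $y^TS^TFSy=0$ alone does not give $FSy=0$, since $F$ need not be semidefinite. Instead, use the block form: $FR=0$ and $R^TF=0$ because the columns of $R$ lie in $\mathcal N(F)$. Hence $[R\ S]^TFSy=0$, and since $[R\ S]$ is invertible, $FSy=0$. Likewise $GSy=0$. So $Sy\in\mathcal N(F)\cap\mathcal N(G)$, while $Sy$ also lies in its orthogonal complement. Therefore $Sy=0$, and $y=0$ because $S$ has full column rank.
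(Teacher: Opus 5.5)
Your proof is correct and follows essentially the paper's argument. Openness of the positive-definite set gives one inclusion. For the other, the interior neighborhood forces $z^TF'z=z^TG'z=0$, and the PSD-kernel fact then gives $F'z=G'z=0$, contradicting \eqref{N(SFG)N(SGS)}; the paper does this with a whole ball and the $k\times 2$ matrix $[F'w_0\ \ G'w_0]$ rather than your two coordinate perturbations. Your Step 3 also verifies \eqref{N(SFG)N(SGS)}, which the paper only asserts.
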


\begin{proof}
From \eqref{(F,G)=(SFS,SGS)}, \( \operatorname{int} (II_\succeq (F,G)) = \operatorname{int} (II_\succeq (S^T F S, S^T G S)) \). To further prove that \( \operatorname{int} (II_\succeq (S^T F S, S^T G S)) = \{(\lambda ,\mu ) \mid \lambda S^T F S + \mu S^T G S \succ 0\} \), we notice that \( \{(\lambda ,\mu ) \mid \lambda S^T F S + \mu S^T G S \succ 0\} \) is an open subset of \( II_\succeq (S^T F S, S^T G S) \) so that
\[
\{(\lambda ,\mu ) \mid \lambda S^T F S + \mu S^T G S \succ 0\}\subset \operatorname{int} (II_\succeq (S^T F S, S^T G S)).
\]

Conversely, Suppose, to the contrary, that there were a pair \( (\bar{\lambda },\bar{\mu })\in \operatorname{int}(II_\succeq (F,G)) \) and \( w_0\in \mathbb{R} ^k \setminus \{0\} \) such that 
\begin{equation}
\label{SFG+SGS}
w_0^T (\bar{\lambda } S^T F S + \bar{\mu } S^T G S) w_0 \leq 0,
\end{equation}
where \( S \) is an \( (n+1)\times k \) matrix as defined above. Since \( (\bar{\lambda },\bar{\mu }) \) is an interior point, there exists an open ball \( \mathcal{B} (\bar{\lambda },\bar{\mu }) \subset II_\succeq (S^T F S, S^T G S) \) such that
\begin{equation}
\label{(SFG,SGS)}
(\lambda ,\mu )\in \mathcal{B} (\bar{\lambda },\bar{\mu }) \Longrightarrow w ^T (\lambda S^T F S + \mu S^T G S)w \geq 0, \; \forall \, w\in \mathbb{R} ^k.
\end{equation}
Combining \eqref{SFG+SGS} and \eqref{(SFG,SGS)}, we obtain
\[
w_0^T (\bar{\lambda } S^T F S + \bar{\mu } S^T G S) w_0 = 0.
\]
It implies that the linear functional \( K(\lambda ,\mu )=w_0^T (\lambda S^T F S + \mu S^T G S)w_0 \) attains the minimum value \( 0 \) at \( (\bar{\lambda },\bar{\mu }) \) on \( \mathcal{B} (\bar{\lambda },\bar{\mu }) \). Therefore, \( K \) is identically zero. That is,
\begin{equation}
\label{L=0}
w_0^T (\lambda S^T F S + \mu S^T G S) w_0 = 0 ,\;\forall \,(\lambda ,\mu )\in \mathbb{R} ^2.
\end{equation}
Incorporating \eqref{L=0} with \eqref{(SFG,SGS)}, we have, for each \( (\lambda ,\mu )\in \mathcal{B} (\bar{\lambda },\bar{\mu }) \), the quadratic form \( w^T ( \lambda S^T FS + \mu S^T GS)w \) attains the minimum value \( 0 \) at \( w_0\neq 0 \) over \( \mathbb{R} ^k \). Necessarily, there is \( (\lambda S^T F S + \mu S^T G S) w_0 = 0 \) for each \( (\lambda ,\mu )\in \mathcal{B} (\bar{\lambda },\bar{\mu }) \). Consequently,
\begin{align*}
&\begin{bmatrix}
(S^T FS)w_0 & (S^T GS)w_0
\end{bmatrix}_{k\times 2}
\begin{bmatrix}
\lambda \\ \mu 
\end{bmatrix}
=0, \; \forall \, (\lambda ,\mu )\in \mathcal{B} (\bar{\lambda },\bar{\mu })\\
\Longrightarrow {} & \mathcal{B} (\bar{\lambda },\bar{\mu })\subset \mathcal N
\begin{bmatrix}
(S^T FS)w_0 & (S^T GS)w_0
\end{bmatrix}_{k\times 2}\\
\Longrightarrow {}&
\begin{bmatrix}
(S^T FS)w_0 & (S^T GS)w_0
\end{bmatrix}
=0.
\end{align*}
It follows that \( (S^T FS)w_0=(S^T GS)w_0=0 \) with \( w_0\neq 0 \), contradicting \eqref{N(SFG)N(SGS)}. The proof is thus complete.
\end{proof}

Since \( II_\succeq (F,G) \) is closed, by \eqref{(F,G)=(SFS,SGS)} and Lemma \ref{int(II)}, an element \( (\lambda ,\mu ) \) is in the boundary set \( \partial (II_\succeq (F,G)) \) if and only if \( \lambda S^T FS+\mu S^T GS\succeq 0 \) and \( \lambda S^T FS+\mu S^T GS\nsucc 0 \). Namely,
\[
\partial (II_\succeq (F,G))=\{(\lambda ,\mu )\mid \lambda S^T FS+\mu S^T GS\succeq 0,\det(\lambda S^T FS+\mu S^T GS)=0\}.
\]
Since \( \{F,G\} \) are linear independent, by \eqref{pted<=>l.i.}, \( II_\succeq (F,G) \) is a pointed cone. Then, its boundary \( \partial (II_\succeq (F,G)) \) consists of at most two non-parallel extreme rays, except for the special case \( II_\succeq (F,G)=\{(0,0)\} \). To characterize such rays, it suffices to look into those \( (\lambda ,\mu )\in II_\succeq (F,G) \) that sit on the unit circle. Specifically, we consider the set
\begin{align*}
\partial ^\circ (II_\succeq (F,G))&=\partial (II_\succeq (F,G))\cap \{(\lambda ,\mu )\mid \lambda ^2+\mu ^2=1\}\\
&=\left\{(\lambda ,\mu )\, \middle | \,
\begin{aligned}
&\lambda S^T FS+\mu S^T GS\succeq 0,\\
&\det(\lambda S^T FS+\mu S^T GS)=0,\lambda ^2+\mu ^2=1
\end{aligned}
\right\}
\end{align*}
with a superscript ``\( \circ \)'' to indicate the intersection of \( \partial (II_\succeq (F,G)) \) with the unit circle.
\begin{itemize}
    \item If \( \partial ^\circ (II_\succeq (F,G))=\emptyset \), then \( \partial (II_\succeq (F,G))=II_\succeq (F,G)=\{(0,0)\} \).
    \item If \( \partial ^\circ (II_\succeq (F,G))=\{(\hat{\lambda },\hat{\mu })\} \) is a singleton, then
    \[
    \partial (II_\succeq (F,G))=II_\succeq (F,G)=\{(t\hat{\lambda },t\hat{\mu })\mid t\geq 0\}
    \]
    is a ray.
    \item If \( \partial ^\circ (II_\succeq (F,G))=\{(\hat{\lambda },\hat{\mu }),(\tilde{\lambda },\tilde{\mu })\} \), then
    \[
    \partial (II_\succeq (F,G))=\{(t\hat{\lambda },t\hat{\mu })\mid t\geq 0\}\cup \{(s\tilde{\lambda },s\tilde{\mu })\mid s\geq 0\}
    \]
    is the union of two rays, and \( II_\succeq (F,G) \) can be represented as conic combinations of the two rays as follows.
    \begin{equation}
    \label{II=_/}
    II_\succeq (F,G)=\{(t\hat{\lambda }+s\tilde{\lambda },t\hat{\mu }+s\tilde{\mu })\mid t\geq 0,~ s\geq 0\},
    \end{equation}
    which is an angular sector with angle less than \( \pi \).
\end{itemize}
To characterize \( \partial ^\circ (II_\succeq (F,G)) \), let us first deal with a larger candidate set
\[
\Lambda = \{(\lambda ,\mu ) \mid \det(\lambda S^T FS+\mu S^T GS)=0, \lambda ^2 + \mu ^2 = 1\}
\]
and handle the positive semi-definiteness of \( \lambda S^T FS+\mu S^T GS \) separately. There are two possibilities:
\begin{itemize}
    \item \( \lambda =0 \). In this case, \( \mu =\pm 1 \) and \( \det(S^T GS)=0 \).
    \item \( \lambda \neq 0 \). By setting \( \xi =\frac{\mu }{\lambda } \), we can turn \( \det(\lambda S^T FS+\mu S^T GS) \) into a polynomial of the variable \( \xi \) with degree at most \( k=\dim((\mathcal N(F)\cap \mathcal N(G))^\bot ) \):
\begin{equation}
\label{Psi=det=0}
\Psi (\xi )=\det (S^T FS + \xi S^T GS)=0.
\end{equation}
Collect all the real roots of \( \Psi (\xi ) \) into
\[
\Xi =\{\xi \in \mathbb{R} \mid \Psi (\xi )=0\}
\]
and for each \( \xi \in \Xi \), by solving \( \frac{\mu }{\lambda }=\xi \) together with \( \lambda ^2+\mu ^2 =1 \), namely, \( \mu =\xi \lambda \) and \( \lambda ^2(1+\xi ^2)=1 \), we obtain
\[
(\lambda ,\mu )=\pm \left (\frac{1}{\sqrt{1+\xi ^2}},\frac{\xi }{\sqrt{1+\xi ^2}} \right ).
\]
\end{itemize}
To conclude,
\begin{equation}
\label{Lambda express}
\Lambda = \bigcup _{\xi \in \Xi }\left \{\left (\frac{1}{\sqrt{1+\xi  ^2}},\frac{\xi  }{\sqrt{1+\xi  ^2}} \right ),\left (\frac{-1}{\sqrt{1+\xi  ^2}},\frac{-\xi  }{\sqrt{1+\xi  ^2}} \right ) \right \}\cup \Lambda _0,
\end{equation}
where
\[
\Lambda _0=
\begin{cases}
\emptyset  & \text{if } \det (S^T GS)\neq 0 ;\\
\{(0,1),(0,-1)\}& \text{if } \det (S^T GS)=0.
\end{cases}
\]
Moreover, according to Lemma \ref{I complex} \cite[2026]{nguyen2023Po4} below, \( \Psi (\xi ) \) in \eqref{Psi=det=0} is not a constant polynomial and it has only real roots if \( I_\succeq (F, G) = \{\xi \in \mathbb{R} \mid F+\xi G \succeq 0\} \neq \emptyset \). In this case, \( \Lambda \) in \eqref{Lambda express} must be a finite set.

\begin{lemma}[\mbox{\cite[2026]{nguyen2023Po4}}]
\label{I complex}
If \( I_\succeq (F, G)=I_\succeq (S^T FS, S^T GS) \) is not empty, then \( \Psi (\xi )=\det (S^T FS + \xi S^T GS) \) is not a constant polynomial and has only real roots.
\end{lemma}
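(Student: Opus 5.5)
Write $P=S^TFS$ and $Q=S^TGS$, so that $\Psi(\xi)=\det(P+\xi Q)$ and, by \eqref{N(SFG)N(SGS)}, $\mathcal N(P)\cap\mathcal N(Q)=\{0\}$. Fix $\xi_0\in I_\succeq(P,Q)$ and set $M=P+\xi_0Q\succeq 0$. Then $\Psi(\xi_0+t)=\det(M+tQ)$, and it suffices to prove the claims for $t\mapsto\det(M+tQ)$. I use the standing assumption of this subsection that $\{F,G\}$ is linearly independent, so $G\neq 0$. This gives $Q\neq0$: if $S^TGS=0$, then, since $GR=0$ for the basis $R$ of $\mathcal N(F)\cap\mathcal N(G)$ and $[R\ S]$ is invertible, we would get $G=0$.

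\emph{Step 1 (the case $M\succ0$).} Factor
\[
\det(M+tQ)=\det(M)\,\det\bigl(I+tM^{-1/2}QM^{-1/2}\bigr)=\det(M)\prod_i(1+t\theta_i),
\]
where the $\theta_i$ are the eigenvalues of the real symmetric matrix $M^{-1/2}QM^{-1/2}$. All $\theta_i$ are real, so the roots $t=-1/\theta_i$ (for $\theta_i\neq0$) are real. Since $Q\neq0$, some $\theta_i\neq0$, so the polynomial has degree $\operatorname{rank}Q\ge1$ and is not constant.

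\emph{Step 2 (reduction to Step 1 when $I_\succeq$ has interior).} If $I_\succeq(P,Q)$ is a nondegenerate interval, I take $\xi_0$ in its interior. For $x\in\mathcal N(M)$ we have $x^T(M+tQ)x=t\,x^TQx\ge0$ for all small $|t|$, hence $x^TQx=0$. Because $M+tQ\succeq0$ and $x^T(M+tQ)x=0$, it follows that $(M+tQ)x=0$, i.e.\ $Qx=0$. Thus $x\in\mathcal N(M)\cap\mathcal N(Q)=\{0\}$, so $M\succ0$ and Step 1 applies.

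\emph{Step 3 (the singleton case $I_\succeq=\{\xi_0\}$, possibly $M$ singular).} I would use a perturbation argument. For $\varepsilon>0$, Step 1 shows that $p_\varepsilon(t)=\det(M+\varepsilon I+tQ)$ is real‑rooted. The coefficients of $p_\varepsilon$ are polynomial in $\varepsilon$, so $p_\varepsilon\to p_0(t)=\det(M+tQ)$ coefficientwise. By Hurwitz's theorem (or continuity of roots), a coefficientwise limit of real‑rooted polynomials of bounded degree is either real‑rooted or identically zero. It therefore remains to exclude two possibilities.

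First, suppose $p_0\equiv c\neq0$. Then $\det M\neq0$, so $M\succ0$, and Step 1 gives nonconstancy, a contradiction.

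Second, suppose $p_0\equiv0$; this is the \emph{main obstacle}. If the pencil $M+tQ$ were singular, there would be a nonzero polynomial vector $x(t)=x_0+tx_1+\cdots$ with $x_0\neq0$ and $(M+tQ)x(t)\equiv0$. Comparing powers of $t$ gives $Mx_0=0$ and $Mx_1+Qx_0=0$, hence $x_0^TQx_0=0$. I would then try to upgrade this to $Qx_0=0$, which would contradict $\mathcal N(M)\cap\mathcal N(Q)=\{0\}$. To do so I would exploit $M\succeq0$ via the Kronecker canonical form of a singular pencil with a semidefinite member: a minimal‑index block would force a common null vector of $M$ and $Q$. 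Alternatively, I would argue directly by a minimal‑degree choice of $x(t)$ together with semidefiniteness of $M$.

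Once $p_0\not\equiv0$ is established, $\Psi$ is nonconstant and has only real roots, which proves Lemma \ref{I complex}.
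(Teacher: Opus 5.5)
There is a genuine gap, and it sits in Step 3, where you flag it yourself. The rest of the argument is sound:
\begin{itemize}
\item Steps 1 and 2 are correct.
\item The perturbation $M+\varepsilon I$ together with Hurwitz's theorem correctly reduces real-rootedness to showing $p_0\not\equiv 0$.
\item The exclusion of a nonzero constant is right.
\item You are right that the lemma tacitly needs $G\neq 0$. For $G=0$ and $F\succeq 0$ one has $I_\succeq(F,G)=\mathbb{R}$ while $\Psi\equiv\det(S^TFS)$ is constant.
\end{itemize}
The paper gives no proof of this lemma; it only quotes it from the literature. So the comparison is with the statement itself.

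The gap: in the singleton case you never prove that the pencil $M+tQ$ is regular. This is not a technicality. The zero polynomial is constant, so without $p_0\not\equiv 0$ neither conclusion of the lemma is established when $M$ is singular, and regularity is the whole content of that case. Step 3 names two strategies (Kronecker form, minimal degree) but carries out neither. Moreover, the upgrade you aim for does not follow from the two equations you wrote down. Take
\[
M=\begin{bmatrix}1&0\\0&0\end{bmatrix},\quad Q=\begin{bmatrix}0&1\\1&0\end{bmatrix},\quad x_0=e_2,\quad x_1=-e_1 .
\]
These satisfy $Mx_0=0$, $Mx_1+Qx_0=0$ and $x_0^TQx_0=0$, yet $Qx_0\neq 0$.

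What is missing is the next coefficient. Write $x(t)=\sum_{j\ge 0}t^jx_j$ with $x_0\neq 0$ and $x_j=0$ for $j>d$. Then $Mx_j+Qx_{j-1}=0$ for all $j\ge 0$ (with $x_{-1}=0$), and in particular
\[
Mx_0=0,\qquad Mx_1=-Qx_0,\qquad Mx_2=-Qx_1 .
\]
Hence
\[
x_1^TMx_1=-x_1^TQx_0=-x_0^TQx_1=x_0^TMx_2=(Mx_0)^Tx_2=0 .
\]
Since $M\succeq 0$, this gives $Mx_1=0$, and therefore $Qx_0=-Mx_1=0$. Then $0\neq x_0\in\mathcal N(M)\cap\mathcal N(Q)=\mathcal N(P)\cap\mathcal N(Q)=\{0\}$, a contradiction.

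This needs neither the Kronecker canonical form nor a minimal-degree choice of $x(t)$. With it inserted, $p_0\not\equiv 0$ and your proof is complete. Step 3 then also covers the interval case, so Step 2 becomes optional.
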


With the following result, if \( \Psi (\xi ) \) happens to be a constant polynomial or, to appear at least one nonreal root, we can immediately conclude that the special case \( II_\succeq (F,G)=\{(0,0)\} \) occurs.

\begin{lemma}
If \( II_\succeq (F, G)=II_\succeq (S^T FS, S^T GS)\neq \{(0,0)\} \), then \( \Psi (\xi )=\det (S^T FS + \xi S^T GS) \) is not a constant polynomial and has only real roots.
\end{lemma}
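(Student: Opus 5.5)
The strategy is to reduce to Lemma \ref{I complex}: we show that a nonzero element of \( II_\succeq (S^T FS, S^T GS) \) can be converted into a nonempty one-parameter pencil \( I_\succeq \), possibly after a rotation of the pair \( (S^T FS, S^T GS) \). Write \( \hat F=S^T FS \), \( \hat G=S^T GS \). Pick \( (\lambda _0,\mu _0)\neq (0,0) \) in \( II_\succeq (\hat F,\hat G) \). If \( \lambda _0>0 \), then \( \hat F+(\mu _0/\lambda _0)\hat G\succeq 0 \), so \( I_\succeq (\hat F,\hat G)\neq \emptyset  \), and Lemma \ref{I complex} applies directly to give the conclusion for \( \Psi  \).

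The remaining cases are \( \lambda _0\le 0 \). Here we change variables. Choose an invertible \( 2\times 2 \) transformation mapping \( (\lambda ,\mu ) \) to new coordinates \( (\lambda ',\mu ') \) in which \( (\lambda _0,\mu _0) \) has positive first coordinate. Set \( \tilde F=\lambda _0\hat F+\mu _0\hat G \) and \( \tilde G=\hat G \) when \( \lambda _0\neq 0 \); more generally take \( \tilde F=\lambda _0\hat F+\mu _0\hat G\succeq 0 \) together with some \( \tilde G \) completing a basis of the pencil. Then \( 0\in I_\succeq (\tilde F,\tilde G) \), so Lemma \ref{I complex} says \( \tilde \Psi (\eta )=\det(\tilde F+\eta \tilde G) \) is nonconstant with only real roots. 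Here \( \mathcal N(\tilde F)\cap \mathcal N(\tilde G)=\{0\} \) holds because the pencil is unchanged, using \eqref{N(SFG)N(SGS)}, so the lemma's \( S \) is the identity.

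The next step is to transfer these properties back to \( \Psi (\xi )=\det(\hat F+\xi \hat G) \). Both are restrictions of the binary form \( P(\lambda ,\mu )=\det(\lambda \hat F+\mu \hat G) \), which is homogeneous of degree \( k \). \( \tilde \Psi  \) is \( P \) dehomogenized along one line, and \( \Psi  \) is \( P \) dehomogenized along another. The claim "only real roots, nonconstant" for a dehomogenization translates into a statement about \( P \): \( P\not\equiv 0 \), \( P \) factors over \( \mathbb{R} \) into linear forms, and \( P \) is not a pure power of the excluded linear form.

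The main obstacle is this transfer, because factoring into real linear forms is invariant under linear changes of variables, but nonconstancy is not. If \( P=c\lambda ^k \), then \( \Psi (\xi )=c \) is constant. So we must rule out \( P=c\lambda ^k \), i.e. \( \det(\mu \hat G)\equiv 0 \) together with the other terms vanishing, under the hypothesis \( II_\succeq \neq \{(0,0)\} \).

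To handle this, I would first try to exploit the pointedness argument, or else show directly that \( P=c\lambda ^k \) with \( c\neq 0 \) forces \( \hat G \) to be nilpotent on a psd-compatible subspace. From there I would aim to derive a common null vector of \( \hat F \) and \( \hat G \), contradicting \eqref{N(SFG)N(SGS)}. Failing that, I would note that the later computation only needs \( \Lambda  \) to be finite, and that finiteness follows from \( P\not\equiv 0 \) plus real-rootedness of \( P \), which the rotation argument already gives.
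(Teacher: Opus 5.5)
Your case $\lambda_0>0$ is exactly the paper's argument. Your case $\lambda_0<0$ is already finished by your own choice $\tilde F=\lambda_0\hat F+\mu_0\hat G$, $\tilde G=\hat G$: then $\det(\tilde F+\eta\tilde G)=\lambda_0^k\,\Psi\bigl((\mu_0+\eta)/\lambda_0\bigr)$ is an affine reparametrization of $\Psi$, so nonconstancy and real-rootedness carry over with no binary-form argument. The paper does the same by applying Lemma~\ref{I complex} to $(-\hat F,\hat G)$.

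The genuine gap is the case you folded into ``$\lambda_0\le 0$'' but never closed: $\lambda_0=0$, i.e.\ every nonzero element of $II_\succeq(\hat F,\hat G)$ lies on the $\mu$-axis. You correctly reduce this to excluding $\det(\lambda\hat F+\mu\hat G)=c\lambda^k$. No common-null-vector argument can do that, because this case actually occurs. Take $f(x)=2x_1x_2+1$ and $g(x)=x_1^2$ on $\mathbb{R}^2$, so
\[
F=\begin{bmatrix}0&1&0\\1&0&0\\0&0&1\end{bmatrix},\qquad G=\begin{bmatrix}1&0&0\\0&0&0\\0&0&0\end{bmatrix}.
\]
Here $F$ is invertible, so $\mathcal N(F)\cap\mathcal N(G)=\{0\}$ and one may take $S=I$. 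Also $\{F,G\}$ is linearly independent. Since the $(2,2)$ entry of $\lambda F+\mu G$ is $0$, positive semidefiniteness forces $\lambda=0$, so $II_\succeq(F,G)=\{(0,t)\mid t\ge 0\}\neq\{(0,0)\}$. Yet $\det(\lambda F+\mu G)=-\lambda^3$ and $\Psi(\xi)\equiv-1$. So the statement is false exactly in the case you isolated as the main obstacle.

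The paper's proof has the same hole. It handles $\tilde\lambda\neq0$ and says the case $\tilde\mu\neq0$ ``can be similarly argued.'' The symmetric argument only shows that $\det(\hat G+\eta\hat F)$ is nonconstant and real-rooted (here it equals $-\eta^3$). As you observed, that transfers real-rootedness to $\Psi$ but not nonconstancy.

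What does survive in every case is your fallback. The binary form $\det(\lambda\hat F+\mu\hat G)$ is not identically zero and splits into real linear factors, so $\Lambda$ is finite; in the example, $\Lambda=\Lambda_0=\{(0,\pm1)\}$ contains the true boundary ray. But that is a weaker claim than the lemma. The nonconstancy part needs an extra hypothesis, such as $\det(S^TGS)\neq0$ or $II_\succeq(F,G)\not\subseteq\{0\}\times\mathbb{R}$.

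The failure also has consequences downstream. In this example $\{f=0\}\cap\{g=0\}=\emptyset$, and (C2) holds with $(\lambda,\mu)=(0,1)$. However, the early exit ``$\Psi$ constant $\Rightarrow II_\succeq(F,G)=\{(0,0)\}$'' in Algorithm~\ref{alg:II(F,G)} would make the procedure report that (C1)--(C3) all fail.
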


\begin{proof}
To shorten the notation, let \( P=S^T FS\in \mathcal S^k \) and \( Q=S^T GS\in \mathcal S^k \). Suppose that \( (0,0)\neq (\tilde{\lambda } ,\tilde{\mu } )\in II_\succeq (P,Q) \). Namely, \( \tilde{\lambda } P + \tilde{\mu } Q \succeq 0 \) with \( \tilde{\lambda }^2+\tilde{\mu }^2\neq 0 \). We only prove the case \( \tilde{\lambda } \neq 0 \), as the other case \( \tilde{\mu }\neq 0 \) can be similarly argued.
\begin{itemize}
\item If \( \tilde{\lambda }>0 \), then \( P + \frac{\tilde{\mu }}{\tilde{\lambda }}Q \succeq 0 \) and \( \frac{\tilde{\mu }}{\tilde{\lambda }} \in I_\succeq (P,Q)\neq \emptyset \). By Lemma \ref{I complex}, \( \det(P+\xi Q) \) is not a constant polynomial and has only real roots.
\item If \( \tilde{\lambda }<0 \), then \( -P - \frac{\tilde{\mu }}{\tilde{\lambda }}Q \succeq 0 \) and \( \frac{\tilde{\mu }}{|\tilde{\lambda }|} \in I_\succeq (-P,Q)\neq \emptyset \). By Lemma \ref{I complex}, \( \det(-P+\gamma Q)=(-1)^k \det(P-\gamma Q) \) is not a constant polynomial and has only real roots. After change of variable \( \xi =-\gamma \), \( \det(P+\xi Q) \) is also non-constant and has only real roots, too.
\end{itemize}
The proof is thus completed.
\end{proof}

Suppose that \( \Psi (\xi ) \) is a non‑constant polynomial with only real roots and denote all the distinct roots by
\[
\Xi =\{\xi _1>\xi _2>\cdots >\xi _m\}
\]
where \( 1\leq m\leq k=\dim((\mathcal N(F)\cap \mathcal N(G))^\bot ) \).

Due to the symmetry in \eqref{Lambda express}, \( \Lambda \) contains either \( 2m \) elements if \( \det (S^T GS)\neq 0 \) or \( 2m+2 \) elements if \( \det (S^T GS)=0 \), all on the unit circle. We enumerate all the elements of \( \Lambda \) by labelling them consecutively as \( (\lambda _1,\mu _1),(\lambda _2,\mu _2),\ldots ,(\lambda _{2m},\mu _{2m}) \) or \( (\lambda _1,\mu _1),(\lambda _2,\mu _2),\ldots ,(\lambda _{2m+2},\mu _{2m+2}) \) according to the following way. 
\begin{itemize}
    \item If \( \det (S^T GS)\neq 0 \), define
\begin{align}
(\lambda _i,\mu _i)&=\left (\frac{1}{\sqrt{1+\xi _i^2}},\frac{\xi _i}{\sqrt{1+\xi _i^2}} \right ),~ \forall \,1\leq i\leq m; \label{det=/=0,i=1...m}\\
(\lambda _i,\mu _i)&=\left (\frac{-1}{\sqrt{1+\xi _{i-m}^2}},\frac{-\xi _{i-m}}{\sqrt{1+\xi _{i-m}^2}} \right ),~\forall \,m+1\leq i\leq 2m. 
\end{align}
\item If \( \det (S^T GS)=0 \), define
\begin{align}
(\lambda _i,\mu _i)&=\left (\frac{1}{\sqrt{1+\xi _i^2}},\frac{\xi _i}{\sqrt{1+\xi _i^2}} \right ),~\forall \,1\leq i\leq m; 
\\
(\lambda _i,\mu _i)&=(0,-1),~i=m+1; 
\\
(\lambda _i,\mu _i)&=\left (\frac{-1}{\sqrt{1+\xi _{i-(m+1)}^2}},\frac{-\xi _{i-(m+1)}}{\sqrt{1+\xi _{i-(m+1)}^2}} \right ),~\forall \,m+2\leq i\leq 2m+1; 
\\
(\lambda _i,\mu _i)&=(0,1),~i=2m+2. \label{det=0,i=2m+2}
\end{align}
\end{itemize}
Since \( h(\xi )=\frac{\xi }{\sqrt{1+\xi ^2}} \) with \( h'(\xi )=(1+\xi ^2)^{-\frac 3 2}>0 \) is an increasing function, the way we arrange \( (\lambda _i,\mu _i) \) as in \eqref{det=/=0,i=1...m}--\eqref{det=0,i=2m+2} orients them in a clockwise manner on the unit circle, with \( (\lambda _i,\mu _i) \), \( i=1,2,\ldots ,m \text{ (or }m+1\text{)} \) in the I and the IV quadrant, whereas all the rest on the III and then the II quadrant. Among the \( 2m \) (or \( 2m+2 \)) elements in \( \Lambda \), there could be 0, 1, or at most 2 of them that can make \( \lambda F+\mu G \) positive semi-definite, but not positive definite, depending on whether \( II_\succeq (F,G)=\{(0,0)\} \), \( II_\succeq (F,G) \) is a ray, or an angular sector, respectively.

In Lemma \ref{consecutive Lambda} below, we show that, when \( \partial ^\circ (II_\succeq (F,G)) \) has exactly two elements \( (\hat{\lambda },\hat{\mu }) \) and \( (\tilde{\lambda },\tilde{\mu }) \) so that \( II_\succeq (F,G) \) is an angular sector with angle less than \( \pi \) (see \eqref{II=_/}), the two points \( (\hat{\lambda },\hat{\mu }) \) and \( (\tilde{\lambda },\tilde{\mu }) \) must be a pair of neighboring elements of \( \Lambda \) on the unit circle.

\begin{lemma}
\label{consecutive Lambda}
Suppose that \( \partial ^\circ (II_\succeq (F,G)) \) contains exactly two elements and that \( \det(S^T GS)\neq 0 \). Then they must be two consecutive points on the unit circle in \( \Lambda \): either \( \{(\lambda _i,\mu _i),(\lambda _{i+1},\mu _{i+1})\} \) for some \( i=1,2,\ldots ,2m-1 \) or \( \{(\lambda _{2m},\mu _{2m}),(\lambda _1,\mu _1)\} \).
\end{lemma}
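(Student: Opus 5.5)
The plan is to argue by contradiction, using the fact that both boundary rays of the sector \( II_\succeq (F,G) \) lie in \( \Lambda \), together with a sign argument for \( \det \) along the unit circle.

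By hypothesis \( \partial ^\circ (II_\succeq (F,G))=\{(\hat\lambda ,\hat\mu ),(\tilde\lambda ,\tilde\mu )\} \), and by \eqref{II=_/} the cone \( II_\succeq (F,G) \) is the angular sector spanned by these two points, with opening angle less than \( \pi \). Both points satisfy \( \det(\lambda S^TFS+\mu S^TGS)=0 \) and lie on the unit circle, so both belong to \( \Lambda \). Since \( \det(S^TGS)\neq 0 \), we have \( \Lambda _0=\emptyset  \), and \( \Lambda  \) consists exactly of the \( 2m \) points \( (\lambda _1,\mu _1),\dots ,(\lambda _{2m},\mu _{2m}) \), ordered cyclically on the circle as in \eqref{det=/=0,i=1...m}. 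It therefore suffices to show that the shorter open arc \( \Gamma  \) joining \( (\hat\lambda ,\hat\mu ) \) and \( (\tilde\lambda ,\tilde\mu ) \) contains no point of \( \Lambda  \).

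Here \( \Gamma  \) is the arc of length less than \( \pi  \) cut out by the sector. Every point of this open arc lies in the interior of the cone \( II_\succeq (F,G) \). By Lemma \ref{int(II)}, each \( (\lambda ,\mu )\in \Gamma  \) therefore satisfies \( \lambda S^TFS+\mu S^TGS\succ 0 \), and hence \( \det(\lambda S^TFS+\mu S^TGS)>0 \). In particular no point of \( \Gamma  \) belongs to \( \Lambda  \). Consequently the two boundary points are adjacent among the cyclically ordered elements of \( \Lambda  \): they are \( (\lambda _i,\mu _i),(\lambda _{i+1},\mu _{i+1}) \) for some \( i \), or the wrap-around pair \( (\lambda _{2m},\mu _{2m}),(\lambda _1,\mu _1) \).

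The main care needed is in justifying that the labelling \eqref{det=/=0,i=1...m} really is a cyclic (clockwise) ordering of all of \( \Lambda  \) around the full circle, so that ``no element of \( \Lambda  \) on the arc between them'' translates exactly into ``consecutive indices, or the pair \( (2m,1) \)''.
\begin{itemize}
\item The points with indices \( 1,\dots ,m \) have \( \lambda >0 \) and are ordered monotonically by \( \xi  \), since \( h(\xi )=\xi /\sqrt{1+\xi ^2} \) is increasing.
\item The points with indices \( m+1,\dots ,2m \) are their antipodes, which lie in the half-plane \( \lambda <0 \) with the same monotone order.
\item Because \( \det(S^TGS)\neq 0 \), neither \( (0,\pm 1) \) lies in \( \Lambda  \). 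So the transitions across \( \lambda =0 \), namely between indices \( m \) and \( m+1 \) and between \( 2m \) and \( 1 \), do not skip any element of \( \Lambda  \).
\end{itemize}
I expect this bookkeeping to be straightforward but to need a sentence about the orientation. The essential step is the positive-definiteness on the open arc, which comes directly from Lemma \ref{int(II)}.
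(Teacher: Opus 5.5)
Your proposal is correct and follows essentially the same route as the paper: both boundary points lie in \( \Lambda \), and any element of \( \Lambda \) strictly between them on the short arc would be a positive combination of them, hence an interior point of \( II_\succeq (F,G) \). By Lemma \ref{int(II)} such a point gives a positive definite pencil, contradicting \( \det=0 \). Your extra check of the clockwise cyclic labelling (in particular, that \( (0,\pm 1)\notin\Lambda \) when \( \det(S^TGS)\neq 0 \)) makes explicit what the paper assumes when it reads ``adjacent on the unit circle'' as consecutive indices or the wrap-around pair \( (2m,1) \).
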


\begin{proof}
The argument is simple. Let \( (\lambda _p,\mu _p),(\lambda _q,\mu _q)\in \partial ^\circ (II_\succeq (F,G)) \) such that
\[
\lambda _iS^T FS+\mu _iS^T GS\succeq 0, ~ \lambda _iS^T FS+\mu _iS^T GS\nsucc 0,~  \lambda _i^2+\mu _i^2=1. \quad (i=p,q)
\]
Suppose that \( (\lambda _p,\mu _p) \) and \( (\lambda _q,\mu _q) \) were not adjacent to each other on the unit circle. There would be some other \( (\lambda _s,\mu _s)\in \Lambda \), which is a conic combination of \( (\lambda _p,\mu _p) \) and \( (\lambda _q,\mu _q) \). Then, \( (\lambda _s,\mu _s)\in \operatorname{int}(II_\succeq (F,G)) \), and by Lemma \ref{int(II)}, \( \lambda _sS^T FS+\mu _sS^T GS\succ 0 \), which contradicts \( \det(\lambda _sS^T FS+\mu _sS^T GS)=0 \) since  \( (\lambda _s,\mu _s)\in \Lambda \). The proof is complete.
\end{proof}

An analogue of Lemma~\ref{consecutive Lambda} for the case \(\det(S^\top GS)=0\) follows from the same argument, with the two additional candidates \((0,1)\) and \((0,-1)\) included in \(\Lambda\).

The cone \(II_\succeq (F,G)\) can be determined by the following procedure.
\begin{enumerate}
    \item Compute all distinct real roots of \( \Psi (\xi )=\det(S^T FS+\xi S^T GS)=0 \) and order them as \(\xi _1>\xi _2>\cdots >\xi _m \).
    \item Based on whether \(\det(S^T GS)\neq 0 \) or not, utilize \(\xi _1>\xi _2>\cdots >\xi _m \) to construct candidates \( (\lambda _i,\mu _i) \), \( i=1,2,\ldots ,2m \) (or \( i=1,2,\ldots ,2m+2 \)), by  \eqref{det=/=0,i=1...m}--\eqref{det=0,i=2m+2}, respectively, for boundary points of \( II_\succeq (F,G) \).
    \item Starting from \( (\lambda _1,\mu _1) \), examine sequentially the positive semidefiniteness of matrix pencils:
\begin{equation}
\label{psd condition}
    \lambda _i S^T FS+\mu _i S^T GS \succeq 0
\end{equation}
    for \( i=1,2,\ldots ,2m \) (or \( i=1,2,\ldots ,2m+2 \)).
\begin{enumerate}[label=\theenumi-\roman*.]
    \item If \eqref{psd condition} fails for every \( (\lambda _i,\mu _i) \), then \( II_\succeq (F,G)=\{(0,0)\} \).
    \item Otherwise, let \( (\lambda _{i_0},\mu _{i_0}) \) denote the pair satisfying \eqref{psd condition}. By Lemma \ref{consecutive Lambda}, it remains to check its neighbors \( (\lambda _{i_0-1},\mu _{i_0-1}) \) and \( (\lambda _{i_0+1},\mu _{i_0+1}) \).
\begin{itemize}
    \item If neither neighboring candidate satisfies \eqref{psd condition}, then \( II_\succeq (F,G)=\{(t\lambda _{i_0},t\mu _{i_0})\mid t\geq 0\} \).
    \item  Otherwise, suppose\((\lambda _{i_1},\mu _{i_1}) \) is the one among \( \{(\lambda _{i_0-1},\mu _{i_0-1}), (\lambda _{i_0+1},\mu _{i_0+1})\} \) that make the matrix pencil in \eqref{psd condition} positive semi-definite, then \( II_\succeq (F,G)=\{(t\lambda _{i_0}+s\lambda _{i_1},t\mu _{i_0}+s\mu _{i_1})\mid t\geq 0,~s\geq 0\} \).
\end{itemize}
\end{enumerate}

\end{enumerate}

For the pseudocode for computing \(II_\succeq (F,G)\), see Algorithm~\ref{alg:II(F,G)} in Appendix~\ref{'en-s'ng-hu8at}.

\subsection{Computing \( I_\succeq (F,G) \)}
\label{I(F,G) refined}

To check \( \{f\leq 0\}\cap \{g=0\}=\emptyset \), it requires to find a \( \xi \in \mathbb{R} \) such that \( f(x)+\xi g(x)\geq 0,~\forall x\in \mathbb{R} ^n \), which further reduces to compute the matrix pencil \( I_\succeq (F,G) \) in Theorem \ref{(F2-1)} and in Theorem \ref{(F1-1)}. We point out that the way to compute \( I_\succeq (F,G) \) has been well addressed in Section 2.3 \cite{nguyen2023Po4} and Algorithm A.1 \cite{nguyen2023Po4} for the purpose of determining the optimal value and its attainability for a given QP1QC problem. In this subsection, we recall the part of their procedure, and then introduce a refinement that reduces the computation of the matrix pencil \( I_\succeq (F,G) \).

Nguyen et al.\ \cite{nguyen2023Po4} proved that, if \( \Psi (\xi )=\det(S^T FS+\xi S^T GS) \) is a constant polynomial or has at least one non-real root, then \( I_\succeq (F,G)=\emptyset \). Suppose that \( \Psi (\xi ) \) is a nonconstant polynomial and has only real roots. Denote its distinct roots, arranged in decreasing order, by
\[
\Xi =\{\xi _1>\xi _2>\cdots >\xi _m\}.
\]
If none of these roots satisfies \( S^T FS+\xi _iS^T GS\succeq 0 \), then \( I_\succeq (F,G)=\emptyset \). Otherwise, \( I_\succeq (F,G) \) can be determined by
\[
I_\succeq (F,G)=
\begin{cases}
(-\infty ,\xi _{\max}], & \text{if } G\preceq 0;\\
[\xi _{\min},\infty ), & \text{if } G\succeq 0;\\
[\xi _{\min},\xi _{\max}], & \text{otherwise},
\end{cases}
\]
where \( \xi _{\max}=\max\{\xi _i\mid S^T FS+\xi _iS^T GS\succeq 0\} \) and \( \xi _{\min}=\min\{\xi _i\mid S^T FS+\xi _iS^T GS\succeq 0\} \).

In \cite{nguyen2023Po4}, \( \xi _{\max} \) is computed by examining \( \xi _1,\xi _2,\ldots ,\xi _m \) in descending order and selecting the first root \( \xi _i \) for which \( S^T FS+\xi _iS^T GS\succeq 0 \). Similarly, \( \xi _{\min} \) is computed by examining \( \xi _m,\xi _{m-1},\ldots ,\xi _1 \) in ascending order and selecting the first root \( \xi _i \) satisfying \( S^T FS+\xi _iS^T GS\succeq 0 \).

However, when \( G \) is indefinite and \( I_\succeq (F,G)\neq \emptyset \), once \( \xi _{\max} \) has been determined, a separate search for the other endpoint \( \xi _{\min} \) is unnecessary. By Theorem 2.2(iii) in \cite{nguyen2023Po4} (see also Remark 4.2 \cite{hsia2014revisit}), if \( I_\succeq (F,G)=[\xi _{\min},\xi _{\max}] \) is a nondegenerate interval, then
\[
S^T FS+\xi S^T GS\succ 0,~\forall \xi \in \operatorname{int}(I_\succeq (F,G)).
\]
Consequently, no point in the interior of \( I_\succeq (F,G) \) can be a root of \( \Psi (\xi )=\det(S^T FS+\xi S^T GS) \). As the root set \( \Xi =\{\xi _1>\xi _2>\cdots >\xi _m\} \) is ordered, the two endpoints must therefore be consecutive roots with \( \xi _{\min}<\xi _{\max} \). Thus, if \( \xi _{\max} \) is computed to be \( \xi _{i_0} \), then
\begin{equation}
\label{xi_min}
\xi _{\min}=
\begin{cases}
\xi _{i_0+1} & \text{if } \xi _{i_0}>\xi _m \text{ and } S^T FS+\xi _{i_0+1}S^T GS\succeq 0;\\
\xi _{i_0} & \text{if } \xi _{i_0}>\xi _m \text{ and } S^T FS+\xi _{i_0+1}S^T GS\nsucceq 0;\\
\xi _{i_0} & \text{if } \xi _{i_0}=\xi _m.
\end{cases}
\end{equation}

In the next section, we will see that using \eqref{xi_min} to compute the matrix pencil \( I_\succeq (F,G)=[\xi _{\min},\xi _{\max}] \) can save significant computational cost compared with searching \( \xi _{\min} \) independently from the bottom up as in \cite{nguyen2023Po4}.

\section{Numerical experiment}
\label{num exp}

For the numerical experiments on [Calabi](nonhomo version) and [Finsler](nonhomo version), we adopt the benchmark instances constructed in \cite{nguyen2023Po4}. Their instances are randomly generated but all guaranteed to have the known ground truth to the following two cases:
\begin{itemize}
\item[(a)] \( \inf\{f(x)\mid g(x)\leq 0\}>0 \);
\item[(b)] \( \inf\{f(x)\mid g(x)\leq 0\}=0 \) but the optimal value \( 0 \) is unattainable.
\end{itemize}
Both cases lead to \( \{f=0\}\cap \{g=0\}=\emptyset \) and also \( \{f\leq 0\}\cap \{g=0\}=\emptyset \). Therefore, the same benchmark data are suitable for testing [Calabi](nonhomo version) as well as [Finsler](nonhomo version).

To ensure a fair comparison, we generated all test instances using the MATLAB code provided by the authors of \cite{nguyen2023Po4}. The experiments in \cite{nguyen2023Po4} considered problem dimensions \( n=10,20,\ldots,100 \), but we extended the test set to larger instances with \( n=50,100,\ldots,500 \) for evaluating the scalability.

Throughout the experiments, a numerical error tolerance of \( 10^{-5} \) was adopted. For two real numbers \( r \) and \( s \), we regard \( r>s \) if \( r>s+10^{-5} \), \( r=s \) if \( |r-s|\leq 10^{-5} \), and \( r<s \) if \( r<s-10^{-5} \). The correct identification rate for each dimension is defined as the number of instances correctly identified over 200 instances generated for each dimension, with 100 apiece for case (a) and case (b).

All experiments were conducted in MATLAB R2024A on a workstation equipped with an Intel i7-10700 CPU and 32 GB RAM. For each dimension, we report the average runtime over the 200 benchmark instances, and also the correct identification rate. The results are documented in Table~\ref{C0-time,acc} for [Calabi](nonhomo version), and in Table~\ref{F0-time,acc} for [Finsler](nonhomo version).

The speedup factor of our method over the method in \cite{nguyen2023Po4} is defined as the average runtime of the method in \cite{nguyen2023Po4} divided by the average runtime of our method. Thus, a speedup factor greater than 1 indicates that our method is faster.

\begin{table}[!ht]
\renewcommand{\arraystretch}{1}
    \centering
    \caption{Average runtime (in seconds) and correct identification rate for verifying \( \{f=0\}\cap \{g=0\}=\emptyset \)} \label{C0-time,acc}
    \begin{tabular}{|c|ccc|cc|}
    \Xhline{2\arrayrulewidth}
        \multirow{2}{*}{\makecell{Dimension\\ \(n\)}} &\multicolumn{3}{c|}{Average Runtime}  & \multicolumn{2}{c|}{Correct Identification Rate}\\ 
        & Our method&  \cite{nguyen2023Po4}'s method& Speedup factor& Our method& \cite{nguyen2023Po4}'s method\\ \hline
        50 & 0.016& 0.071& 4.47 & 1 & 0.97\\ 
        100 & 0.107& 0.637& 5.94 & 0.98& 0.93\\ 
        150 & 0.406& 2.444& 6.03 & 0.98& 0.88\\ 
        200 & 0.959& 5.522& 5.76 & 0.985& 0.905\\ 
        250 & 1.765& 10.414& 5.90 & 0.965& 0.895\\ 
        300 & 3.111& 17.558& 5.64 & 0.99& 0.86\\ 
        350 & 4.952& 29.022& 5.86 & 0.975& 0.845\\ 
        400 & 7.139& 42.147& 5.90 & 0.98& 0.88\\ 
        450 & 10.362& 62.766& 6.06 & 0.98& 0.82\\ 
        500 & 18.861& 91.615& 4.86 & 0.97& 0.81\\ 
        \Xhline{2\arrayrulewidth}
    \end{tabular}
\end{table}

\begin{table}[!ht]
\renewcommand{\arraystretch}{1}
    \centering
    \caption{Average runtime (in seconds) and correct identification rate for verifying \( \{f\leq 0\}\cap \{g=0\}=\emptyset \)} \label{F0-time,acc}
    \begin{tabular}{|ccc|}
        \Xhline{2\arrayrulewidth}
        Dimension \(n\)& Average Runtime & \makecell{Correct Identi-\\fication Rate}\\ \hline
        50 & 0.016& 1 \\ 
        100 & 0.110& 1 \\ 
        150 & 0.409& 0.985\\ 
        200 & 0.941& 0.985\\ 
        250 & 1.726& 0.98\\ 
        300 & 2.990& 0.99\\ 
        350 & 4.766& 0.98\\ 
        400 & 6.877& 0.99\\ 
        450 & 10.178& 0.99\\ 
        500 & 18.483& 0.975\\
        \Xhline{2\arrayrulewidth}
    \end{tabular}
\end{table}

From Table~\ref{C0-time,acc}, it shows that our algorithm for [Calabi](nonhomo version) outperforms the four-QP1QC approach in \cite{nguyen2023Po4}, achieving a speedup factor of at least \( 4.47 \). On the other hand, the correct identification rate climbs from over \( 80\% \) in \cite{nguyen2023Po4} to more than \( 95\% \) by our method. The substantial reduction in computation time and the significant increase in accuracy fully justify our efforts in characterizing the mathematical essence of the problem.

Table~\ref{F0-time,acc} reports the numerical results for verifying \( \{f\leq 0\}\cap \{g=0\}=\emptyset \) by our algorithm. Comparing with those in Table~\ref{C0-time,acc}, we observe that the results are relatively close in both average runtime and correct identification rate. Since the primary difference lies in computing a two-dimensional matrix pencil \( II_\succeq (A,B) \) for [Calabi](nonhomo version) and a one-dimensional matrix pencil \( I_\succeq (A,B) \) for [Finsler](nonhomo version), the highly similar computational results suggest that this architectural difference has less impact than it superficially appears.

Let us look into these matrix-pencil computations. Suppose that \( \Psi (\xi )=\det(S^T FS+\xi S^T GS) \) has \( m \) distinct real roots and no nonreal one. In computing the one-dimensional pencil \( I_\succeq (F,G) \), there require at most \( m \) positive-semidefiniteness tests for the matrices \( S^T FS+\xi _iS^T GS \) to search for \( \xi _{\max} \). Once \( \xi _{\max} \) is found, \( \xi _{\min} \) can be determined by at most one additional positive-semidefiniteness test.

For the two dimensional matrix pencil \( II_\succeq (F,G) \), according to \eqref{psd condition}, at most \( 2m+2 \) positive-semidefiniteness tests for the matrices \( \lambda _iS^T FS+\mu _iS^T GS \) may be needed. However, due to the symmetry appearing in \eqref{det=/=0,i=1...m}--\eqref{det=0,i=2m+2}, among the \( 2m+2 \) matrix positive-semidefiniteness checks, the matrices in the first \( m+1 \) tests and those in the last \( m+1 \) tests differ only by a negative sign. Consequently, the required number of eigenvalue decompositions is actually \( m+1 \), rather than \( 2m+2 \). This explains why the computation costs for \( I_\succeq (F,G) \) and \( II_\succeq (F,G) \) are roughly the same, which is clearly evidenced by the numerical results in Table~\ref{pencil-time}(a).

Lastly, in Table~\ref{pencil-time}(b), we also verified that, in computing the one-dimensional matrix pencil \( I_\succeq (F,G) \), switching to \eqref{xi_min} for \( \xi _{\min} \) calculation achieves a speedup factor of approximately \( 1.3 \) over searching \( \xi _{\min} \) separately from the bottom up as in \cite{nguyen2023Po4}.

\begin{table}[!ht]
\renewcommand{\arraystretch}{1}
    \centering
    \refstepcounter{table}
    \label{pencil-time}

    \begin{minipage}[t]{0.42\textwidth}
        \centering
        {\normalsize Table~\thetable(a) Average runtime (in seconds) for computing
        \( II_\succeq (A,B) \) and \( I_\succeq (A,B) \)}
    \end{minipage}
    \hfill
    \begin{minipage}[t]{0.57\textwidth}
        \centering
        {\normalsize Table~\thetable(b) Comparison of average runtime (in seconds) for computing
        \( I_\succeq (A,B) \) using different methods}
    \end{minipage}
    \\[1em]
    \begin{minipage}{0.42\textwidth}
    \begin{tabular}{|c|cc|}
    \Xhline{2\arrayrulewidth}
    \makecell{Dimen-\\ sion \(n\)}
    &
    \makecell{\(II_\succeq (A,B)\),\\
    our \\ method }
    &
    \makecell{\(I_\succeq (A,B)\),\\
    our \\ method}
    \\ \hline
    50  & 0.012& 0.013\\ 
    100 & 0.100& 0.102\\ 
    150 & 0.388& 0.389\\ 
    200 & 0.894& 0.899\\ 
    250 & 1.666& 1.672\\ 
    300 & 2.948& 2.884\\ 
    350 & 4.737& 4.665\\ 
    400 & 6.809& 6.684\\ 
    450 & 10.083& 9.982\\ 
    500 & 15.281& 15.062\\
    \Xhline{2\arrayrulewidth}
    \end{tabular}
    \end{minipage}
    \hfill
    \begin{minipage}{0.57\textwidth}
    \begin{tabular}{|c|cc|c|}
    \Xhline{2\arrayrulewidth}
    \makecell{Dimen- \\ sion \(n\)}&
    \makecell{\(I_\succeq (A,B)\),\\
    our \\ method }
    &
    \makecell{\(I_\succeq (A,B)\),\\
    \cite{nguyen2023Po4}'s \\ method}
    &
    \makecell{Speedup \\factor}\\ \hline
    50  & 0.013& 0.014& 1.12\\ 
    100 & 0.102& 0.134& 1.32\\ 
    150 & 0.389& 0.516& 1.32\\ 
    200 & 0.899& 1.178& 1.31\\ 
    250 & 1.672& 2.221& 1.33\\ 
    300 & 2.884& 3.832& 1.33\\ 
    350 & 4.665& 6.188& 1.33\\ 
    400 & 6.684& 9.228& 1.38\\ 
    450 & 9.982& 13.502& 1.35\\ 
    500 & 15.062& 19.407& 1.29\\
    \Xhline{2\arrayrulewidth}
    \end{tabular}
    \end{minipage}
\end{table}

\section{Concluding remarks}

In this paper, we complete the two remaining cases in the family of nonhomogeneous quadratic unsolvability conditions by establishing the nonhomogeneous Calabi theorem and the nonhomogeneous strict Finsler lemma. We characterize \[ \{f=0\}\cap\{g=0\}=\emptyset \quad\text{and}\quad \{f\leq 0\}\cap\{g=0\}=\emptyset \] through three mutually exclusive alternatives. For the nonhomogeneous Calabi theorem, the three alternatives are determined geometrically by the position of the origin \( (0,0) \) relative to the joint numerical range \( (f,g)(\mathbb{R}^n) \) and its convex hull \( \operatorname{conv}((f,g)(\mathbb{R}^n)) \). The topological aspect enters through the position of the origin relative to the closure \( \operatorname{cl}(\operatorname{conv}((f,g)(\mathbb{R}^n))) \), which brings limiting behavior at infinity into the analysis. A parallel classification with three mutually exclusive alternatives is derived for the nonhomogeneous strict Finsler lemma. Numerical experiments confirm that our methods for checking these conditions are computationally efficient and accurate.

For applications, the optimal value \( \alpha \) of \( \min\{f(x)\mid g(x)\leq 0\} \) is obtained from the S-Lemma, the alternative theorem for \( \{f<0\}\cap\{g\leq 0\}=\emptyset \). Our algorithm then decides whether \( \alpha \) is attained through \( \{f-\alpha=0\}\cap\{g\leq 0\}=\emptyset \). Similarly, the optimal value \( \beta \) of \( \min\{f(x)\mid g(x)=0\} \) follows from the S-Lemma with equality, the alternative theorem for \( \{f<0\}\cap\{g=0\}=\emptyset \), while its attainability can be characterized by \( \{f-\beta=0\}\cap\{g=0\}=\emptyset \).

For quadratic fractional programming \[ (\mathrm{QFP})\qquad \begin{array}{rl} \min & \dfrac{h(x)}{g(x)}\\ \mathrm{s.t.} & f(x)\mathbin{\star}0, \end{array} \qquad \star\in\{\leq,=\}, \] whether the objective function is well defined over the feasible set is now answered by our results through checking \( \{f\mathbin{\star}0\}\cap\{g=0\}=\emptyset \). Once a finite optimal value \( \nu \) is known, it is unattainable if and only if \( \{f\mathbin{\star}0\}\cap\{h-\nu g=0\}=\emptyset \), which is again a direct application of our results.

Classical optimization theory often addresses attainability indirectly through verifiable sufficient conditions such as compactness, coercivity, and level-boundedness. The intrinsic difficulty of attainability lies in the geometry of the feasible set at infinity and the asymptotic behavior of the objective function along unbounded directions. A direct study of attainability therefore requires a deeper understanding of the mathematics behind the optimization model. In this paper, we turn the attainability of quadratic optimization problems from a technical existence issue into computable necessary and sufficient conditions, opening a new direction for the study of optimization at infinity.

\begingroup
\setlength{\intextsep}{8pt}
\begin{appendices}
\section{Algorithms.}
\label{'en-s'ng-hu8at}

\begin{algorithm}[H]
\caption{Checking (F3): \( \{g=0\} \) separates \( \{f\leq 0\} \).}
\label{alg:F3}
\begin{algorithmic}
\Input \( f(x)=x^T Ax+2a^T x+a_0 \), \( g(x)=x^T Bx+2b^T x+b_0 \)
\Output Whether or not \( \{g=0\} \) separates \( \{f\leq 0\} \).

\If {\( B\neq 0 \) \textbf{ or } \( b=0 \)}
    \State \Return False;
\EndIf

    \State \( V\leftarrow \) a matrix basis of \( \mathcal{N} (b^T ) \); \Comment{\( V\in \mathbb{R} ^{n\times (n-1)} \)}
    \State \( x_0 \leftarrow \frac{-b_0}{2b^T b}b \);
    \State \( w\leftarrow V^T (Ax_0 + a) \); \Comment{\( w\in \mathbb{R} ^{n-1} \)}

    \If{\(
    \begin{cases}
    b\in \mathcal{R} (A) \textbf{ and}\\
    b\neq 0 \textbf{ and}\\
    A \text{ has exactly one negative eigenvalue} \textbf{ and}\\
    b^T A^\dagger b \leq 0 \textbf{ and}\\
    w\in \mathcal{R} (V^T AV) \textbf{ and}\\
    f(x_0)-w^T (V^T AV)^\dagger w >0
    \end{cases}
    \)}
        \State \Return True;
\EndIf

\State \Return False;
\end{algorithmic}
\end{algorithm}

\begin{algorithm}[H]
\caption{Checking (C3): the separation property of \( f \) and \( g \).}
\label{alg:C3}
\begin{algorithmic}
\Input \( f(x)=x^T Ax+2a^T x+a_0 \), \( g(x)=x^T Bx+2b^T x+b_0 \)
\Output Whether \( f \) and \( g \) have the separation property

\If {\( A=0 \)}
    \If {\( B\neq 0 \)}
        \State \( \gamma _0\leftarrow 1 \), \( \delta _0\leftarrow 0 \);
    \Else
        \State \Return False;
    \EndIf
\Else \Comment{\( A\neq 0 \)}
    \If {\( B=\zeta A \) for some \( \zeta \in \mathbb{R} \)}
        \State \( \gamma _0\leftarrow -\zeta \), \( \delta _0\leftarrow 1 \);
\algstore{alg(C3)}
\end{algorithmic}
\end{algorithm}

\begin{algorithm}[H]
\ContinuedFloat
\caption{(continued)}
\begin{algorithmic}
\algrestore{alg(C3)}
    \Else
        \State \Return False;
    \EndIf
\EndIf
\For {\( (\gamma ,\delta )\in \{(\gamma _0,\delta _0),(-\gamma _0,-\delta _0)\} \)}
    \State \( c \leftarrow \gamma a + \delta b \);
    \State \( d \leftarrow \delta a - \gamma b \);
    \State \( D \leftarrow \delta A - \gamma B \);
    \State \( V\leftarrow \) a matrix basis of \( \mathcal{N} (c^T ) \); \Comment{\( V\in \mathbb{R} ^{n\times (n-1)} \)}
    \State \( x_0 \leftarrow \frac{-\gamma a_0-\delta b_0}{2c^T c}c \);
    \State \( w\leftarrow V^T (Dx_0 + d) \); \Comment{\( w\in \mathbb{R} ^{n-1} \)}
    \If{\(
    \begin{cases}
   c\neq 0,~ c\in \mathcal{R} (D) \textbf{ and}\\
    D \text{ has exactly one negative eigenvalue} \textbf{ and}\\
    c^T D^\dagger c \leq 0 \textbf{ and}\\
    w\in \mathcal{R} (V^T DV) \textbf{ and}\\
    \delta f(x_0)-\gamma g(x_0)-w^T (V^T DV)^\dagger w >0
    \end{cases}
    \)}
        \State \Return True;
    \EndIf
\EndFor
\State \Return False;
\end{algorithmic}
\end{algorithm}

\begin{algorithm}[H]
\caption{Checking (C1) and (C2).}
\label{alg:C12}
\begin{algorithmic}
\Input \( f(x)=x^T Ax+2a^T x+a_0 \), \( g(x)=x^T Bx+2b^T x+b_0 \)
\Output Whether either (C1) or (C2) holds for \( f \) and \( g \)

\State \( F\leftarrow \begin{bmatrix}A & a \\ a^T & a_0 \end{bmatrix} \); \( G\leftarrow \begin{bmatrix}B & b \\ b^T & b_0 \end{bmatrix} \);
\State Compute \( II_\succeq (F,G) \) by Algorithm \ref{alg:II(F,G)};
\If {\( II_\succeq (F,G) \) is \( \{(0,0)\} \)}
    \State \Return Both (C1) and (C2) fail;
\EndIf
\State \( (\lambda ,\mu )\leftarrow \) a relative interior point of \( II_\succeq (F,G) \);
\If {\( \lambda a_0+\mu b_0-( \lambda a+\mu b)^T ( \lambda A+\mu B)^\dagger ( \lambda a+\mu b) > 0 \)}
        \State \Return (C1) holds but (C2) fails; \Comment{The only case where (C1) holds}
\EndIf
\If {\( II_\succeq (F,G) \) is a ray \textbf{ and }\( \lambda a_0+\mu b_0-( \lambda a+\mu b)^T ( \lambda A+\mu B)^\dagger ( \lambda a+\mu b) = 0 \)}
    \State \( C\leftarrow \mu A-\lambda B \);
    \State  \( c\leftarrow \mu a-\lambda b \);
    \State \( c_0\leftarrow \mu a_0-\lambda b_0 \);
    \State \( U\leftarrow \) a matrix basis of \( \mathcal{N}(\lambda A + \mu B) \);
    \State \( u\leftarrow -(\lambda A+\mu B)^\dagger (\lambda a+\mu b) \); 
    \If {\( U^T (Cu+c)\notin \mathcal{R} (U^T CU)\)}
        \State \Return Both (C1) and (C2) fail;
    \EndIf
    \If {\( U^T CU \succeq 0 \) \textbf{ and } \( (\mu f(u)-\lambda g(u))- (Cu+c)^T U(U^T CU)^\dagger U^T (Cu+c) >0 \)}
        \State \Return (C1) fails but (C2) holds;
    \EndIf
    \If {\( U^T CU \preceq 0 \) \textbf{ and } \( (\mu f(u)-\lambda g(u))- (Cu+c)^T U(U^T CU)^\dagger U^T (Cu+c) <0 \)}
        \State \Return (C1) fails but (C2) holds;
    \EndIf
    \State \Return Both (C1) and (C2) fail;
\Else
    \State \Return Both (C1) and (C2) fail; 
\EndIf
\end{algorithmic}
\end{algorithm}

\begin{algorithm}[H]
\caption{Checking (F1) and (F2).}
\label{alg:F12}
\begin{algorithmic}
\Input \( f(x)=x^T Ax+2a^T x+a_0 \), \( g(x)=x^T Bx+2b^T x+b_0 \)
\Output Whether either (F1) or (F2) holds for \( f \) and \( g \)

\State \( F\leftarrow \begin{bmatrix}A & a \\ a^T & a_0 \end{bmatrix} \); \( G\leftarrow \begin{bmatrix}B & b \\ b^T & b_0 \end{bmatrix} \);
\State Compute \( I_\succeq (F,G) \) using the procedure in \cite{nguyen2023Po4};
\If{\( I_\succeq (F,G)=\emptyset \)}
    \State \Return Both (F1) and (F2) fail;
\Else
    \State \( \xi \leftarrow \) a relative interior point of \( I_\succeq (F,G) \);
\EndIf

\If {\( I_\succeq (F,G) \) is a singleton}
    \If {\( a_0+\xi b_0-(a+\xi b)^T (A+\xi B)^\dagger (a+\xi b) > 0 \)}
        \State \Return (F1) holds but (F2) fails; \Comment{The only case where (F1) holds}
    \ElsIf {\( a_0+\xi b_0-(a+\xi b)^T (A+\xi B)^\dagger (a+\xi b) = 0 \)}
        \State \( U\leftarrow \) a matrix basis of \( \mathcal{N}(A + \xi B) \);
        \State \( u\leftarrow -(A+\xi B)^\dagger (a+\xi b) \); 
        \If {\( U^T (Bu+b)\notin \mathcal{R} (U^T BU)\)}
            \State \Return Both (F1) and (F2) fail;
        \EndIf
        \If {\( U^T BU \succeq 0 \) \textbf{and} \( g(u)- (Bu+b)^T U(U^T BU)^\dagger U^T (Bu+b) >0 \)}
            \State \Return (F1) fails but (F2) holds;
        \EndIf
        \If {\( U^T BU \preceq 0 \) \textbf{and} \( g(u)- (Bu+b)^T U(U^T BU)^\dagger U^T (Bu+b) <0 \)}
            \State \Return (F1) fails but (F2) holds;
        \EndIf
        \State \Return Both (F1) and (F2) fail;
    \Else
        \State \Return Both (F1) and (F2) fail;
    \EndIf
\Else \Comment{\( I_\succeq (F,G)\) is a closed interval}
     \If {\( a_0+\xi b_0-(a+\xi b)^T (A+\xi B)^\dagger (a+\xi b) > 0 \)}
        \State \Return (F1) holds but (F2) fails;
    \Else
        \State \Return Both (F1) and (F2) fail;
    \EndIf
\EndIf
\end{algorithmic}
\end{algorithm}

\begin{algorithm}[H]
\caption{Characterizing \( II_\succeq (F,G) \).}
\label{alg:II(F,G)}
\begin{algorithmic}
\Input \( F,G\in \mathcal S^{n+1} \).
\Output The matrix pencil \( II_\succeq (F,G) \).

\If{\( F=G=0 \)}
\State \Return \( II_\succeq (F,G)\leftarrow \mathbb{R} ^2 \);
\ElsIf{\( \lambda F+\mu G=0 \) for some \( (\lambda ,\mu )\neq (0,0) \)}
\If{\( \mu F-\lambda G\succeq 0 \)}
\State \Return \( II_\succeq (F,G)\leftarrow \{(r,s)\in \mathbb{R} ^2\mid \mu r-\lambda s\geq 0\} \);
\ElsIf{\( \mu F-\lambda G\preceq 0 \)}
\State \Return \( II_\succeq (F,G)\leftarrow \{(r,s)\in \mathbb{R} ^2\mid \mu r-\lambda s\leq 0\} \);
\Else
\State \Return \( II_\succeq (F,G)\leftarrow \{(r,s)\in \mathbb{R} ^2\mid \mu r-\lambda s=0\} \);
\EndIf
\EndIf
\State \Comment{\( \{F,G\} \) is linearly independent}
\State \(R\leftarrow \) a matrix basis for \(\mathcal N(F)\cap \mathcal N(G)\).
\State \(S \leftarrow \) a matrix basis for \(\mathcal N(R^T )\).
\State \( \Psi (\xi )\leftarrow \det (S^T FS + \xi S^T GS)\);
\If {\( \Psi \) is a constant polynomial \textbf{ or } \( \Psi \) has at least one nonreal root}
\State \Return \( II_\succeq (F,G)\leftarrow \{(0,0)\} \);
\EndIf
\State \( \{\xi _1>\xi _2>\ldots >\xi _m\} \leftarrow \) the collection of all roots \( \xi \) of \( \Psi (\xi ) \);
\If {\( \det (S^T GS)\neq 0 \)}
\State \( l\leftarrow m; \)
\Else 
\State \( l\leftarrow m+1; \)
\State \( (\lambda _l,\mu _l)\leftarrow (0,-1) \);
\State \( (\lambda _{2l},\mu _{2l})\leftarrow (0,1) \);
\EndIf
\State \( (\lambda _i,\mu _i)\leftarrow \left (\frac{1}{\sqrt{1+\xi _i^2}},\frac{\xi _i}{\sqrt{1+\xi _i^2}} \right ),~\forall i=1,2,\ldots ,m \);
\State \( (\lambda _i,\mu _i)\leftarrow \left (\frac{-1}{\sqrt{1+\xi _{i-l}^2}},\frac{-\xi _{i-l}}{\sqrt{1+\xi _{i-l}^2}} \right )=(-\lambda _{i-l},-\mu _{i-l}),~ \forall i=l+1,l+2,\ldots ,l+m \);
\If{\( \lambda _iF+\mu _iG\nsucceq 0,~\forall i=1,2,\ldots ,2l \)}
\State \Return \( II_\succeq (F,G)\leftarrow \{(0,0)\} \);
\EndIf
\State \( i_0 \leftarrow \min \{i\in \{1,2,\ldots ,2l\}\mid \lambda _iF+\mu _iG\succeq 0\} \);
\If{\( i_0=1 \)}
\If{\( \lambda _2F+\mu _2G\succeq 0 \)}
\State \Return \( II_\succeq (F,G)\leftarrow \{(t\lambda _1 + t' \lambda _2, t\mu _1 + t'\mu _2) \mid t\geq 0,t'\geq 0\} \);
\ElsIf{\( \lambda _{2l}F+\mu _{2l}G\succeq 0 \)}
\State \Return \( II_\succeq (F,G)\leftarrow \{(t\lambda _1 + t' \lambda _{2l}, t\mu _1 + t'\mu _{2l}) \mid t\geq 0,t'\geq 0\} \);
\Else
\State \Return \( II_\succeq (F,G)\leftarrow \{(t\lambda _1, t\mu _1) \mid t\geq 0\} \);
\EndIf
\ElsIf{\( 1<i_0<2l \)}
\If{\( \lambda _{i_0+1}F+\mu _{i_0+1}G\succeq 0 \)}
\State \Return \( II_\succeq (F,G)\leftarrow \{(t\lambda _{i_0} + t' \lambda _{i_0+1}, t\mu _{i_0} + t'\mu _{i_0+1}) \mid t\geq 0,t'\geq 0\} \);
\Else
\State \Return \( II_\succeq (F,G)\leftarrow \{(t\lambda _{i_0}, t\mu _{i_0}) \mid t\geq 0\} \);
\EndIf
\Else \Comment{\( i_0=2l \)}
\State \Return \( II_\succeq (F,G)\leftarrow \{(t\lambda _{2l}, t\mu _{2l} )\mid t\geq 0\} \);
\EndIf
\end{algorithmic}
\end{algorithm}

\end{appendices}
\endgroup

\end{document}